\documentclass{amsproc}

\newtheorem{theorem}{Theorem}

\newtheorem{proposition}{Proposition}[section]

\newtheorem{lemma}[proposition]{Lemma}
\newtheorem{corollary}[proposition]{Corollary}

\theoremstyle{definition}
\newtheorem{definition}[proposition]{Definition}
\newtheorem{example}[proposition]{Example}

\theoremstyle{remark}
\newtheorem{remark}[proposition]{Remark}

\numberwithin{equation}{section}


\usepackage[stable]{footmisc}
\usepackage[dvipsnames]{xcolor}

\DeclareSymbolFontAlphabet{\mathcal}{symbols}

 \usepackage{tikz}
\usepackage{mathrsfs}

\usepackage{dsfont}
\usepackage{eufrak}

\usepackage[all]{xy}
\usepackage{framed}
\usepackage{hyperref}

\usepackage{mathabx}
\usepackage{amsthm}

\usepackage{enumerate}
\usepackage{graphicx}





\newcommand{\bl}{\begin{lemma}}
\newcommand{\bp}{\begin{proposition}}
\newcommand{\bt}{\begin{theorem}}
\newcommand{\bc}{\begin{corollary}}
\newcommand{\be}{\begin{equation}}
\newcommand{\bee}{\begin{equation*}}
\newcommand{\bd}{\begin{definition}}
\newcommand{\bex}{\begin{example}}
\newcommand{\bpr}{\begin{proof}}

\newcommand{\el}{\end{lemma}}
\newcommand{\ep}{\end{proposition}}
\newcommand{\et}{\end{theorem}}
\newcommand{\ec}{\end{corollary}}
\newcommand{\ee}{\end{equation}}
\newcommand{\eee}{\end{equation*}}
\newcommand{\ed}{\end{definition}}
\newcommand{\eex}{\end{example}}
\newcommand{\epr}{\end{proof}}


\newcommand{\secref}[1]{Section~\ref{#1}}

\newcommand{\thmref}[1]{Theorem~\ref{#1}}
\newcommand{\propref}[1]{Proposition~\ref{#1}}
\newcommand{\lemref}[1]{Lemma~\ref{#1}}
\newcommand{\corref}[1]{Corollary~\ref{#1}}

\newcommand{\remref}[1]{Remark~\ref{#1}}

\newcommand{\defref}[1]{Definition~\ref{#1}}

\newcommand{\partref}[1]{Part~\ref{#1}}
\newcommand{\pagref}[1]{page~\pageref{#1}}
\setcounter{tocdepth}{1}
\setcounter{secnumdepth}{2}
\def\R{{\mathbb R}}

\def\ov{\overline}





\newcommand{\cal}[1]{\mathcal #1}

\def\tcL{{\widetilde{\mathcal L}}}


\def\1{{\boldsymbol 1}}
\def\gd{{\mathfrak{d}}}
\def\gC{{\mathfrak{C}}}
\def\gH{{\mathfrak{H}}}

\def\gT{{\mathfrak{T}}}

\def\tc{{\mathtt c}}
\def\tp{{\mathtt p}}

\def\tv{{\mathtt v}}
\def\tw{{\mathtt w}}
%

\def\N{\mathbb{N}}
\def\Q{\mathbb{Q}}
\def\R{\mathbb{R}}
\def\Z{\mathbb{Z}}


\def\im{{\rm Im\,}}

\def\Hom{{\rm Hom}}


\def\id{{\rm id}}

\def\sub{{\rm Sub}}
\def\wsub{{\widetilde{{\rm Sub}\,}}}
\def\codim{{\rm codim\,}}
\def\pr{{\rm pr}}
\def\tN{{\widetilde{N}}}
\def\tT{{\widetilde{\mathrm{T}}}}
\def\ext{{\rm Ext}}

\def\der{{\mathtt{der}\,}}
\def\rc{{\mathring{\tc}}}
\def\fc{{\boldsymbol all}}

 \newcommand{\menos}{\backslash} 
      
\def\epsi{{\varepsilon}}

 \usepackage[applemac]{inputenc}
    
\newcommand{\bi}[2]{{#1}^{^{#2}}}

\newcommand{\tres}[3]{{#1}^{^{ #2 }}_{_{#3}}}

\newcommand{\Hiru}[3]{{#1}^{^{#2}}{( #3 )}}
\newcommand{\hiru}[3]{{#1}_{_{#2}}{( #3 )}}
\newcommand{\lau}[4]{{#1}^{^{#2}}_{_{#3}}{( #4 )}}

\newcommand{\parr}[1]{ \medskip \begin{center} { \bf   \  #1.} \end{center}}

\newcommand{\IH}{\mathscr H}


\def\tDelta{{\widetilde{\Delta}}}
\def\tdelta{{\widetilde{\delta}}}
\def\tcap{\,{\widetilde{\cap}\;}}
\def\tg{{\tilde{g}}}
\def\fa{{\vartriangleleft}}
%
\def\si{{\mathtt{Simp}}}

\title{Blown-up  intersection cohomology}

\author{David Chataur}
\address{Lafma\\
Universit\'e de Picardie Jules Verne\\
33, rue Saint-Leu\\
80039 Amiens Cedex~1\\
         France}
\email{David.Chataur@u-picardie.fr}

\author{Martintxo Saralegi-Aranguren}
\address{Laboratoire de Math{\'e}matiques de Lens\\  
      EA 2462 \\
      Universit\'e d'Artois\\
         SP18, rue Jean Souvraz\\
          62307 Lens Cedex\\
         France}
\email{martin.saraleguiaranguren@univ-artois.fr}

\author{Daniel Tanr\'e}
\address{D\'epartement de Math{\'e}matiques\\
         UMR 8524 \\
         Universit\'e de Lille~1\\
         59655 Villeneuve d'Ascq Cedex\\
         France}
\email{Daniel.Tanre@univ-lille1.fr}
\thanks{The third author is partially supported by the MINECO and  FEDER research project MTM2016-78647-P  and ANR-11-LABX-0007-01  ``CEMPI''}

\begin{document}

\keywords{Intersection cohomology, Cap product; Cup product;Topological invariance}

\subjclass[2010]{55N33, 57N80, 55S05}
 
 \today

\begin{abstract} 
In previous works, we have introduced the blown-up intersection cohomology and used it
to extend  Sullivan's minimal models theory to the framework of pseudomanifolds,
and to give a positive answer to a conjecture of M. Goresky and W. Pardon on Steenrod squares in intersection homology.

In this paper, we establish the main properties of this cohomology. One of its major feature is the existence 
of cap and cup products for any filtered space and any commutative ring of coefficients, at the cochain level. Moreover, we show that
each stratified map induces an homomorphism between the blown-up intersection cohomologies, compatible with
the cup and cap products. We prove also its topological invariance in the case of a pseudomanifold with no codimension one strata.
Finally, we compare it with the intersection cohomology studied by G. Friedman and J.E. McClure.

A great part of our results involves general perversities, defined independently on each stratum,
and a tame intersection homology adapted to  large perversities.
\end{abstract}

\maketitle

\tableofcontents
\section*{Introduction}
Intersection homology  was defined by  M.~Goresky and R.~MacPherson  in the case of PL-pseudomanifolds in \cite{MR572580} and with sheaf theory in  \cite{MR696691} where the authors restore Poincar\'e duality for pseudomanifolds:  if  $X$ is a compact oriented pseudomanifold of dimension $n$
and $\ov p, \ov q$ are two complementary  perversities,  there is an isomorphism
$$\lau H {\ov p}
r {X;\Q}
\cong
 \lau H {n-r}{\ov q} {X;\Q},
$$
with $\lau H {n-r}
{\ov q} {X;\Q} := \hom(\lau H {\ov q} {n-r}{X;\Q};\Q)$.
This restoration also presents
some disadvantages :
\begin{enumerate}
\item[(1)] If we don't restrict ourselves to perversities comprised between the zero $\ov  0$
and the top $\ov t$ perversities respecting some growth condition, then we don't
have Poincar\'e duality anymore and we need to use the more general {\em tame
intersection cohomology}
$\lau \gH *{\ov p}{X;R}$ of
 $X$ (see
 \defref{tameNormHom}),

\item[(2)]  The cohomology is defined via a linear dual process and not via a geometrical one,

\item[(3)]  This duality is not satisfied for any commutative ring $R$. Especially for a commutative
ring of positive characteristic properties of ``torsion freeness" for	
the links of the singularities are needed.
\end{enumerate}

{\em Blown-up intersection cohomology} aims to overcome some of these difficulties.
The goal in this article is to develop the main properties of the  Blown-up intersection
cohomology $\lau \IH * {\ov p} {X;R}$ for filtered spaces and more precisely for CS sets.
This theory  was
successfully used in \cite{CST1} to extend the theory of Sullivan's minimal models for intersection cohomology and in \cite{CST2} to answer a conjecture on Steenrod squares.

\medskip
The first sections contain some  reminders about the type of
stratified spaces and maps used in the rest of the paper. After that come the
definitions by a local system of  the
blown-up complex $\lau\tN*{\ov p}{X;R}$, first for a regular simplex, then by a limit argument
for a general weighted simplicial complex in \propref{prop:pasdeface} and finally for
a perverse space $(X,\ov p)$ in \defref{def:thomwhitney}.
The interesting features  are that

\begin{enumerate}
\item[(1)] the perversities $\ov p$ used here are completely general, lifting the restriction
on the perversities used for classical intersection cohomology,

\item[(2)] it is well defined for any commutative ring $R$ as coefficients ring, regardless
of its characteristic.
\end{enumerate}

The existence and properties of a cup product for this cohomology are proved in
\propref{42} by the existence of an associative multiplication
 $$
- \cup  - \colon \lau \tN k {\ov p} {X;R} \otimes \lau \tN   \ell  {\ov q}   {X;R}
\to  \lau \tN  { \ell+k}   {\ov p +\ov q} {X;R},
$$ inducing a graded commutative  multiplication with unity
 $$
- \cup  - \colon \lau \IH k {\ov p} {X;R} \otimes \lau \IH   \ell  {\ov q}   {X;R}
\to  \lau \IH  { \ell+k}   {\ov p +\ov q} {X;R}.
$$ The same goes for the cap product, proving in \propref{prop:cap} that the cap product
allows an interaction between the blown-up intersection cohomology $\lau \IH * {\ov p} {X;R}$
and
the tame intersection homology $\lau \gH  {\ov p}* {X;R}$.

\medskip
The first part ends with two sections detailing the behavior of a stratified map
with respect to blown-up intersection cohomology, first at the local level and then
globally. The key result being here the \thmref{MorCoho} stating that any stratified map
$f \colon  X \to  Y$ as defined in the \defref{def:applistratifieeforte} induces a chain map
$$
f^*\colon \lau \tN k {\ov p} {Y;R}  \to  \lau \tN k {\ov p} {X;R}
$$
compatible with the cup product.
This theorem also states that the induced maps $f_*$
and $f^*$ are  compatible with the interaction between the blown-up intersection
cohomology $\lau \IH   *  {\ov p}   {X;R}$  and the tame intersection homology $\lau \gH  {\ov p}* {X;R}$
of \propref{prop:cap}.

\medskip
The second part of the paper is devoted to the various properties of the blown-up
intersection cohomology and their proofs. The first property comes from the \thmref{thm:Upetits}
 stating the existence of the complex of $\cal U$-small chains $
\lau \tN {*,\cal U}{\ov p} {X,R}$ where $\cal U$ is an
open cover of the perverse space $(X,\ov p)$ and the existence of a quasi-isomorphism
$
\lau \tN {*}{\ov p} {X,R} \to \lau \tN {*,\cal U}{\ov p} {X,R}$. This leads then to the \thmref{thm:MVcourte} where the Mayer-
Vietoris exact sequence is proved. The two next sections detail results about explicit
computations with the blown-up intersection cohomology, the first one is the
\thmref{prop:isoproduitR} determining the blown-up intersection cohomology of the
product of a filtered space $X$ with the line $\R$, from the existence of an isomorphism
$$
\lau \IH   *  {\ov p}   {X \times \R;R} \cong \lau \IH   *  {\ov p}   {X;R}.
$$
The second result is the \thmref{prop:coneTW} giving the computation for the blown-up intersection
cohomology of an open cone $\rc X$ over a compact filtered space $X$.

\smallskip
The paper finishes with the two following theorems. The \thmref{thm:TWGMcorps} compares
the blown-up intersection cohomology $
\lau \IH   *  {\ov p}   {X;R}$
 to the already known tame intersection
cohomology $
\lau \gH   * {\ov p}   {X;R}
$ for $(X, \ov p)$ a paracompact separable perverse CS set
and proves that if $R$ is a field or if the space $X$  is a locally $(D\ov p,R)$-torsion free
pseudomanifold there is an isomorphism
$$
\lau \IH   *  {\ov p}   {X;R}
\cong \lau \gH   * {D\ov p}   {X;R}
$$
where $D\ov p$ is the complementary perversity. The last \thmref{inv} states that the blown-up intersection cohomology is a topological
invariant when working with GM-perversities and separable paracompact CS sets with no codimension one strata .

\bigskip

We fix for the sequel a commutative ring $R$ with unity. All  (co)homologies in this work are considered with coefficients in $R$. For a topological space $X$, we denote by $\tc X= X \times [0,1]/ X \times \{ 0\}$ the \emph{cone} on $X$ and $\rc X = X \times [0,1[/ X \times \{ 0\}$ the \emph{open cone} on $X$.

\bigskip
We thank the anonymous referee for her/his comments and suggestions which have in particular contributed to improve the writing
and the organization of this introduction.

\part{Blown-up  intersection cohomology. Stratified maps}\label{part:TWcoho}

We introduce the main notion of this work: the blown-up  intersection cohomology and its associated cup and cap products.

\section{Some reminders.}\label{sec:rappels}

This section contains the basic definitions and properties of the main notions used in this work.

\bd\label{def:espacefiltre} 
A \emph{filtered space} is a  Hausdorff topological space endowed with a filtration by closed sub-spaces 
\bee
\emptyset = X_{-1} \subseteq X_0\subseteq X_1\subseteq\ldots \subseteq X_{n-1} \subsetneqq X_n=X.
\eee
The
\emph{formal dimension} of $X$ is $\dim X=n$. 

The non-empty  connected components of $X_{i}\backslash X_{i-1}$ are the  \emph{strata} of $X$.
Those of $X_n\backslash X_{n-1}$ are \emph{regular strata}, while the others are  \emph{singular strata.}  
The family of strata of $X$ is denoted by $\cal S_X$.
The {\em singular set} is $X_{n-1}$, denoted by $\Sigma_X$ or simply $\Sigma$. 
 The {\em formal dimension} of a stratum $S \subset X_i\backslash X_{i-1}$ is $\dim S=i$.
The {\em formal codimension} of $S$ is $\codim S = \dim X -\dim S$.
\ed

An open subset $U$ of $X$ can be provided with the \emph{induced filtration,} defined by
$U_i = U \cap X_{i}$.
If $M$ is a topological manifold, the \emph{product filtration} is defined by 
$\left(M \times X\right) _i = M \times X_{i}$ (see remarks about shifted filtrations of subsections \ref{SF1} and \ref{SF}).

\smallskip

The more restrictive concept of stratified space provides a better behavior of the intersection (co)homology with regard to  continuous maps.

\bd
A \emph{stratified space} is a filtered space verifying the following frontier condition: 
for any two strata $S, S'\in \cal S_X$ such that
$S\cap \overline{S'}\neq \emptyset$ then $S\subset \overline{S'}$.
\ed

In their work (\cite{MR572580}, \cite{MR696691}), Goresky and MacPherson proved that the intersection (co)ho\-mo\-logy has richer properties on a singular space $X$ with  local conical structure: these are the classical stratified pseudomanifolds. The local structure is characterized by the fact that any stratum $S$ is a manifold having a conical transversal structure over the link $ L$. This link must be in turn a compact stratified pseudomanifold.
Friedman \cite{LibroGreg} observed  that we can suppose the link $L$ to be only a compact filtered space to preserve the (co)homological properties of $X$. These are the CS sets of 
 L. Siebenman, \cite{MR0319207}.

\begin{definition}
A \emph{CS set} of dimension $n$ is a filtered space,
$$
\emptyset\subset X_0 \subseteq X_1 \subseteq \cdots \subseteq X_{n-2} \subseteq X_{n-1} \subsetneqq X_n =X,
$$
such that, for each $i$, 
$X_i\backslash X_{i-1}$ is a topological manifold of dimension $i$ or the empty set. 
Moreover, for each point $x \in X_i \backslash X_{i-1}$, $i\neq n$, there exist
\begin{enumerate}[(i)]
\item an open neighborhood $V$ of  $x$ in $X$, endowed with the induced filtration,
\item an open neighborhood $U$ of $x$ in $X_i\backslash X_{i-1}$, 
\item a compact filtered space $L$,
with formal dimension $n-i-1$, where the open cone, $\rc L$, is provided with the {\em conical filtration}, $(\rc L)_{j}=\rc L_{j-1}$, with $\rc \emptyset = \tv$ the apex of $\rc L$,%
\item a homeomorphism, $\varphi \colon U \times \rc L\to V$, 
such that
\begin{enumerate}[(a)]
\item $\varphi(u,\tv)=u$, for each $u\in U$, 
\item $\varphi(U\times \rc L_{j})=V\cap X_{i+j+1}$, for each $j\in \{0,\ldots,n-i-1\}$.
\end{enumerate}
\end{enumerate}
The pair  $(V,\varphi)$ is a   \emph{conical chart} of $x$
 and the filtered space $L$ is a \emph{link} of $x$. 
The CS set is called  \emph{normal} if the links are connected.
\end{definition}

In the above definition, the links are always non-empty sets.
Therefore, the open subset $ X_ {n} \backslash X_ {n-1} $ is dense.
Links are not necessarily CS sets but they are always filtered spaces.
Note also that the links associated to points living in the same stratum may be not homeomorphic
but they always have the same intersection homology, see for example \cite [Chapter 5] {LibroGreg}.
Finally, note that any open subset of a CS set is a CS set, that any CS set is a locally compact stratified space \cite[Theorem G]{CST1}
and that a paracompact CS set
 is metrizable \cite [Proposition 1.11] {CST3}.

The pseudomanifolds are special cases of CS sets. Their definition varies in the literature;
we consider here the original definition of M. Goresky and R. MacPherson \cite{MR572580}.

\begin{definition}\label{def:pseudomanifold}
An \emph{$n$-dimensional  pseudomanifold } (or simply pseudomanifold) is an $n$-dimensional CS set, where  the link $L$ of a point $x \in X_i\menos X_{i-1}$ is an $(n-i-1)$-dimensional pseudomanifold. We refer to a pseudomanifold such that  $X_{n-1} = X_{n-2}$ as a \emph{classical $n$-dimensional  pseudomanifold }.
\end{definition}

There are several notions of stratified maps.

\begin{definition}\label{def:applistratifieeforte}
Let $f\colon X \to Y$ be a continuous map between two stratified spaces.
The map $f$ is a  \emph{stratified map,}  if it sends a stratum $S$ of $X$ in a stratum $\bi S f$ of $Y$, $f(S) \subset \bi S f$, verifying $\codim S \geq \codim \bi Sf$.

The stratified map $f$ is a \emph{stratified homeomorphism,} if $f$ and   $f^{-1}$ are stratified maps.

The map $f$ is a  \emph{stratum preserving stratified map} if $n=\dim X = \dim Y$ and $f^{-1}(Y_{n-\ell})=X_{n-\ell}$, for any $\ell \in \{0,\dots,n\}$.
\end{definition}

This notion of stratified map does not match exactly  with those found in \cite{LibroGreg}  or  \cite{MR696691}.

There are two families of perversities: filtration depending (introduced by Goresky-MacPherson  \cite{MR572580}) and the stratification depending (introduced by MacPherson \cite{RobertSF}).

\begin{definition}\label{def:perversite} 
A \emph{Goresky-MacPherson perversity } (or \emph{$GM$-perversity}) (see \cite{MR572580}) is  a map $ \ov {p} \colon \N \to \Z $, verifying
$\ov p(0)=\ov{p}(1)=\ov{p}(2)=0$ and $\ov p (i) \leq \ov p (i+1) \leq \ov p (i) +1$ for each $i\geq 2$.

\end{definition}

We use in this work the notion of perversity introduced  by MacPherson \cite{RobertSF}
and also present in \cite{MR1245833,MR2210257,MR2721621,MR2461258,MR2796412,MR3046315}.
Unlike classic perversities, these  perversities 
are not maps depending only on the codimension of the strata but are maps defined on the strata themselves.

\begin{definition}\label{def:perversitegen}
A \emph {perversity on a filtered space} $ X$ is a map
$ \ov {p} \colon \cal S_X \to \Z \cup \{\pm\infty\}$ taking the value ~ 0 on the regular strata.
The  pair $(X, \ov {p}) $ is called a \emph {perverse space.} 
If $ X $ is a  CS set, we will say that $ (X, \ov {p}) $ is a \emph {perverse CS set.}

The \emph{top perversity} is  the perversity defined by $ \ov {t} (S) = \codim (S) -2 $
  on singular strata. The {\em complementary perversity} of a perversity $\ov p$ is the perversity $D\ov p = \ov t - \ov p$.
  The \emph {zero perversity} is defined by $ \ov {0} (S) =  0$.

  A GM-perversity
 induces a perversity on a filtered space $X$, still denoted by $\ov p$, defined by $\ov p (S) = \ov p (\codim S)$. 
\end{definition}

\bd Let $f \colon X \to Y$ be a stratified map. The \emph{pull-back of a perversity } $\ov q$ of $Y$ is the perversity $f^* \ov q$ of $X$ defined by $f^*\ov q (S) = \ov q(S^f)$.
\ed

\section{Blown-up  complex of a weighted simplicial complex.}\label{sec:TWcomplexepoids}

The blown-up  complex is based on the blow up of a filtered simplex. This technique goes back to
  \cite{MR1143404}, (see also \cite{MR2210257}). In this section, we introduce the notions and first
properties associated to the blow up  of a  regular euclidean simplex (see \defref{def:eclate}) and, more generally, of a weighted simplicial complex (see \defref{def:poidssommet}).

The cone of an euclidean simplex $ \Delta = [e_0, \dots, e_m]$ is the simplicial simplex $ \tc \Delta = [e_0, \dots, e_m, \tv] $. 

The notation
$\nabla \triangleleft \Delta$ means that $\nabla $ is a  face of $\Delta $.

\subsection{Differential complexes associated to a simplicial complex}\label{subsec:petitspoids}
Let $ \cal L $ be an oriented simplicial complex whose family of vertices is $ \cal V (\cal L) = \{e_{0}, \dots, e_{m} \} $.
We denote $ (\hiru N   {*} {\cal L} ,\partial) $ the $ R $-complex of  linear simplices of $ \cal L$.
The differential of $ (\hiru N   {*} {\cal L} , \partial) $ is defined by
$$\partial [e_{i_{0}},\dots,e_{i_{p}}]=
\sum_{k=0}^{p} (-1)^k
[e_{i_{0}},\dots,\hat{e}_{i_{k}},\dots,e_{i_{p}}].$$
For any vertex $e\in\cal V(\cal L)$, we define the homomorphism
$-*e\colon N_{p}(\cal L)\to N_{p+1}(\cal L)$,
by
\begin{equation*}\label{equa:starsimplex}
[e_{i_{0}},\dots, e_{i_{p}}]*e=
\left\{
\begin{array}{cl}
[e_{i_{0}},\dots ,e_{i_{p}},e]&\text{if }[e_{i_{0}},\dots ,e_{i_{p}},e]\subset \cal L,\\
0&\text{if not.}
\end{array}\right.
\end{equation*}
This operator is extended to the empty set by
$ \emptyset \ast e = [e] $.
When  $ [e_ {0}, \dots, e_{r}] $ does not match an ordered simplex, by convention,
it means
$ (- 1) ^ {\varepsilon (\sigma)} [e_{\sigma (0)}, \dots, e_ {\sigma (r)}] $,
where $ \sigma $ is the permutation for which $ [e _ {\sigma (0)}, \dots, e_{\sigma (r)}] $ is an
ordered simplex and $ \varepsilon (\sigma) $ its signature.
We also put $ [e _ {{0}}, \dots, e_{r}] = 0 $ if two vertices $ e_{i} $ are equal.

 Let $ (\Hiru N   {*} {\cal L} , \delta) $ be the $ R $-dual, $ \hom (\hiru N   {*} {\cal L} , R), $ equipped with the transpose differential of $  \partial $,
defined by $ (\delta f) (v) = - (- 1) ^ {| f | } f(\partial v) $.
Among the elements of $ \Hiru N   {*} {\cal L} $, we consider the dual basis of the simplices of
$ \cal L$; i.e., if $ F $
is a $ p $-simplex of $ \cal L$, we denote by
$ \1_ {F} $ the $ p $-cochain taking the value $ 1$ on $ F$ and $0$ on the other simplices of $ \cal L$.
If $ e  \in \cal V (\cal L) $, we also introduce a homomorphism,
 $ - * e \colon \Hiru N   {p} {\cal L}\to \Hiru N   {p+1} {\cal L} $,
defined by
\begin{equation*}
\1_{[e_{i_{0}},\dots,e_{i_{p}}]}\ast e=
(-1)^p\1_{[e_{i_{0}},\dots,e_{i_{p}},e]},
\end{equation*}
putting $ \1_ {[e_ {i_ {0}}, \dots, e_ {i_ {p}} e]} =  0$ if $ [e_{i_ {0}}, \dots, e_ {i_ { p}} ,e] \not\subset \cal L$.
On the elements of the dual basis,
the differential $ \delta \colon \Hiru N * {\cal L} \to \Hiru N {*+ 1} {\cal L}  $
takes the value \\
$\delta\1_{[e_{i_{0}},\dots,e_{i_{p}}]}=\sum_{{k}\notin \{i_{0},\dots,i_{p}\}}  \1_{[e_{i_{0}},\dots,e_{i_{p}},e_{{k}}]}.$
Using $[e_{i_ 0}, \dots, e_{i_k}] = 0 $ when two of the vertices are equal, we can
 consider that the above sum is indexed by the set of vertices of $ \cal L$; i.e.,
\begin{equation}\label{equa:ledelta1}
\delta \1_{[e_{i_{0}},\dots,e_{i_{p}}]}
=\sum_{e\in\cal V(\cal L)} \1_{[e_{i_{0}},\dots,e_{i_{p}},e]}=
(-1)^p\sum_{e\in\cal V(\cal L)} \1_{[e_{i_{0}},\dots,e_{i_{p}}]}\ast e.
\end{equation}
The cochain $ \delta \1_ {F} $ depends on the simplicial complex $ {\cal L} $ in which the face $ F $ lives.
 We denote by $ \delta^{\cal L} \1_{F} $  the differential of $ \1_ {F} $ in ~ $ {\cal L} $,
 when necessary.

\begin{definition}\label{def:poidssommet}
A \emph{weighted simplicial complex} is an oriented simplicial complex,
where each vertex $ e $ has a weight $ w (e) \in \{0, \dots, n \} $.
The integer $ n $ is called the {\em formal dimension of $ \cal L$.}
\end{definition}

Let $ \cal L_{i} $ be the union of the simplices of $\cal L$  whose vertices have a weight equal to $i$.
Any simplex $ F $ of $ \cal L$ is written (modulo orientation) as a join,
$ F = F_ {0} \ast \dots \ast F_ {n} $,
with $ F_i \subset \cal L_i$.
They are the \emph{filtered euclidean simplices}.

Let $ \tc \cal L_ {i} $ be the cone on $ \cal L_{i} $, whose apex $ \tv_ {i} $ is  called 
\emph{virtual vertex},
\label{virt}
  chosen as the last vertex of the cone; i.e.,
if $ F_ {i} = [e_ {i_{0}}, \dots, e_ {i_ {p}}] \subset \cal L_ {i} $, then $ \tc F_ {i} = [e_ {i_{ 0}}, \dots, e_{i_p}, \tv_ {i}] $.
The  face $ [\tv_ {i}] $  can be considered  as the cone on the empty set.
For a comprehensive treatment of all 
these cases, we consider the empty set as a simplex of $\cal L_i $.

The \emph{full blow up} of a weighted simplicial complex
$\cal L$ is the prismatic set
$\tcL^{\fc}=\tc\cal L_{0}\times\dots\times\tc\cal L_{n}$.
For each $ i \in \{0, \dots, n \} $, we  denote by $ 
L_ {i} $
- a simplex $ F_{i} $ of  $ \cal L_{i} $ - or the cone $ \tc F_{i} $ of a simplex  of  $\cal L_i $,
- either the singleton reduced to the virtual vertex $ \tv_{i} $.
The products $ L_{0} \times \dots \times  L_{n}$
are called \emph{faces  of $\tcL^{\fc}$}  and represented by
\begin{equation*}\label{equa:lafaceseclate}
(F,\varepsilon)=(F_{0},\varepsilon_{0})\times \dots \times  
(F_{n},\varepsilon_{n}),
\end{equation*}
satisfying the following conventions:
\begin{itemize}
\item the iterated join, $F=F_{0}\ast\dots\ast F_{n}$, is a simplex of $\cal L$;
\item if $\varepsilon_{i}=0$ and $F_{i}\neq \emptyset$, then
$(F_{i},0)=F_{i}$ is a simplex of $\cal L_{i}$;
\item if $\varepsilon_{i}=1$ and $F_{i}\neq \emptyset$, 
then $(F_{i},1)=\tc F_{i}$ is the cone over the simplex $F_{i}$ of $\cal L_{i}$;
\item  if $F_{i}=\emptyset$,  one must have $\varepsilon_{i}=1$. 
\end{itemize}

\medskip

Let $(F,\varepsilon)=(F_{0},\varepsilon_{0})\times \dots \times (F_{n},\varepsilon_{n}) $ be a face of 
  $\tcL^\fc$ and
let $\gamma\in\{0,\dots,n\}$.  
We put,
\begin{itemize}
\item 
$|(F,\varepsilon)|_{< \gamma}=\sum_{i< \gamma}(\dim F_{i}+\varepsilon_{i})$
and
$|(F,\varepsilon)|_{> \gamma}=\sum_{i> \gamma}(\dim F_{i}+\varepsilon_{i})$,
\item 
$|(F,\varepsilon)|_{\leq \gamma}=\sum_{i\leq \gamma}(\dim F_{i}+\varepsilon_{i})$
and
$|(F,\varepsilon)|_{\geq \gamma}=\sum_{i\geq \gamma}(\dim F_{i}+\varepsilon_{i})$,
\item $|(F,\varepsilon)|=|(F,\varepsilon)|_{\leq n}$.
\end{itemize}

\begin{definition}\label{def:eclate}
The \emph{blow up} of  a weighted simplicial complex of dimension $n$, $ \cal L$, is the sub-prism  $ \tcL$ of $\tcL^\fc$ consisting of  the $ (F, \varepsilon)'s  $
such that $  \varepsilon_ {n} =  0$.
Notice that the corresponding simplices, $ F = F_ {0} \ast \dots \ast F_{n} \subset \cal L$, must verify $ F_ {n} \neq \emptyset$. This kind of filtered simplices are called
\emph{regular}.
\end{definition}

\begin{example}\label{exem:eclate}
The generic case of a weighted simplicial complex is a regular simplex 
$ \Delta = \Delta_{0} \ast \dots \ast \Delta_{n} $.
Assume $ \Delta $ is  oriented by an order on its vertices, $ (e_ {i})_{0 \leq i \leq m}, e_ {i} <e_ {i + 1} $, in a compatible way with the join decomposition. More precisely, expressing the maximal simplex by its vertices, and indicating by a vertical bar the filtration change, we have
\begin{equation}\label{equa:ordresommets}
\Delta=[\underbrace{e_{0},\dots,e_{k_{0}}}_{\Delta_{0}}\mid 
\underbrace{e_{k_{0}+1},\dots,e_{k_{1}}}_{\Delta_{1}}\mid\dots\mid 
\underbrace{\emptyset}_{\Delta_{i}}\mid\dots\mid 
\underbrace{e_{k_{n-1}+1},\dots,e_{k_{n}}}_{\Delta_{n}}],
\end{equation}
with $ k_ {{n}} = {m} $. The vertices
$ \{e_{0} ,\dots, e_{k_{\ell}} \} $ generate the simplex
$ \Delta_{0} \ast \dots \ast \Delta_{\ell} $.
A  face $ \Delta_{i} $  can be empty, as shown in the decomposition above.
As we have already noticed,
giving the  filtration $\Delta = \Delta_ {0} \ast \dots \ast \Delta_n $ of the simplex $ \Delta $ is 
equivalent to giving a weight on each vertex of $ \Delta $, in the sense of \defref{def:poidssommet}.
The corresponding blow up is
\be\label{bup}
\tDelta=\tc \Delta_{0}\times\dots\times \tc\Delta_{n-1}\times \Delta_{n}.
\ee
\end{example}

\begin{example}\label{exem:codageeclate}
The coding of the faces of the blow up  of a regular simplex under the form of a product
$ (F, \varepsilon) = (F_ 0 ,\varepsilon_{0}) \times \dots \times
(F_ {n}  ,\varepsilon_n) $
is used throughout the text. To familiarize the reader with this representation, we 
specify the blow up $\tDelta$ of
$\Delta=\Delta_{0}\ast\Delta_{1}=[e_{0}]\ast [e_{1},e_{2}]$.

\vskip -2cm

$$
\begin{picture}(250,150)(100,0)

\put(55,-10){\makebox(0,0){$(e_0,e_1)$}}

\put(55,90){\makebox(0,0){$(e_0,e_2)$}}

\put(100,40){\makebox(0,0){$\tDelta$}}

\put(150,-10){\makebox(0,0){$(\tv,e_1)$}}

\put(150,90){\makebox(0,0){$(\tv,e_2)$}}

\put(150,40){\makebox(0,0){\textcircled{\tiny 1}}}
\put(50,40){\makebox(0,0){\textcircled{\tiny 2}}}
\put(100,-10){\makebox(0,0){\textcircled{\tiny 3}}}
\put(100,90){\makebox(0,0){\textcircled{\tiny 4}}}

\linethickness{,7mm}

\put(60,0){\line(0,1){80}} 
\thinlines
\put(60,0){\line(1,0){80}} 
\put(140,80){\line(-1,0){80}} 
\put(140,80){\line(0,-1){80}}


\put(280,0){\makebox(0,0){$\bullet$}}

\put(250,-10){\makebox(0,0){$[e_0] =\Delta_0 $}}

\put(370,-10){\makebox(0,0){$\Delta_1 =[e_1,e_2] $}}

\put(295,5){\makebox(0,0){$e_0$}}
\put(350,7){\makebox(0,0){$e_1$}}
\put(352,60){\makebox(0,0){$e_2$}}

\put(280,0){\line(1,1){80}} 

\put(280,0){\line(1,0){80}}

\put(360,80){\line(0,-1){80}}


\put(210,40){\makebox(0,0){\vector(1,0){70}}}


\end{picture} 
$$

\bigskip\bigskip
The four one dimensional faces of the blow up 
$\tDelta = \tc \Delta_0 \times \Delta_1$ are encoded as\\

\begin{center}
\begin{tabular}{ll}
\textcircled{\tiny 1}=$(\emptyset ,1) \times (\Delta_1,0) ,$& 
\textcircled{\tiny 2}=$([e_{0}] ,0) \times (\Delta_1,0),$\\[.2cm]
 \textcircled{\tiny 3}=$([e_{0}]  ,1) \times ([e_1],0), $& 
 \textcircled{\tiny 4}=$([e_{0}]  ,1) \times ([e_2],0),$
  \end{tabular}
 \end{center}
 \smallskip
 \noindent corresponding to 
 \smallskip
 \begin{center}
\begin{tabular}{lllllll}
 \textcircled{\tiny 1}=$[\tv]\times \Delta_1,$& & 
\textcircled{\tiny 2}=$\Delta_0  \times \Delta_1,$& &
 \textcircled{\tiny 3}=$\tc\Delta_0  \times [e_1] ,$& &
 \textcircled{\tiny 4}=$\tc \Delta_0 \times [e_2].$
 \end{tabular}
 \end{center}

\end{example}

\subsection{Blown-up  complex associated to a weighted simplicial complex}\label{subsec:twpoids}
We recall the notations of the previous paragraph. The element $ \1_{(F_{i}, \varepsilon_ {i})} $ is the cochain on
$ \tc \cal L_ {i} $, taking the value $ 1$ on the simplex $(F_ {i} ,
\varepsilon_i)$ and $ 0$ on other simplices of $ \tc \cal L_i$.
If $F=F_{0}\ast\dots\ast F_{n}\subset\cal L$ and
$(F,\varepsilon)=(F_{0},\varepsilon_{0})\times \dots \times (F_{n},\varepsilon_{n}) $,
we write
$$\1_{(F,\varepsilon)}=\1_{(F_{0},\varepsilon_{0})}\otimes\dots\otimes \1_{(F_{n},\varepsilon_{n})}.$$
The $ R $-module generated by these elements when $ F $ runs over the simplices of $ \cal L$ is denoted by $ \Hiru\tN {\fc, *} {\cal L; R} $
or $ \Hiru\tN {\fc, *} {\cal L} $
 if there is no ambiguity.
The sub-$ R $-module generated by the elements such that $ \varepsilon_ {n} =0 $, endowed with the differential
\begin{eqnarray}\label{equa:ladiff}
\delta \1_{(F,\varepsilon)}
&=&
\sum_{i=0}^{n-1}(-1)^{|(F,\varepsilon)|_{<i}}
\1_{(F_{0},\varepsilon_{0})}\otimes\dots\otimes \delta^{\tc\cal L_{i}}\1_{(F_{i},\varepsilon_{i})}
\otimes\dots\otimes\1_{F_{n}}\\
&&
+ (-1)^{|(F,\varepsilon)|_{<n}}
\1_{(F_{0},\varepsilon_{0})}\otimes\dots\otimes 
\1_{(F_{n-1},\varepsilon_{n-1})}
\otimes\delta^{\cal L_{n}}\1_{F_{n}},\nonumber
\end{eqnarray}
 is called the \emph{blown-up  complex} of 
$ \cal L $ and denoted by $\Hiru \tN  * {\cal L; R} $
or simply $ \Hiru\tN * {\cal L} $.
(Recall that $ \delta^{\tc \cal L_ {i}} $ is the differential of the cochain complex on the simplicial complex
$ \tc \cal L_  {i}$.)

The elements of the blown-up  complex are provided with an additional degree
 which looks like the degree of a differential form along the fiber in a bundle.

\begin{definition}\label{def:degrepervers}
Let $ \ell \in \{1, \dots, n \} $. %
The \emph{$\ell$-perverse degree of the cochain 
$ \1_{(F, \varepsilon) }$}
is equal to
$$
\|\1_{(F,\varepsilon)}\|_{\ell}=\left\{
\begin{array}{ccl}
-\infty&\text{if}
&
\varepsilon_{n-\ell}=1,\\
|(F,\varepsilon)|_{> n-\ell}
&\text{if}&
\varepsilon_{n-\ell}=0.
\end{array}\right.$$
If $ \omega \in  \Hiru \tN  * {\cal L; R} $ decomposes  as 
$ \omega = \sum_ {\mu} \lambda_{\mu} \, \1_{(F_{\mu} \varepsilon_{\mu})} $, with $ \lambda_ {\mu} \neq 0$,
its \emph {$\ell$-perverse degree }
 is equal to
 \begin{equation}\label{equa:degrepervers}
\|\omega\|_{\ell}=\max_{\mu}\|\1_{(F_{\mu},\varepsilon_{\mu})}\|_{\ell}.
\end{equation}
By convention, we set $ \| 0 \|_{\ell} = - \infty $.
\end{definition}

\bex
We compute the perverse degree of some cochains 
of the blow up  $ \tDelta = c\Delta_0 \times c\Delta_1 \times \Delta_2$
of the regular simplex 
$\Delta =\Delta_0 * \Delta_1 * \Delta_2$.

\bigskip

\begin{center}
\begin{tikzpicture}
\definecolor{zzttqq}{rgb}{0.6,0.2,0.9}
\draw[color=black] (2.1,1.1) node {$\xleftarrow{\hspace*{.5cm} \hspace*{.5cm}}$};
\draw [color=black] (0,0)-- (1,0.5);
\draw [color=black] (1,0.5)--  (0,2);
\draw [color=black]  (0,2)-- (0,0);
\draw [color=black]  (1,0.5)-- (-2,0.5);
\draw [color=red,very thick,dashed]  (0,0)-- (-2,0.5);
\draw [color=black]  (0,2)-- (-2,0.5);
\fill [color=zzttqq] (-2,0.5) circle (3pt);
\draw[color=black] (-2.5,0.5) node {$\Delta_0$};
\draw[color=black] (0,-0.2) node {$\Delta_1$};
\draw[color=black] (1,1.5) node {$\Delta_2$};
\draw [color=black] (6,0)-- (7,0.5);
\draw [] (7,0.5)--  (6,2);
\draw [color=black]  (5,1.5)-- (6,2);
\fill[color=black,fill=zzttqq,fill opacity=0.2,dashed] (4,0) -- (5,0.5) -- (4,2) -- (3,1.5) -- (4,0) -- cycle;
\fill[color=black,fill=red,fill opacity=0.3] (4,0) -- (6,0)-- (5,1.5)  -- (3,1.5) -- (4,0) -- cycle;
\draw [color=zzttqq] (4,0)-- (5,0.5);
\draw [color=black] (5,0.5)--  (7,0.5);
\draw [color=black] (6,2)--  (4,2);
\draw [color=black]  (4,0)-- (3,1.5);
\draw [color=red,dashed]  (6,0)-- (4,0);
\draw [color=red,dashed]  (3,1.5)-- (5,1.5);
\draw [color=zzttqq]  (3,1.5)-- (4,2);
\draw [color=red,dashed] (6,0 )--  (5,1.5);
\draw [color=zzttqq]  (5,0.5)-- (4,2);
\draw[color=black] (7.2,1.5) node {$\Delta_2$};
\draw[color=black] (5,-0.3) node {$\tc\Delta_0$};
\draw[color=black] (7,0) node {$\tc\Delta_1$};

\draw[color=black] (8.5,0.5) node {${\mathtt v_1}$ \tiny{ apex of $\tc\Delta_1$} };
\draw[color=black] (6,-.2) node {$\tiny{\mathtt  v_0}$  };
\draw[color=black] (6,-.5) node {\tiny{ apex of $\tc\Delta_0$}  };

\end{tikzpicture}
\end{center}

$$
\begin{array}{ll}
\|\1_{ {\mathtt v_0} \times {\mathtt v_1} \times \Delta_2}\|_2 =-\infty 
&
\|\1_{ {\mathtt v_0} \times {\mathtt v_1} \times \Delta_2}\|_1 =-\infty
\\
\|\1_{c \Delta_0 \times\Delta_1 \times \Delta_2}\|_2=-\infty  
&
\|\1_{c \Delta_0 \times\Delta_1 \times \Delta_2}\|_1= \dim \Delta_2 \\
\|\1_{ \Delta_0 \times \tc\Delta_1 \times \Delta_2}\|_2 =\dim \tc\Delta_1 + \dim \Delta_2 &\|\1_{ \Delta_0 \times \tc\Delta_1 \times \Delta_2}\|_1 =-\infty\\
\|\1_{ \Delta_0 \times \Delta_1 \times \Delta_2}\|_2 =\dim \Delta_1 +\dim \Delta_2
&
\|\1_{ \Delta_0 \times \Delta_1 \times \Delta_2}\|_1 =\dim \Delta_2
\end{array}
$$
(We have used $\tv_i$ the apex of the cone  $\tc \Delta_i$, $i=0,1$).

\eex

\bd
In the case of a \emph{ regular simplex} $  \Delta = \Delta_{0} \ast \dots \ast \Delta_ {n} $, $\Delta_n\ne \emptyset$,
the blown-up   complex is the tensor product,
$\Hiru \tN  * {\Delta} = \Hiru N  * {\tc \Delta_{0}} \otimes \dots \otimes  \Hiru N  * {\tc \Delta_ { n-1}} \otimes \Hiru N  * {\Delta_ {n}} $ which corresponds to \cite[Definition 1.31]{CST1}.

The \emph{face  operators} of  a filtered simplex $ \Delta = \Delta_ {0} \ast \dots \ast \Delta_n$ are maps
$ \mu \colon \Delta' = \Delta'_{0} \ast \dots \ast \Delta'_{n} \to
\Delta = \Delta_{0} \ast \dots \ast \Delta_ {n} $,
of the form $ \mu = \ast_ {i = 0}^n \mu_{i} $ where $ \mu_ {i} $ is an injective map preserving the order.
The face $\Delta'$ is a codimension 1 regular face.
Each operator face of a factor of the join
 $\delta_{\ell}\colon \Delta'_{{i}}\to \Delta_{{i}}$
 induces a map,
still denoted by 
 ${\delta}_{\ell}\colon \hiru N* {\tc\Delta'_{0}}\otimes \dots\otimes  \hiru  N *{\tc\Delta'_{n-1}} \otimes  \hiru  N *{\Delta'_{n}}
 \to 
  \hiru  N *{\tc\Delta_{0}}
  \otimes \dots\otimes 
   \hiru N *{\tc\Delta_{i}} \otimes \dots\otimes \hiru  N *{\Delta_{n}}
$,
 obtained by carrying out the join with the identity map on factors $ \Delta_{j} $, $ j \neq \ell $.
We call
 ${\delta}_{\ell}^*\colon \Hiru N *{\tc\Delta_{0}}\otimes \dots\otimes \Hiru N*{\tc\Delta_{n-1}}\otimes \Hiru N *{\Delta_{n}}
 \to \Hiru N *{\tc\Delta'_{0}}\otimes \dots\otimes \Hiru N *{\tc\Delta'_{i}}\otimes \dots\otimes \Hiru N *{\Delta'_{n}}
 $,
 the transpose of the linear map $\delta_\ell$.
 \ed
 
The blown-up  complex of a weighted simplicial complex is expressed from the 
  blown-up  complex of regular simplices.
  
\begin{proposition}\label{prop:pasdeface}
Let $\cal L$ be a weighted simplicial complex. Then
$$
\Hiru \tN *{\cal L;R}\cong
\varprojlim_{\Delta\subset  \cal L, \Delta\,\text{regular}} 
\Hiru \tN *{\Delta;R}.$$
\end{proposition}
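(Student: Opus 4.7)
The plan is to construct mutually inverse chain isomorphisms between $\Hiru \tN *{\cal L;R}$ and $\varprojlim_\Delta \Hiru \tN *{\Delta;R}$, where the inverse system runs over regular simplices $\Delta$ of $\cal L$ with morphisms given by the codimension-one regular face inclusions $\delta_\ell \colon \Delta' \hookrightarrow \Delta$, inducing transpose maps $\delta_\ell^* \colon \Hiru \tN *{\Delta;R} \to \Hiru \tN *{\Delta';R}$.

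First, I would define restriction maps $\rho_\Delta \colon \Hiru \tN *{\cal L;R} \to \Hiru \tN *{\Delta;R}$ for each regular simplex $\Delta \subset \cal L$. On generators, set $\rho_\Delta(\1_{(F,\varepsilon)}) = \1_{(F,\varepsilon)}$ when $F \subset \Delta$ (so that $(F,\varepsilon)$ is a face of $\tDelta$), and $\rho_\Delta(\1_{(F,\varepsilon)}) = 0$ otherwise. The locality of the differential formula (\ref{equa:ladiff}) --- each $\delta^{\tc\cal L_i}\1_{(F_i,\varepsilon_i)}$ is computed purely in terms of adjacent faces --- guarantees that $\rho_\Delta$ is a chain map. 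Moreover, for any face inclusion $\delta_\ell \colon \Delta' \hookrightarrow \Delta$, one verifies on the dual basis that $\delta_\ell^* \circ \rho_\Delta = \rho_{\Delta'}$, so the $\rho_\Delta$'s assemble into a chain map $\Phi \colon \Hiru \tN *{\cal L;R} \to \varprojlim_\Delta \Hiru \tN *{\Delta;R}$.

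For injectivity, observe that any generator $\1_{(F,\varepsilon)}$ of $\Hiru \tN *{\cal L;R}$ has $F = F_0 \ast \cdots \ast F_n$ regular, so $F$ itself is a regular simplex of $\cal L$ and $\rho_F(\1_{(F,\varepsilon)}) = \1_{(F,\varepsilon)} \neq 0$. Hence any $\omega$ in the kernel of $\Phi$ must have every coefficient vanish. For surjectivity, given a compatible family $(\omega_\Delta)$ with $\omega_\Delta = \sum_{(F,\varepsilon) \lhd \tDelta} c^\Delta_{(F,\varepsilon)}\1_{(F,\varepsilon)}$, I would define, for each face $(F,\varepsilon)$ in the blow-up of $\cal L$, a coefficient $c_{(F,\varepsilon)} := c^\Delta_{(F,\varepsilon)}$ using any regular $\Delta \supset F$. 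The compatibility $\delta_\ell^*(\omega_\Delta) = \omega_{\Delta'}$ forces this coefficient to be independent of the chosen $\Delta$, because $\delta_\ell^*\1_{(F,\varepsilon)} = \1_{(F,\varepsilon)}$ whenever $(F,\varepsilon)$ already lies in the smaller prism. The resulting element $\omega = \sum c_{(F,\varepsilon)}\1_{(F,\varepsilon)}$ --- understood as a formal sum in the product that underlies the $R$-module generated by the dual basis --- satisfies $\rho_\Delta(\omega) = \omega_\Delta$ for every $\Delta$.

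The main obstacle, and the point requiring the most care, is the well-definedness of the coefficient assignment in the surjectivity step: two different regular simplices $\Delta, \Delta'$ both containing $F$ are not in general comparable by a single face inclusion, so one must connect them through a chain of face inclusions inside $\cal L$ (typically passing through a common enlargement or through $F$ itself as the minimal regular simplex containing $F$) and verify at each stage that the compatibility under $\delta_\ell^*$ preserves the coefficient of $\1_{(F,\varepsilon)}$. Once that bookkeeping is done, the verification that $\Phi$ commutes with the differential is automatic from the chain-map property of each $\rho_\Delta$.
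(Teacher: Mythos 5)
Your proof is correct and takes essentially the same route as the paper: both directions of the isomorphism rest on the observation that for any regular face $(F,\varepsilon)$ of the blow-up of $\cal L$, the simplex $F$ is itself a regular simplex of $\cal L$, so $F$ serves as the canonical index from which the coefficient of $\1_{(F,\varepsilon)}$ can be read off. The paper avoids the well-definedness bookkeeping you flag in the surjectivity step by directly setting $\omega_{\cal L}(F,\varepsilon) := \omega_F(F,\varepsilon)$ rather than picking an arbitrary $\Delta \supset F$; you note this as the natural resolution (passing through $F$ as the minimal regular simplex containing $F$), and the compatibility check $\delta_\ell^*(\omega_\Delta) = \omega_\nabla$ you perform is the same verification the paper carries out explicitly.
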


 \begin{proof}
 An element $\omega$ of $ \varprojlim
\Hiru \tN  * {\Delta} $ is a family
 $ \omega = \left( \omega_{\Delta} \right)_{\Delta} $,
 indexed by the regular simplices of $ \cal L$, with $ \omega_{\Delta} \in \Hiru \tN  * {\Delta} $ and satisfying the following face  compatibility conditions:
 if $ \delta_{\ell} \colon \nabla \to \Delta $ is a regular face of codimension 1 of a simplex $ \Delta $ of $ \cal L$, then
 $ \delta_{\ell}^* {\omega _ {\Delta}} = \omega_{\nabla} \in 
 \Hiru \tN  * {\nabla} $.
 To $ \omega \in \varprojlim \Hiru \tN  * {\Delta} $, we associate the cochain
$ \omega_{\cal L} \in  \Hiru \tN  k {\cal L} $ defined by
$$\omega_{\cal L}(F,\varepsilon) =
\omega_{F}(F,\varepsilon).$$

Conversely, let
$\omega_{\cal L}=
\sum_{F}
\alpha_{(F,\varepsilon)}\1_{(F,\varepsilon)}\in\tN^k(\cal L)$,
where $ F $ runs over the regular simplices of $ \cal L$,
$ | (F, \varepsilon) | =k  $
and $ \alpha_{(F, \varepsilon)} \in R $.
For each regular simplex $ \Delta $ of $ \cal L$, we define $ \omega_{\Delta} \in  \Hiru\tN  k {\Delta} $ by
$$\omega_{\Delta}=
\sum_{F\fa \Delta}
\alpha_{(F,\varepsilon)}\1_{(F,\varepsilon)}.$$
It remains to prove the 
compatibility  with respect  to the face operators.
Let $ \delta_{\ell} \colon \nabla \to \Delta $ be  a  regular face of $\Delta$ and  $ F $ a  simplex of $\Delta $. By definition of the dual basis one has
$${\delta}^*_{\ell}(\1_{(F,\varepsilon)})=
\left\{\begin{array}{cl}
\1_{(F,\varepsilon)}
&\text{if}\;
F\subset\nabla,\\
0&\text{if not.}
\end{array}\right.$$
It follows
$$\delta_{\ell}^*(\omega_{\Delta})=
\sum_{F\fa \nabla}\alpha_{(F,\varepsilon)}\1_{(F,\varepsilon)}
=\omega_{\nabla}.$$
\end{proof}


\subsection{Adjunction of a vertex to a cochain of the blow up}
\bd\label{def:etunpointun}
Let
$(F,\varepsilon)=(F_{0},\varepsilon_{0})\times \dots \times (F_{n},\varepsilon_{n})$ be a face of ${\tcL^\fc}$ and  $\ell\in\{0,\dots,n\}$.
 \emph{The adjunction of a vertex 
 $e\in \cal L_{\ell}$}
to the cochain $\1_{(F,\varepsilon)}$
is the cochain
 \begin{equation*}\label{equa:etunpointunaussi}
\1_{(F,\varepsilon)}\ast e=
(-1)^{|(F,\varepsilon)|_{> \ell}}\;
\1_{(F_{0,}\varepsilon_{0})}\otimes\dots\otimes
(\1_{(F_{\ell},\varepsilon_{\ell})}\ast e)
\otimes\dots\otimes
\1_{(F_{n},\varepsilon_{n})}.
\end{equation*}
Likewise, for the virtual vertex $\tv_{\ell}$,  we set,
\begin{equation*}\label{equa:etunpointvirtuel}
\1_{(F,\varepsilon)}\ast \tv_{\ell} =
(-1)^{|(F,\varepsilon)|_{> \ell}}\;
\1_{(F_{0,}\varepsilon_{0})}\otimes\dots\otimes
(\1_{(F_{\ell},\varepsilon_{\ell})}\ast \tv_{\ell})\otimes\dots\otimes
\1_{(F_{n},\varepsilon_{n})}.
\end{equation*}
We extend by linearity this adjunction to
$\Hiru  \tN {\fc,*}{\cal L;R}$.
\ed

	The next property follows directly from the definition.
	
\bl\label{lem:unecellulepuislautre}
Let $\cal L$  be a weighted simplicial complex  and 
$(F,\varepsilon)$ 
 a face of ${\tcL^\fc}$. Consider two vertices of $\cal L$,
 $e_{\alpha}\in \cal L_{\ell(\alpha)}$
 and
$e_{\beta}\in \cal L_{\ell(\beta)}$, 
and two virtual vertices
$\tv_{\ell}$ and $\tv_{\ell'}$. Then the following properties are verified,
\begin{eqnarray*}
\1_{(F,\varepsilon)}\ast e_{\alpha}\ast e_{\beta}
&=&
- \1_{(F,\varepsilon)}\ast e_{\beta}\ast e_{\alpha}, \\
\1_{(F,\varepsilon)}\ast e_{\alpha}\ast \tv_{\ell}
&=&
- \1_{(F,\varepsilon)}\ast \tv_{\ell}\ast e_{\alpha},\\
\1_{(F,\varepsilon)}\ast \tv_{\ell}\ast \tv_{\ell'}
&=&
- \1_{(F,\varepsilon)}\ast \tv_{\ell'}\ast \tv_{\ell}.
\end{eqnarray*}
\el

\begin{proposition}\label{prop:dfaceajout}
Let $\cal L$ be a weighted simplicial complex.
The blown-up  differential of an element 
$\1_{(F,\varepsilon)}\in\Hiru \tN *{\cal L;R}$
can be written as,
\begin{equation}\label{equa:diffetpoint}
\delta\1_{(F,\varepsilon)}= 
(-1)^{|(F,\varepsilon)|}
\left(
\sum_{e\in \cal V(\cal L)} \1_{(F,\varepsilon)}\ast e
+
\sum_{i=0}^{n-1} \1_{(F,\varepsilon)}\ast \tv_{i}
\right).
\end{equation}
\end{proposition}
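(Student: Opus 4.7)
The plan is to unfold the defining formula of the blown-up differential (equation \eqref{equa:ladiff}) and rewrite each summand in terms of the adjunction operator from \defref{def:etunpointun}. The key observation is that the ``internal'' differential $\delta^{\tc\cal L_i}$ for $i<n$ is itself an adjunction of a vertex (by \eqref{equa:ledelta1}), now taken over the vertex set $\cal V(\tc\cal L_i)=\cal V(\cal L_i)\sqcup\{\tv_i\}$; this is exactly what produces the two sums in the right-hand side of \eqref{equa:diffetpoint}, namely a sum over actual vertices of $\cal L$ and a sum over the virtual vertices.

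More precisely, I would fix a face $(F,\varepsilon)$ with $\varepsilon_n=0$ and treat the two types of summand separately. For $i<n$, applying \eqref{equa:ledelta1} inside the factor $\tc\cal L_i$ yields
\bee
\delta^{\tc\cal L_i}\1_{(F_i,\varepsilon_i)}
= (-1)^{\dim F_i+\varepsilon_i}\!\!\sum_{e\in \cal V(\cal L_i)\cup\{\tv_i\}}\!\!\1_{(F_i,\varepsilon_i)}\ast e,
\eee
and I then reinsert this into the tensor product and convert the result back through \defref{def:etunpointun}. For $i=n$ (where $\varepsilon_n=0$), the same reasoning applies to $\delta^{\cal L_n}\1_{F_n}$, except that the vertex set only contributes elements of $\cal V(\cal L_n)$ (no virtual vertex $\tv_n$ appears, consistently with the bound $i\le n-1$ in the second sum of \eqref{equa:diffetpoint}).

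The main (and only) obstacle is a careful sign bookkeeping. Three sign contributions combine: the factor $(-1)^{|(F,\varepsilon)|_{<i}}$ appearing in \eqref{equa:ladiff}, the factor $(-1)^{\dim F_i+\varepsilon_i}$ coming from \eqref{equa:ledelta1}, and the factor $(-1)^{|(F,\varepsilon)|_{>i}}$ that is introduced when rewriting
\bee
\1_{(F_0,\varepsilon_0)}\otimes\dots\otimes(\1_{(F_i,\varepsilon_i)}\ast e)\otimes\dots\otimes\1_{(F_n,\varepsilon_n)}
=(-1)^{|(F,\varepsilon)|_{>i}}\,\1_{(F,\varepsilon)}\ast e
\eee
according to \defref{def:etunpointun}. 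Adding the three exponents gives
\bee
|(F,\varepsilon)|_{<i}+(\dim F_i+\varepsilon_i)+|(F,\varepsilon)|_{>i}=|(F,\varepsilon)|,
\eee
which is independent of $i$. This uniform sign $(-1)^{|(F,\varepsilon)|}$ can then be factored out in front of the whole expression.

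Finally, I would conclude by grouping the partial sums indexed by $i$. Since $\cal V(\cal L)=\bigsqcup_{i=0}^{n}\cal V(\cal L_i)$, the non-virtual contributions from all $i\in\{0,\dots,n\}$ reassemble into a single sum $\sum_{e\in\cal V(\cal L)}\1_{(F,\varepsilon)}\ast e$, while the virtual contributions produce exactly $\sum_{i=0}^{n-1}\1_{(F,\varepsilon)}\ast\tv_i$. This yields formula \eqref{equa:diffetpoint} and the proof is complete. By linearity it then holds for an arbitrary $\omega\in\Hiru\tN{*}{\cal L;R}$, though the statement only asserts it on the basis elements.
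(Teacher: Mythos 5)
Your proposal is correct and follows exactly the paper's approach: expand the definition of $\delta$ from \eqref{equa:ladiff}, substitute \eqref{equa:ledelta1} for each factorwise differential $\delta^{\tc\cal L_i}$ (and $\delta^{\cal L_n}$), and then translate the resulting tensors back into adjunctions via \defref{def:etunpointun}; the sign bookkeeping $|(F,\varepsilon)|_{<i}+|(F_i,\varepsilon_i)|+|(F,\varepsilon)|_{>i}=|(F,\varepsilon)|$ is precisely what the paper invokes, just made fully explicit.
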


\begin{proof}
Let
$\1_{(F,\varepsilon)}=
\1_{(F_{0},\varepsilon_{0})}\otimes\dots\otimes \1_{(F_{n-1},\varepsilon_{n-1})}
\otimes\1_{F_{n}}$.
By replacing $\delta^{\tc\cal L_{i}}$ and $\delta^{\cal L_{n}}$ by their values from (\ref{equa:ledelta1}) 
in the equality (\ref{equa:ladiff}) and denoting $|(F_i,\varepsilon_i)| = \dim F_i  + \varepsilon_i$, we get,
\begin{eqnarray*}
\delta \1_{(F,\varepsilon)}
&=&\\
\sum_{i=0}^{n}(-1)^{|(F,\varepsilon)|_{<i}}
\sum_{e_{i}\in\cal V(\cal L_{i})}
(-1)^{|(F_{i},\varepsilon_{i})|}
\1_{(F_{0},\varepsilon_{0})}\otimes\dots\otimes (\1_{(F_{i},\varepsilon_{i})}\ast e_{i})
\otimes\dots\otimes\1_{(F_{n},\varepsilon_{n})}&&\\
+\sum_{i=0}^{n-1}(-1)^{|(F,\varepsilon)|_{\leq i}}
\1_{(F_{0},\varepsilon_{0})}\otimes\dots\otimes 
(\1_{(F_{i},\varepsilon_{i})}\ast \tv_{i})
\otimes\dots\otimes 
\1_{F_{n}}.
\end{eqnarray*}
The wanted formula  follows from the definition of the adjunction of a vertex.
\end{proof}

From \propref{prop:dfaceajout} and \lemref{lem:unecellulepuislautre}, we directly deduce the
behavior of the adjunction of a vertex with respect to the differential of the blown-up  complex.
\begin{corollary}\label{cor:astdiff}
Let  $\cal L$ be a weighted simplicial complex
and let
$\1_{(F,\varepsilon)}\in \Hiru \tN *{\cal L;R}$.
For each vertex $e_{\alpha}\in \cal V(\cal L)$  and each virtual vertex
 $\tv_{\ell}$, one has
$ \delta(\1_{(F,\varepsilon)}\ast e_{\alpha})=
(\delta\1_{(F,\varepsilon)})\ast e_{\alpha}
\;\text{ and }\;
\delta(\1_{(F,\varepsilon)}\ast \tv_{\ell})=
(\delta\1_{(F,\varepsilon)})\ast \tv_{\ell}.
$
\end{corollary}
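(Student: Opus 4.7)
The plan is a direct computation using the formula from \propref{prop:dfaceajout} together with the anticommutativity of \lemref{lem:unecellulepuislautre}. I will do the case of an ordinary vertex $e_{\alpha}\in\cal V(\cal L_{\ell(\alpha)})$; the virtual vertex case is identical.

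Set $\eta = \1_{(F,\varepsilon)}\ast e_{\alpha}$. Up to sign, $\eta$ is a basis cochain $\1_{(G,\varepsilon')}$ whose total degree satisfies $|\eta|=|(F,\varepsilon)|+1$. Applying \propref{prop:dfaceajout} to $\eta$ gives
\begin{equation*}
\delta\eta
=(-1)^{|(F,\varepsilon)|+1}\!\left(\sum_{e\in\cal V(\cal L)} \eta\ast e
+\sum_{i=0}^{n-1}\eta\ast\tv_{i}\right).
\end{equation*}
On the other hand, applying \propref{prop:dfaceajout} to $\1_{(F,\varepsilon)}$ first and then adjoining $e_{\alpha}$ yields
\begin{equation*}
(\delta\1_{(F,\varepsilon)})\ast e_{\alpha}
=(-1)^{|(F,\varepsilon)|}\!\left(\sum_{e\in\cal V(\cal L)} (\1_{(F,\varepsilon)}\ast e)\ast e_{\alpha}
+\sum_{i=0}^{n-1}(\1_{(F,\varepsilon)}\ast\tv_{i})\ast e_{\alpha}\right).
\end{equation*}
By \lemref{lem:unecellulepuislautre}, each term satisfies $(\1_{(F,\varepsilon)}\ast e)\ast e_{\alpha}=-(\1_{(F,\varepsilon)}\ast e_{\alpha})\ast e=-\eta\ast e$, and likewise $(\1_{(F,\varepsilon)}\ast\tv_{i})\ast e_{\alpha}=-\eta\ast\tv_{i}$. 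Pulling out the resulting minus sign converts the factor $(-1)^{|(F,\varepsilon)|}$ into $(-1)^{|(F,\varepsilon)|+1}$, matching the expression for $\delta\eta$.

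For the virtual vertex case, one repeats the argument, now using the second and third identities of \lemref{lem:unecellulepuislautre} to anticommute $\tv_{\ell}$ past both the ordinary vertices $e\in\cal V(\cal L)$ and the virtual vertices $\tv_{i}$; the sign accounting is identical.

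The only point requiring a little care is the bookkeeping of the degree shift $|\eta|=|(F,\varepsilon)|+1$ in the prefactor of \propref{prop:dfaceajout}, which must precisely cancel the minus sign produced by the anticommutation; I do not expect any genuine obstacle beyond this sign check, since both the differential formula and the anticommutativity are formal identities valid for every basis cochain, including the degenerate cases where adjoining twice yields zero (which is consistent with anticommutativity).
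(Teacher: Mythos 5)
Your proof is correct and matches the paper's intended argument: the paper asserts the corollary follows directly from \propref{prop:dfaceajout} and \lemref{lem:unecellulepuislautre}, and your expansion of both sides using the vertex-adjunction formula for $\delta$, followed by the anticommutation to convert the degree-shifted prefactor $(-1)^{|(F,\varepsilon)|+1}$ into $(-1)^{|(F,\varepsilon)|}\cdot(-1)$, is exactly that deduction. The sign bookkeeping and the handling of the trivial case $\1_{(F,\varepsilon)}\ast e_{\alpha}=0$ are both sound.
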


\section{Blown-up  intersection cohomology.}\label{sec:TWcohomologie}

In this section, we define the blown-up intersection cohomology of a perverse space.

\begin{definition}\label{def:filteredsimplex}
Let  $X$ be a filtered space.   A
 \emph{filtered singular simplex} is a continuous map, $\sigma\colon\Delta\to X$, where the euclidean simplex $\Delta$ is endowed with a decomposition
$\Delta=\Delta_{0}\ast\Delta_{1}\ast\dots\ast\Delta_{n}$,
called \emph{$\sigma$-decomposition of $\Delta$},
verifying
$$
\sigma^{-1}X_{i} =\Delta_{0}\ast\Delta_{1}\ast\dots\ast\Delta_{i},
$$
for each~$i \in \{0, \dots, n\}$. 
The filtered singular simplex $\sigma$ is \emph{regular} if $\Delta_n\ne \emptyset$.
To specify that the filtration of the euclidean simplex $ \Delta $ is induced from that of $ X $ by the map
$ \sigma $, we sometimes write $ \Delta = \Delta_{\sigma} $. This notation is particularly useful when a simplex carries two filtrations associated with two distinct maps.
\end{definition}

The dimensions of the simplices $ \Delta_i$  of the $ \sigma $-decomposition measure the 
  non-transver\-sality of the  simplex $ \sigma $ with respect to the strata of $X$.
These simplices $ \Delta_i$ may be empty, with the convention $ \emptyset * Y = Y $, for any space $ Y $.
  Note also that a singular simplex $ \sigma \colon \Delta \to X $ is filtered if each $ \sigma^{- 1} (X_i) $,
  $ i \in \{0, \dots, n \} $, is a face of $ \Delta $.
  
\medskip

To any regular simplex, $\sigma\colon \Delta=\Delta_0\ast\dots\ast \Delta_n\to X$, 
we associate the cochain complex defined by 
$$\tres \tN*\sigma=\tN^*(\Delta)=
\Hiru N *{\tc\Delta_0}\otimes\dots\otimes
\Hiru  N *{\tc\Delta_{n-1}} \otimes \Hiru N *{\Delta_n}.$$

If $\delta_{\ell}\colon \Delta'=\Delta'_{0}\ast\dots\ast\Delta'_\ell\ast\dots\ast\Delta'_{n} \to
 \Delta=\Delta_{0}\ast\dots\ast\Delta_\ell\ast\dots\ast\Delta_{n}$
is a face operator, 
we denote $\partial_{\ell}\sigma$ 
the filtered simplex defined by
$\partial_{\ell}\sigma=\sigma\circ\delta_{\ell}\colon 
\Delta'\to X$. The face operator $\delta_{\ell}$ is regular if $\Delta'_n\ne \emptyset$.
 
 \begin{definition}\label{def:thomwhitney}
Let $X$ be a filtered space. 
The \emph{blown-up  complex of $X$ with coefficients in $R$,} $\Hiru \tN*{X;R}$,
is the cochain complex 
 formed by the elements $ \omega $, associating to any regular filtered simplex,
 $\sigma\colon \Delta_{0}\ast\dots\ast\Delta_{n}\to X$,
an element
 $\omega_{\sigma}\in \tN^*_{\sigma}$,  
so that $\delta_{\ell}^*(\omega_{\sigma})=\omega_{\partial_{\ell}\sigma}$,
for any  regular face operator,
 $\delta_{\ell}$.
 The differential $\delta\omega$ of $\omega\in\Hiru \tN *{X;R}$ is defined by
 $(\delta\omega)_{\sigma}=\delta(\omega_{\sigma})$
 for any regular filtered simplex $\sigma$.
 \end{definition}
 
For any $\ell \in \{1,\dots,n\}$,  the element  $\omega_{\sigma}\in \tN^*_{\sigma}$
is endowed with  the \emph{perverse degree} $\|\omega_{\sigma}\|_{\ell}$, 
introduced in \defref{def:degrepervers}. 
We extend this degree to the elements of
$ \Hiru \tN *{X;R}$
as follows.

\begin{definition}\label{def:transversedegreeblowup}
Let $X$ be a filtered space and  $\omega\in\Hiru\tN *{X;R}$.
The \emph{perverse degree of $\omega$ along a singular stratum,} $S$, is equal to
\begin{equation*}\label{equa:perversstrate}
\|\omega\|_{S}=\sup\left\{\|\omega_{\sigma}\|_{\codim S}\mid 
\sigma\colon\Delta\to X \text{ regular with }\sigma(\Delta)\cap S\neq \emptyset\right\}.
\end{equation*}
We denote by $\|\omega\|$  the map associating to any singular stratum $ S $ of $ X  $ the element $\|\omega\|_{S}$ and 0 to a regular stratum.
\end{definition}

Notice that, by definition,  face operators $\delta_{\ell}^*$  
decrease the perverse degree.

\begin{definition}\label{def:admissible}
 Let $(X,\ov{p})$ be a perverse space. A cochain $\omega\in \Hiru \tN *{X;R}$ is \emph{$\ov{p}$-allowable} if
\begin{equation}\label{equa:legraal}
\|\omega\|\leq \ov{p}.
\end{equation}

A cochain $\omega$ is a \emph{$\ov{p}$-intersection cochain} if $\omega$ and its coboundary, $\delta \omega$,
are $\ov{p}$-allowable. 
We denote by $\lau\tN*{\ov{p}}{X;R}$
the complex of  $\ov{p}$-intersection cochains and
 $\lau \IH {\ov{p}} *{X;R}$  its homology, called
 \emph{blown-up  intersection cohomology} of $X$ with coefficients in~$R$,
for the perversity $\ov{p}$. %
\end{definition}

\subsection{Shifted filtrations}\label{SF1}
Blown-up  intersection cohomology does not depend on the dimension of the strata of  $X$ but on the codimension of these strata (see \cite[Section 4.3.1]{LibroGreg}).  Let us consider on $X$ the shifted filtration $Y$, where $m \in \N^*$:
\begin{align}\label{shif}
\emptyset = Y_0 = \cdots = Y_{m-1} \subset Y_m =X_0 \subset \dots \subset Y_{n+m-1} =\\ \nonumber
 X_{n-1} \subset Y_{n+m} =X_n =X.
\end{align}
So, the formal dimension of $Y$ is $n+m$. For example, on $\R^m \times X$ we have two natural shifted filtrations: $(\R^m \times X)_i = \R^m \times X_i$, with $i\in \{0,\ldots,n\}$, and $(\R^m \times X)_i = \R^m \times X_{i-m}$, with $i\in \{0,\ldots,n+m\}$.

The family of  regular simplices is the same for both filtrations. The perverse degree of a filtered simplex $\sigma\colon \Delta \to X=Y$ is the same for both filtrations. Let us see that. If  $S$ is a singular stratum of $X$ (and therefore of $Y$) with $\im \sigma \cap S \ne \emptyset$, we have $\codim_XS =\codim_Y S $. Let $\ell$ be this codimension.
On the other hand, if $\Delta_\sigma^X = \Delta_0* \cdots * \Delta_n$ is the induced filtration by $\sigma \colon \Delta \to X$ then $\Delta_\sigma^Y =  \underbrace{\emptyset *\dots *\emptyset}_{m \ times}*\Delta'_{m}* \cdots * \Delta'_{n+m}$, with $\Delta'_{m+r} = \Delta_r$, is the induced filtration by $\sigma \colon \Delta \to Y$. So, if $(F=F_0*\cdots *F_n,\varepsilon=(\varepsilon_0 *\dots*\varepsilon_n))$ is a face of the blow up of $\Delta_\sigma^X$ then 
$(F'=\underbrace{\emptyset, \dots,\emptyset}_{m \ times}*F_0*\cdots *F_{n},\varepsilon'=(\underbrace{1, \dots,1}_{m \ times},\varepsilon_0,\dots,\varepsilon_{n}))$ is the corresponding face of 
the blow up of $\Delta_\sigma^Y$. We have, for $\ell \in \{1,\dots,n\}$
\begin{eqnarray*}
\|\1_{(F,\varepsilon)}\|_\ell^X &= &
\left\{
\begin{array}{ll}
-\infty & \hbox{if } \varepsilon_{n-\ell} =1\\
|(F,\varepsilon)|_{>n-\ell}  & \hbox{if } \varepsilon_{n-\ell} =0
\end{array}
\right.
=
\left\{
\begin{array}{ll}
-\infty & \hbox{if } \varepsilon'_{m+n-\ell} =1\\
|(F',\varepsilon')|_{>m+n-\ell}  & \hbox{if } \varepsilon'_{m+n-\ell} =0
\end{array}
\right.\\
&=&
\|\1_{(F',\varepsilon')}\|_{\ell}^Y,
\end{eqnarray*}
and
$
\|\1_{(F',\varepsilon')}\|_\ell^Y = -\infty \ \ \hbox{ if } \ell \in \{n+1, \ldots, m+n\}.
$
We get $\lau \tN *{\ov p} {X;R} = \lau \tN * {\ov p} {Y;R}$ and therefore $\lau \IH* {\ov p} {X;R} = \lau \IH * {\ov p} {Y;R}$.

\bigskip

We  prove in \thmref{MorCoho} that any stratified map induces a morphism in blown-up intersection cohomology. The following Proposition, which is a weaker version of this result, is used in \partref{part:mayervietoris}.

\begin{proposition}\label{prop:applistratifieeforte}
Let $f \colon X \to Y$ be a stratum preserving stratified map. The \emph{induced map}  $f \colon \Hiru \tN * {Y;R} \to \Hiru \tN * {X;R} $, defined by
$(f^*(\omega))_{\sigma}=\omega_{f\circ\sigma}$,
 is a well defined chain map.
 
 Consider a perversity $\ov p$ on $X$ and a perversity $\ov q$ on $Y$ verifying $\ov p \geq f^*\ov q$.
 The induced operator $f ^*\colon \lau \tN * {\ov q} {Y;R} \to \lau \tN * {\ov p} {X;R}$,
 is a well defined chain map inducing the morphism 
$f ^*\colon \lau \IH * {\ov q} {Y;R} \to \lau \IH* {\ov p} {X;R}$.
\end{proposition}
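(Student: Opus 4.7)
The plan is to exploit the key feature of a stratum preserving stratified map: the condition $f^{-1}(Y_{n-\ell})=X_{n-\ell}$ forces the filtration of a euclidean simplex $\Delta$ induced by a filtered simplex $\sigma\colon\Delta\to X$ to coincide with that induced by $f\circ\sigma\colon\Delta\to Y$, since $(f\sigma)^{-1}(Y_i)=\sigma^{-1}(X_i)$. I would begin by recording this observation, from which three things follow at once: $\sigma$ is regular iff $f\circ\sigma$ is regular, the associated complexes satisfy $\tN^*_\sigma = \tN^*_{f\sigma}$ as identical $R$-modules with identical differentials, and the assignment $(f^*\omega)_\sigma := \omega_{f\circ\sigma}$ yields an element of $\tN^*_\sigma$ in a well defined way.

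Next I would verify that $f^*\omega$ lies in $\tN^*(X;R)$ by checking the face compatibility. A regular face operator $\delta_\ell$ for $\sigma$ is simultaneously a regular face operator for $f\circ\sigma$ (again by the coincidence of decompositions of $\Delta$), and then
\[
\delta_\ell^*\bigl((f^*\omega)_\sigma\bigr) = \delta_\ell^*(\omega_{f\sigma}) = \omega_{f\circ\partial_\ell\sigma} = (f^*\omega)_{\partial_\ell\sigma}
\]
is immediate from the face compatibility of $\omega$. The chain map property $\delta\circ f^* = f^*\circ\delta$ then follows formally from \defref{def:thomwhitney}, since the differential on $\tN^*_\sigma$ and on $\tN^*_{f\sigma}$ is literally the same operator.

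The slightly more delicate point is the compatibility with perversities. Let $S$ be a singular stratum of $X$ with $\sigma(\Delta)\cap S\neq\emptyset$ and let $S^f$ be the stratum of $Y$ containing $f(S)$. The stratum preserving hypothesis gives $\codim S = \codim S^f$; combining this with the coincidence of $\sigma$- and $(f\sigma)$-decompositions and inspecting \defref{def:degrepervers} shows that the perverse degrees match:
\[
\|(f^*\omega)_\sigma\|_{\codim S} = \|\omega_{f\sigma}\|_{\codim S^f}.
\]
Taking suprema over all regular $\sigma$ meeting $S$ and using $\ov p \geq f^*\ov q$ then yields $\|f^*\omega\|_S \leq \|\omega\|_{S^f} \leq \ov q(S^f) = f^*\ov q(S) \leq \ov p(S)$. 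The same bound applied to $\delta\omega$ shows that $f^*$ sends $\ov q$-intersection cochains to $\ov p$-intersection cochains and thus induces the claimed morphism on $\lau\IH{*}{\ov p}{-;R}$.

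The whole argument is essentially bookkeeping, with no serious obstacle; the only place where one must be vigilant is the perverse degree comparison, making sure that reinterpreting $\omega_{f\sigma}$ on the filtered simplex $\Delta_\sigma$ in place of $\Delta_{f\sigma}$ does not alter the combinatorial data $(F,\varepsilon)$ on which the degree depends. This is precisely where the stratum preserving hypothesis is indispensable; dropping it, as required for the stronger \thmref{MorCoho}, will force a genuine subdivision argument, but that extra difficulty is not present here.
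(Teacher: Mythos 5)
Your argument is correct and follows the same route as the paper: observe that the stratum preserving hypothesis makes $\Delta_\sigma=\Delta_{f\circ\sigma}$, so $(f^*\omega)_\sigma=\omega_{f\circ\sigma}$ lives in the right module, verify face compatibility and commutation with $\delta$ formally, and then compare perverse degrees using $\codim S=\codim S^f$ together with $\ov p\geq f^*\ov q$. The only cosmetic difference is that you spell out the supremum step and the preservation of regularity of simplices, which the paper leaves implicit.
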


\begin{proof}

The definition makes sense since 
$(\delta_{\ell}^*f^*(\omega))_{\sigma}
=
\delta_{\ell}^*(f^*(\omega)_{\sigma})
=
\delta_{\ell}^*(\omega_{f\circ\sigma})$
$
=
\omega_{f\circ\sigma\circ\delta_{\ell}}
=
(f^*(\omega))_{\sigma\circ\delta_{\ell}}$,
for each $\omega \in \Hiru \tN * {Y;R}$, each regular simplex $\sigma \colon \Delta \to X$
and each regular face $\delta_\ell \colon \nabla \to \Delta$.

The induced morphism is a chain map since
 $(\delta f^*(\omega))_{\sigma}
=
\delta (f^*(\omega)_{\sigma})
=
\delta(\omega_{f\circ\sigma})
=
(\delta\omega)_{f\circ\sigma}
=
(f^*(\delta\omega))_{\sigma}$,
for each $\omega \in \Hiru \tN * {Y;R}$ and each regular simplex $\sigma \colon \Delta_\sigma \to X$.

We turn now to perversities and  firstly to the filtrations.
The euclidean simplex $ \Delta $ has two filtrations, respectively induced by $ \sigma $ and $ f \circ \sigma $,
denoted by $ \Delta_{\sigma} $ and $ \Delta_{f \circ \sigma} $, following the conventions of \defref{def:filteredsimplex}.
For each $\ell \in\{0,\dots,n\}$,
the hypothesis $f^{-1}(Y_{n-\ell})=X_{n-\ell}$ implies equalities,
$\sigma^{-1}(X_{n-\ell })=\sigma^{-1}f^{-1}(Y_{n-\ell })=(f\circ\sigma)^{-1}(Y_{n-\ell })$
and $\Delta_{\sigma}=\Delta_{f\circ\sigma}$.

Let $ S $ be an $\ell$-codimensional  singular stratum of $ X $  such that $ S \cap \im \sigma \neq \emptyset$. The  stratum $S^f$ of $ Y $, characterized by $ f (S) \subset S^f $, also has codimension $ \ell $ and verifies $ S^f \cap \im (f \circ \sigma) \neq \emptyset$. 
Moreover, the definition of $ f^* $ and the previous observation on the two filtrations of $ \Delta $ give
$
\| f^* (\omega)_{\sigma} \|_{\ell} = \| \omega_ {f \circ \sigma} \|_{\ell} $.
From \defref{def:transversedegreeblowup}, we deduce
$\|f^*(\omega)\|_{S}\leq \|\omega\|_{S^f}\leq \ov{q}(S^f)$. The assumption made on the perversities $ \ov {p} $ and $ \ov q $ allows us  to conclude
$\|f^*(\omega)\|_{S}\leq \ov{p}(S)$. The same argument applied to the  $ \ov q$-allowable form,  $ \delta \omega $, gives
$\|f^*(\delta\omega)\|_{S}\leq \ov{p}(S)$. The compatibility of $ f ^ * $ with the differential $ \delta $ implies
$f^*(\omega)\in \lau \tN*{\ov{p}}{X;R}$.
\end{proof}

\begin{remark}\label{rem:fortementstratifieetidentite}
If $f$ is a stratum preserving stratified map, 
for any filtered simplex 
$\sigma\colon\Delta\to X$,
the
$\sigma$- and $f\circ\sigma$- decompositions of $\Delta$ are the same:
$\Delta_{\sigma}=\Delta_{f\circ \sigma}$. This is not the case for a general stratified map (see \cite[A.25]{CST1}).
\end{remark}

\section{Cup product.}\label{subsec:cupTW}

We have defined the notion of cup product in  \cite{CST1}
for "filtered face sets".
In
\cite{CST2}, we also introduced the notion of  $ cup_i$-products and Steenrod squares on the blown-up intersection cohomology.
We give below a definition of cup product for any coefficient ring, in our context of stratum depending
perversities.

\begin{definition}\label{def:cupsurDelta}
Two ordered simplices,
 $F=[a_{0},\dots,a_{k}]$ and $G=[b_{0},\dots,b_{\ell}]$,
of an euclidean simplex $\Delta$, are said to be \emph{compatible}
 if $a_{k}=b_{0}$.
In this case, we set
 $F\cup G=[a_{0},\dots,a_{k},b_{1},\dots,b_{\ell}]\in N_{*}(\Delta)$.
 The \emph{cup product} is defined on the dual basis of  $\Hiru N*{\Delta}$ by
 $$\1_{F}\cup \1_{G}=(-1)^{k \cdot \ell} \ \1_{F\cup G}$$ %
 if $F$ and $G$ are compatible and  0 if not.
 \end{definition}
  In the case of the cone $\tc\Delta$, this law appears on
  $\Hiru N* {\tc\Delta}$ as:
  \begin{equation}\label{equa:cupbase}
  \1_{(F,\varepsilon)}\cup \1_{(G,\kappa)}
  =\left\{\begin{array}{cl}
  (-1)^{|F| \cdot |(G,\kappa)|} \  \1_{(F\cup G,\kappa)}
  &
  \text{if}\; F, G\;\text{compatible}\;\text{and}\;\varepsilon=0,\\
  \1_{(F,\varepsilon)}
  &
  \text{if}\;
 (G,\kappa) = (\emptyset,1) \hbox{ and }  \varepsilon=1,\\
  0
  &
  \text{if not.}
  \end{array}\right.
  \end{equation}

 If $\Delta=\Delta_{0}\ast\dots\ast\Delta_{n}$ 
 is a regular euclidean simplex,
 this definition extend to
  $\Hiru \tN *{\Delta}$ as a law defined on a tensor product.
More precisely,
 if
 $\omega_{0}\otimes\dots\otimes\omega_{n}$ and
 $\eta_{0}\otimes\dots\otimes\eta_{n}$
belong to
 $\Hiru N* {\tc\Delta_{0}} \otimes\dots\otimes \Hiru N*{\Delta_{n}}$,
we set
 \begin{equation}\label{equa:cupsimplexfiltre}
 (\omega_{0}\otimes\dots\otimes\omega_{n})\cup
 (\eta_{0}\otimes\dots\otimes\eta_{n})=
 (-1)^{\sum_{i>j}|\omega_{i}|\,|\eta_{j}|}
 (\omega_{0}\cup\eta_{0})\otimes\dots\otimes
 (\omega_{n}\cup\eta_{n}).
 \end{equation}
 
 Given two perversities $\ov p,\ov q$ defined on a filtered set $X$ we define the perversity $\ov p + \ov q$ by $(\ov p + \ov q )(S) = \ov p(S) + \ov q(S)$, with the convention $-\infty + \ov p(S) =\ov p(S)  -\infty = -\infty$.
 
 \begin{proposition}\label{42}
For each filtered space $X$ endowed with two perversities $\ov{p}$ and $\ov{q}$,
 there exists an associative multiplication, 
 \begin{equation*}\label{equa:cupprduitespacefiltre}
 -\cup -\colon \lau \tN k {\ov{p}}{X;R}\otimes \lau \tN {\ell}{\ov{q}}{X;R}\to \lau \tN {k+\ell} {\ov{p}+\ov{q}}{X;R},
 \end{equation*}
 defined by
 $(\omega\cup\eta)_{\sigma}=\omega_{\sigma}\cup \eta_{\sigma}$,
 for each pair of cochains $(\omega,\eta)\in 
 \lau \tN k {\ov{p}}{X;R}\times \lau \tN \ell{\ov{q}}{X;R}$
and each regular simplex
 $\sigma\colon\Delta\to X$.
 It induces a graded commutative multiplication with unity called \emph{intersection cup product},
  \begin{equation*}\label{equa:cupprduitTWcohomologie}
-\cup -\colon  \lau \IH k {\ov{p}}{X;R}\otimes \lau \IH {\ell} {\ov{q}}{X;R}\to
 \lau \IH{k+\ell}{\ov{p}+\ov{q}}{X;R}.
 \end{equation*}
 \end{proposition}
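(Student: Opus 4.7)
The plan is to establish each assertion first at the level of a single regular filtered simplex, globalize via the inverse-limit description of \propref{prop:pasdeface}, and then track the perverse degree along each singular stratum.

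Well-definedness of the rule $(\omega \cup \eta)_\sigma := \omega_\sigma \cup \eta_\sigma$ amounts to compatibility with regular face operators. Since the formula (\ref{equa:cupsimplexfiltre}) is a tensor product of classical simplicial cup products, and each face operator $\delta_r^*$ on a factor $\Hiru N * {\tc\Delta_i}$ or $\Hiru N * {\Delta_n}$ is a morphism of differential graded algebras for the classical product, we obtain $\delta_r^*(\omega_\sigma \cup \eta_\sigma) = \omega_{\partial_r\sigma} \cup \eta_{\partial_r\sigma}$, so the family $\{(\omega \cup \eta)_\sigma\}$ belongs to $\Hiru \tN {k+\ell} {X;R}$. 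The graded Leibniz identity $\delta(\omega \cup \eta) = \delta\omega \cup \eta + (-1)^{|\omega|}\omega \cup \delta\eta$ then reduces to the classical Leibniz rule on each tensor factor.

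The main calculation is the perverse-degree estimate $\|\omega_\sigma \cup \eta_\sigma\|_\ell \leq \|\omega_\sigma\|_\ell + \|\eta_\sigma\|_\ell$ for every regular $\sigma$ and $\ell \in \{1,\dots,n\}$. By linearity it suffices to bound $\|\1_{(F,\varepsilon)} \cup \1_{(G,\kappa)}\|_\ell$. From (\ref{equa:cupbase}) this product vanishes at every index $i$ unless either $\varepsilon_i = 0$ with $F_i,G_i$ compatible, or $\varepsilon_i = 1$ with $(G_i,\kappa_i) = (\emptyset,1)$. In both situations the new coordinate at index $i$ is $(F_i \cup G_i, \max(\varepsilon_i,\kappa_i))$, and the identity $|F_i \cup G_i| = |F_i| + |G_i|$ for compatible simplices yields $|(F',\varepsilon')|_{>n-\ell} = |(F,\varepsilon)|_{>n-\ell} + |(G,\kappa)|_{>n-\ell}$. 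If the new index $\varepsilon'_{n-\ell}$ equals $1$ the bound is trivial since the left-hand side is $-\infty$; otherwise $\varepsilon_{n-\ell} = \kappa_{n-\ell} = 0$ and the bound is immediate. Applying this inequality to $\omega, \eta$ and, via Leibniz, to $\delta(\omega \cup \eta)$, we conclude $\omega \cup \eta \in \lau \tN {k+\ell} {\ov p + \ov q} {X;R}$. Associativity reduces factor-wise to the associativity of the classical simplicial cup product; the unit is the family whose value on each regular $\sigma$ is the tensor product of the standard unit cochains of the factors $\Hiru N * {\tc\Delta_i}$ and $\Hiru N * {\Delta_n}$, which has perverse degree $0$ on every singular stratum and therefore lies in $\lau \tN 0 {\ov 0} {X;R}$.

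The main obstacle is graded commutativity of the induced product on cohomology. At the cochain level the product is not commutative; following \cite{CST2} I would introduce an Alexander--Whitney-style $\cup_1$-operation at the simplex level as the tensor product of the classical $\cup_1$ chain homotopies on each factor, providing a chain homotopy between $\omega \cup \eta$ and $(-1)^{|\omega|\cdot|\eta|} \eta \cup \omega$. Since the classical $\cup_1$ only reshuffles the ordering of vertices within each fixed tensor factor, it preserves all the $\varepsilon$-coordinates; the same $\varepsilon$-analysis as above then shows that $\cup_1$ respects the perverse-degree bound $\ov p + \ov q$, so the chain homotopy descends to intersection cochains. Passing to cohomology yields the graded commutative unital product of the statement.
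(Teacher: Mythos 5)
Your overall route is the same as the paper's: define the cup product simplexwise, check face compatibility via naturality of the classical cup product, establish the perverse-degree bound by the $\varepsilon$-case analysis on basis elements, use Leibniz to handle $\delta(\omega\cup\eta)$, and then invoke a $\cup_1$-operation (from \cite{CST2}) to deduce graded commutativity on cohomology. The perverse-degree estimate is where the real content lies, and your argument there is essentially equation~(\ref{equa:cupbase}) plus the additivity $|(F'_i,\varepsilon'_i)| = |(F_i,\varepsilon_i)|+|(G_i,\kappa_i)|$ and the observation that $\varepsilon'_{n-\ell}=1$ kills the term; this is the same computation the paper carries out, just organized slightly differently (the paper first isolates the $\ov p(S)=-\infty$ and $\ov q(S)=-\infty$ cases, which in your version is absorbed into the $\varepsilon'_{n-\ell}=1$ branch).

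One caveat on the commutativity step. You describe the $\cup_1$-operation as ``the tensor product of the classical $\cup_1$ chain homotopies on each factor.'' Taken literally this cannot be right: the blown-up complex of a regular simplex is a tensor product of $n+1$ simplicial cochain algebras, so applying $\cup_1$ factorwise would lower degree by $n+1$, not by $1$. The actual $\cup_1$ on a tensor product of DGAs mixes $\cup$ and $\cup_1$ across factors (and in some terms permutes the arguments). This matters because your justification that $\cup_1$ respects the perverse-degree bound rests on the claim that it ``only reshuffles the ordering of vertices within each fixed tensor factor'' and hence ``preserves all the $\varepsilon$-coordinates''; that claim inherits the incorrect factorwise picture, and the cross-factor terms of the genuine construction are exactly where a perverse-degree check is needed. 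The paper handles this by citing \cite{CST2} (where the construction is carried out for $\Z_2$ and shown to land in $\lau\tN{k+\ell-1}{\ov p+\ov q}{X;\Z_2}$) and Bredon for the signs, so if you intend to defer to \cite{CST2} as well, you should drop your proposed description of the construction and simply quote the result: there is a $\cup_1 \colon \lau\tN k {\ov p}{X;R}\otimes\lau\tN\ell{\ov q}{X;R}\to\lau\tN{k+\ell-1}{\ov p+\ov q}{X;R}$ satisfying the Leibniz identity, and commutativity in $\IH$ follows.
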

 
 \begin{proof}
 Begin by verifying that the product locally defined on each regular simplex extends
  to the blown-up  complex. For this, consider two cochains $ \omega ,\eta \in \Hiru \tN*{X;R}$ and
  $ \delta_{\ell} \colon \nabla \to \Delta $
 a  regular face of a simplex $ \sigma \colon \Delta \to X $.
   The map induced by
$ \delta_{\ell}$ at the cochain level,
$\delta_{\ell}^*\colon \Hiru \tN*{X;R}\to \Hiru \tN*{X;R}$,
is the identity on each factor of the tensor product, except for one 
  where it is induced by a canonical inclusion.
It is compatible with the cup product and we can write,
$
\delta_{\ell}^*(\omega\cup \eta)_{\sigma}
=
\delta_{\ell}^*(\omega_{\sigma}\cup\eta_{\sigma})
=_{(1)}
\delta_{\ell}^*(\omega_{\sigma})\cup \delta_{\ell}^*(\eta_{\sigma})
=
\omega_{\sigma\circ\delta_{\ell}}\cup \eta_{\sigma\circ\delta_{\ell}}
=
(\omega\cup\eta)_{\sigma\circ\delta_{\ell}},
$
where $=_{(1)}$ comes from the naturality of the usual cup product.
Then the product $\omega\cup\eta$ of the statement is well defined.

Let us now study the behavior of the cup product with respect to 
the perverse degree. This is a local issue. Let $\omega \in \lau \tN * {\ov p}{X;R}$ and $\eta \in \lau \tN * {\ov q} {X;R}$. 
We need to prove that 
\begin{equation*}\label{equa:cupperversite}
\|\omega_\sigma \cup \eta_\sigma \|_\ell\leq
(\ov p + \ov q )(S),
\end{equation*}
where
$S$ is a singular stratum of $X$, $\ell = \codim S$, 
and
$\sigma \colon \Delta=\Delta_{0}\ast\dots\ast\Delta_{n} \to X$  is a regular simplex with $\im \sigma \cap S \ne \emptyset$.

 Without loss of generality, we can suppose  $\omega_\sigma=\1_{(F,\varepsilon)}$
 and $\eta_\sigma=\1_{(G,\kappa)}$. The result is clear if $\|\omega_\sigma\cup \eta_\sigma\|_{\ell} =-\infty$. So, we can suppose that $\|\omega_\sigma\cup \eta_\sigma\|_{\ell}\ne -\infty$. 
 If $\ov p (S) =-\infty$, condition $\|\omega_\sigma \|_\ell \leq -\infty$ implies $\epsi_{n-\ell}=1$ and therefore $\|\omega_\sigma\cup \eta_\sigma\|_\ell = -\infty$. So, we can suppose that $\ov p (S) >-\infty$ and similarly $\ov q (S) >-\infty$, which imply $(\ov p+ \ov q)(S) = \ov p (S) + \ov q(S)$.
 Using (\ref{equa:cupbase}) and \defref{def:degrepervers}, 
 we see that condition
 $\|\omega_\sigma\cup \eta_\sigma\|_{\ell}\neq-\infty$ gives
 $\varepsilon_{n-\ell}=\kappa_{n-\ell}=0$. The usual degree of the cup product of two cochains gives 
  $\|\omega_\sigma\cup \eta_\sigma\|_{\ell}\leq 
  \|\omega_\sigma\|_\ell + \|\eta_\sigma\|_{\ell}
  \leq \ov p(S) + \ov q (S) = (\ov p+ \ov q)(S).$
 
Then, the cup product of a $\ov{p}$-allowable  cochain  and  of a $\ov{q}$-allowable cochain  is $(\ov{p}+\ov{q})$-allowable. For the intersection  cochains, the result follows from the formula
 $\delta(\omega \cup \eta) = \delta(\omega)\cup \eta+(-1)^{|\omega|} \omega\cup\delta(\eta)$.

Associativity  is deduced from the associativity of the usual cup product. The unity element is the 0-cochain taking the value 1 on any face.

 Let us verify commutativity. 
 We have constructed the product \\
$-\cup_1 -\colon \lau  \tN k {\ov{p}}{X,\Z_2}\otimes  \lau \tN {\ell} {\ov{q}}{X,\Z_2}\to
\lau  \tN {k+\ell-1} {\ov{p}+\ov{q}}{X,\Z_2}
 $
 verifying the Leibniz condition
 $\delta (x_1 \cup_1 x_2)=x_1 \cup x_2 +x_2 \cup x_1 +\delta x_1 \cup_1 x_2 +x_1 \cup_1 \delta x_2$ (see \cite{CST2}).
 For general coefficients, and following the same procedure, we construct the product 
 $-\cup_1 -\colon  \lau \tN k{\ov{p}}{X;R} \otimes \lau \tN {\ell} {\ov{q}} {X;R}\to
 \lau \tN {k+\ell-1} {\ov{p}+\ov{q}}{X;R},
 $
taking into account the signs (see for example \cite[p. 414]{MR1700700}). So, for each 
 $\omega \in  \lau \tN k {\ov{p}}{X;R}$ and  $\eta \in \lau \tN \ell {\ov{q}}{X;R}$ we have
 $$
 \delta (\omega \cup_1\eta )
 =
 (-1)^{p+q-1}\omega \cup \eta + (-1)^{p+q+p q}\eta \cup \omega + \delta \omega \cup_1 \eta +(-1)^p \omega \cup_1 \delta \eta.
  $$
  So, if $\delta \omega =\delta \eta =0$, we get the commutativity
  $
  [\omega] \cup [\eta] = (-1)^{p q} [\eta] \cup [\omega].
  $
\end{proof}

  See \cite{MR2607414, MR3046315} for other cup products.

\begin{remark}\label{43}
Given perversities $\ov a \leq \ov p$ and $\ov b \leq \ov q$, the cup product 
$
- \cup - \colon \lau \tN * {\ov{a}}{X;R}\otimes  
\lau \tN  *  {\ov{b}}  {X;R}  \to \lau \tN  * {\ov{a}+\ov{b}}  {X;R}
$
is the restriction of 
$
- \cup - \colon \lau \tN * {\ov{p}} {X;R}\otimes  
\lau \tN  *  {\ov{q}}  {X;R}  \to \lau \tN  * {\ov{p}+\ov{q}}  {X;R}.
$
\end{remark}

\section{Intersection and tame intersection (co)homology.}

\label{IC}


For a  perversity $\ov p$ such that $\ov p \not \leq \ov t$, we may have $\ov p$-allowable simplices that are not regular. This failure has bad consequences; for instance, Poincaré duality may be not satisfied in this case (see
\cite{CST4}).
To overcome this problem the tame intersection homology $\lau \gH {\ov p} * {X;R}$
has been  introduced in \cite{MR2210257}. 
In this work, we use the simpler presentation of G. Friedman (see \cite{MR2209151,LibroGreg,CST3}).

We begin by recalling the  notions of intersection homology. As proved in \cite[Proposition A29]{CST1}, it can be computed  using filtered chains.
\bd
Consider a perverse space  $(X,\ov p)$  and 
 a filtered  simplex
$\sigma\colon\Delta=\Delta_{0}\ast \cdots\ast\Delta_{n} \to X$.
\begin{enumerate}[{\rm (i)}]
\item The \emph{perverse degree of } $\sigma$ is  the $(n+1)$-tuple,
$\|\sigma\|=(\|\sigma\|_0,\ldots,\|\sigma\|_n)$,  
where
 $\|\sigma\|_{i}=\dim \sigma^{-1}(X_{n-i})=\dim (\Delta_{0}\ast\cdots\ast\Delta_{n-i})$, 
with the convention $\dim \emptyset=-\infty$.
 \item Given a stratum $S$ of  $X$, the \emph{perverse degree of $\sigma$ along $S$} is defined by \ 
 $$\|\sigma\|_{S}=\left\{
 \begin{array}{cl}
 -\infty,&\text{if } S\cap \im \sigma=\emptyset,\\
 \|\sigma\|_{\codim S}&\text{if } S\cap \im \sigma\ne\emptyset.\\
  \end{array}\right.$$

  \item The filtered singular simplex $\sigma$  is  \emph{$\ov{p}$-allowable} if
  $
  \|\sigma\|_{S}\leq \dim \Delta-\codim S+\ov{p}(S),
  $
   for any stratum $S$.

   \item A chain $c$ is 
   \emph{$\ov{p}$-allowable} if it  is a linear combination of  $\ov{p}$-allowable simplices.
   The chain $c$ is a  \emph{$\ov{p}$-intersection chain} if $c$ and $\partial c$ are $\ov{p}$-allowable chains.
   \end{enumerate}
   \ed

 \begin{definition}\label{def:chaineintersection} 
 Consider a perverse space $(X,\ov p)$.
 We denote by
$\hiru C {*}{X;R}$ the complex of filtered chains of  $X$, generated by filtered simplices. 
 The dual complex is
$\lau C {*} {}{X;R}=\hom(\lau C {} *{X;R},R)$.

The complex of $\ov{p}$-intersection chains of $X$  with the differential $\partial$
is denoted by
$\lau C{\ov{p}}* {X;R} $. 
Its homology  $\lau H {\ov{p}} * {X;R}$  is the \emph{$\ov{p}$-intersection homology} of $X$ (see \cite[Théorème A]{CST3}).
The dual complex
$\lau C {*} {\ov{p}}{X;R}=\hom(\lau C {\ov{p}} *{X;R},R)$ computes the \emph{$\ov p$-intersection cohomology} $\lau H {*} {\ov{p}}{X;R}$ (see \cite{LibroGreg}).

\end{definition}
\bd
Given a regular simplex $\Delta = \Delta_0 * \cdots *\Delta_n$ we denote by $\gd$ the regular part of the chain $\partial \Delta$.  That is
$\gd \Delta =\partial (\Delta_0 * \cdots * \Delta_{n-1})* \Delta_n$, if $|\Delta_n| = 0 $, or $\gd \Delta = \partial \Delta$,
if $|\Delta_n|\geq 1$.
\ed\bd \label{defgd}
Let  $(X,\ov p)$ be a perverse space.
 Given a regular simplex $\sigma \colon\Delta \to X$, we define the chain $\gd \sigma$ by $\sigma \circ \gd$.
 Notice that $\gd^2=0$.
 We denote by $\hiru \gC * {X;R}$ the chain complex generated by the regular simplices, endowed with the differential $\gd$.
\ed

\bd\label{tameNormHom}
Let  $(X,\ov p)$ be a perverse space.
A $\ov p$-allowable  filtered simplex $\sigma \colon \Delta \to X$  is  \emph{$\ov{p}$-tame} if $\sigma$ is also a regular simplex.
   A chain $\xi$ is 
   \emph{$\ov{p}$-tame} if is a linear combination of  $\ov{p}$-tame simplices.
   The chain $\xi$ is a  \emph{tame $\ov{p}$-intersection chain} if $\xi$ and $\gd \xi$ are $\ov{p}$-tame chains.

We write  $\lau \gC {\ov{p}} * {X;R} \subset \hiru \gC * {X;R}$ the complex of tame $\ov{p}$-intersection chains endowed with the differential  $\gd $.
Its homology  $\lau \gH {\ov{p}}{*} {X;R}$  is the \emph{tame $\ov{p}$-intersection homology} of $X$
\end{definition}

Main properties of this homology have been studied in \cite{CST3,LibroGreg}.
We have proven in \cite{CST3} that the homology
$\lau \gH {\ov{p}} * {X;R}$
coincides with those of \cite{MR2210257,MR2276609} (see also \cite[Chapter 6]{LibroGreg}).
It is also proved that
$ \lau \gH {\ov{p}} * {X;R} = \lau H {\ov{p}} * {X;R}$
if $\ov{p}\leq\ov{t}$.
The  \emph{tame $\ov{p}$-intersection cohomology} $\lau \gH *{\ov p}  {X;R}$
is defined  by using the dual complex  $\lau \gC * {\ov p}  {X;R} = \Hom (\lau \gC  {\ov p} *  {X;R},R)$.
It verifies $ \lau \gH * {\ov{p}}  {X;R} = \lau H * {\ov{p}}  {X;R}$
if $\ov{p}\leq\ov{t}$.

\subsection{Shifted filtrations}\label{SF}
Intersection homology  does not depend on the dimension of the strata of  $X$ but on the codimension of these strata  (see \cite[Section 4.3.1]{LibroGreg}). Let us consider on $X$ the shifted filtration $Y$ of \eqref{shif} in subsection \ref{SF1}.
So, the formal dimension of $Y$ is $n+m$. 
Following subsection \ref{SF1} we have:
$$
\|\sigma\|_\ell^{^X} = \dim \sigma^{-1}(X_{n-\ell}) = \dim \sigma^{-1}(Y_{n+m-\ell})=
\|\sigma\|_{\ell}^{^Y}.
$$
The allowability condition is the same. The perversity  $\ov p$ is also a perversity  on $Y$. If $S$ is a stratum of $X$ (and therefore of $Y$) with $\im \sigma \cap S \ne \emptyset$ we have $\ell = \codim_XS =\codim_Y S $. So,
$$
\|\sigma\|_\ell^{^X}  \leq \dim \Delta -\ell + \ov p(S) \Longleftrightarrow 
\|\sigma\|_\ell^{^Y}  \leq \dim \Delta -\ell + \ov p(S).
$$
We get $\lau C {\ov p} *{X;R} = \lau C {\ov p} *{Y;R}$ and therefore $\lau H {\ov p} *{X;R} = \lau H {\ov p} *{Y;R}$.
By the same reasons, we also have $\lau H *{\ov p} {X;R} = \lau H *{\ov p} {Y;R}$, $\lau \gH {\ov p} *{X;R} = \lau \gH {\ov p}*{Y;R}$ and
$\lau \gH *{\ov p} {X;R} = \lau \gH *{\ov p} {Y;R}$.

\section{Cap product.}\label{ProdCap}

We introduce the notion of cap product for the blown-up intersection (co)homo\-logy, already treated in  \cite{CST7} in a different context. First of all, we work at the simplex level.

Let $\Delta$ be an $m$-dimensional  euclidean simplex. 
We denote by $[\Delta]$ its  face of maximal dimension.
The \emph{classical cap product}
 $-\cap [\Delta]\colon \Hiru N *\Delta\to \hiru N {m-*}\Delta$
  is defined by
\be\label{equa:cap0}
\1_{F}\cap [\Delta]=\left\{
\begin{array}{cl}
G
&
\text{if  $F\cup G=\Delta$  (where $\cup$ is the map of  \defref{def:cupsurDelta})},\\
0
&
\text{otherwise.}
\end{array}\right.
\ee

Consider now the cone  $\tc\Delta$ whose apex is denoted by $\tv$, which is the last vertex of the cone (see  \pagref{virt}). We have the formula:
\begin{equation}\label{equa:cap1}
\1_{(F,j)}\cap [\tc\Delta]=\left\{
\begin{array}{cl}
(G,1)&\text{if } j=0 \hbox{ or } (F,j) = (\Delta,1),\\
%
0&\text{if not,}
\end{array}\right.
\end{equation}
where the simplex $G$ is the face of $\Delta$ with  $F\cup G=\Delta$ (cf. \defref{def:cupsurDelta}) if $j=0$ and $G = \emptyset$ if $(F,j) = (\Delta,1)$.

\medskip

We extend it to regular simplices $\Delta=\Delta_{0}\ast\dots\ast\Delta_{n}$
as follows. Denote  $\hiru \tN{*} \Delta =\hiru N{*}{\tc\Delta_{0}} \otimes\dots\otimes \hiru N {*}{\tc \Delta_{n-1}}\otimes \hiru N{*}{\Delta_{n}}$.

\bd\label{61}
We define the \emph{cap product} 
$
-\tcap \tDelta\colon  \Hiru \tN {*} \Delta \to  \hiru \tN{*-m} \Delta,
$
linearly from 
\be\label{equa:lecapsurdelta}
\1_{(F,\varepsilon)} \tcap \tDelta
=
(-1)^{\nu(F,\varepsilon,\Delta)}
(\1_{(F_{0},\varepsilon_{0})}\cap \tc[\Delta_{0}])\otimes\dots\otimes
(\1_{F_{n}}\cap [\Delta_{n}]),
\ee
where  $\1_{(F,\varepsilon)}=\1_{(F_{0},\varepsilon_{0})}\otimes\dots\otimes \1_{(F_{n-1},\varepsilon_{n-1})}\otimes \1_{F_{n}}$, 
 $\tDelta = \tc \Delta_0 \times \cdots \times \tc \Delta_{n-1} \times \Delta_n$ (see \eqref{bup}) and  $\nu(F,\varepsilon,\Delta)=\sum_{j=0}^{n-1}(\dim\Delta_{j}+1) \,(\sum_{i=j+1}^n|(F_{i},\varepsilon_{i})|)
$, with the convention $\varepsilon_{n}=0$.

At the filtered simplex level, the \emph{local intersection cap product}
$-\cap  \tDelta\colon \Hiru  \tN* \Delta\to \hiru N{m-*}\Delta$ is defined by
$$c\cap \tDelta=\mu_{*}(c\tcap\tDelta),$$
where the map
$\mu_{*}\colon \hiru \tN {*}\Delta \to \hiru N{*}\Delta$ is defined by
\begin{equation}\label{AmuA}
\mu_{*}\left(\otimes_{k=0}^{n-1} (F_k,\varepsilon_k) \otimes F_n
\right)=\left\{
\begin{array}{cl}
F_{0}\ast\dots\ast F_{\ell}&\text{if } 
\dim (F,\varepsilon) =\dim (F_{0}\ast\dots\ast F_{\ell}),\\
0&\text{otherwise,}
\end{array}\right.
\end{equation}
where 
$(F,\varepsilon)=(F_{0},\varepsilon_{0})\otimes\dots\otimes (F_{n-1},\varepsilon_{n-1})\otimes F_{n}$ and $\ell$  is the smallest integer, $j$, such that $\varepsilon_{j}=0$.

\ed

\bd
Since $\tDelta=\tc\Delta_{0} \times\dots\times \tc \Delta_{n-1}\times \Delta_{n}$ (see \eqref{bup}), we have the boundary chain
\begin{eqnarray*}
\partial\tDelta
=
\mathop{\sum_{i=0}^n}_{\Delta_i\ne \emptyset} (-1)^{|\Delta|_{\leq {i-1}}+1}
[\tc \Delta_{0}]\otimes\dots\otimes [\tc \partial\Delta_{i}]\otimes\dots\otimes [\Delta_{n} ]
 &&
 \\+\mathop{\sum_{i=0}^{n-1}}_{\Delta_i\ne \emptyset}
(-1)^{|\Delta|_{\leq i} + 1}
[\tc\Delta_{0}]\otimes\dots\otimes [\Delta_{i}]\otimes\dots\otimes [\Delta_{n}].
\end{eqnarray*}
Let $i\in \{0, \dots, n-1\}$ such that $\Delta_i\ne \emptyset$, the face 
$\mathcal H_i = \tc \Delta_{0} \times\dots \times \tc \Delta_{i-1} \times \Delta_{i}\times \tc \Delta_{i+1} \times \dots\times \Delta_{n}$ is called a \emph{hidden face} of $\tDelta$. This gives
\begin{eqnarray}\label{Hid}
\partial\tDelta=
\widetilde{\gd\Delta} +
\mathop{\sum_{i=0}^{n-1}}_{\Delta_i\ne \emptyset}
(-1)^{|\Delta|_{\leq i} + 1} \mathcal \  \mathcal  H_i, 
\end{eqnarray}
where $\widetilde {\gd \Delta}$ is the blow up of the weighted simplicial complex  $\gd \Delta$ (cf. \defref{def:eclate}).
\ed

\begin{remark} \label{GG}

\color{white}.\normalcolor

\begin{itemize}
\item 
For each $\omega \in \Hiru \tN *\Delta$ the chain $\omega \cap \tDelta$ is regular or 0. Let us see that.
Let $\1_{(F,\varepsilon)}=
\1_{(F_{0},\varepsilon_{0})}\otimes\cdots\otimes \1_{F_{n}} \in \tN^*(\Delta)$.  
From the definition of the cap product we have, up to a sign,
$$\1_{(F,\varepsilon)}\cap\tDelta=G_{0}\ast\cdots \ast G_{n},$$
if $\1_{(F,\varepsilon)}\cap\tDelta\ne 0$. The faces $G_\bullet$ have been defined in \eqref{equa:cap0}, \eqref{equa:cap1} and \eqref{AmuA}.
The simplex $\1_{(F,\varepsilon)}\cap\tDelta$ is regular: $F_n\ne \emptyset
\Rightarrow G_n\ne \emptyset$.

\item Given a hidden face $\mathcal H_i = \tc \Delta_{0} \times\dots  \times \Delta_{i} \times \dots\times \Delta_{n}$, we have
\begin{equation*}
\mu_{*}\left(\cal H_i\right)=\left\{
\begin{array}{cl}
\Delta_{0}\ast\dots\ast \Delta_{i}&\text{if } 
\dim(\Delta_{i+1}* \dots * \Delta_n)=0\\
0&\text{otherwise}
\end{array}\right.,
\end{equation*}
which is a non regular face or 0.
\end{itemize} 
\end{remark}

\medskip

We define the intersection cap product for any filtered space $X$. 

\bd Let $X$ be a filtered space.
The \emph{intersection cap product} 
\bee
- \cap - \colon \Hiru \tN * {X;R} \otimes \lau \gC {}m {X;R}
\to \lau \gC{} {m-*} {X;R}
\eee
is defined by linear extension of 
$$
\omega\cap \sigma=
\sigma_{*}(\omega_{\sigma}\cap \tDelta),
$$
where  $\omega\in \Hiru \tN *{X;R}$ and  $\sigma\colon \Delta\to X$ is a regular simplex.
\ed

\bl\label{bebe}
Let $\Delta$ be a regular simplex.
The map
$\mu_{*}\colon \hiru \tN {*}\Delta \to \hiru N{*}\Delta$
commutes with differentials.
\el

\begin{proof}
Set $\Delta= \Delta_0 * \cdots * \Delta_n$. We proceed by  induction on $n$.
The result is clear when $n=0$. We consider the regular simplex $\nabla=\Delta_{1}\ast\dots\ast\Delta_{n}$,
et $\tN_{*}(\nabla)=N_{*}(\tc\Delta_{1})\otimes\dots\otimes N_{*}(\tc \Delta_{n-1})\otimes N_{*}(\Delta_{n})$. The map $\mu_{\Delta,*} $ is the composition
$$
\mu_{\Delta,*} \colon N_{*}(\tc \Delta_{0})\otimes \tN_{*}(\nabla)
\xrightarrow[]{\id\otimes \mu_{\nabla,*} }
N_{*}(\tc\Delta_{0})\otimes N_{*}(\nabla)
\xrightarrow[]{\nu}
N_{*}(\Delta),
$$
(see \eqref{AmuA}),
where 
$$\nu((F_{0},\varepsilon_{0})\otimes G)=\left\{
\begin{array}{cl}
F_{0}&\text{if } \varepsilon_{0}=0 \text{ and } \dim G=0,\\
0&\text{if }\varepsilon_{0}=0 \text{ and } \dim G>0,\\
F_{0}\ast G&\text{if } \varepsilon_{0}=1.
\end{array}\right.$$
By induction hypothesis, the operator $\mu_{\nabla,*}$ is compatible with differentials. It remains to prove that $\nu$ is also compatible with differentials. We distinguish four cases.
\begin{itemize}
\item If $\varepsilon_{0}=0$ and $\dim G=0$, we have 
$\nu(\partial((F_{0},0)\otimes G))=
\nu((\partial F_{0},0)\otimes G)=\partial F_{0}=\partial \nu((F_{0},0)\otimes G)$.

\item If $\varepsilon_{0}=0$ and $\dim G=1$, we have 
$\nu(\partial((F_{0},0)\otimes G))=
\nu((\partial F_{0},0)\otimes G)+(-1)^{|F_{0}|}\nu((F_{0},0)\otimes \partial G)=
0+ F_{0}-F_{0}=0=\partial \nu((F_{0},0)\otimes G)$.

\item If $\varepsilon_{0}=0$ and $\dim G>1$, all the terms vanish.

\item If $\varepsilon_{0}=1$, the result comes from
\begin{eqnarray*}
\nu(\partial((F_{0},1)\otimes G))=&&\\
\nu((\partial F_{0},1)\otimes G)+
(-1)^{|F_{0}|+1} \left(\nu((F_{0},0)\otimes  G) +
 \nu((F_{0},1)\otimes \partial G)\right)=&&\\
 (\partial F_{0})\ast G + (-1)^{|F_{0}|+1}\left\{
 \begin{array}{cl}
 F_{0}&\text{if } \dim G=0,\\
 F_{0}\ast \partial G&\text{if } \dim G\geq 1,
 \end{array}\right.=&&\\
 \partial (F_{0}\ast G)=\partial \nu((F_{0},1)\otimes G).&&
\end{eqnarray*}
\end{itemize}
\end{proof}

\begin{proposition}\label{prop:lecap}
The intersection  cap product 
verifies the following properties

\begin{itemize}
\item[(i)] $\gd (\omega\cap\xi)=(\delta \omega)\cap \xi+(-1)^{|\omega|}\omega\cap \gd \xi$, and
\item[(ii)] $(\omega\cup\eta)\cap \xi=(-1)^{|\omega| \cdot |\eta|} \ \eta\cap \omega\cap\xi $.
\end{itemize}
where $\omega  , \eta\in \lau \tN * {} {X;R}$ and  $\xi \in \lau \gC {} * {X;R}$.
\end{proposition}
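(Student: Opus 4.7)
The plan is to reduce both identities to a local calculation on a single regular simplex $\sigma\colon \Delta=\Delta_0*\dots*\Delta_n\to X$, exploit the classical Leibniz rule and the classical cup/cap identity on each tensor factor of $\tres \tN*\sigma=N^*(\tc\Delta_0)\otimes\dots\otimes N^*(\Delta_n)$, and then push the result forward through $\mu_*$ and $\sigma_*$. The key technical inputs are: the decomposition (\ref{Hid}) of $\partial\tDelta$ into $\widetilde{\gd\Delta}$ plus hidden faces $\cal H_i$, the observation in \remref{GG} that $\mu_*(\cal H_i)=0$ whenever the hidden face is truly hidden, and \lemref{bebe} asserting that $\mu_*$ commutes with differentials. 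Compatibility with face operators ($\delta_\ell^*\omega_\sigma=\omega_{\sigma\circ\delta_\ell}$) will transport the local identity from $\tDelta$ to $\widetilde{\gd\sigma}$.

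For part (i), I would first establish the Leibniz rule at the level of $\tcap$:
\begin{equation*}
\partial(\omega_\sigma\tcap\tDelta)=(\delta\omega_\sigma)\tcap\tDelta+(-1)^{|\omega|}\omega_\sigma\tcap\partial\tDelta.
\end{equation*}
Since $\omega_\sigma\tcap\tDelta$ is built tensor-factor by tensor-factor from classical cap products, this follows from the ordinary Leibniz rule on each $N^*(\tc\Delta_i)$ and $N^*(\Delta_n)$, with the signs matching the ones in \defref{61}. Applying $\mu_*$ and using \lemref{bebe}, the left side becomes $\partial(\omega_\sigma\cap\tDelta)$. Splitting $\partial\tDelta$ via (\ref{Hid}) into its regular part $\widetilde{\gd\Delta}$ and the hidden faces $\cal H_i$, the hidden face contributions $\mu_*(\omega_\sigma\tcap\cal H_i)$ vanish by \remref{GG} (since $\omega_\sigma\tcap\cal H_i$ lives in a non-regular tensor factor structure that $\mu_*$ sends to $0$). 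On the remaining $\widetilde{\gd\Delta}$ summand, the face-compatibility $\delta_\ell^*\omega_\sigma=\omega_{\sigma\circ\delta_\ell}$ identifies the contribution with $\omega\cap\gd\sigma$. Pushing forward by $\sigma_*$ and using $\gd\sigma=\sigma_*\gd\Delta$ yields $\gd(\omega\cap\sigma)=(\delta\omega)\cap\sigma+(-1)^{|\omega|}\omega\cap\gd\sigma$.

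For part (ii), I would again reduce to the simplex level using $(\omega\cup\eta)_\sigma=\omega_\sigma\cup\eta_\sigma$ and the definition $\omega\cap\sigma=\sigma_*\mu_*(\omega_\sigma\tcap\tDelta)$. The claim reduces to the identity
\begin{equation*}
(\omega_\sigma\cup\eta_\sigma)\tcap\tDelta=(-1)^{|\omega|\cdot|\eta|}\,\eta_\sigma\tcap(\omega_\sigma\tcap\tDelta)
\end{equation*}
at the level of $\hiru\tN*\Delta$. This reduces tensor-factor-wise to the classical identity $(\alpha\cup\beta)\cap[\Delta_j]=\pm\,\beta\cap(\alpha\cap[\Delta_j])$ on each $N^*(\tc\Delta_j)$ and $N^*(\Delta_n)$, with the signs dictated by (\ref{equa:cupsimplexfiltre}) and (\ref{equa:lecapsurdelta}). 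After verifying the factorwise identity (the cone case being (\ref{equa:cap1}), where one checks the two possibilities $\varepsilon=0$ and $(F,j)=(\Delta,1)$ separately), the global sign is obtained by a Koszul-style recount identical to the one that produces the sign in (\ref{equa:cupsimplexfiltre}). Then $\mu_*$ and $\sigma_*$ are applied and, crucially, one uses that $\omega_\sigma\tcap\tDelta$ is a regular chain (\remref{GG}), so applying the second cap keeps us inside a regular structure and the formula passes cleanly to $X$.

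The main obstacle will be the sign bookkeeping in part (i): ensuring that the signs generated by the tensor-wise Leibniz rule, the signs $(-1)^{\nu(F,\varepsilon,\Delta)}$ from (\ref{equa:lecapsurdelta}), and the signs $(-1)^{|\Delta|_{\le i}+1}$ appearing in (\ref{Hid}) all assemble correctly so that the hidden-face terms can be discarded after $\mu_*$ without leaving any surviving sign discrepancy. A secondary subtlety is verifying that every non-regular face contribution — whether arising from $\partial(\omega_\sigma\tcap\tDelta)$ directly or from compositions such as $(F_0*\dots*F_{\ell-1})$ in the definition (\ref{AmuA}) of $\mu_*$ — is killed, so that what remains indeed matches the chain $\gd\sigma$ of \defref{defgd}.
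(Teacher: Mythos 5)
Your overall strategy mirrors the paper's: the Leibniz rule at the $\tcap$ level, \lemref{bebe} to move $\mu_*$ past $\partial$, the decomposition \eqref{Hid} of $\partial\tDelta$, and a tensor-factor reduction to the classical cup/cap identity in part (ii). However, there is a concrete error in your handling of the hidden faces in part (i). You assert that $\mu_*(\omega_\sigma\tcap\cal H_i)=0$, citing \remref{GG}. That is not what the remark says, and it is false in general: $\mu_*(\cal H_i)$ (and hence $\mu_*(\omega_\sigma\tcap\cal H_i)$) is a \emph{non-regular} chain or zero, but can certainly be nonzero. For example, with $n=1$, $\Delta=\Delta_0*\Delta_1$, $\dim\Delta_1=0$ and $\omega_\sigma=\1_{[e_0]}\otimes\1_{[e_1]}$, one finds $\mu_*(\omega_\sigma\tcap\cal H_0)=G_0\ne 0$, a face missing its regular component. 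The mechanism that removes these terms is not $\mu_*$; it is the passage from $\partial$ to $\gd$. After applying $\mu_*$, the equality you reach is for $\partial(\omega_\sigma\cap\tDelta)$, not $\gd(\omega_\sigma\cap\tDelta)$, and these differ precisely by the non-regular hidden-face contributions. One must then observe that the first two summands of the resulting expression are regular (by \remref{GG}, first bullet) while the hidden ones are non-regular (second bullet), so that taking $\gd$ — which by \defref{defgd} discards the non-regular part — cleanly isolates the desired identity. Your ``secondary subtlety'' paragraph gestures at this but the main argument as written would not survive scrutiny, since it produces a false intermediate equality ($\partial=\gd$ for that chain).

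A smaller omission: after reaching $\gd(\omega_\sigma\cap\tDelta)=(\delta\omega_\sigma)\cap\tDelta+(-1)^{|\omega|}\omega_\sigma\cap\widetilde{\gd\Delta}$, you push forward by $\sigma_*$ and quietly use $\sigma_*\circ\gd=\gd\circ\sigma_*$. This needs justification: $\gd$ depends on which simplices are regular, and one has to note that $\sigma\colon\Delta\to X$, with $\Delta$ given its filtration $\Delta_0\subset\Delta_0*\Delta_1\subset\cdots$, is a stratified map with $\Delta_0*\cdots*\Delta_{n-1}=\sigma^{-1}(\Sigma_X)$, so that $\sigma_*$ carries regular faces to regular simplices and non-regular faces to non-regular ones. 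The paper makes this observation explicitly; without it, the last step is not immediate. Your treatment of part (ii) is fine in outline — the sign bookkeeping via $\ltimes_1,\dots,\ltimes_4$ in the paper is exactly the ``Koszul-style recount'' you anticipate — but you should be prepared to carry it out, since it is not entirely mechanical given the extra sign $\nu(F,\varepsilon,\Delta)$ in \eqref{equa:lecapsurdelta}.
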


\begin{proof} 
(i) It suffices to prove it for a regular simplex  $\sigma \colon \Delta \to X$.  We write $k = |\omega|$.
Using formula \eqref{Hid}, the classic Leibniz formula for the cap product gives
\begin{eqnarray*}
\partial (\omega_\sigma \cap \tDelta )=\partial \mu_* (\omega_\sigma \tcap \tDelta )  \stackrel{\lemref{bebe}}{=}
\mu_* \partial (\omega_\sigma \tcap \tDelta )=&& \\
\mu_*\left(
(\delta\omega_\sigma )\tcap \tDelta+ (-1)^k \omega_\sigma \tcap (\partial\tDelta)
\right)
=&&\\
\stackrel{\eqref{Hid}}{=}\mu_* \left( (\delta\omega_\sigma )\tcap \tDelta+(-1)^k \omega_\sigma \tcap \widetilde{\gd \Delta}
+ \mathop{\sum_{i=0}^{n-1}}_{\Delta_i\ne \emptyset}(-1)^{(-1)^{k+ |\Delta|_{\leq i}}} \omega_\sigma \tcap \mathcal H_i \right) =&&\\
=
\underbrace{\mu_* \left(  (\delta\omega_\sigma )\tcap \tDelta+(-1)^k \omega_\sigma \tcap \widetilde{\gd \Delta}\right)}_{\hbox{\tiny regular or 0}}
+ \mathop{\sum_{i=0}^{n-1}}_{\Delta_i\ne \emptyset}(-1)^{(-1)^{k+ |\Delta|_{\leq i}}}  
\underbrace{\mu_* (\omega_\sigma \tcap \mathcal H_i )}_{\hbox{\tiny non regular or 0}}
\end{eqnarray*}
(cf. \remref{GG}). So,
$
\gd(\omega_\sigma \cap \tDelta )
=
 \mu_* ((\delta\omega_\sigma )\tcap \tDelta)
 +(-1)^k  \mu_*(\omega_\sigma \tcap \widetilde{\gd \Delta})
 =
 (\delta\omega_\sigma )\cap \tDelta
 +(-1)^k \omega_\sigma \cap \widetilde{\gd \Delta},
 $
and therefore
$
\sigma_*\gd(\omega_\sigma \cap \tDelta )=
 (\delta\omega)\cap \Delta
 +(-1)^k \omega \cap \gd \Delta.
 $

 The regular simplex $\sigma \colon \Delta \to X$ is in fact a stratified map when one considers on $\Delta$ the filtration $\Delta_0 \subset \Delta_0*\Delta_1 \subset \cdots \subset \Delta_0 * \cdots * \Delta_n$.
 So, $\sigma_* \circ \partial = \partial \circ \sigma_*$ \cite[Theorem F]{CST1}.
 Since $\Delta_0* \cdots * \Delta_{n-1} = \sigma^{-1}(\Sigma_X)$ then  $\sigma_* \circ \gd = \gd \circ\sigma_*$.
We get property (i) since
$
\gd (\omega \cap \Delta ) = \gd \sigma_* (\omega_\sigma  \cap \tDelta )
=
\sigma_* \gd  (\omega_\sigma  \cap \tDelta ).
 $

\medskip

(ii) Without loss of generality we can suppose $\omega_\sigma =\1_{(F,\varepsilon)}=\1_{(F_{0},\varepsilon_{0})}\otimes\dots\otimes \1_{(F_{n-1},\varepsilon_{n-1})}\otimes \1_{F_{n}}$
and
$\eta_\sigma =\1_{(H,\kappa)}=\1_{(H_{0},\kappa_{0})}\otimes\dots\otimes \1_{(H_{n-1},\kappa_{n-1})}\otimes \1_{H_{n}}$. It suffices to prove
$$
(\1_{(F,\varepsilon)} \cup \1_{(H,\kappa)}) \tcap \tDelta = (-1)^{|(F,\varepsilon)| \cdot |(H,\kappa)|} \ \1_{(H,\kappa)} \tcap (\1_{(F,\varepsilon)} \tcap \tDelta).
$$
For $n=0$ we find the classic property of cup/cap products.
In the general case, with the convention $\epsi_n=\kappa_n=0$, we have
\begin{eqnarray*}
(\1_{(F,\epsi)}  \cup \1_{(H,\kappa)}) \tcap \tDelta
=
\left( \bigotimes_{a=0}^n \1_{(F_a,\epsi_a)} 
 \cup
\bigotimes_{a=0}^n \1_{(H_a,\kappa_a)}
 \right)
 \tcap \tDelta  \stackrel{\eqref{equa:cupsimplexfiltre}}{=}&&\\
 (-1)^{\ltimes_1} \cdot 
\left(\bigotimes_{a=0}^n \ \left( \1_{(F_a,\epsi_a)} \cup \1_{(H_a,\kappa_a)}\right)
 \right)\tcap  \tDelta \stackrel{\eqref{equa:lecapsurdelta}}{=}&&\\
 (-1)^{\ltimes_2} \cdot
\bigotimes_{a=0}^{n-1} \ 
\left(
\left( \1_{(F_a,\epsi_a)} \cup \1_{(H_a,\kappa_a)} 
\right)
\cap [\tc \Delta_a ]\right) \otimes  
\left( \left( \1_{F_n} \cup \1_{H_n} \right)
\cap  [\Delta_n]\right)  \stackrel{classic}{=}&&\\
   (-1)^{\ltimes_3} \cdot
\bigotimes_{a=0}^{n-1} \ 
 \left(\1_{(H_a,\kappa_a)} \cap\1_{(F_a,\epsi_a) }\cap [\tc \Delta_a ]\right)
 \otimes
 (\1_{H_n} \cap\1_{F_n }\cap [\Delta_n]) \stackrel{\eqref{equa:lecapsurdelta}}{=}&&
\\
  (-1)^{\ltimes_4}\cdot
  \1_{(H,\kappa)}  \tcap
 \left( 
 \bigotimes_{a=0}^{n-1} \
 \left(\1_{(F_a,\epsi_a) }\cap [\tc \Delta_a ]\right)
 \otimes
 (\1_{F_n }\cap [\Delta_n])
 \right)
   \stackrel{\eqref{equa:lecapsurdelta}}{=}&&\\
 (-1)^{|(F,\epsi)| \cdot |(H,\kappa)|} \cdot \1_{(H,\kappa)}  \tcap
 \1_{(F,\epsi) }  \tcap  \tDelta.
\end{eqnarray*}
where
$
\ltimes_1 =\sum_{i>j}|(F_i,\epsi_i)| \cdot |(H_j,\kappa_j)|,
\ltimes_2 =\ltimes_1 + \sum_{j=0}^{n-1}(\dim\Delta_{j}+1) \,(\sum_{i>j}|(F_{i},\varepsilon_{i})|+ |(H_i,\kappa_i)|),
\ltimes_3=\ltimes_2 +\sum_{i}|(F_i,\epsi_i)| \cdot |(H_i,\kappa_i)|$ and
\begin{eqnarray*}
\ltimes_4 &=&\ltimes_3 +
  \sum_{j=0}^{n-1}(\dim\Delta_{j}+1 - |(F_j,\epsi_j)| ) \,(\sum_{i>j} |(H_i,\kappa_i)|)
=\sum_{i\geq j}|(F_i,\epsi_i)| \cdot |(H_j,\kappa_j)| \\
  && 
  + \sum_{j=0}^{n-1}(\dim\Delta_{j}+1) \,(\sum_{i>j}|(F_{i},\varepsilon_{i})|) - 
  \sum_{j=0}^{n-1}|(F_j,\epsi_j)| \,(\sum_{i>j} |(H_i,\kappa_i)|)\\
  &=&
  \sum_{j=0}^{n-1}(\dim\Delta_{j}+1) \,(\sum_{i>j}|(F_{i},\varepsilon_{i})|) + 
  |(F,\epsi)|\cdot |(H,\kappa)|.
\end{eqnarray*}
\end{proof}

The cap product between a cochain $\omega \in \lau \tN * {} {X;R}$ and a chain $\xi \in \lau C*{} {X;R}$ may not exist. This  happens when a simplex $\sigma$ of  $\xi$ lies in  the singular part of $X$. In this case, $\sigma$ is not regular and we cannot construct the blow up of $\Delta_\sigma$.
This is the reason to use the tame intersection homology in the definition of the cap product: all the involved simplices are regular.

 \begin{proposition}\label{prop:cap}
For each filtered space, $X$, endowed with two perversities, $\ov{p}$ and $\ov{q}$,
the intersection cap product 
 \begin{equation*}\label{equa:capprduitespacefiltre}
 -\cap -\colon \lau \tN * {\ov{p}}{X;R}\otimes \lau \gC {\ov{q}} m {X;R}\to \lau \gC{\ov{p}+\ov{q}}  {m-*} {X;R},
 \end{equation*}
 is well defined and induces a morphism 
  \begin{equation*}\label{equa:capprduitBUcohomologie}
-\cap -\colon  \lau \IH * {\ov{p}}{X;R}\otimes \lau \gH  {\ov{q}} m {X;R}\to
 \lau \gH {\ov{p}+\ov{q}} {m-*}{X;R}.
 \end{equation*}
 \end{proposition}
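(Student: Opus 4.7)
The plan is to establish two things: that the cap product at the chain level sends a $\ov{p}$-intersection cochain and a tame $\ov{q}$-intersection chain to a tame $(\ov{p}+\ov{q})$-intersection chain, and that this descends to (co)homology. The second is formal from the first via the Leibniz identity in \propref{prop:lecap}~(i), so I concentrate on the first.

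By linearity, it suffices, for a regular $\ov{q}$-allowable filtered simplex $\sigma\colon \Delta = \Delta_0 \ast \cdots \ast \Delta_n \to X$ and a cochain with $\omega_\sigma = \1_{(F,\varepsilon)}$ of total blown-up degree $k$, to prove that $\omega \cap \sigma$ is $(\ov{p}+\ov{q})$-allowable. By \remref{GG}, $\omega_\sigma \cap \tDelta$ is either zero or equals (up to sign) a regular subsimplex of $\Delta$; unwinding (\ref{equa:lecapsurdelta}) and (\ref{AmuA}) identifies this subsimplex as $\tau = G_0 \ast \cdots \ast G_n$, where $G_j$ is the face of $\Delta_j$ complementary to $F_j$ when $\varepsilon_j = 0$, and $G_j = \emptyset$ when $\varepsilon_j = 1$ (which forces $F_j = \Delta_j$ for the cap product to be nonzero). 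Consequently $\sigma|_\tau$ is a regular filtered simplex of dimension $\dim \tau = \dim \Delta - k$.

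Now fix a singular stratum $S$ of codimension $\ell$ meeting $\im(\sigma|_\tau)$. Such an $S$ can exist only when $G_{n-\ell} \neq \emptyset$, whence $\varepsilon_{n-\ell} = 0$ and $\|\omega_\sigma\|_\ell = \sum_{j > n-\ell}(\dim F_j + \varepsilon_j)$ is finite; the case $\varepsilon_{n-\ell} = 1$ yields an empty intersection and requires nothing. A direct count (using $\dim \emptyset = -1$ for joins) gives
$$\|\sigma|_\tau\|_\ell = \dim(G_0 \ast \cdots \ast G_{n-\ell}) = \|\sigma\|_\ell - \sum_{j \leq n-\ell}(\dim F_j + \varepsilon_j).$$
Combining this with the $\ov{q}$-allowability $\|\sigma\|_\ell \leq \dim\Delta - \ell + \ov{q}(S)$, the $\ov{p}$-allowability $\sum_{j > n-\ell}(\dim F_j + \varepsilon_j) \leq \ov{p}(S)$, and $k = \sum_j(\dim F_j + \varepsilon_j)$, a short algebraic manipulation yields $\|\sigma|_\tau\|_\ell \leq \dim\tau - \ell + (\ov{p}+\ov{q})(S)$, as required.

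The allowability of the boundary $\gd(\omega \cap \xi)$ then follows from the Leibniz formula $\gd(\omega \cap \xi) = (\delta\omega)\cap \xi + (-1)^{|\omega|}\omega \cap \gd\xi$: both summands are $(\ov{p}+\ov{q})$-allowable by the same reasoning, since $\delta\omega$ is still $\ov{p}$-allowable and $\gd\xi$ is still a tame $\ov{q}$-intersection chain. Descent to (co)homology is then routine: a cocycle capped with a cycle is a cycle, while capping a cocycle with a boundary, or a cycle with a coboundary, produces a boundary via Leibniz. The main technical obstacle is the identification of $\tau$ together with the verification of the dimension identity for $\|\sigma|_\tau\|_\ell$, particularly the bookkeeping around empty subfaces $G_j$ and their effect on the perverse degree.
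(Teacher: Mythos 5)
Your proposal is correct and follows essentially the same route as the paper's proof: both reduce to verifying the perverse-degree inequality for a basic cochain $\1_{(F,\varepsilon)}$ capped against a regular $\ov q$-allowable simplex, identify the cap output via \remref{GG} as $\pm G_0 \ast \cdots \ast G_n$ with $|\Delta_j| = |G_j| + |(F_j,\varepsilon_j)|$, and then perform the same dimension count $\|\sigma|_\tau\|_\ell = \|\sigma\|_\ell - \sum_{j\le n-\ell}(\dim F_j + \varepsilon_j) = \|\sigma\|_S - |\omega_\sigma| + \|\omega_\sigma\|_\ell$ before applying the two allowability hypotheses. The only organizational difference is how the degenerate cases are dispatched: the paper splits explicitly on $\ov p(S) = -\infty$ and $\ov q(S) = -\infty$, whereas you argue that $\im(\sigma|_\tau)\cap S \neq \emptyset$ forces $G_{n-\ell}\neq\emptyset$, hence $\varepsilon_{n-\ell}=0$ (and finiteness of $\|\omega_\sigma\|_\ell$), which implicitly covers those same cases; both arguments are sound and equivalent.
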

 
 \bpr
Following \propref{prop:lecap} it remains to 
study the behavior of the cap product with respect to the perverse degree on a  singular stratum $S$.
Let us consider $\omega \in \tN^*(X;R)$ with $\|\omega\|_S \leq \ov{p}(S)$, $\sigma \colon \Delta \to X$ a regular simplex with $\im \sigma \cap S \ne \emptyset$ and  $\|\sigma\|_S  \leq |\Delta|-\ell + \ov{q}(S)$, where  $\ell=\codim S$. 
We need to prove
\be\label{kap}
\|\omega \cap \sigma\|_S \leq |\omega \cap \sigma| - \ell + (\ov p + \ov q)(S).
\ee
Without loss of generality, we can suppose  $\omega \cap \sigma \ne  0$.

 When $\ov p (S) =-\infty$, the condition $\|\omega \|_S \leq -\infty$ implies $\epsi_{n-\ell}=1$ and therefore the $(n-\ell)$-factor of $\omega_\sigma  \tcap \tDelta$ is $[\tv_{n-\ell}]$ which gives $\omega \cap \sigma \cap S =\emptyset$ and therefore  \eqref{kap}. On the other hand, if $\ov q (S) =-\infty$, the condition $\|\sigma\|_S =-\infty$ gives $\Delta_{n-\ell} =\emptyset$ then $\omega \cap \sigma \cap S =\emptyset$ and  therefore \eqref{kap}. 
 
 We can suppose $(\ov p + \ov q )(S) = \ov p(S) +\ov q(S)$. Then we have \eqref{kap} from
$$
\|\omega\cap\sigma\|_{S}
\stackrel{claim}{\leq}
\|\sigma\|_S+\|\omega_{\sigma}\|_{\ell}-|\omega_{\sigma}|
\leq
|\Delta|-\ell+\ov{p}(S)+\ov{q}(S)-|\omega_{\sigma}|
=
|\omega\cap\sigma|-\ell +\ov{p}(S)+\ov{q}(S).
$$
We verify the claim.
It suffices to consider $\omega_\sigma = \1_{(F,\varepsilon)}$ with $\1_{(F,\varepsilon)}\cap \tDelta \ne 0$.
We know, from \remref{GG}, that $\1_{(F,\varepsilon)}\cap \Delta = \pm G_0*\dots*G_n$ with $|\Delta_i| = |G_i| + |(F_i,\varepsilon_i)|$, for each $i \in \{0, \dots ,n\}$. Then, $\| \omega \cap \sigma\|_S $ is equal to
\begin{eqnarray*}
|G_0*\dots * G_{n-\ell}|
&=&
|G_0| + \dots + |G_{n-\ell}| + n-\ell \\
&=&
|\Delta_0|  + \dots + | \Delta_{n-\ell}|   + n-\ell - |(F_0,\varepsilon_0)|  - \dots - |(F_{n-\ell},\varepsilon_{n-\ell})| 
\\
&=&
\|\sigma\|_S - |(F,\varepsilon)|  + |(F,\varepsilon)|_{>n-\ell} =
\|\sigma\|_S - |\omega_\sigma|  +\|\omega_\sigma\|_\ell.
\end{eqnarray*}
 \epr

\begin{remark}\label{67}
The intersection cap product is natural respectively to perversities. 
Given perversities $\ov a \leq \ov p$ and $\ov b \leq \ov q$, the cap product 
$
- \cap - \colon \lau \tN * {\ov{a}} X\otimes  
\lau \gC    {\ov{b}} {m}  X  \to \lau \gC   {\ov{a}+\ov{b}}  {m-*}X
$
is the restriction of 
$
- \cap - \colon \lau \tN * {\ov{p}} X\otimes  
\lau \gC    {\ov{q}} {m}  X  \to \lau \gC   {\ov{p}+\ov{q}}  {m-*}X.
$
\end{remark}

\section{Stratified maps: the local level.}\label{EAmal}


In  this section and the next one, we prove that any stratified map induces a homomorphism between  the blown-up intersection cohomologies if a certain compatibility of the involved perversities is satisfied. In particular, if $\ov p$ is a GM-perversity, any stratified map $f \colon X \to Y$ induces a homomorphism $f \colon \lau \IH * {\ov p} {X;R} \to \lau \IH * {\ov p} {Y;R}$.

Let $f \colon X \to Y$ be a stratified map as in   \defref{def:applistratifieeforte}. If 
$\sigma\colon \Delta \to X$ is a filtered simplex then the composite $f \circ \sigma \colon \Delta \to Y$ is also a filtered simplex but, in general, the two filtrations induced on $\Delta$ by $\sigma$ and $f \circ \sigma$ differ. We denote them by $\Delta_\sigma$ and $\Delta_{f\circ \sigma}$. In  \cite[Corollary A.25 and Lemma A.24]{CST1}, we prove that the two filtrations $\Delta_\sigma$ and $\Delta_{f\circ \sigma}$ (on the same euclidean simplex) match through a finite number of elementary amalgamations that we describe and study in this section.

\bd \label{elemkam}
Let  $k \in \{0, \ldots,n-1\}$. An {\em elementary $k$-amalgamation} of 	a regular simplex $\Delta = \Delta_0 *  \cdots *  \Delta_n$ 
is the  regular simplex: 
$
\Delta' =  \Delta'_0 *   \cdots * 
  \Delta'_{n-1}
$ with
$$
\Delta'_i
=
\left\{
\begin{array}{ll}
\Delta_i & \hbox{if } i \leq n-k-2,\\
\Delta_{n-k-1} * \Delta_{n-k} & \hbox{if } i = n-k-1,\\
\Delta_{i-1} & \hbox{if } i \geq n-k.
\end{array}
\right.
$$
If $\Delta_{n-k-1}=\emptyset$ then the elementary amalgamation  is \emph{$k$-simple}.

\ed
We study the effect of these maps at the blow up level. First of all, we introduce a technical tool. 

\subsection{Stratification} 
The  simplices
$\Delta = \Delta_0 *  \cdots *  \Delta_n$ and  $\Delta' = \Delta'_0 *  \cdots *  \Delta'_{n-1}$ are filtered spaces (see \defref{def:espacefiltre}) of respective dimension  $n$ and $n-1$.
We set $\N_\Delta = \{ i \in \{0,\ldots,n\} \ / \ 
\Delta_0 * \cdots * \Delta_{n-i - 1} \ne 
\Delta_0 * \cdots * \Delta_{n-i}\}$. We observe that the family of strata of $\Delta$ is
$$\cal S_\Delta =\{ S_{i} =
\Delta_0 * \cdots * \Delta_{n-i } \backslash 
\Delta_0 * \cdots * \Delta_{n-i - 1} \  / \ i \in \N_\Delta\}.
$$
Notice that  $\codim S_i =i$ for each $i \in \N_\Delta$.

If $k \in \{0, \ldots,n-1\}$, we denote $\epsi_k \colon \{0,\ldots,n\} \to \{0,\ldots,n-1\}$ the map defined by $\epsi_k(i) = i
$ if $i \leq k$ and $\epsi_k(i) = i-1$ if $i\geq k+1$.
We check easily that this maps restricts as a map $a_k \colon \N_\Delta \to \N_{\Delta'}$, called the \emph{index map}.
If $S_i \in \cal S_\Delta$, we have $S_{i} \subset S'_{a_k(i)}$.
Thus by definition, the identity map is a stratified map from $\Delta$ to $\Delta'$, we denote it 
$$
\cal A [k] \colon \Delta  = \Delta_0 * \cdots * \Delta_n\to \Delta' = \Delta_0 * \cdots *\Delta'_{n-1}.
$$
If $ {S} \in \cal S_\Delta$, with the notations of \defref{def:applistratifieeforte} we have ${S}^{\cal A [k] } = S'_{a_k(i)}$, with $i=\codim  {S}$, and therefore
\be\label{indexa}
\codim  {S}^{\cal A [k] } =  a_k(\codim  {S}).
\ee

We also say that 
$
\cal A [k] 
$
is an elementary amalgamation.
If the elementary $k$-amalgam\-ation is simple, then $a_k$ is a bijection and $\cal A[k] $ is a  stratified homeomorphism (see \defref{def:applistratifieeforte}).

\bigskip

The elementary $k$-amalgamation has an effect only on two components . We first focus on them. For the sake of the simplicity we write $E_0 = \Delta_{n-k-1}$ and $E_1 = \Delta_{n-k}$, they can be empty.

\bd\label{LemaTec}
We define a map
$$\theta \colon  
 \hiru N *{\tc  E_{0}} \otimes  \hiru N * {\tc E_1}
  \to 
\hiru N *{\tc (E_0*  E_1)}$$
by
\begin{eqnarray*}\label{deftetapeq*}
\theta
((F_0,\epsi_0) \otimes (F_{1},\epsi_1) ) =
\left\{
\begin{array}{ll}
(F_0*F_1,\epsi_1) & \hbox{if } \epsi_0=1,\\
(F_0,0) & \hbox{if } \epsi_0=0 \hbox{ and } |(F_1,\epsi_1) |=0,\\
0 & \hbox{if not.}
\end{array}
\right.
\end{eqnarray*}
Notice that the restriction of $\theta$ to $ \hiru N *{\tc  E_{0}} \otimes  \hiru N * { E_1}$ gives a map, still denoted $\theta$,
$$\theta \colon  
 \hiru N *{\tc  E_{0}} \otimes  \hiru N * { E_1}
  \to 
\hiru N *{E_0*  E_1}.$$
By duality, we get a map
$$\Xi \colon  
 \Hiru N *{\tc (E_0*  E_1)} \to 
\Hiru N *{\tc  E_{0}} \otimes  \Hiru N * {\tc E_1}$$
which verifies
\bee
\left\{
\begin{array}{lcll}
\Xi(\1_{( F_0*F_1,\varepsilon) })&=&
(-1)^{|(F_0,1)| \cdot |(F_1,\varepsilon)|} \1_{(F_0,1)}\otimes \1_{(F_1,\varepsilon)} &\hbox{if }(F_1,\epsi) \ne  (\emptyset,0).
\\[.2cm] \nonumber
  \Xi(\1_{(F_0,0)}) 
&=&
\1_{(F_0,0)}\otimes \lambda_{\tc E_1},
&\hbox{with }
\lambda_{\tc E_1} = \1_{(\emptyset,1)}   \\
&&&
+ 
\displaystyle \sum_{e \in\cal V( E_1)}  \1_{(e,0)}.
\end{array}
\right.
\eee

\ed
We also denote
$\Xi \colon  
 \Hiru N *{E_0*  E_1} \to 
\Hiru N *{\tc  E_{0}} \otimes  \Hiru N * { E_1}$
 the restriction of  $\Xi$.

\bl\label{CompBlow}
The morphisms $\theta $ and $\Xi$ are  chain maps. 
Moreover,
\begin{enumerate}[(1)]
\item if $E_0=\emptyset$ then $\theta$ and $\Xi$ are isomorphisms,

\item $\Xi$ is compatible with the cup products,

\item $\theta$ and $\Xi$ are compatible with the cap product, i.e.,
\begin{eqnarray*}
\theta (\Xi \left(\1_{(F_0 *F_1,\varepsilon)}  \right) \tcap ([\tc E_0] \otimes [\tc E_1] )) &=&
\1_{(F_0 *F_1,\varepsilon)}   \cap [\tc(E_0*E_1)], \hbox{ and} \\
\theta (\Xi \left(\1_{F_0 *F_1}  \right) \tcap ([\tc E_0] \otimes[ E_1] )) &=&
\1_{F_0 *F_1}   \cap [E_0*E_1]
\end{eqnarray*}
where $[\nabla]$ is the maximal simplex of an euclidean simplex $\nabla$, $(F_0*F_1,\varepsilon)$,
is a face of $\tc(E_0 * E_1) $ and $F_0*F_1$,
is a face of $E_0 * E_1 $.

\end{enumerate}

\el
We postpone the proof of this Lemma and deduce first some properties on amalgamations from it.

\bd\label{85}
Let us consider the elementary $k$-amalgamation $\cal A[k] \colon \Delta \to \Delta'$ with  $k\in\{0, \ldots,n-1\}$ (see \defref{elemkam}).
We define two homomorphisms
$
\cal A[k]_* \colon  \hiru \tN * {\Delta}\to \hiru \tN * {\Delta'}
$
 and
$
\cal A[k]^* \colon \Hiru \tN * {\Delta'} \to \Hiru \tN * {\Delta}
$
 by
 \be\label{Abajo}
\cal A[k]_* = \underbrace{\id \otimes \cdots \otimes \id}_{n-k-1 \ times}
\otimes \ \theta \ \otimes \underbrace{\id  \otimes \cdots \otimes \id }_{k \ times}
\ee
and
\be\label{Aalto}
\cal A[k]^* = \underbrace{\id \otimes \cdots \otimes \id}_{n-k-1 \ times}
\otimes \ \Xi \ \otimes \underbrace{\id  \otimes \cdots \otimes \id }_{k \ times}
\ee
\ed

If there is not ambiguity, we use the notation $\cal A_*$ and $\cal A^*$ for these two maps and $a$ for the index map.  In order to get coherent notation, we write $\|-\|_0=0$.

\bp\label{CMAmal}
Let $\cal A \colon \Delta \to \Delta'$ be an elementary $k$-amalgamation. Then,  the maps $\cal A_*$  and  $\cal A^*$ defined above satisfy the following properties.
\begin{enumerate}[(1)]

\item They are chain maps and $\cal A^*$ is compatible with the cup product.

\item Let $\omega \in \Hiru \tN * {\Delta'}$, then we have 
$$
\cal A_*( \cal A^*(\omega) \tcap \tDelta) =  \omega \tcap \widetilde{\Delta'}.
$$

\item In the case of simple amalgamations, they are isomorphisms.

\item For each regular face operator $\delta_\ell \colon \nabla \to \Delta$  we denote by $\cal A_{\Delta}^*$ the previous map and by $\cal A_\nabla^*$ the map corresponding to the amalgamation induced on $\nabla$.
Then we have
$$ \delta_\ell^* \circ  \cal A ^*_\nabla=  \cal A ^*_\Delta  \circ   \delta_\ell^*.
$$

\item The map $\cal A^*$ decreases the perverse degree, i.e., for $\omega \in \Hiru \tN * {\Delta'}$ and $\ell \in \N_\Delta$, we have $$
\|\cal A^*(\omega)\|_\ell  \leq 
\|\omega\|_{a(\ell )}.$$
Moreover, if the amalgamation is simple, this inequality becomes an equality,
$$
\|\cal A^*(\omega)\|_\ell  =
\|\omega\|_{a(\ell )}.$$

\end{enumerate}
\ep
\bpr
The properties (1) and (3) are consequences of  \lemref{CompBlow} (2),(1).

\medskip

(2) From the definition of $\cal A_*$ and $\cal A^*$ it suffices to apply \lemref{CompBlow} (3).

\medskip

(4) Direct from definitions.

\medskip

(5) 
Let $\omega=\1_{(F_0,\varepsilon_0)} \otimes \cdots   \otimes \1_{(F_{n -k-2},\varepsilon_{n-k-2} )}  \otimes \1_{(F_{n-k-1}*F_{n-k},\varepsilon)}  \otimes  \1_{(F_{n-k+1},\varepsilon_{n-k+1} )}  \otimes
 \cdots \otimes \1_{F_{n}} \in \Hiru \tN * {\Delta'}$.
 We distinguish two  cases (see Definitions \ref{LemaTec}, \ref{85}):
 
 \bigskip
 
   \begin{itemize}
   
  \item Suppose $(F_{n-k} ,\epsi) \ne (\emptyset,0)$. We have, up to a sign, 
$
\cal A^* (\omega) = \pm
\1_{(F_0,\varepsilon_0)} \otimes \cdots  \otimes \1_{(F_{n-k-2},\varepsilon_{n-k-2})}  \otimes 
 \1_{(F_{n-k-1},1)} \otimes    \1_{(F_{n-k},\varepsilon)} \otimes  \1_{(F_{n-k+1},\varepsilon_{n-k+1} )} \otimes
 \cdots \otimes\1_{(F_{n},\varepsilon_{n})}.$
  The result comes from
  $$
\| \cal A^* (\omega) \|_\ell  =
\left\{
\begin{array}{cl}
\|\omega\|_{\ell -1}  & \hbox{if } \ell \geq k +2\\
-\infty & \hbox{if } \ell  = k+1\\
\|\omega\|_{\ell } & \hbox{if } \ell \leq k
\end{array}
\right.
\leq
\left\{
\begin{array}{cl}
\|\omega\|_{\ell -1}  & \hbox{if } \ell \geq k +1\\
\|\omega\|_{\ell } & \hbox{if } \ell \leq k
\end{array}
\right.
=\|\omega\|_{a(\ell)},
$$
for $\ell \in \N_\Delta$.

\item Suppose $(F_{n-k},\epsi) =(\emptyset,0)$. 
We have the equality
$
\cal A^* (\omega) =
\1_{(F_0,\varepsilon_0)} \otimes \cdots  \otimes \1_{(F_{n-k-2},\varepsilon_{k-2})}  \otimes 
 \1_{(F_{n-k-1},0)} \otimes    \1_{(\emptyset,1)} \otimes \1_{(F_{n-k+1},\varepsilon_{n-k+1} )}  \otimes
 \cdots \otimes\1_{(F_{n},\varepsilon_{n})}
 +
\sum_{e \in \cal V(\Delta_k)} 
\1_{(F_0,\varepsilon_0)} \otimes \cdots  \otimes \1_{(F_{n-k-2},\varepsilon_{n-k-2})}  \otimes 
 \1_{(F_{n-k-1},0)} \otimes    \1_{(e,0)} \otimes 
 $ \\
 $
 \1_{(F_{n-k+1},\varepsilon_{n-k+1} )} \otimes
 \cdots \otimes\1_{(F_{n},\varepsilon_{n})}.
 $
Then we have
  $$
\|\cal A^* (\omega) \|_\ell  =
\left\{
\begin{array}{cl}
\|\omega\|_{\ell -1}  & \hbox{if } \ell \geq k +2\\
\|\omega\|_{\ell } & \hbox{if } \ell \leq k
\end{array}
\right.
=
\|\omega\|_{a(\ell)},
$$
for $\ell \in \N_\Delta$.
Since the amalgamation is simple, the equality comes from the fact that $k+ 1\not\in \N_\Delta$.
\end{itemize}

\epr

\begin{proof}[Proof of \lemref{CompBlow}.] We begin with the compatibility with the differentials.
Let $H= (F_0,\varepsilon_0)  \otimes (F_1,\varepsilon_1) $
and suppose
$\varepsilon_0=1$ and $|(F_1,\varepsilon_1)|\geq 2$. We have
\begin{eqnarray*}
\theta \partial H  & =&
\theta(
(\partial F_0,1) \otimes (F_1,\varepsilon_1) ) 
 +(-1)^{|(F_0,1)|}\theta( (F_0,1) \otimes (\partial F_1,\varepsilon_1))\\
&&+
(-1)^{|(F_0,1) + |F_1|+1}
\left\{
\begin{array}{ll}
0 &\hbox{if } \varepsilon_1=0\\
\theta( (F_0,1) \otimes (F_1,0)) & \hbox{if } \varepsilon_1=1
\end{array}
\right.\\
&=&
(\partial F_0 *  F_1,\varepsilon_1) +(-1)^{|(F_0,1)|}(F_0*\partial F_1,\varepsilon_1) \\
&&+
\left\{
\begin{array}{ll}
0 &\hbox{if } \varepsilon_1=0\\
(-1)^{|(F_0,1) + |F_1|+1}(F_0*F_1,0)  & \hbox{if } \varepsilon_1=1
\end{array}
\right.\\
&=&
 \partial (F_0 *  F_1,\varepsilon_1)  
\end{eqnarray*}
On the other hand, we get $\partial \theta H = \partial (F_0*F_1,\varepsilon_1)$ and we have established the equality $\theta\partial =\partial \theta$ in the case $\varepsilon_0=1$ and $|(F_1,\varepsilon_1)|\geq 2$.
The proof is similar in the other cases. The compatibility of $\Xi$ with the differentials follows by duality.

\medskip

(1) If $E_0 = \emptyset$, then $\epsi_0 =1$ and, by definition of the map $\theta$, we have $\theta((\emptyset,1) \otimes (F_1,\epsi_1)) = (F_1,\epsi_1)$. Therefore, $\theta$ is an isomorphism and so is $\Xi$ by duality.

\medskip

(2) Let $ \1_{(F_0* F_1,\varepsilon)} ,\1_{(F_0'*F_1',\varepsilon')}   \in \Hiru \tN * {\tc (E_0*E_1)}$. We suppose that $(F_1,\varepsilon)$ and $(F'_1,\varepsilon')$ are different from $(\emptyset,0)$. Let us consider $A =\Xi \left(\1_{(F_0* F_1,\varepsilon)} \right) 
\cup \Xi \left(\1_{(F_0'*F_1',\varepsilon')}  \right)$ and 
$B = \Xi \left(\1_{(F_0*F_1,\varepsilon)}  \cup \1_{(F'_0* F'_1,\varepsilon')}  \right)$.
A direct computation gives the following equalities from definitions.
\begin{eqnarray*}
A
 =(-1)^{ |(F_0,1)| \cdot |(F_1,\epsi)|  + |(F'_0,1)| \cdot |(F'_1,\epsi')|  }  \left(\1_{(F_0,1)} \otimes \1_{(F_1,\epsi)}   \right)\cup  \left(\1_{(F'_0,1)} \otimes \1_{(F'_1,\epsi')}\right) = &&\\
 (-1)^{|(F_1,\epsi)| \cdot |(F'_0,1)| + |(F_0,1)| \cdot |(F_1,\epsi)|  + |(F'_0,1)| \cdot |(F'_1,\epsi')|  }  \left(\1_{(F_0,1)} \cup\1_{(F'_0,1)}  \right)\otimes  \left(\1_{(F_1,\epsi)} \cup\1_{(F'_1,\epsi')}\right)&&
\end{eqnarray*}
If $(F'_0,\epsi) =(\emptyset, 0)$  and  $F_1,F'_1$ are compatibles (see \defref{def:cupsurDelta}), we have
\begin{eqnarray*}
A
&=&
(-1)^{|(F_1,0)|  \left( |(F_0,1)|  +  |(F'_1,\epsi')|\right)}  
\1_{(F_0,1)} \otimes\1_{(F_1\cup F'_1,\epsi')}\\
&=&
(-1)^{|(F_0*F_1,0)| \cdot |(F'_1,\epsi')|  } \Xi (\1_{(F_0*F_1 \cup F'_1,\epsilon' ) })
=
B.
\end{eqnarray*}
If  $(F'_0,\epsi) =(\emptyset, 1)$ and $(F'_1,\epsi') = (\emptyset,1)$, we have
$
A
=
(-1)^{|(F_0,1)| \cdot |(F_1,1)|   }  \1_{(F_0,1)} \otimes\1_{(F_1,1)}	=
  \Xi( \1_{(F_0*F_1,1)} ) 	 =B.
$
In the other cases we have $A=B=0$.

We have established the equality $A=B$ in the case where $(F_1,\varepsilon)$ and $(F'_1,\varepsilon')$ are different from $(\emptyset,0)$. The verification is similar in the other cases.

\medskip

(3) 
Suppose  $(F_1,\epsi)\ne (\emptyset,0)$. We have
\begin{eqnarray*}
C&=&\theta
  (\Xi (\1_{(F_0* F_1,\varepsilon)}  ) \tcap ([\tc E_0 ]\otimes [\tc E_1] ))\\
&=&
\mathfrak (-1)^{|(F_0,1)| \cdot |(F_1,\epsi)|} \theta((\1_{(F_0,1)}\otimes \1_{(F_1,\epsi)} )\tcap ( [\tc E_0 ]\otimes [\tc E_1] ))\\&=&
(-1)^{|(F_1,\epsi)| \cdot |\tc E_0| + |(F_0,1)| \cdot |(F_1,\epsi)|} \mathfrak \theta ( (\1_{(F_0,1)}  \cap [\tc E_0]) \otimes (\1_{(F_1,\epsi)} \cap [\tc E_1 ])).
\end{eqnarray*}
We apply the definition of cap product \eqref{equa:cap1} and we consider three different cases.
\begin{itemize}
\item If $F_0=E_0, F_1=E_1$ and $\epsi=1$, we have
$$
C =
\theta( [\tv ]\otimes [\tv] )  =[\tv]  = \1_{(F_0*F_1,\varepsilon) }\cap [\tc(E_0*E_1)].
$$

\item If $F_0=E_0, F_1 \cup G_1 =E_1$ and $\epsi=0$, we have
$$
C=
\theta( [\tv ]\otimes (G_1,1) ) =(G_1,1) =
\1_{(F_0*F_1,\varepsilon) }\cap [\tc(E_0*E_1)].
$$

\item The other cases correspond to $C = \1_{(F_0*F_1,\varepsilon) }\cap [\tc(E_0*E_1) ] =0$.
\end{itemize}

We have established the property (3) in the case $(F_1,\epsi)\ne (\emptyset,0)$. The verification is similar in the other cases.

\end{proof}

\section{Stratified maps: the global level.
\thmref{MorCoho}.}\label{AmalSec}
Let $f \colon X \to Y$ be a stratified map, $\sigma \colon \Delta_\sigma \to X$ and $f \circ \sigma \colon \Delta_{f\circ \sigma} \to Y$ filtered simplices of $X$ and $Y$ respectively. In 
\cite[Corollary A.25]{CST1} we have proved that the filtrations $\Delta_\sigma$ and $\Delta_{f\circ \sigma}$ (of the same euclidean simplex $\Delta$) are connected by an amalgamation, more exactly, by a finite sequence $\cal A_1$   of elementary amalgamations and a 
finite sequence $\cal A_2^{-1}$   of inverse of simple amalgamations as follows.

\begin{center}
\begin{tikzpicture}
\node (a) at (-6,0)  {$\Delta_\sigma $} ;
\node (b) at (-6,-4) {$\Delta_{f\circ \sigma}$} ;
\draw [->] (a) --(b) node[midway,left]{\tiny$\cal A_\sigma$} ;

\draw(-5.5,0) node{$=$} ;

\node (c)  at (-4,-.15) {$\underbrace{\Delta_0 {*} \cdots {*}\Delta_{j_0} }$} ;
\node (d)  at (-4,-2)  {$\Delta'_{i_0}$}  ;
\node (e)  at (-4,-3.85) {$\overbrace{\Delta'_0 {*} \cdots{*} \Delta'_{i_0}}$} ;
\draw [->] (c) --(d) node[midway]{} ;
\draw [->] (d) --(e) node[midway]{} ;

\draw (-2.8,0) node{$*$} ;
\draw (-2.8,-4) node{$*$} ;

\node (f)  at (-1.3,-.15) {$\underbrace{\Delta_{j_0+1} * \cdots *\Delta_{j_1} }$};
\node (g)  at (-1.3,-2)  {$\Delta'_{i_1}$}  ;
\node (h)  at (-1.3,-3.85) {$\overbrace{\Delta'_{i_0+1} * \cdots * \Delta'_{i_1}}$} ;
\draw [->] (f) --(g) node[midway]{} ;
\draw [->] (g) --(h) node[midway]{} ;

\draw (.8,0) node{$* \cdots *$} ;
\draw (.8,-4) node{$* \cdots *$} ;

\node (i)  at (3,-.15) {$\underbrace{ \Delta_{j_{a-1}+1}{*}\cdots {*} \Delta_{j_a}}$};
\node (j)  at (3,-2)  {$\Delta'_{i_a}$}  ;
\node (k)  at (3,-3.85){$\overbrace{ \Delta'_{i_{a-1}+1}{*}\cdots *\Delta'_{i_a} }$} ;
\draw [->] (i) --(j) node[midway]{} ;
\draw [->] (j) --(k) node[midway]{} ;

\node (l)  at (4.5,0){} ;
\node (m)  at (3.2,-1.9)  {}  ;
\node (mm)  at (3.2,-2.1)  {}  ;
\node (n)  at (4.5,-4)  {}  ;
\draw [->] (l)  to[out=0,in=0]  node[midway,right]{\tiny${\cal A}_1$}(m) ;
\draw [->] (mm)  to[out=0,in=0]  node[midway,right]{\tiny${\cal A}^{-1}_2$}(n) ;

\end{tikzpicture}
\end{center}

We denote the amalgamation $\cal A_\sigma = \cal A_2^{-1} \circ \cal A_1$, with $\cal A_1 = \cal A_{1,1} \circ \cdots \circ \cal A_{1,u}$ and $\cal A_2 = \cal A_{2,1} \circ \cdots \circ \cal A_{2,v}$. The elementary amalgamations  $\cal A_{1,i}$ (resp. simple amalgamations $\cal A_{2,j}$)
are written in a canonical way, going from the left to the right.

\subsection{$\mu$-amalgamation}
\label{defmu*}
Mention the amalgamation of $\Delta_\sigma$ collecting all the filtration in one factor. We call it  the \emph{$\mu$-amalgamation} $\cal A_\sigma''\colon \Delta_\sigma \to \Delta$. The induced map  $\cal A''_{\sigma,*} \colon \hiru \tN * {\Delta_\sigma} \to \hiru N * {\Delta}$ is defined by
\begin{eqnarray*}
\bigotimes_{i=0}^n (F_i,\varepsilon_i)
&\rightsquigarrow&
(F_{0} * \cdots *F_\ell,0) \otimes\bigotimes_{i> \ell}(F_i,\varepsilon_i)\rightsquigarrow
\left\{
\begin{array}{cl}
F_{0}\ast\dots\ast F_{\ell}&\text{if } |(F,\epsilon)|_{>\ell} = 0\\
0&\text{otherwise,}
\end{array}\right.
\end{eqnarray*}
with $\epsi_n=0$ and 
$\ell$  is the smallest integer, $j$, such that $\varepsilon_{j}=0$ (see \defref{61}).
We recover the map $\mu_*$ of \eqref{AmuA}. We also define $\mu^* = \cal A_\sigma''^*$.

With the previous notations, we notice the commutativity of the following diagram 
\be\label{PropmuA}
\xymatrix{
\Delta_\sigma 
\ar[r]^-{{\cal A}_\sigma }
 \ar[d]_{\cal A''_\sigma}
&
\Delta_{f\circ \sigma}
 \ar[ld]^-{\cal A''_{f \circ \sigma}}
\\
\Delta &
}
\ee

If $\ov p$ is a perversity on $X$ and $\ov q$ is a perversity on $Y$ with $\ov p \leq f^* \ov q$, we have proved in \cite[Proposition 3.6]{CST3} that the map $f$ induces a homomorphism $f_* \colon \lau H {\ov p} *{X;R} \to \lau H{\ov p} *{Y;R} $.
We define now the induced map in blown-up intersection cohomology and study its properties.

\bd\label{local1}
Let $f \colon X \to Y$ be a stratified map. The \emph{induced map}  $f \colon \Hiru \tN * {Y;R} \to \Hiru \tN * {X;R} $ is the map defined by
$
(f^* \omega)_\sigma = \cal A_\sigma^*\omega_{f \circ \sigma },
$
for each $\omega \in \Hiru \tN * {Y;R}$ and each regular simplex $\sigma \colon \Delta_\sigma \to X$.
\ed

With \propref{CMAmal}(4)  this definition makes sense. It generalizes  the notion of induced map of  \propref{prop:applistratifieeforte} (see \remref{rem:fortementstratifieetidentite}).

\bt \label{MorCoho}
Let $f \colon X \to Y$ be a stratified map. 
\begin{enumerate}[(1)]
\item The induced morphism
$f ^*\colon \Hiru \tN *  {Y;R} \to \Hiru \tN *  {X;R}$
 is a  chain map.
 
 \item The map $f^*$ is compatible with the cup product.
 
 \item The maps $f_*\colon \lau \gC {} * {X;R} \to \lau \gC {} * {Y;R}$ and $f ^*\colon \Hiru \tN *  {Y;R} \to \Hiru \tN *  {X;R}$ verify
 $$f_*(f^* \omega\cap \xi ) = \omega \cap f_* \xi.$$
for each $\omega\in \lau \tN * {} {Y;R}$ and each  $\xi \in \lau \gC {} * {X;R}$.
 
 \item Let  $\ov p$ be a  perversity on $X$ and  $\ov q$  a perversity on $Y$ such that $\ov p \geq f^*\ov q$.
 Then $f$ induces a chain map $f ^*\colon \lau \tN * {\ov q} {Y;R} \to \lau \tN * {\ov p} {X;R}$,
thus a  homomorphism 
$f ^*\colon \lau \IH * {\ov q} {X;R} \to \lau \IH* {\ov p} {Y;R}$.
\end{enumerate}
\et

As a GM-perversity verifies $\ov p\geq f^* \ov p  $, the previous Theorem has an adaptation in this context.

\bc \label{GMp}
Let $f \colon X \to Y$ be a stratified map  and $\ov p$ a GM-perversity.
Then $f$ induces a homomorphism $f ^*\colon \lau \IH * {\ov p} {Y;R} \to \lau \IH* {\ov p} {X;R}$
compatible with cup products.
\ec

The maps induced in cohomology and homology for a GM-perversity satisfy also the condition (3) of \thmref{MorCoho}.

\begin{proof}[Proof of \thmref{MorCoho}]
Properties (1) and (2) are direct consequences of \propref{CMAmal}. 

\medskip

(3) Notice first that $f_*\colon \lau \gC {} * {X;R} \to \lau \gC {} * {Y;R}$ is well defined since $f_*$ preserves filtered simplices (see \cite[Theorem F]{CST1}) and  $f(X\menos \Sigma_X) \subset Y \menos \Sigma_Y$, since $f$ is  stratified.

We can suppose that $\xi$ is a regular simplex $\sigma \colon \Delta_\sigma \to X$. 
Applying \propref{CMAmal}(2) repeatedly we have
$
\cal A_{\sigma,*}(\cal A ^*_\sigma \omega_{f\circ\sigma} \tcap \tDelta_\sigma )
= \omega_{f\circ \sigma} \tcap \tDelta_{f \circ \sigma}.
$
 The result follows from
 \begin{eqnarray*}
 f_*(f^*\omega \cap \sigma ) =
f_* \sigma_*  \mu_{\Delta_\sigma, *} ((f^*\omega )_\sigma \tcap \tDelta_\sigma)
=
f_* \sigma_*  \mu_{\Delta_\sigma, *}
 (
\cal A_\sigma^* \omega_{f\circ \sigma} \tcap \tDelta_\sigma)
&&\\
\stackrel{\eqref{PropmuA}}{=}
f_* \sigma_*  \mu_{\Delta_{f\circ\sigma}, *}\cal A_{\sigma ,*}
 (
\cal A_\sigma^* \omega_{f\circ \sigma} \tcap \tDelta_\sigma)
=
(f \circ \sigma)_* \mu_{\Delta_{f\circ \sigma},*}(
\omega_{f\circ \sigma} \tcap \tDelta_{f \circ \sigma})
&&\\
=\omega \cap  f_* (\sigma).&&
\end{eqnarray*}

\medskip

(4) Let $\omega \in \lau \tN * {\ov q} {Y;R} $. 
Let $\sigma \colon \Delta \to X$ be a regular simplex  and $S$  a singular stratum of $X$ with  $\im \sigma \cap S\ne \emptyset$. We have to prove
$$
\|\omega\|_{S^f} \leq \ov q(S^f)  \Rightarrow \|f^*\omega\|_{S}   \leq \ov p(S)  
$$
Since $\ov p \geq f^* \ov q$, it suffices  to prove
$
\|\cal A_\sigma^*\omega_{f \circ \sigma}\|_{\codim S}  \leq \|\omega_{f \circ \sigma}\|_{\codim S^f}.$
Recall the decomposition,
$
\cal A_\sigma=\cal A^{-1}_{2,v} \circ \dots \circ \cal A^{-1}_{2,1} \circ \cal A_{1,1} \circ \dots \circ \cal A_{1,u},
$
at the beginning of the section and denote by  $a_{1,i}, a_{2,j}$ the associated index maps. We have
$$
\codim S^f = (a^{-1}_{2,v} \circ \dots \circ  a^{-1}_{2,1} \circ
a_{1,1} \circ \dots \circ a_{1,u})(\codim S)
$$
(see \eqref{indexa}). Now, the result follows from \propref{CMAmal} (5).
\end{proof}

  \part{Properties of the Blown-up intersection cohomology}\label{part:mayervietoris}
  
  We present the properties of the blown-up intersection cohomology used in this work. We end this Part by comparing this cohomology with the intersection cohomology obtained from the dual cochains.

\section{$\cal U$-small chains. \thmref{thm:Upetits}.}\label{sec:subdivision}

We establish  a Theorem of $\cal U $-small filtered simplices, where  $ \cal U $ is  an open cover of $ X $
(\thmref{thm:Upetits}).

\medskip
Let $\Delta$ be an euclidean simplex whose vertices are $e_{0},\dots,e_{m}$. 
Let $e_{\{i_{0}\dots i_{p}\}}$ be the barycenter of the simplex  $[e_{i_{0}},\dots,e_{i_{p}}]$ and
$\sub\,\Delta$ the simplicial complex given by the  
barycentric subdivision of  $\Delta$.
The subdivision linear map,
$\sub_{*}\colon N_{*}(\Delta)\to N_{*}(\sub\,\Delta)$, is defined by
$\sub_{*} [e_{i_{0}},\dots,e_{i_{p}}]
=
(-1)^p (\sub_{*}\partial [e_{i_{0}},\dots,e_{i_{p}}])\ast e_{\{i_{0}\dots i_{p}\}}
$.
Any face of $ \sub \, \Delta $ appears at most once in the expression of $ \sub_{*} $ and some faces do not.

\begin{definition}\label{def:simplexcomplet}
An $\ell$-simplex of $\sub\,\Delta$ is a \emph{subdividing simplex}  if it appears
in the  subdivision of an $\ell$-simplex of $\Delta$. 
 \end{definition}
 

From the definition of $ \sub_{*} $ we know that the subdividing simplices of $ \sub \, \Delta $ are
  the simplices of the form
$$\nabla=[e_{i_{0}},e_{\{i_{0}i_{1}\}},e_{\{i_{0}i_{1}i_{2}\}},\dots,e_{\{i_{0}\dots i_{p}\}}].$$
The vertex $e_{i_{0}}$ is the \emph{first vertex} and the vertex $e_{i_{p}}$ is the \emph{last vertex of $\nabla$}. We write $e_{i_{p}}=\der\nabla$.

\medskip
The transpose map of $\sub_{*}$ is denoted by 
$\sub^*\colon \Hiru N *{\sub\,\Delta} \to \Hiru  N *{\Delta}$. It is defined on the subdividing simplices by
$${\sub^*} (\1_{[e_{i_{0}},e_{\{i_{0}i_{1}\}},e_{\{i_{0}i_{1}i_{2}\}},\dots,e_{\{i_{0}\dots i_{p}\}}]})= \1_{[e_{i_{0}},\dots,e_{i_{p}}]},$$
and by 0 on the other simplices.
Since the linear map ${\sub_{*}}$ is compatible with  differentials, then ${\sub^*}$ also is a cochain map.

\medskip

To construct a homotopy between the barycentric subdivision operator and the identity,
we consider the simplicial complex $ K (\Delta) $,
whose simplices  are the joins, $F  \ast  G$, of a simplex $ F $
of $ \Delta $ and of a simplex $ G $ of $ \sub \, \Delta $, such that:
if $F=[e_{i_{0}},\dots,e_{i_{p}}]$
and
$G=[e_{J_{0}},\dots,e_{J_{k}}]$,
then
$\{i_{0},\dots,i_{p}\}\subset J_{\ell}$,
for each $\ell\in\{0,\dots,k\}$.
In addition, simplices $ F $ and $ G $ can be empty, but not simultaneously, making $ \Delta $ and $ \sub \, \Delta $ two
sub-complexes of $ K ( \Delta) $.

\medskip
The homotopy operator, $T\colon \hiru N {*}{\Delta}\to \hiru N {*+1}{K(\Delta)}$, defined by
\\
$T[e_{i_{0}},\dots,e_{i_{p}}]=
(-1)^{p+1}\left(
[e_{i_{0}},\dots,e_{i_{p}}] -T\partial [e_{i_{0}},\dots,e_{i_{p}}]\right)
\ast e_{\{i_{0}\dots i_{p}\}}
$,
verifies
$$(\partial T+T\partial)(F)=F- \sub_{*}(F).$$
In particular, $T$ takes the values, $T(e_{0})=-e_{0}\ast e_{\{0\}}$
and
$T[e_{0},e_{1}]
=
[e_{0}e_{1}]\ast e_{\{01\}} -e_{0}\ast [e_{\{0\}},e_{\{0,1\}}]+e_{1}\ast [e_{\{1\}},e_{\{0,1\}}]$.

\begin{definition}\label{def:facecompleteK}
A \emph{full simplex} of $K(\Delta)$ is a  simplex of the form
$$F\ast G
=
[e_{i_{0}},\dots,e_{i_{p}}]\ast [e_{\{i_{0}\dots i_{p}\}}, e_{\{i_{0}\dots i_{p}i_{p+1}\}},\dots, e_{\{i_{0}\dots i_{p}\dots i_{p+r}\}}].$$
The vertex $e_{i_{p+r}}$ is the  \emph{last vertex} of $F\ast G$ and denoted by
$\der(F\ast G)$.
\end{definition}

Note that if $ F \ast G$ is full and $ p =  0$, then $ G $ is subdividing in the sense of \defref{def:simplexcomplet}.

\smallskip

Endow the simplex $\Delta$  with a filtration,
$\Delta=\Delta_{0}\ast\dots\ast\Delta_{n}$, 
compatible with the order considered on the vertices, cf. (\ref{equa:ordresommets}).
The simplicial complex \emph{$\sub\,\Delta$  is a weighted simplicial complex,} associating to the vertex
$e_{\{i_{0}\dots i_{p}\}}$ the weight $\ell$ if we have
$[e_{i_{0}},\dots,e_{i_{p}}]\fa \Delta_{0}\ast \dots\ast \Delta_{{\ell}}$
and
$[e_{i_{0}},\dots,e_{i_{p}}]\not\!\!\fa \Delta_{0}\ast \dots\ast \Delta_{{\ell}-1}$.
Since the family of the vertices of  $K(\Delta)$ is the union of those of $\Delta$ and those of  $\sub\,\Delta$, the  complex
$K(\Delta)$ also is a weighted simplicial complex.
The chosen canonical basis of
$\tN^*(K(\Delta))$ 
is composed of elements of the form,
$$\1_{(F\ast G,\varepsilon)}=\1_{(F_{0},\varepsilon_{0})}
\otimes\dots\otimes
\1_{(F_{q}\ast G_{q},\varepsilon_{q})}
\otimes\dots\otimes
\1_{(G_{n-1},\varepsilon_{n-1})}
\otimes
\1_{G_{n}},$$
with $F=F_{0}\ast\dots\ast F_{q}\fa \Delta$ and $G=G_{q}\ast\dots\ast G_{n}\fa \sub\,\Delta$.

\begin{remark}\label{rem:subplouf}
The linear operator $ \sub^*$ does not necessarily respect  the filter degree of  simplices.
Consider $ \Delta = [e_ {0}] \ast [e_ {1}, e_ {2}] $.
Previous computations give
$\sub^*(\1_{[e_{1},e_{\{0,1\}}]})=\1_{[e_{1},e_{0}]}$.
Taking into account the filtration defined on $ \sub \, \Delta $,
one has
$[e_{1},e_{\{0,1\}}]=\emptyset\ast [e_{1},e_{\{0,1\}}]$
and
$[e_{1}, e_{0}]=-[e_{0}]\ast [e_{1}]$,
from which one gets $\sub^*(\1_{\emptyset\ast [e_{1},e_{\{0,1\}}]})=-\1_{[e_{0}]\ast [e_{1}]}$.

More generally, let $\Delta=\Delta_{0}\ast\dots\ast\Delta_{n}$ and let $\nabla=\nabla_{0}\ast\dots\ast\nabla_{n}\fa \sub\,\Delta$.
The previous example shows that the use of $\sub^*$ 
for the construction of a cochain morphism,
$$\wsub\colon N^*(\tc\nabla_{0})\otimes \dots\otimes N^*(\tc\nabla_{n-1})\otimes N^*(\nabla_{n})
\to N^*(\tc\Delta_{0})\otimes \dots\otimes N^*(\tc\Delta_{n-1})\otimes N^*(\Delta_{n}),$$
requires a reordering of the vertices according to the filtrations of $\Delta$ and $\sub\,\Delta$. 
To avoid this difficulty
we define below the morphism
 $\wsub_{\Delta}\colon \Hiru \tN*{\sub\,\Delta}\to \Hiru \tN*\Delta$  and the homotopy 
$\tT_{\Delta}\colon \Hiru \tN* {K(\Delta)}\to \Hiru \tN{*-1}\Delta$
by induction.
\end{remark}

The two canonical injections of $\Delta$ and $\sub\,\Delta$ into  $K(\Delta)$ induce the chain complex epimorphisms, 
\begin{itemize}
\item $\iota^*_{\Delta}\colon \Hiru \tN*{K(\Delta)}\to \tN^*(\Delta)$,
defined by the identity on the elements
$\1_{(F,\varepsilon)}$ with $F\fa \Delta$ and 0 elsewhere,
\item 
$\iota^*_{\sub\,\Delta}\colon \tN^*(K(\Delta))\to 
\Hiru \tN*{\sub\,\Delta}$,
defined by the identity on the elements
$\1_{(G,\varepsilon)}$ with $G\fa \sub\,\Delta$ and 0 elsewhere.
\end{itemize}

\bl\label{lem:subethomotopie}
There exist a linear map, 
$\tT_{\Delta}\colon \Hiru \tN*{K(\Delta)}\to 
\Hiru \tN {*-1}\Delta$, 
and a complex cochain morphism, 
$\wsub_{\Delta}\colon \Hiru \tN*{\sub\,\Delta}\to \Hiru\tN *\Delta$, 
verifying
\begin{equation}\label{equa:cequilfaut}
\tT_{\Delta}\circ \tdelta^{K(\Delta)}+\tdelta^{\Delta}\circ \tT_{\Delta}=
\iota^*_{\Delta}-\wsub_{\Delta}\circ\iota^*_{\sub\,\Delta},
\end{equation}
where $\tdelta^{K(\Delta)}$ and $\tdelta^{\Delta}$ are the differentials on
$\tN^*(K(\Delta))$ and $\tN^*(\Delta)$.
\el

\subsection{Construction of $\wsub_{\Delta}$  and $\tT_{\Delta}$}
The homotopy $\tT_{\Delta}$ is constructed by induction at the level of the full blow up's,
$\tN^{\fc,*}(K(\Delta))$ and $\tN^{\fc,*-1}(\Delta)$,
specifying its value on
the elements $\1_{(F\ast G,\varepsilon)}$, with $F\fa \Delta$ and $G\fa \sub\,\Delta$.

We set  $\tT_{\Delta}(\1_{(F\ast G,\varepsilon)})=0$ if $F\ast G$ is not full, in the sense of \defref{def:facecompleteK}. 
Now, let,
$$\1_{(F\ast G,\varepsilon)}=\1_{(F_{0},\varepsilon_{0})}
\otimes\dots\otimes
\1_{(F_{q}\ast G_{q},\varepsilon_{q})}
\otimes\dots\otimes
\1_{(G_{n-1},\varepsilon_{n-1})}
\otimes
\1_{(G_{n},\varepsilon_{n})},$$
with $F\ast G$ full. %
We use a double induction, on the length $n$ of the filtration of $\Delta$
and on the dimension of the component $(G_{n},\varepsilon_{n})$. We set
$\Delta=\nabla\ast \Delta_{n}$.
\begin{enumerate}[a)]
\item {If $\dim (G_{n},\varepsilon_{n})=0$,} 
we distinguish three cases
\begin{enumerate}[i)]
\item If $G_{n}=\emptyset$ and $\varepsilon_{n}=1$, then $F\ast G\fa \nabla$ and we set:
\begin{equation}\label{equa:homotopiedemarragevide}
\tT_{\Delta}(\1_{(F\ast G,\varepsilon)}\otimes \1_{(\emptyset,1)})=
\tT_{\nabla}(\1_{(F\ast G,\varepsilon)})\otimes \1_{(\emptyset,1)}.
\end{equation}
\item If $\dim G_{n}=0$, $\varepsilon_{n}=0$ and $q<n$, then $G_{n}=[e_{J}]$, where $e_{J}$ is the barycenter of a face of $\Delta$.
Let  $e_{\alpha}=\der (G_{q}\ast\dots\ast G_{n})$  the last vertex of the face $G$
and $G'=G_{q}\ast\dots\ast G_{n-1}$. 
(Notice 
$e_{\alpha}\in\Delta_{n}$, 	otherwise the weight of the vertex $e_{J}$ of $\sub\,\Delta$ would be $n-1$.)
We set:
\begin{equation}\label{equa:homotopiedim0}
\tT_{\Delta}(\1_{(F\ast G',\varepsilon)}\otimes \1_{([e_{J}],0)})=
\tT_{\nabla}(\1_{(F\ast G',\varepsilon)})
\otimes
\1_{([e_{\alpha}],0)}.
\end{equation}
\item If $\dim G_{n}=0$, $\varepsilon_{n}=0$ and $q=n$, then  $G=G_{n}=[e_{\{F\}}]$ is reduced to the barycenter of the face $F$. 
(Notice    $F_{n}\neq \emptyset$
otherwise the weight of the barycenter of $F$ could not be  $n$ in $\sub\,\Delta$.) We set:
\begin{equation}\label{equa:homotopiedemarrage}
\tT_{\Delta}(\1_{(F,\varepsilon)}\ast e_{\{F\}})=
(-1)^{|(F,\varepsilon)|}
\1_{(F,\varepsilon)}.
\end{equation}
\end{enumerate}
\item If $\dim (G_{n},\varepsilon_{n})\geq 1$, then $G_{n}\neq\emptyset$ and we set $e_{\alpha}=\der (G_{q}\ast\dots\ast G_{n})$ the last vertex of the face $G$.
We write $G_{n}=G'_{n}\ast e_{J}$, so that
$G'=G_{q}\ast \dots\ast
G_{n-1}\ast G'_{n}$ is subdividing and $e_{\alpha}$ is a vertex of the face $J$ for which  $e_{J}$ is the barycenter.
We set:
\begin{equation}\label{equa:homotopiedimqcq}
\tT_{\Delta}(\1_{(F\ast G',\varepsilon)}\ast e_{J})=
\tT_{\Delta}(\1_{(F\ast G',\varepsilon)})\ast e_{\alpha}.
\end{equation}
\end{enumerate}

The construction  of $\tT_{\Delta}$ does not change the value of $\varepsilon_{n}$; moreover, the image of an element
$\1_{(F\ast G,\varepsilon)}$ with $\varepsilon_{n}=0$ and $G_{n}\neq\emptyset$ also has a non-empty component in $\Delta_{n}$. So we built a linear map
$$
\tT_{\Delta}\colon \tN^*(K(\Delta))\to \tN^{*-1}(\Delta).
$$
We construct the map
$\wsub_{\Delta}\colon \tN^*(\sub\,\Delta)\to \tN^*(\Delta)$ from $\tT_{\Delta}$, by 
$$\wsub_{\Delta} 
= -\tT_{\Delta}\circ \tdelta^{K(\Delta)}.$$ 

\begin{remark}\label{rem:pleinetsommets}
To any simplex, $F\ast G$, of $K(\Delta)$, we associate a family of vertices, $\cal V_{\Delta}(F\ast G)$, consisting of the vertices of  $F$ and the vertices of the faces of $\Delta$   having a vertex of $G$ as barycenter.
Adding the  virtual vertices, we set
$\cal V_{\Delta}(\1_{(F\ast G,\varepsilon)})=
\cal V_{\Delta}(F\ast G,\varepsilon)$.
By construction of $\tT_{\Delta}$, we have
$$\cal V_{\Delta}(\1_{(F\ast G,\varepsilon)})=\cal V_{\Delta}(\tT_{\Delta}(\1_{(F\ast G,\varepsilon)})).$$
\end{remark}

\bigskip

The proof of \lemref{lem:subethomotopie}  is postponed to the end of the proof of \lemref{lem:subethomotopie},
 \pagref{subsec:constructionhomotopie}.
First, we take advantage of this result 
in terms of the blown-up  intersection cohomology.
%


Let $(X,\ov{p})$ be a perverse space.
Recall that a cochain $\omega\in \lau \tN* {\ov{p}}{X;R}$ associates to any regular filtered simplex
$\sigma\colon \Delta\to X$,
a cochain $\omega_{\sigma}\in \tres \tN*\sigma$ and verifies
$
\|\omega\|\leq \ov{p}$,
$\|\delta\omega\|\leq\ov{p}$
 and $
\delta^*_{\ell}(\omega_{\sigma})=\omega_{\partial_{\ell}\sigma},
$ 
for any regular face operator, $\delta_{\ell}$.

\begin{definition}\label{def:Upetit}
Let $\cal U$ be an open cover of $X$.
A \emph{$\cal U$-small simplex} is a regular simplex, $\sigma\colon \Delta=\Delta_{0}\ast\dots\ast\Delta_{n}\to X$,
 such that there exists $U\in\cal U$ with $\im\sigma\subset U$. The family of  $\cal U$-small simplices is denoted by $\si_{\cal U}$.

 The \emph{blown-up  complex of $\cal U$-small cochains of $X$
 with coefficients in $R$,} written $\Hiru \tN{*,\cal U}{X;R}$,
is the cochain complex made up of elements  $\omega$, associating to any $\cal U$-small simplex,
 $\sigma\colon\Delta= \Delta_{0}\ast\dots\ast\Delta_{n}\to X$,
an element
 $\omega_{\sigma}\in \Hiru \tN*\Delta$, %
 so that $\delta_{\ell}^*(\omega_{\sigma})=\omega_{\partial_{\ell}\sigma}$,
for any face operator,
 $\delta_{\ell}\colon \Delta'_{0}\ast\dots\ast\Delta'_{n}\to \Delta_{0}\ast\dots\ast\Delta_{n}$, with $\Delta'_{n}\neq\emptyset$. 
 If $\ov{p}$ is a perversity on $X$, we denote by $\lau \tN {*,\cal U}{\ov{p}}{X;R}$ the complex of 
 $\cal U$-small cochains verifying
$ \|\omega\|\leq \ov{p}$ and
$\|\delta \omega\|\leq\ov{p}$.
\end{definition}

The following Theorem  compares the complexes $\lau \tN * {\ov{p}}{X;R}$ and
$\lau \tN {*,\cal U} {\ov{p}}{X;R}$.

\begin{theorem}\label{thm:Upetits}
Let $(X,\ov{p})$ be a perverse space endowed with an open cover, $\cal U$.
Let
$\rho_{\cal U}\colon \lau \tN {*}{\ov{p}}{X;R}\to  \lau \tN {*,\cal U}{\ov{p}}{X;R}$ be the restriction map. 
The following properties are verified.
\begin{enumerate}[\rm (i)]
\item  There exist a cochain map,
$\varphi_{\cal U}\colon  \lau \tN {*,\cal U} {\ov{p}}{X;R}\to \lau \tN *{\ov{p}}{X;R}$, and a homotopy, \\
$\Theta\colon  \lau \tN*{\ov{p}}{X;R}\to\lau \tN{*-1}{\ov{p}}{X;R}$,
such that
$$\rho_{\cal U}\circ \varphi_{\cal U}=\id
\text{ and }
\delta\circ\Theta+\Theta\circ\delta=\id -\varphi_{\cal U}\circ\rho_{\cal U}.
$$
\item Furthermore, if the cochain $\omega \in \tN^{*,\cal U}_{\ov{p}}(X;R)$ 
is such that there exists a subset $K\subset X$
for which  $\omega_{\sigma}=0$ if $(\im\sigma)\cap K=\emptyset$,
then $\varphi_{\cal U}(\omega)$ also verifies
$(\varphi_{\cal U}(\omega))_{\sigma}=0$
if $(\im\sigma)\cap K=\emptyset$.
\end{enumerate}
\end{theorem}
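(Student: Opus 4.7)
The plan is to adapt the classical Lebesgue-number passage to $\cal U$-small cochains to the blown-up setting, using the subdivision operator $\wsub_\Delta$ and the homotopy $\tT_\Delta$ produced in \lemref{lem:subethomotopie}. For any regular singular simplex $\sigma\colon\Delta\to X$, the family $\{\sigma^{-1}(U)\}_{U\in\cal U}$ is an open cover of the compact simplex $\Delta$, so by the Lebesgue number lemma there exists a smallest integer $n(\sigma)\geq 0$ such that every simplex of $\sub^{n(\sigma)}\Delta$ is mapped by $\sigma$ inside some element of $\cal U$. A direct check shows that $n(\partial_\ell\sigma)\leq n(\sigma)$ for every regular face operator $\delta_\ell$.

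Iterating \lemref{lem:subethomotopie} along the tower $\Delta,\sub\,\Delta,\sub^2\Delta,\dots$ yields cochain maps $\wsub^n_\Delta\colon \Hiru\tN *{\sub^n\Delta}\to \Hiru\tN *\Delta$ and, by the standard telescope construction, homotopy operators $\tT^{(n)}_\Delta$ satisfying the relation
$$\tT^{(n)}_\Delta\circ\tdelta+\tdelta\circ \tT^{(n)}_\Delta=\iota^*_\Delta-\wsub^n_\Delta\circ\iota^*_{\sub^n\Delta}.$$
I then define, for $\omega\in\lau\tN{*,\cal U}{\ov p}{X;R}$ and $\sigma\colon\Delta\to X$ regular,
$$\varphi_{\cal U}(\omega)_\sigma=\wsub^{n(\sigma)}_\Delta\bigl(\omega_{\sub^{n(\sigma)}\sigma}\bigr),$$
which makes sense because every simplex appearing in $\sub^{n(\sigma)}\sigma$ is $\cal U$-small, and I define $\Theta$ from the corresponding telescope $\tT^{(n(\sigma))}_\Delta$ dualised to cochains. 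Face compatibility $\delta_\ell^*\varphi_{\cal U}(\omega)_\sigma=\varphi_{\cal U}(\omega)_{\partial_\ell\sigma}$ requires a small correction when $n(\partial_\ell\sigma)<n(\sigma)$, obtained by inserting the intermediate operators $\tT_{\sub^k\Delta}$ for $k=n(\partial_\ell\sigma),\dots,n(\sigma)-1$.

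The central point to verify, and the main obstacle, is that $\varphi_{\cal U}$ and $\Theta$ preserve $\ov p$-allowability, i.e.\ do not increase the perverse degree of \defref{def:degrepervers} along any singular stratum. This reduces, by linearity, to showing that $\wsub_\Delta$ and $\tT_\Delta$ do not raise $\|\cdot\|_\ell$ on each filtered simplex $\Delta=\Delta_0\ast\dots\ast\Delta_n$. For $\tT_\Delta$ this must be read off the inductive formulas \eqref{equa:homotopiedemarragevide}, \eqref{equa:homotopiedim0}, \eqref{equa:homotopiedemarrage} and \eqref{equa:homotopiedimqcq}, combined with \remref{rem:pleinetsommets}, which identifies $\cal V_\Delta(\tT_\Delta(\1_{(F\ast G,\varepsilon)}))$ with $\cal V_\Delta(\1_{(F\ast G,\varepsilon)})$; since the $\ell$-perverse degree of a basic cochain is determined by which of its vertices lie in strata of codimension $<\ell$, this vertex-level identification, together with \lemref{lem:unecellulepuislautre} and \corref{cor:astdiff}, yields the desired inequality. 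The corresponding control for $\wsub_\Delta=-\tT_\Delta\circ\tdelta^{K(\Delta)}$ then follows from \propref{prop:dfaceajout}.

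The remaining claims of (i) are easy consequences: $\rho_{\cal U}\circ\varphi_{\cal U}=\id$ because $n(\sigma)=0$ for any $\cal U$-small $\sigma$ and then $\wsub^{0}_\Delta=\id$, while the chain-homotopy identity $\delta\circ\Theta+\Theta\circ\delta=\id-\varphi_{\cal U}\circ\rho_{\cal U}$ is exactly the telescope identity displayed above. Statement (ii) is automatic from the construction: every simplex of $\sub^{n(\sigma)}\sigma$ has image contained in $\im\sigma$, so $\im\sigma\cap K=\emptyset$ forces $\omega_{\sub^{n(\sigma)}\sigma}=0$ and hence $\varphi_{\cal U}(\omega)_\sigma=0$; the analogous vanishing for $\Theta$ is built in by the same argument.
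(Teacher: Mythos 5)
Your overall strategy (Lebesgue number of the cover $\cal U$, iterated barycentric subdivision, telescope homotopy built from $\tT_\Delta$ and $\wsub_\Delta$) is the one the paper follows, but at the key construction step the argument has a genuine gap. Setting $\varphi_{\cal U}(\omega)_\sigma=\wsub^{n(\sigma)}_\Delta(\omega_{\sub^{n(\sigma)}\sigma})$ does \emph{not} define an element of $\Hiru\tN*{X;R}$: when $n(\partial_\ell\sigma)<n(\sigma)$ the cochains $\wsub^{n(\sigma)}\omega$ and $\wsub^{n(\partial_\ell\sigma)}\omega$ agree on $\partial_\ell\sigma$ only up to coboundary, not on the nose, so the face relation $\delta_\ell^*\varphi_{\cal U}(\omega)_\sigma=\varphi_{\cal U}(\omega)_{\partial_\ell\sigma}$ fails, and ``inserting intermediate $\tT_{\sub^k\Delta}$'s'' gives no recipe for restoring the strict equality. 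This is not a small correction; it is the reason the paper does \emph{not} define $\varphi_{\cal U}$ first. The paper defines the homotopy $\Theta$ coordinatewise: the coefficient of $\1_{(F,\varepsilon)}$ in $\Theta(\omega)_\sigma$ is $(\Psi_{m(\sigma_F)}(\omega))_{\sigma_F}(F,\varepsilon)$, where $\Psi_m=\sum_{0\leq i<m}\tT\circ\wsub^i$ and, crucially, the iteration count $m(\sigma_F)$ depends only on the restriction $\sigma_F$ of $\sigma$ to the face $F$. Face compatibility is then immediate. Afterwards $\varphi_{\cal U}$ is \emph{recovered} from the identity $\varphi_{\cal U}=\id-\delta\Theta-\Theta\delta$; the resulting formula carries, besides the main term with coefficient $(\wsub^{m(\sigma_\nabla)}\omega)_{\sigma_\nabla}(\nabla,\kappa)$ over faces $(\nabla,\kappa)\fa\tDelta$, genuine correction terms involving the differences $\Psi_{m(\sigma_F)}-\Psi_{m(\sigma_\nabla)}$. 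The face-dependence of the iteration count is the heart of the construction, and your sketch omits it entirely.

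Separately, your reduction of the perversity bound to $\cal V_\Delta$-preservation is not valid. Remark~\ref{rem:pleinetsommets} is used in the paper only to establish face-compatibility of the globally assembled $\tT$, and $\cal V_\Delta(F\ast G,\varepsilon)$ is a proper superset of the vertex set of $F\ast G$ (it also contains every vertex of each face of $\Delta$ whose barycenter lies in $G$). The perverse degree $\|\1_{(F,\varepsilon)}\|_\ell$ is not a function of this family: it is $-\infty$ if $\varepsilon_{n-\ell}=1$ and otherwise equals $\sum_{i>n-\ell}(\dim F_i+\varepsilon_i)$, so it is sensitive to the exact dimensions and cone flags. The inequalities $\|\tT(\omega)\|\leq\|\omega\|$ and $\|\wsub(\omega)\|\leq\|\omega\|$ require the direct case check of the defining clauses (\ref{equa:homotopiedemarragevide})--(\ref{equa:homotopiedimqcq}), tracking the weight of the adjoined vertex $e_\alpha$, as the paper carries out.
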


	As an immediate consequence of property (i) we get the following corollary.

\begin{corollary}\label{cor:Upetits}
The restriction map,
$\rho_{\cal U}\colon \lau \tN{*}{\ov{p}}{X;R}\to \lau\tN{*,\cal U}{\ov{p}}{X;R}$,
is a quasi-isomor\-phism.
\end{corollary}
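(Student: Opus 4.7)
The plan is to globalize the local operators $\wsub_\Delta$ and $\tT_\Delta$ of Lemma~\ref{lem:subethomotopie} to a chain homotopy on $\Hiru \tN * {X;R}$, and then apply a standard Lebesgue-number iteration to build $\varphi_{\cal U}$ and $\Theta$.

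For the first step, observe that for any regular simplex $\sigma \colon \Delta \to X$, each subdividing simplex of $\sub\,\Delta$ (resp.\ each full simplex of $K(\Delta)$) inherits the weighted structure of $\Delta$ and hence, via $\sigma$, becomes a regular filtered simplex of $X$. Thus any cochain $\omega \in \Hiru \tN *{X;R}$ restricts to elements $\omega|_{\sub\,\Delta} \in \Hiru \tN * {\sub\,\Delta}$ and $\omega|_{K(\Delta)} \in \Hiru \tN * {K(\Delta)}$, and I define
$$
(\Sub^* \omega)_\sigma = \wsub_\Delta(\omega|_{\sub\,\Delta}), \qquad (T\omega)_\sigma = \tT_\Delta(\omega|_{K(\Delta)}).
$$
Equation~(\ref{equa:cequilfaut}) then yields the global identity $\delta T + T \delta = \id - \Sub^*$. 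Both operators preserve $\ov p$-allowability: the inductive formulas (\ref{equa:homotopiedemarragevide})--(\ref{equa:homotopiedimqcq}) only adjoin ordinary or virtual vertices drawn from the filtration stratum where they already lie, and such adjunctions do not increase the $\ell$-perverse degree of \defref{def:degrepervers} (by inspection using \corref{cor:astdiff}).

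Next I iterate. Setting $T_k = \sum_{j=0}^{k-1} T \circ (\Sub^*)^j$ gives $\delta T_k + T_k \delta = \id - (\Sub^*)^k$, and $\Sub^*$ preserves the subcomplex of $\cal U$-small cochains since subdivision sends $\cal U$-small simplices to $\cal U$-small ones. For each regular $\sigma \colon \Delta \to X$, Lebesgue's lemma applied to the compact $\Delta$ covered by $\{\sigma^{-1}(U)\}_{U \in \cal U}$ produces a least integer $k(\sigma) \geq 0$ for which every subdividing simplex of $\sub^{k(\sigma)}\,\Delta$ is $\cal U$-small under $\sigma$. The inequality $k(\partial_\ell \sigma) \leq k(\sigma)$ holds for every regular face operator, by restriction of the cover. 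For $\omega \in \lau \tN {*,\cal U} {\ov p}{X;R}$, the expression $((\Sub^*)^{k(\sigma)} \omega)_\sigma$ requires $\omega$ only on $\cal U$-small simplices and so is well defined. Taking $\varphi_{\cal U}(\omega)_\sigma = ((\Sub^*)^{k(\sigma)} \omega)_\sigma$ as a first approximation is not face-compatible when $k(\partial_\ell \sigma) < k(\sigma)$; the standard remedy---and the main obstacle of the proof---is to build $\varphi_{\cal U}$ and $\Theta$ simultaneously by induction on $\dim \sigma$, absorbing the discrepancy between $(\Sub^*)^{k(\sigma)}$ and $(\Sub^*)^{k(\partial_\ell \sigma)}$ on the face into a correction term of the form $T \circ (\Sub^*)^j$ for $k(\partial_\ell \sigma) \leq j < k(\sigma)$. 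This mirrors the classical cochain-level proof of the small-simplices theorem (Spanier, Bredon) and yields $\rho_{\cal U} \circ \varphi_{\cal U} = \id$ (since $k(\sigma) = 0$ for every $\cal U$-small $\sigma$) together with the required homotopy identity.

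For property~(ii), observe that by construction $\varphi_{\cal U}(\omega)_\sigma$ depends only on values $\omega_\tau$ for filtered simplices $\tau$ with $\im \tau \subseteq \im \sigma$, since the subdividing or full simplices used in the construction all sit inside $\Delta$ (cf.\ \remref{rem:pleinetsommets}), hence their images under $\sigma$ lie in $\im \sigma$. If $\im \sigma \cap K = \emptyset$, all such $\omega_\tau$ vanish and thus $\varphi_{\cal U}(\omega)_\sigma = 0$. The hardest point in the whole argument is the inductive reconciliation of the differing subdivision depths $k(\sigma)$ over the simplex poset, together with the verification that $\wsub_\Delta$ and $\tT_\Delta$ preserve perverse degrees---both standard but delicate checks given the combinatorial complexity of the explicit formulas defining these operators.
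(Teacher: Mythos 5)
Your approach is essentially the one taken in the paper: globalize the local operators of \lemref{lem:subethomotopie} to $\tT$ and $\wsub$ on $\Hiru\tN*{X;R}$ (this is \propref{prop:wsubX}), iterate to get $\Psi_m=\sum_{0\leq i<m}\tT\circ\wsub^i$ satisfying $\delta\Psi_m+\Psi_m\delta=\id-\wsub^m$, and then deal with the variable subdivision depth $m(\sigma)$ via Lebesgue's lemma; the corollary is then an immediate formal consequence of \thmref{thm:Upetits}(i). The one place you diverge from the paper's construction is how face-compatibility of $\Theta$ and $\varphi_{\cal U}$ is secured. You propose an induction on $\dim\sigma$ that absorbs the mismatch between $m(\sigma)$ and $m(\partial_\ell\sigma)$ into correction terms of the form $\tT\circ\wsub^j$. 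The paper avoids any induction by giving a closed face-wise formula
\[
(\Theta(\omega))_\sigma \;=\; \sum_{(F,\varepsilon)\fa\tDelta}\bigl(\Psi_{m(\sigma_F)}(\omega)\bigr)_{\sigma_F}(F,\varepsilon)\,\1_{(F,\varepsilon)},
\]
whose coefficient at $\1_{(F,\varepsilon)}$ depends only on $\sigma_F$ and $\varepsilon$, so compatibility with face operators is automatic; $\varphi_{\cal U}$ is then extracted algebraically from the homotopy identity, and one checks directly that the resulting expression only consults $\omega$ on $\cal U$-small restrictions. Both routes work, but the closed formula sidesteps the bookkeeping your induction would require. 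One small caution on your perversity-preservation argument: adjoining a vertex from a deeper filtration level \emph{can} increase the $\ell$-perverse degree by one (this is the case $\ell(\alpha)>n-\ell$ in the paper's computation for (\ref{equa:homotopiedimqcq})); the inequality $\|\tT(\omega)\|\leq\|\omega\|$ holds not because adjunctions never raise the degree, but because the input cochain to $\tT_\Delta$ is correspondingly smaller, so the careful case analysis in the proof of \thmref{thm:Upetits}(i) is genuinely needed rather than ``by inspection.''
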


First we establish the existence of a subdivision at the level of the blown-up  complex.

\begin{proposition}\label{prop:wsubX}
Consider $X$ a filtered space. The maps $\tT_{\Delta}$ and $\wsub_{\Delta}$
extend as
$\tT\colon \Hiru \tN*{X;R}\to
\Hiru\tN {*-1}{X;R}$
and as
$\wsub\colon\Hiru \tN *{X;R}\to\Hiru \tN *{X;R}$
verifying
\begin{equation}\label{equa:homotopiesub}
\tT\circ \delta+\delta\circ \tT=\id -\wsub.
\end{equation}
\end{proposition}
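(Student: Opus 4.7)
The plan is to define $\tT$ and $\wsub$ pointwise on regular filtered simplices of $X$, reducing to the local constructions $\tT_\Delta$ and $\wsub_\Delta$ via a natural pullback of cochains. Given $\omega \in \Hiru \tN *{X;R}$ and a regular filtered simplex $\sigma \colon \Delta \to X$, I would first associate to $\omega$ a cochain $\omega^\sigma \in \Hiru \tN *{K(\Delta);R}$. Using the inverse limit description from \propref{prop:pasdeface} (valid since $K(\Delta)$ is a weighted simplicial complex), it suffices to specify $\omega^\sigma$ on each regular sub-simplex $\Gamma \subset K(\Delta)$. For such a $\Gamma$, the natural linear map $|\Gamma| \to |\Delta|$ sending each barycentric vertex $e_J$ to the barycenter of the face $[e_j : j \in J]$ of $\Delta$, composed with $\sigma$, yields a regular filtered simplex $\sigma_\Gamma \colon \Gamma \to X$; set $(\omega^\sigma)_\Gamma = \omega_{\sigma_\Gamma}$. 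The face compatibility required by \propref{prop:pasdeface} follows directly from the face compatibility of $\omega$ on $X$, and the assignment $\omega \mapsto \omega^\sigma$ commutes with differentials by construction.

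Next, I would define
\begin{equation*}
(\tT\omega)_\sigma = \tT_\Delta(\omega^\sigma), \qquad (\wsub\omega)_\sigma = \wsub_\Delta(\iota^*_{\sub\Delta}(\omega^\sigma)),
\end{equation*}
and verify that these assemble into cochains of $X$, i.e., satisfy face compatibility: for any regular face operator $\delta_\ell \colon \nabla \to \Delta$, one needs $\delta_\ell^*((\tT\omega)_\sigma) = (\tT\omega)_{\sigma\circ\delta_\ell}$ and the analogous identity for $\wsub$. The inclusion $\nabla \hookrightarrow \Delta$ induces compatible inclusions $\sub\nabla \hookrightarrow \sub\Delta$ and $K(\nabla) \hookrightarrow K(\Delta)$, and a direct check shows that $\omega^{\sigma\circ\delta_\ell}$ coincides with the restriction of $\omega^\sigma$ along the latter inclusion. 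Thus face compatibility for $\tT$ reduces to the naturality claim that the restriction maps $\Hiru\tN *{K(\Delta)} \to \Hiru\tN *{K(\nabla)}$ and $\Hiru\tN *{\Delta} \to \Hiru\tN *{\nabla}$ intertwine $\tT_\Delta$ with $\tT_\nabla$; the analogous statement for $\wsub_\bullet$ then follows automatically from the identity $\wsub_\Delta = -\tT_\Delta \circ \tdelta^{K(\Delta)}$.

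The naturality of $\tT_\Delta$ under face operators is the main technical obstacle. I expect to establish it by inspecting the inductive definition of $\tT_\Delta$ given by \eqref{equa:homotopiedemarragevide}--\eqref{equa:homotopiedimqcq}. Each recursion step uses only data intrinsic to the face structure (the last vertex of $G$, the first $n-1$ tensor factors, or the adjunction of a vertex), and each formula admits a matching version for $\nabla$, so the claim should follow by a double induction on the filtration depth $n$ and on the dimension of the last component, mirroring the induction used to construct $\tT_\Delta$. The case analysis is tedious but routine.

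Finally, once $\tT$ and $\wsub$ are established as well-defined operators on $\Hiru\tN *{X;R}$, the homotopy relation \eqref{equa:homotopiesub} follows by applying the local identity \eqref{equa:cequilfaut} to $\omega^\sigma$ for each regular $\sigma$: since $\iota^*_\Delta(\omega^\sigma) = \omega_\sigma$ and $(\delta\omega)^\sigma = \tdelta^{K(\Delta)}(\omega^\sigma)$, extracting the $\sigma$-component of \eqref{equa:cequilfaut} applied to $\omega^\sigma$ yields exactly $(\tT\circ\delta + \delta\circ\tT)(\omega)_\sigma = \omega_\sigma - (\wsub\omega)_\sigma$, which is the claimed identity.
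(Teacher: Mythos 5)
Your proposal follows the paper's own argument in essentially all respects: you define $\omega^\sigma$ (the paper's $\omega_{K(\Delta)}$) by restricting $\omega$ along the simplices $\sigma_\Gamma = \sigma \circ (\text{barycentric inclusion})$, set $(\tT\omega)_\sigma = \tT_\Delta(\omega^\sigma)$ and $(\wsub\omega)_\sigma = \wsub_\Delta(\iota^*_{\sub\Delta}\omega^\sigma)$, reduce face compatibility to naturality of $\tT_\Delta$ under face operators (the paper's commutative diagram, using \remref{rem:pleinetsommets}), and deduce \eqref{equa:homotopiesub} from the local identity \eqref{equa:cequilfaut} together with $\iota^*_\Delta(\omega^\sigma)=\omega_\sigma$ and $\iota^*_{\sub\Delta}(\omega^\sigma)=\omega_{\sub\Delta}$. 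The only cosmetic difference is that you invoke the inverse-limit description of \propref{prop:pasdeface} to construct $\omega^\sigma$, where the paper simply writes it out as a sum over faces; both routes produce the same cochain, and the "tedious but routine" naturality check you defer is exactly the diagram the paper verifies using the explicit formula for $\delta_\ell^*$ on the dual basis.
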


\begin{proof}
We first construct a homotopy,
$\tT\colon \Hiru \tN*{X;R}\to \Hiru \tN{*-1}{X;R}$,
from the maps $\tT_{\Delta}$ previously defined.

Let $\sigma\colon\Delta\to X$ be a filtered simplex and
$\omega\in \Hiru \tN k {X;R}$. 
A simplex $F\ast G$  of $K(\Delta)$ 
can be described by its vertices,
$F\ast G=[e_{i_{0}},\dots,e_{i_{r}},e_{\{F_{0}\}},\dots,e_{\{F_{s}\}}]$,
where $e_{j}$ is a vertex of $\Delta$ and $e_{\{F_{j}\}}\in\cal V(\sub\,\Delta)$ is the barycenter of the face $F_{j}\fa \Delta$.
It follows  $e_{\{F_{j}\}}\in \Delta$ and we define a filtered simplex of $X$,
$\sigma_{_{F\ast G}}\colon F\ast G\to X$,
using the barycentric coordinates by
\be\label{equa:sigmaFG}
\sigma_{_{F\ast G}}(\sum_{j}t_{j}e_{j}+\sum_{\ell}u_{\ell}e_{\{F_{\ell}\}})
=
\sigma(\sum_{j}t_{j}e_{j}+\sum_{\ell}u_{\ell}e_{\{F_{\ell}\}}).
\ee
Thanks to \propref{prop:pasdeface}, we define
$\omega_{K(\Delta)}\in \Hiru\tN k{K(\Delta)}$ 
by
$$\omega_{K(\Delta)}=
\sum_{\substack{F\ast G\fa K(\Delta) \\  |(F\ast G,\varepsilon)|=k\phantom{-}}}
\omega_{\sigma_{_{F\ast G}}}(F\ast G,\varepsilon) \,\1_{(F\ast G,\varepsilon)}.
$$
Write now
$$(\tT(\omega))_{\sigma}=\tT_{\Delta}(\omega_{K(\Delta)})$$
and verify that we obtain  an element of $\Hiru \tN*{X;R}$.
Let $\sigma\colon\Delta=\Delta_{0}\ast\dots\ast\Delta_{n}\to X$ be  a regular simplex and 
$\tau=\sigma\circ\delta_{\ell}\colon\nabla\to X$,
where $\delta_{\ell}\colon \nabla\to\Delta$
is a regular face operator.
Since
$\omega_{K(\Delta)}\in \tN^k(K(\Delta))$
then
$\delta_{\ell}^*\,\omega_{K(\Delta)}=\omega_{K(\nabla)}$.
The maps
${\delta}^*_{\ell}\colon \tN^*(\Delta)\to\tN^*(\nabla)$
and
${\delta}^*_{\ell}\colon \tN^*(K(\Delta))\to \tN^*(K(\nabla))$
are defined by
$$
{\delta}^*_{\ell}(\1_{(F,\varepsilon)})=
\left\{\begin{array}{cl}
\1_{(F,\varepsilon)}
&\text{if}\;
F\fa\nabla,\\
0&\text{if not,}
\end{array}\right.
\text{and } 
{\delta}^*_{\ell}(\1_{(F*G,\varepsilon)})=
\left\{\begin{array}{cl}
\1_{(F*G,\varepsilon)}
&\text{if}\;
F*G\fa K(\nabla),\\
0&\text{if not.}
\end{array}\right.
$$
From these calculations and from 
\remref{rem:pleinetsommets}, directly follows the commutativity of the following diagram,
\begin{equation*}\label{equa:biendefini}
\xymatrix{
\Hiru \tN *{K(\Delta)}\ar[rr]^-{\tT_{\Delta}}
\ar[d]_{{\delta}^*_{\ell}}
&&
\Hiru \tN {*-1}{\Delta}
\ar[d]^{{\delta}^*_{\ell}}
\\
\Hiru \tN*{K(\nabla)}
\ar[rr]^-{\tT_{\nabla}}
&&
\Hiru \tN {*-1}{\nabla},
}
\end{equation*}
from which we deduce,
$
\delta^*_{\ell}\tT(\omega)_{\sigma}
=\delta^*_{\ell}\tT_{\Delta}(\omega_{K(\Delta)})=
\tT_{\nabla}(\delta^*_{\ell}\omega_{K(\Delta)})=
\tT_{\nabla}(\omega_{K(\nabla)})
=
\tT(\omega)_{\tau}.
$
Thus the map $\tT\colon \Hiru \tN * {X;R}\to
\Hiru \tN {*-1} {X;R}$ is well defined.
 Now we proceed to the construction of
$\wsub\colon \Hiru \tN*{X;R}\to \Hiru \tN* {X;R}$ from the operators $\wsub_{\Delta}$ of \lemref{lem:subethomotopie}. 
Let $\sigma\colon\Delta\to X$ be a regular simplex and let $\omega\in\Hiru\tN k {X;R}$.
If $G$ is a face of $\sub\,\Delta$, we denote by $\iota_{G}\colon G\to \Delta$ the canonical injection and we define (cf. \propref{prop:pasdeface})
\begin{equation*}\label{equa:leSub}
\omega_{\sub\,\Delta}=
\sum_{\substack{G\fa \sub\,\Delta \\ |(G,\varepsilon)|=k\phantom{-}}}
\omega_{\sigma\circ\iota_{G}}(G,\varepsilon)\1_{(G,\varepsilon)}
\quad\text{and}\quad
(\wsub \omega)_{\sigma}=
\wsub_{\Delta}(\omega_{\sub\,\Delta}).
\end{equation*}
Notice 
\begin{eqnarray*}
\iota^*_{\sub\,\Delta}\left(\omega_{K(\Delta)}\right)
&=&
\sum_{\substack{F\ast G\fa  K(\Delta) \\ | (F\ast G,\varepsilon)|=k\phantom{-}}}
\omega_{\sigma_{_{F\ast G}}}(F\ast G,\varepsilon)\,
\iota^*_{\sub\,\Delta}\left(\1_{(F\ast G,\varepsilon)}\right)
\\
&=&
\sum_{\substack{G\fa \sub\,\Delta \\ |(G,\varepsilon)|=k\phantom{-}}}
\omega_{\sigma\circ\iota_{G}}(G,\varepsilon)\1_{(G,\varepsilon)}
= \omega_{\sub\,\Delta}.
\end{eqnarray*}
We also have
$\iota^*_{\Delta}\left(\omega_{K(\Delta)}\right)
=
\omega_{\sigma}
$. 
It follows, 	by using  \lemref{lem:subethomotopie},
\begin{eqnarray*}
\omega_{\sigma}- (\tT\delta\omega)_{\sigma}-(\delta\tT\omega)_{\sigma}
&=&
\iota^*_{\Delta}\left(\omega_{K(\Delta)}\right)
-\tT_{\Delta}\left(\tdelta^{K(\Delta)}\omega_{K(\Delta)}\right)
-\tdelta^{\Delta}\tT_{\Delta}\left(\omega_{K(\Delta)}\right)\\
&=&
\wsub_{\Delta}\left(\iota^*_{\sub\,\Delta}(\omega_{K(\Delta)})\right)
=
\wsub_{\Delta}\left(\omega_{\sub\,\Delta}\right)
=
\left(\wsub(\omega)\right)_{\sigma}
\end{eqnarray*}
We have shown $\wsub(\omega)\in \Hiru \tN * {X;R}$ and the equality
$\tT\circ \delta+\delta\circ\tT=\id-\wsub$.
\end{proof}

\begin{proof}[Proof of  \thmref{thm:Upetits}]
(i) %
We prove
$\|\tT(\omega)\|\leq \|\omega\|$
and $\|\wsub(\omega)\|\leq \|\omega\|$,
by working on the chosen basis and the
various possible cases.
Making reference to \defref{def:degrepervers}, since maps $\tT$ and $\wsub$ 
do not modify the value of the parameter
$\varepsilon$ then we can limit ourselves to the case  $\varepsilon_{n-\ell}=0$, with $\ell\in \{1,\dots,n\}$. 
We proceed by induction on the complex $\Hiru \tN {\fc}
\Delta$, the case $n=0$ being obvious.
With the notations used in the construction of $ \tT$, the following properties
are verified.
\begin{itemize}
\item (\ref{equa:homotopiedemarragevide}): if $G_{n}=\emptyset$ and $\varepsilon_{n}=1$, the desired inequality comes from
\begin{eqnarray*}
\|\tT_{\Delta}(\1_{(F\ast G,\varepsilon)})\otimes \1_{(\emptyset,1)}\|_{\ell}
&=&
\left\{
\begin{array}{cl}
0&\text{if }\ell=1,\\
\|\tT_{\Delta}(\1_{(F\ast G,\varepsilon)})\|_{\ell-1}
&\text{if } \ell>1,
\end{array}\right.\\
&\leq&
\left\{
\begin{array}{cl}
0&\text{if }\ell=1,\\
\|\1_{(F\ast G,\varepsilon)}\|_{\ell-1}
&\text{if } \ell>1,
\end{array}\right.\leq
\|\1_{(F\ast G,\varepsilon)}\otimes\1_{(\emptyset,1)}\|_{\ell}.
\end{eqnarray*}
\item The justifications of  (\ref{equa:homotopiedim0}) and (\ref{equa:homotopiedemarrage})
are similar.
\item (\ref{equa:homotopiedimqcq}): if  $\dim (G_{n},\varepsilon_{n})\geq 1$, 
we denote by  $\ell(\alpha)$ the weight of the generator $e_{\alpha}$ of $\Delta$.
By construction of $\tT_{\Delta}$, we have
$$
\|\tT_{\Delta}(\1_{(F\ast G',\varepsilon)})\ast e_{\alpha}\|_{\ell}
=
\left\{\begin{array}{cl}
\|\tT_{\Delta}(\1_{(F\ast G',\varepsilon)})\|_{\ell}
\text{if } \ell(\alpha)\leq n-\ell,\\[.2cm]
\|\tT_{\Delta}(\1_{(F\ast G',\varepsilon)})\|_{\ell}+1
\text{if } \ell(\alpha)>n-\ell,
\end{array}\right.
\leq
\|\1_{(F\ast G,\varepsilon)}\|_{\ell}.
$$
\end{itemize}
To study the perverse degree of $\wsub_{\Delta}(-)$, 	consider a  subdividing face $G\fa \sub\,\Delta$  with first term $ e_ {0} $.
By definition of  $\wsub_{\Delta}$, one has, for each $\ell\in\{1,\dots,n\}$,
$$
\|\wsub_{\Delta}\1_{(G,\varepsilon)}\|_{\ell}
=
\|\tT_{\Delta}\tdelta^{K(\Delta)}\1_{(G,\varepsilon)}\|_{\ell}
=\|\tT_{\Delta}(\1_{(G,\varepsilon)}\ast e_{0})\|_{\ell}
\leq \|\1_{(G,\varepsilon)}\ast e_{0}\|_{\ell}
=\|\1_{(G,\varepsilon)}\|_{\ell}.
$$
(The last equality uses the fact that $e_{0}$, being the first vertex of $ G $, lies in the smallest filtration of $ G $.)
Together with \propref{prop:wsubX}, this proves that  $\tT$ and $\wsub$ define two maps from the complex $\lau\tN*{\ov{p}}{X;R}$ to itself. 

\medskip
 We construct a cochain map, $\varphi_{\cal U}\colon \lau \tN{*,\cal U}{\ov{p}}{X;R}\to \lau \tN*{\ov{p}}{X;R}$, 
and a homotopy,
 $\Theta\colon \lau \tN*{\ov{p}}{X;R}\to\lau\tN{*-1}{\ov{p}}{X;R}$, such that
 $\rho_{\cal U}\circ\varphi_{\cal U}=\id$ and $\delta\circ\Theta+\Theta\circ\delta=\id-\varphi_{\cal U}\circ\rho_{\cal U}$.
 
  Write $\Psi_{1}=\tT$ and, for $m\geq 2$, 
 $\wsub^m=\wsub^{m-1}\circ\wsub$ and
 $\Psi_{m}=\sum_{0\leq i<m}\tT\circ \wsub^i$.
A simple calculation using (\ref{equa:homotopiesub}) gives
 \begin{equation}\label{equa:psim}
 \delta\circ \Psi_{m}+\Psi_{m}\circ\delta=\sum_{0\leq i <m}\delta\circ\tT\circ\wsub^i+\tT\circ\wsub^i\circ \delta
 =\id-\wsub^m.
 \end{equation}
 
  If $\sigma\colon \Delta\to X$ is a regular simplex, we denote by $m(\sigma)$ 
the smallest integer such that the simplices of 
 $\sub^{m(\sigma)}\,\Delta$
have their image by $\sigma$ included in an open subset of the cover~$\cal U$. 
If $F$ is a simplex of $\Delta$, we denote by $\sigma_{F}\colon F\to X$ the restriction of $\sigma$
and by $(F,\varepsilon)\fa \tDelta$
the fact that  $(F,\varepsilon)$ is a face of the blow up $\tDelta$. 
Let $\omega\in \Hiru \tN k {X;R}$. The homotopy $\Theta$ is defined by
$$\left(\Theta(\omega)\right)_{\sigma}=
\sum_{\substack{(F,\varepsilon)\fa \tDelta \\  |(F,\varepsilon)|=k-1\phantom{-}}}
\left(\Psi_{m(\sigma_{F})}(\omega)\right)_{\sigma_{F}}\!\!\!\!(F,\varepsilon)\,\1_{(F,\varepsilon)}.
$$
Since the coefficient of $\1_{(F,\varepsilon)}$  depends only on $\sigma_{F}$ and
  $ \varepsilon $ then we get the compatibility with the faces. In order to construct $\varphi_{\cal U}$, we calculate $\delta\circ\Theta$ and $\Theta\circ\delta$.
Following (\ref{equa:ledelta1}), we have
  \begin{equation}\label{equa:lecobord}
  \delta {\1_{(F,\varepsilon)}} =(-1)^k
\sum_{(F,\varepsilon)\fa \partial (\nabla,\kappa)}n_{(F,\varepsilon,\nabla,\kappa)}\1_{(\nabla,\kappa)},
\end{equation}
where
$(F,\varepsilon)\fa\partial(\nabla,\kappa)$
means that $(F,\varepsilon)$ runs over the proper faces of $(\nabla,\kappa)$ and
$$\partial(\nabla,\kappa)=\sum_{(F,\varepsilon)\fa\partial(\nabla,\kappa)}
n_{(F,\varepsilon,\nabla,\kappa)}(F,\varepsilon).$$
We deduce, for all $\omega\in \Hiru \tN k {X;R}$,
$$
\left( \delta\Theta(\omega)\right)_{\sigma}
=
(-1)^k \! \!  \! \! \! \! 
\sum_{\substack{(F,\varepsilon)\fa \tDelta \\  |(F,\varepsilon)|=k-1\phantom{-}}} 
\sum_{\substack{(F,\varepsilon)\fa \partial (\nabla,\kappa)  \\  (\nabla,\kappa)\fa \tDelta\phantom{-}}}
n_{(F,\varepsilon,\nabla,\kappa)}
\left(\Psi_{m(\sigma_{F})}(\omega)\right)_{\sigma_{F}}(F,\varepsilon)\,\1_{(\nabla,\kappa)}.
$$
On the other hand, the definition of $\Theta$ implies
  \begin{eqnarray*}
 \left(\Theta(\delta\omega)\right)_{\sigma}
 &=&
 \sum_{\substack{(\nabla,\kappa)\fa \tDelta \\  |(\nabla,\kappa)|=k\phantom{-}}}
 \left(\Psi_{m(\sigma_{\nabla})}(\delta\omega)\right)_{\sigma_{\nabla}}
 (\nabla,\kappa)\,
 \1_{(\nabla,\kappa)}
 \\
 &=_{(\ref{equa:psim})}&
 \omega_{\sigma}
 -\sum_{\substack{(\nabla,\kappa)\fa \tDelta \\  |(\nabla,\kappa)|=k\phantom{-}}}
 \left( \wsub^{m(\sigma_{\nabla})}(\omega)\right)_{\sigma_{\nabla}}(\nabla,\kappa)\,\1_{(\nabla,\kappa)}
\\
&&
-\sum_{\substack{(\nabla,\kappa)\fa \tDelta \\  |(\nabla,\kappa)|=k\phantom{-}}}
\left(\delta\Psi_{m(\sigma_{\nabla})}(\omega)\right)_{\sigma_{\nabla}}(\nabla,\kappa)
\,\1_{(\nabla,\kappa)}.
 \end{eqnarray*}
It follows, again by using (\ref{equa:lecobord}),
 \begin{eqnarray*}\label{equa:levarphi}
 \left((\delta\Theta+\Theta\delta-\id)(\omega)\right)_{\sigma}
 &=&
 -(\varphi_{\cal U}(\omega))_{\sigma}
\end{eqnarray*}
with
\begin{eqnarray*}
-(\varphi_{\cal U}(\omega))_{\sigma}
=
 -\sum_{\substack{(\nabla,\kappa)\fa \tDelta  \\  |(\nabla,\kappa)|=k\phantom{-}}}
 \left( \wsub^{m(\sigma_{\nabla})}(\omega)\right)_{\sigma_{\nabla}}(\nabla,\kappa)\,\1_{(\nabla,\kappa)} + &&
 \label{equa:varphi}\\
 (-1)^k 
\sum_{\substack{(F,\varepsilon)\fa \tDelta  \\  |(F,\varepsilon)|=k-1\phantom{-}}}
\sum_{\substack{(F,\varepsilon)\fa \partial (\nabla,\kappa)  \\  (\nabla,\kappa)\fa \tDelta\phantom{-}}}
n_{(F,\varepsilon,\nabla,\kappa)}
\left((\Psi_{m(\sigma_{F})}-\Psi_{m(\sigma_{\nabla})})(\omega)\right)_{\sigma_{F}}(F,\varepsilon)\,\1_{(\nabla,\kappa)}.
&&\nonumber
\end{eqnarray*}
Observe that $(\varphi_{\cal U}(\omega))_{\sigma}$
is well defined for all $\omega\in \Hiru \tN k {X;R}$ but we have to show that it is also well defined
for every  $\omega\in \Hiru \tN{*,\cal U}{X;R}$ and any regular simplex $\sigma\colon\Delta\to X$.
The first term of the sum above is defined since the cochains $(\wsub^m(\omega))_{\sigma_{\nabla}}$
are well defined for any $m\geq m(\sigma_{\nabla})$.
The second term is also well defined since $m(\sigma_{F})\leq m(\sigma_{\nabla})$
and the cochain $(\wsub^m(\omega))_{\sigma_{F}}$ is defined for any $m\geq m(\sigma_{F})$.
	So we have $\varphi_{\cal U}(\omega)\in \Hiru \tN * {X;R}$.

\smallskip

The equality
 $\delta\circ\Theta+\Theta\circ\delta=\id-\varphi_{\cal U}\circ\rho_{\cal U}$
is thus verified by construction of $\varphi_{\cal U}$ and $\Theta$.
 If $\omega\in \Hiru \tN{*,\cal U}{X;R}$ and if $\sigma$ is $\cal U$-small, then $m(\sigma)=0$
 and the family of indexes of the terms defining  $\Psi_{m(\sigma)}$ is empty. Then
 $$\left(\rho_{\cal U}(\varphi_{\cal U}(\omega))\right)_{\sigma}=
 \varphi_{\cal U}(\omega)_{\sigma}
 =\omega_{\sigma}-0=\omega_{\sigma}.$$
We have therefore established
 $\rho_{\cal U}\circ \varphi_{\cal U}=\id$.
 From $\id-\varphi_{\cal U}=\delta\circ\Theta+\Theta\circ\delta$, 	we deduce
 $\delta-\delta\circ\varphi_{\cal U}
=\delta\circ\Theta\circ\delta=
\delta-\varphi_{\cal U}\circ\delta$,
and the compatibility of  $\varphi_{\cal U}$ with differentials.
It remains to study the behavior of $\Theta$ and $\varphi_{\cal U}$ relatively to the perverse degree.
Since $\|\Psi_{m}(\omega)\|\leq \|\omega\|$
and
$\|\wsub(\omega)\|\leq \|\omega\|$,
we have, for any singular stratum, $S$,
$\|\Theta(\omega)\|_{S}\leq \|\omega\|_{S}
\quad\text{and}\quad
\|\varphi_{\cal U}(\omega)\|_{S}\leq \|\omega\|_{S}$.
This completes the proof of the assertion (i).

\medskip
(ii) Let $\omega \in \lau \tN{*,\cal U}{\ov{p}}{X;R}$ verifying the hypothesis of (ii) for a subset $K\subset X$.
Let $\sigma\colon \Delta\to X$ such that $(\im\sigma)\cap K=\emptyset$.
The element
$(\varphi_{\cal U}(\omega))_{\sigma}$
is defined from cochains $\omega_{\sigma_{F\ast G}}$, where
$\sigma_{F\ast G}$ is the restriction of $\sigma$, as defined in
(\ref{equa:sigmaFG}). From this formula, we find that
$\im \sigma_{F\ast G}\subset \im\sigma$,
and then $\omega_{\sigma_{F\ast G}}=0$ and
$(\varphi_{\cal U}(\omega))_{\sigma}=0$.
\end{proof}

\begin{proof}[Proof of \lemref{lem:subethomotopie}]\label{subsec:constructionhomotopie}
We have to show that
$
\tT_{\Delta}\colon\Hiru  \tN *{K(\Delta)}\to \Hiru \tN {*-1}\Delta
$ verifies (\ref{equa:cequilfaut}). For it,
we use
\propref{prop:dfaceajout} to express
the value of the differential $\delta$ in function of the adding of vertices.

(i) Start with $\1_{(F,\varepsilon)}\in  \Hiru \tN *{\Delta}$. By definition, the face $F$ is not a full face (see \defref{def:facecompleteK}) on $K(\Delta)$,
then we have $\tT_{\Delta}(\1_{(F,\varepsilon)})=0$. On the other hand,
the only term of $\tdelta^{K(\Delta)} \1_{(F,\varepsilon)}$ having a non-zero image 
by $\tT_{\Delta}$  is
$\1_{(F,\varepsilon)} \ast e_{\{F\}}$ where $e_{\{F\}}$is the barycenter of the face $F$.
Therefore one has, using (\ref{equa:homotopiedemarrage}),
$$
\tT_{\Delta}(\tdelta^{K(\Delta)} \1_{(F,\varepsilon)})
=
\tT_{\Delta}\left(
(-1)^{|(F,\varepsilon)|}
\1_{(F,\varepsilon)}\ast e_{\{F\}}\right)
=
(-1)^{|(F,\varepsilon)|}(-1)^{|(F,\varepsilon)|}\1_{(F,\varepsilon)}
= \1_{(F,\varepsilon)}.
$$
It follows:
$$(\tT_{\Delta}\circ \tdelta^{K(\Delta)}+\tdelta^{\Delta} \circ \tT_{\Delta})(\1_{(F,\varepsilon)})=
(\iota^*_{\Delta}-\wsub_{\Delta}\circ\iota^*_{\sub\,\Delta})(\1_{(F,\varepsilon)}).$$

\medskip 
(ii) Continue with a simplex, $F\ast G\fa K(\Delta)$, such that $F\neq \emptyset$ and $G\neq \emptyset$.
In this case, we have to show
\begin{equation}\label{equa:pointiii}
(\tT_{\Delta}\circ \tdelta^{K(\Delta)}+\tdelta^{\Delta}\circ \tT_{\Delta})(\1_{(F\ast G,\varepsilon)})=0.
\end{equation}

--- We look first to the case of a \emph{full simplex} distinguishing the various possibilities that appear in the construction of $ \tT_{\Delta} $, located after the statement of \lemref{lem:subethomotopie}.
We use a first recurrence assuming equality (\ref{equa:pointiii})  holds for any filtered euclidean simplex with formal dimension strictly less than $ n $.

$\bullet$ If $G_{n}=\emptyset$ and $\varepsilon_{n}=1$, we have $F\ast G\fa K(\nabla)$, then:
\begin{eqnarray*}
&&
\tT_{\Delta}\left(\tdelta^{K(\Delta)}(\1_{(F\ast G,\varepsilon)}\otimes \1_{(\emptyset,1)})\right) =_{(1)}\\
&&
\tT_{\nabla}\left(\tdelta^{K(\nabla)}\1_{(F\ast G,\varepsilon)}\right)\otimes \1_{(\emptyset,1)}\\&&
+
(-1)^{|(F\ast G,\varepsilon)|}
\sum_{e\in\cal V(K(\Delta)_{n})}
\tT_{\Delta}\left(\1_{(F\ast G,\varepsilon)}\otimes \1_{(\emptyset,1)}\ast e\right)=_{(2)}\\
&&
\tT_{\nabla}\left(\tdelta^{K(\nabla)}\1_{(F\ast G,\varepsilon)}\right)\otimes \1_{(\emptyset,1)}\\&&
+
(-1)^{|(F\ast G,\varepsilon)|}
\sum_{e\in\cal V(\Delta_{n})}
\tT_{\nabla}\left(\1_{(F\ast G,\varepsilon)}\right)\otimes \1_{(\emptyset,1)}\ast e=_{(3)}\\
&&
-\tdelta^{\nabla}\tT_{\nabla}\left(\1_{(F\ast G,\varepsilon)}\right)\otimes \1_{(\emptyset,1)}
+
(-1)^{|(F\ast G,\varepsilon)|}
\tT_{\nabla}\left(\1_{(F\ast G,\varepsilon)}\right)\otimes \tdelta^{c\Delta_{n}}\1_{(\emptyset,1)}
=_{(1)}\\
&&
-\tdelta^{\Delta}
\tT_{\Delta}\left(\1_{(F\ast G,\varepsilon)}\otimes \1_{(\emptyset,1)})\right),
\end{eqnarray*}
where $=_{(1)}$ uses (\ref{equa:homotopiedemarragevide}), $=_{(2)}$ uses (\ref{equa:homotopiedimqcq}) and
(\ref{equa:homotopiedemarragevide}),
$=_{(3)}$  is the induction hypothesis  on $\nabla$.

\smallskip
$\bullet$ 
The argument is similar when $ \dim G_{n} =  0$ and $ \varepsilon_ {n} = 0$.
A second induction on the dimension of $ G_{n} $ completes the proof in the case of a full simplex $ F \ast G $.

\medskip

--- If $ F \ast G $ is not full and if the differential $ \tdelta^{K(\Delta)}\1_{(F\ast G,\varepsilon)}$ only involves 
non full simplices, then the left hand  side of (\ref{equa:pointiii})  is zero and the result is true.
So we have to
consider the case of a non full simplex whose differential involves full simplices.
Specifically, consider a  full $ k $-simplex $ F '\ast G' $ with
$F'=[e_{i_{0}},\dots,e_{i_{a}}]$
and
$G'=[e_{\{i_{0}\dots i_{a}\}},\dots,e_{\{i_{0}\dots i_{a} \dots i_{b}\}}]$
and including a non full $ (k-1) $-face $ F \ast G $ with $ F \neq \emptyset$ and $ G \neq \emptyset$.

We need to establish
\begin{equation}\label{equa:pasplein}
\tT_{\Delta}\tdelta^{K(\Delta)}\left(\1_{(F\ast G,\varepsilon)}\right)=0.
\end{equation}
Consider the various possible cases.
\begin{enumerate}[(a)]
\item Suppose $a=b$, then $F=[e_{i_{0}},\dots,\hat{e}_{i_{x}},\dots,e_{i_{a}}]$ and $G=G'=[e_{\{F'\}}]$. The equality
(\ref{equa:pasplein}) is reduced to
$\tT_{\Delta}\tdelta^{K(\Delta)}\left(\1_{(F,\varepsilon)}\ast e_{\{F'\}}\right)=0.$
By keeping in the expression of the differential only the  elements corresponding to full simplices, we have:
$$
\tT_{\Delta}\tdelta^{K(\Delta)}\left(\1_{(F,\varepsilon)}\ast e_{\{F'\}}\right)=
(-1)^{(F,\varepsilon)+1}
\tT_{\Delta}\left(
\1_{(F,\varepsilon)}\ast e_{\{F'\}}\ast e_{i_{x}}+
\1_{(F,\varepsilon)}\ast e_{\{F'\}}\ast 
e_{\{F\}}
\right)
$$
The definition of $\tT_{\Delta}$ gives,
\begin{eqnarray*}
\tT_{\Delta}\left(
\1_{(F,\varepsilon)}\ast e_{\{F'\}}\ast e_{i_{x}}\right)
=
-\tT_{\Delta}\left(
\1_{(F,\varepsilon)}\ast e_{i_{x}} \ast e_{\{F'\}}\right)
=
-(-1)^{|(F',\varepsilon)|}\,\1_{(F,\varepsilon)}\ast e_{i_{x}},
&&
\\
\tT_{\Delta}\left(\1_{(F,\varepsilon)}\ast e_{\{F'\}}\ast 
e_{\{F\}}
\right)
=
-\tT_{\Delta}\left(\1_{(F,\varepsilon)}\ast 
e_{\{F\}}
\ast e_{\{F'\}}\right)
&&\\
=
-\tT_{\Delta}\left(\1_{(F,\varepsilon)}\ast 
e_{\{F\}}\right)\ast e_{i_{x}}
=
-(-1)^{|(F,\varepsilon)|}\1_{(F,\varepsilon)}\ast e_{i_{x}}.
&&
\end{eqnarray*}

The conclusion comes from $|(F,\varepsilon)|+1=|(F',\varepsilon)|$.

\smallskip\noindent 
\emph{For the last two cases, we use an induction on the dimension of the component $G_{n}$.}
\item Suppose $a<b$ and $F=[e_{i_{0}},\dots,\hat{e}_{i_{x}},\dots,e_{i_{a}}]$, then 
$G=G'=L\ast e_{\{i_{0}\dots i_{a} \dots i_{b}\}}$.
The equality
(\ref{equa:pasplein}) is reduced to
$\tT_{\Delta}\tdelta^{K(\Delta)}\left(\1_{(F\ast L,\varepsilon)}\ast e_{\{i_{0}\dots i_{a} \dots i_{b}\}}\right)=0.$
By definition of $\tT_{\Delta}$ and following \corref{cor:astdiff}, one has:
$$\tT_{\Delta}\tdelta^{K(\Delta)}\left(\1_{(F\ast L,\varepsilon)}\ast e_{\{i_{0}\dots i_{a} \dots i_{b}\}}\right)
=
\tT_{\Delta}\left(\tdelta^{K(\Delta)}\1_{(F\ast L,\varepsilon)}\right)\ast e_{i_{b}}.
$$
The simplex $F\ast L$ is not full and $L\neq\emptyset$ since $a<b$. We can apply the induction hypothesis, so
$\tT_{\Delta}\tdelta^{K(\Delta)}\left(\1_{(F\ast L,\varepsilon)}\right)=0$.

\item The argument is similar when
$G=[e_{\{i_{0}\dots i_{a}\}},\dots,\hat{e}_{\{i_{0}\dots i_{a}\dots i_{x}\}},\dots,e_{\{i_{0}\dots i_{a} \dots i_{b}\}}]$
and $a<b$,
and thus $F=F'$.
\end{enumerate}

\medskip
(iii) It remains to verify the equality (\ref{equa:cequilfaut}) for elements
$\1_{(G,\varepsilon)}\in\tN^*(\sub\,\Delta)$. 
In this case, it is actually the \emph{definition} of $\wsub$ by
$$\wsub \1_{(G,\varepsilon)}=-\left(\tT_{\Delta}\circ \tdelta^{K(\Delta)}\right)(\1_{(G,\varepsilon)}),$$
the other terms being zero.

\medskip
To complete the proof of \lemref{lem:subethomotopie}, it remains to verify the compatibility of
$\wsub$ with differentials,
\begin{equation}\label{equa:subdiff}
\tdelta^{\Delta}\circ \tT_{\Delta}\circ \tdelta^{K(\Delta)}(\1_{(G,\varepsilon)})
=
\tT_{\Delta}\circ \tdelta^{K(\Delta)}\circ \tdelta^{\sub\,\Delta}(\1_{(G,\varepsilon)}).
\end{equation}
The two terms of this equation are zero except in the two following situations.
\begin{enumerate}[(i)]
\item $G=[e_{i_{0}},\dots,e_{\{i_{0}\dots i_{a}\}}]$,
\item $G=[e_{i_{0}},\dots,\hat{e}_{\{i_{0}\dots i_{x}\}},\dots,e_{\{i_{0}\dots i_{x}\dots i_{a}\}}]$ with $x<a$.
\end{enumerate}
Detail each case.

(i) The term $\tdelta^{K(\Delta)}(\1_{(G,\varepsilon)})$ has only one full term,  
$(-1)^{|(G,\varepsilon)|}\1_{(G,\varepsilon)}\ast e_{i_{0}}$.
The term $\tdelta^{K(\Delta)}\tdelta^{\sub\,\Delta}(\1_{(G,\varepsilon)})$ has the following full terms
$-\sum_{e_{i_{j}}\in\cal V(\Delta)}\1_{(G,\varepsilon)}\ast e_{\{i_{0}\dots i_{a}i_{j}\}}\ast e_{i_{0}}$. We can deduce,
\begin{eqnarray*}
\tT_{\Delta}\tdelta^{K(\Delta)}\tdelta^{\sub\,\Delta}(\1_{(G,\varepsilon)})
&=&
-\sum_{e_{i_{j}}\in\cal V(\Delta)}\tT_{\Delta}(\1_{(G,\varepsilon)}\ast e_{\{i_{0}\dots i_{a}i_{j}\}}\ast e_{i_{0}})\\
&=&
\sum_{e_{i_{j}}\in\cal V(\Delta)}\tT_{\Delta}(\1_{(G,\varepsilon)}\ast e_{i_{0}})\ast e_{i_{j}}
\\
&=& (-1)^{|(G,\varepsilon)|}\tdelta^{\Delta}\tT_{\Delta}(\1_{(G,\varepsilon)}\ast e_{i_{0}})=
\tdelta^{\Delta}\tT_{\Delta}\tdelta^{K(\Delta)}(\1_{(G,\varepsilon)}).
\end{eqnarray*}

(ii) The term $\tdelta^{K(\Delta)}\1_{(G,\varepsilon)}$ does not have any full term. Also, the next term
$\tdelta^{K(\Delta)}\tdelta^{\sub\,\Delta}\1_{(G,\varepsilon)}$ has the following full terms
$-\1_{(G,\varepsilon)}\ast e_{\{i_{0}\dots i_{x}\}}\ast e_{i_{0}}
-\1_{(G,\varepsilon)}\ast e_{\{i_{0}\dots i_{x-1}i_{x+1}\}}\ast e_{i_{0}}$.
As in the previous calculation, equation (\ref{equa:subdiff}) follows from the definition of $\tT_{\Delta}$:
$$\tT_{\Delta}(\1_{(G,\varepsilon)}\ast e_{\{i_{0}\dots i_{x}\}}\ast e_{i_{0}})=-\tT_{\Delta}(\1_{(G,\varepsilon)}\ast e_{\{i_{0}\dots i_{x-1}i_{x+1}\}}\ast e_{i_{0}}).$$
(The sign comes from the permutation of $e_{i_{x}}$ and $e_{i_{x+1}}$.)
\end{proof}
%

\section{Mayer-Vietoris exact sequence. \thmref{thm:MVcourte}.%
}\label{sec:MV}

If $ U \subset $ V are two open subsets of a perverse space $ (X, \ov {p}) $, canonical inclusions $ U \subset V \subset X $
induce cochain maps,
$\lau \tN* {\ov{p}}{X;R}\to \lau \tN* {\ov{p}}{V;R}\to \lau \tN *{\ov{p}}{U;R}$. 
If there is no ambiguity, we keep the same notation for a cochain and
its images by these maps.

\begin{theorem}[Mayer-Vietoris exact sequence]\label{thm:MVcourte}
Let $(X,\ov{p})$ be a paracompact perverse space, 
endowed with an open cover  $(U_{1},U_{2})$
and a subordinated partition of the unity, $(f_{1},f_{2})$. 
For $i=1,\,2$, we denote by $\cal U_{i}$ the cover of  $U_{i}$ consisting of the open subsets
$(U_{1}\cap U_{2}, f_{{i}}^{-1}(]1/2,1])$ and by $\cal U$ the cover of  $X$, union of the covers  $\cal U_{i}$.
Then, the canonical inclusions, $U_{i}\subset X$ and $U_{1}\cap U_{2}\subset U_{i}$, induce a short exact sequence ,
$$
0\to
\lau \tN {*,\cal U} {\ov{p}}{X;R}
\stackrel{\iota}{\longrightarrow}
\lau \tN {*,\cal U_{1}} {\ov{p}}{U_{1};R}
\oplus
\lau \tN {*,\cal U_{2}} {\ov{p}}{U_{2};R}
\stackrel{\varphi}{\longrightarrow}
\lau \tN * {\ov{p}}{U_{1}\cap U_{2};R}
\to
0,
$$
where $\varphi(\omega_{1},\omega_{2})=\omega_{1}-\omega_{2}$.
\end{theorem}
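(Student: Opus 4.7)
The plan is to verify exactness of the sequence at each of its three positions. Set $B_i := f_i^{-1}(]1/2,1])$; since $f_1+f_2=1$ we have $B_1 \cap B_2 = \emptyset$, the family $\cal U = \{U_1 \cap U_2, B_1, B_2\}$ covers $X$, and $\{U_1 \cap U_2, B_i\}$ covers $U_i$ (any $x\in U_i \setminus U_{3-i}$ has $f_{3-i}(x)=0$, hence $f_i(x)=1$, so $x \in B_i$).

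For the injectivity of $\iota$, any $\cal U$-small simplex of $X$ has image in one of the three elements of $\cal U$, each belonging to $\cal U_1$ or $\cal U_2$; hence it is a $\cal U_i$-small simplex of $U_i$ for some $i$, and the vanishing of both restrictions forces $\omega=0$.

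For exactness at the middle, the relation $\varphi\circ\iota=0$ is immediate from the definition of restrictions. Conversely, given $(\omega_1,\omega_2)$ with $\omega_1|_{U_1\cap U_2} = \omega_2|_{U_1\cap U_2}$, define $\omega \in \lau \tN {*,\cal U}{\ov p}{X;R}$ on each $\cal U$-small simplex $\sigma$ of $X$ by $\omega_\sigma := (\omega_i)_\sigma$ whenever $\im\sigma$ is contained in an open set of $\cal U_i$; the hypothesis on restrictions makes this unambiguous on simplices in $U_1 \cap U_2$, and both face compatibility and $\ov p$-allowability are inherited from $\omega_1$ and $\omega_2$.

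The surjectivity of $\varphi$ is the main step. The plan is to reduce it to showing that the restriction
$$
\lau \tN {*, \cal U_1}{\ov p}{U_1;R} \longrightarrow \lau \tN * {\ov p}{U_1 \cap U_2;R}
$$
is surjective; for then $\omega \mapsto (\omega_1, 0)$ with $\omega_1$ any extension of $\omega$ yields a preimage under $\varphi$. Such an extension is built by induction on the dimension of $\cal U_1$-small simplices $\sigma$ of $U_1$. For $\im\sigma \subset U_1 \cap U_2$ set $(\omega_1)_\sigma := \omega_\sigma$. For $\im\sigma \subset B_1$ with $\im\sigma \not\subset U_1 \cap U_2$, the previously defined values $(\omega_1)_{\partial_\ell \sigma}$ on regular faces of $\sigma$ form consistent boundary data in $\tN^*(\tDelta)$, by the induction hypothesis and the cochain property of $\omega$; one extends this data to a full cochain in $\tN^*(\tDelta)$ by setting the unconstrained cell values to zero, which is possible because the simplicial cochain complex on the blow-up of an euclidean simplex is flabby. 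The perverse degree bound propagates from the boundary data to the extension, and the $\ov p$-allowability of the coboundary is inherited from that of $\delta\omega$.

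The principal obstacle will be to ensure consistency of the extension on the ``mixed'' simplices, i.e.~those $\sigma \subset B_1$ whose image meets both $B_1 \cap U_2 \subset U_1 \cap U_2$ and $B_1 \setminus U_2$: the extension must simultaneously reproduce $\omega_\tau$ on faces $\tau$ lying in $U_1 \cap U_2$ and the inductively defined (zero) values on faces lying in $B_1 \setminus U_2$, while respecting the allowability of the coboundary. This compatibility is secured by the disjointness $B_1 \cap B_2 = \emptyset$, together with the paracompactness of $X$, which permit the inductive extensions to be patched coherently using the partition of unity $(f_1, f_2)$ and, if needed, the quasi-isomorphism $\varphi_{\cal U}$ from \thmref{thm:Upetits} to replace $\omega$ by a suitably small representative.
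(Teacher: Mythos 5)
The first two thirds of your argument (injectivity of $\iota$ and exactness at the middle term) match the paper's proof and are fine. The gap is in your treatment of the surjectivity of $\varphi$, where you depart entirely from the paper.

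You propose to prove the stronger statement that the restriction $\lau \tN {*, \cal U_1}{\ov p}{U_1;R} \to \lau \tN * {\ov p}{U_1 \cap U_2;R}$ is surjective by an inductive extension on simplex dimension, setting the ``free'' cells $(F,\varepsilon)$ of each blow-up (those with $F$ not contained in any proper regular face of $\Delta$, i.e.\ $F=\Delta$) to zero. The crucial claim --- that ``the perverse degree bound propagates from the boundary data to the extension, and the $\ov p$-allowability of the coboundary is inherited from that of $\delta\omega$'' --- is asserted without argument, and is precisely the nontrivial point. Setting free cells to zero keeps the perverse degree of $(\omega_1)_\sigma$ under control, but the coboundary $\delta\bigl((\omega_1)_\sigma\bigr)$ can take nonzero values on free cells $(\Delta,\varepsilon)$ with $\varepsilon_{n-\ell}=0$, whose perverse degree $|(\Delta,\varepsilon)|_{>n-\ell}$ may exceed $\ov p(S)$; these coefficients are forced through the coboundary formula by the non-free cells coming from the boundary data, so you have no freedom to cancel them and no a priori reason for them to vanish. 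Nothing in your proposal repairs this. A related issue already affects the cochain itself: a proper regular face $\tau$ of $\sigma$ contributing to $(\omega_1)_\sigma$ need not meet the stratum $S$ that $\sigma$ meets, so the bound $\|(\omega_1)_\tau\|_{\codim S}\le\ov p(S)$ is not available for every contributing face.

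The paper proves surjectivity of $\varphi$ without proving (or needing) surjectivity of the restriction map. From the partition of unity it builds the indicator functions $g_1$ of $\{f_1>1/2\}$ and $g_2$ of $\{f_2\ge 1/2\}$, which satisfy $g_1+g_2=1$ and have disjoint, controlled supports; it converts them via \lemref{lem:0cochaine} into $0$-cochains $\tg_1,\tg_2\in\lau\tN 0{\ov 0}{X;R}$ of perverse degree zero, and sets $\omega_1=\tg_1\cup\omega$, $\omega_2=-\tg_2\cup\omega$. The cup product of \propref{42}, combined with the Leibniz rule for $\delta$, automatically makes $\tg_i\cup\omega$ a $\ov p$-intersection cochain (this is exactly the allowability control that your extension lacks); the support conditions place $\tg_i\cup\omega$ in $\lau \tN {*,\cal U_i}{\ov p}{U_i;R}$; and $g_1+g_2=1$ yields $\varphi(\omega_1,\omega_2)=\omega$. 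Your closing sentence invokes ``the partition of unity $(f_1,f_2)$ and, if needed, $\varphi_{\cal U}$ from \thmref{thm:Upetits}'', which points vaguely toward the right ingredients but supplies no mechanism; as written, the surjectivity step is a genuine gap.
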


The following result is a direct consequence of Theorems \ref{thm:Upetits} and \ref{thm:MVcourte}.

\begin{corollary}\label{cor:MVlongue} 
Let $ (X, \ov p) $  be a paracompact perverse space provided with an open cover $ (U_ {1}, U_ {2}) $.
Then there is a long exact sequence for the blown-up  intersection cohomology,
$$
\to
\lau \IH i{\ov{p}}{X;R}
\to
\lau  \IH i{\ov{p}}{U_{1};R}\oplus \lau \IH i{\ov{p}}{U_{2};R}
\to
\lau \IH i{\ov{p}}{U_{1}\cap U_{2};R}
\to
\lau \IH{i+1}{\ov{p}}{X;R}\to
$$

\end{corollary}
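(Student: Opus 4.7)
The plan is to verify exactness of the short sequence at each of its three positions.

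First, I would establish injectivity of $\iota$: if $\omega\in\lau\tN{*,\cal U}{\ov p}{X;R}$ has both restrictions to $U_1$ and $U_2$ equal to zero, then every $\cal U$-small regular simplex $\sigma\colon\Delta\to X$ has $\im\sigma$ contained in some $U\in\cal U$, hence in either $U_1$ or $U_2$, so $\omega_\sigma=0$; thus $\omega=0$.

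Second, for exactness at the middle, the composition $\varphi\circ\iota=0$ is immediate. For the reverse inclusion, given $(\omega_1,\omega_2)$ with $\omega_1|_{U_1\cap U_2}=\omega_2|_{U_1\cap U_2}$, I would glue: define $\omega\in\lau\tN{*,\cal U}{\ov p}{X;R}$ on each $\cal U$-small regular simplex $\sigma$ by $\omega_\sigma=(\omega_i)_\sigma$ for any $i\in\{1,2\}$ with $\im\sigma\subset U_i$. The matching hypothesis ensures well-definedness on the overlap, and face compatibility, the cocycle condition, and the perverse-degree bounds are inherited term by term from $(\omega_1,\omega_2)$.

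The main obstacle is surjectivity of $\varphi$. Given $\eta\in\lau\tN*{\ov p}{U_1\cap U_2;R}$, I would take $\omega_2=0$ and construct $\omega_1\in\lau\tN{*,\cal U_1}{\ov p}{U_1;R}$ extending $\eta$. For a $\cal U_1$-small regular simplex $\sigma\colon\Delta\to U_1$, my candidate is
$$(\omega_1)_\sigma \;=\; \sum_{(F,\varepsilon)} \eta_{\sigma|_F}(F,\varepsilon)\,\1_{(F,\varepsilon)},$$
summing over basis faces $(F,\varepsilon)$ of $\tilde\Delta_\sigma$ such that $F$ is a regular sub-simplex of $\Delta$ with $\sigma(F)\subset U_1\cap U_2$ (so that $\sigma|_F$ is a regular filtered simplex of $U_1\cap U_2$ and $\eta_{\sigma|_F}(F,\varepsilon)$ makes sense as a coefficient in the blown-up cochain complex $\tN^*_{\sigma|_F}$). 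By face compatibility of $\eta$, this coefficient depends only on $\sigma|_F$, so $\delta_\ell^*(\omega_1)_\sigma=(\omega_1)_{\partial_\ell\sigma}$ holds for every regular face operator, and by construction $\omega_1|_{U_1\cap U_2}=\eta$.

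The hardest step will be verifying the perverse-degree bound $\|\delta\omega_1\|_S\leq\ov p(S)$ on each singular stratum $S$ of $U_1$. The blown-up differential of Proposition 2.6 adjoins vertices of $\Delta$ or virtual apices $\tv_i$, producing basis elements $\1_{(F',\varepsilon')}$ with $F'$ strictly larger than some $F$ in the support of $(\omega_1)_\sigma$; when the adjoined vertex maps outside $U_2$ one has $\sigma(F')\not\subset U_1\cap U_2$, so the associated coefficient is not directly read off from $\eta$. I expect to control these contributions by a strata-wise local comparison of $\delta(\omega_1)_\sigma$ with the differential of $\eta$ evaluated on the maximal sub-face of $\Delta$ mapping into $U_1\cap U_2$, exploiting the hypothesis $\|\delta\eta\|\leq\ov p$ together with the locality of the perverse degree along each stratum meeting $\im\sigma$.
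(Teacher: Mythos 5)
Your plan reproves the short exact sequence of Theorem~\ref{thm:MVcourte}; to obtain the long exact sequence of the corollary one must in addition invoke Corollary~\ref{cor:Upetits} (the quasi-isomorphism $\rho_{\cal U}$) to replace the $\cal U$-small cohomologies by the full ones, which your write-up leaves implicit. Your arguments for injectivity of $\iota$ and exactness at the middle term are the same as the paper's.

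The surjectivity of $\varphi$ is where you depart from the paper, and where the argument breaks. The paper splits $\eta$ as $\tg_{1}\cup\eta + \tg_{2}\cup\eta$ using a subordinated partition of unity, and the crucial point is Lemma~\ref{lem:0cochaine}: for any function $g\colon X\to R$ the associated $0$-cochain $\tg$ lies in $\lau\tN 0 {\ov 0}{X;R}$, so that in particular $\|\delta\tg\|_{S}=-\infty$ on every singular stratum $S$; this is what allows one to cup with $\tg_{i}$ without damaging the allowability of the coboundary. You instead set $\omega_{2}=0$ and try to produce $\omega_{1}$ by truncating $\eta$, i.e.\ keeping only the coefficients on basis faces $(F,\varepsilon)$ with $\sigma(F)\subset U_{1}\cap U_{2}$. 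That is a much stronger claim than Mayer--Vietoris: it asserts that every $\ov p$-intersection cochain on $U_{1}\cap U_{2}$ extends over $U_{1}$, which cannot hold in general (already in ordinary singular cohomology, restriction to an open subset is not surjective, and the nontriviality of the connecting homomorphism in Mayer--Vietoris measures exactly this failure).

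Concretely, in your construction the coboundary $\delta(\omega_{1})_{\sigma}$ acquires, via Proposition~\ref{prop:dfaceajout}, contributions on faces $(F',\varepsilon')=(F,\varepsilon)\ast e$ where $\sigma(F)\subset U_{1}\cap U_{2}$ but $\sigma(F\ast e)\not\subset U_{1}\cap U_{2}$. These coefficients are not governed by $\delta\eta$, because $\eta_{\sigma|_{F'}}$ is simply not defined. If $S$ is a singular stratum of $U_{1}$ with $\codim S=\ell$, $\sigma(F\ast e)\cap S\neq\emptyset$, $\|\1_{(F,\varepsilon)}\|_{\ell}=\ov p(S)$ is already maximal and $e$ lies in a factor $\Delta_{j}$ with $j>n-\ell$, then $\|\1_{(F\ast e,\varepsilon)}\|_{\ell}=\ov p(S)+1$ and allowability of $\delta\omega_{1}$ fails. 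Your proposed remedy---comparison with $\delta\eta$ on ``the maximal sub-face of $\Delta$ mapping into $U_{1}\cap U_{2}$''---does not rescue this: such a maximal sub-face need not exist (two faces can map into $U_{1}\cap U_{2}$ while their join does not), and the cancellation you would need would require values of $\eta$ on simplices partly outside $U_{1}\cap U_{2}$, which are unavailable. The partition-of-unity mechanism, together with Lemma~\ref{lem:0cochaine}, is the device the paper uses precisely to avoid this obstruction, and it does not appear to be dispensable.
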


\begin{proof}[Proof of  \thmref{thm:MVcourte}]
Since the open cover $  \cal U$ is the union  of the covers $ \cal U_ {1} $ and $ \cal U_ {2} $, the morphism $ \iota $ is injective.

We prove the surjectivity of the map $\varphi$.
For $i=1,\,2$, we define the fonction, $g_{i}\colon X\to \{0,1\}$, by
$$\begin{array}{ccc}
g_{1}(x)=\left\{
\begin{array}{cl}
1&\text{if } f_{1}(x)>1/2,\\
0&\text{if not,}
\end{array}\right.
&
\text{ and }
&
g_{2}(x)=\left\{
\begin{array}{cl}
1&\text{if } f_{2}(x)\geq 1/2,\\
0&\text{if not.}
\end{array}\right.
\end{array}
$$
The inequality $f_{1}(x)>1/2$ implies $f_{2}(x)<1/2$ and $g_{2}(x)=0$. The support of $g_{2}$ is therefore included in
$U_{2}\backslash f_{1}^{-1}(]1/2,1])$. It is noted as well that the support of $g_{1}$ is included
in $U_{1}\backslash f_{2}^{-1}(]1/2,1])$.
On the other hand, by construction, one has $g_{1}(x)+g_{2}(x)=1$. We denote by $\tg_{1}$ and $\tg_{2}$ the
two  0-cochains of  perverse degree 0, respectively associated  to 
 $g_{1}$ and $g_{2}$, and defined in
 \lemref{lem:0cochaine}.
If $\omega\in \lau \tN * {\ov{p}}{U_{1}\cap U_{2};R}$, we denote by $\tg_{i}\cup \omega$ the cup product 
(cf. \secref{subsec:cupTW}) of $\tg_{i}$ with $\omega$, for $i=1,\,2$.
Since the cochain $\tg_{1}$ has a  support included in $U_{1}\backslash f_{2}^{-1}(]1/2,1])$, then the cup product 
$\tg_{1}\cup \omega\in \lau \tN{*,\cal U_{1}}{\ov{p}}{U_{1};R}$. Likewise, one has
$\tg_{2}\cup \omega\in \lau \tN {*,\cal U_{2}} {\ov{p}}{U_{2};R}$. We verify
$\varphi(\tg_{1}\cup \omega, -\tg_{2}\cup \omega)=\omega$,
which gives the surjectivity of $\varphi$.

The composition $\varphi\circ\iota$ is the zero map. 
It remains to consider an element
$(\omega_{1},\omega_{2})\in \lau \tN {*,\cal U_{1}} {\ov{p}}{U_{1};R}
\oplus
\lau \tN {*,\cal U_{2}} {\ov{p}}{U_{2};R}$ such that $\varphi(\omega_{1},\omega_{2})=0$
and to construct $\omega\in \lau \tN{*,\cal U} {\ov{p}}{X;R}$ 
such that $\iota(\omega)=(\omega_{1},\omega_{2})$.
If $\sigma\colon \Delta\to X$  is a regular simplex, we set,
\begin{itemize}
\item $\omega_{\sigma}=(\omega_{1})_{\sigma}=(\omega_{2})_{\sigma}$, if $\sigma(\Delta)\subset U_{1}\cap U_{2}$, 
\item $\omega_{\sigma}=(\omega_{1})_{\sigma}$, if $\sigma(\Delta)\subset f_{1}^{-1}(]1/2,1])$, 
\item $\omega_{\sigma}=(\omega_{2})_{\sigma}$, if $\sigma(\Delta)\subset f_{2}^{-1}(]1/2,1])$.
\end{itemize}
This definition makes sense because, on the one hand  $f_{1}^{-1}(]1/2,1])\cap f_{2}^{-1}(]1/2,1])=\emptyset$
and on the other hand 
$U_{1}\cap U_{2}\cap f_{1}^{-1}(]1/2,1])\subset U_{1}\cap U_{2}$, where the two cochains
$(\omega_{1})_{\sigma}$ and $(\omega_{2})_{\sigma}$ coincide.
\end{proof}

With the notation of the previous proof, the connecting morphism of the long exact sequence of Mayer-Vietoris is defined by
$$
[\omega]\mapsto [(\delta \tilde{g}_{1})\cup \omega].$$

\begin{lemma}\label{lem:0cochaine}
Let  $(X,\ov{p})$ be a perverse space. Any map, $g\colon X\to R$, defines a 0-cochain
$\tg\in \lau \tN 0 {\ov{0}}{X;R}$.
Moreover, the association $g\mapsto \tilde{g}$ is $R$-linear.
\end{lemma}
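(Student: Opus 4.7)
The plan is to write $\tg$ as an augmentation tensored with the pullback of $g$, and then verify the three conditions defining a $\ov 0$-intersection $0$-cochain together with the $R$-linearity. For every regular simplex $\sigma \colon \Delta = \Delta_{0} \ast \cdots \ast \Delta_{n} \to X$, I set
\[
\tg_{\sigma} := \lambda_{\tc \Delta_{0}} \otimes \cdots \otimes \lambda_{\tc \Delta_{n-1}} \otimes g_{\sigma} \in \tN^{0}_{\sigma},
\]
where $g_{\sigma} = \sum_{v \in \cal V(\Delta_{n})} g(\sigma(v))\, \1_{v} \in N^{0}(\Delta_{n})$ is the obvious pullback of $g$ to the vertices of the top factor, and $\lambda_{\tc \Delta_{i}} = \1_{(\emptyset, 1)} + \sum_{u \in \cal V(\Delta_{i})} \1_{(u, 0)}$ is the augmentation cochain from Definition~\ref{LemaTec}. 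Intuitively, $\tg_{\sigma}$ assigns the scalar $g(\sigma(v))$ to every $0$-face of $\tDelta$ whose last entry is $v \in \cal V(\Delta_{n})$, irrespective of the choice of real vertex or apex in the other factors.

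Next I would check that this $\tg$ is a well-defined element of $\tN^{0}(X;R)$ and that the assignment $g \mapsto \tg$ is $R$-linear. For any regular face operator $\delta_{\ell} \colon \nabla \to \Delta$, the pullback $\delta_{\ell}^{*}$ acts factor-by-factor: it restricts $\lambda_{\tc \Delta_{i}}$ to $\lambda_{\tc \nabla_{i}}$ and $g_{\sigma}$ to $g_{\sigma \circ \delta_{\ell}}$, so $\delta_{\ell}^{*} \tg_{\sigma} = \tg_{\sigma \circ \delta_{\ell}}$. The $R$-linearity is immediate since only the factor $g_{\sigma}$ depends on $g$ and it does so manifestly $R$-linearly. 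The $\ov 0$-allowability of $\tg$ itself is also straightforward: every basis summand of $\tg_{\sigma}$ is a tensor of $0$-dimensional faces, hence $|(F,\varepsilon)|_{> n - \ell} = 0$ for every $\ell$, which by Definition~\ref{def:degrepervers} forces $\|\1_{(F,\varepsilon)}\|_{\ell} \in \{-\infty, 0\}$; taking the supremum over regular simplices meeting a singular stratum $S$ yields $\|\tg\|_{S} \leq 0 = \ov 0 (S)$.

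The delicate step is the $\ov 0$-allowability of $\delta \tg$. A short computation from (\ref{equa:ledelta1}) using the antisymmetry $\1_{[v,w]} = -\1_{[w,v]}$ shows that each $\lambda_{\tc \Delta_{i}}$ is a cocycle in $N^{*}(\tc \Delta_{i})$, so by the Leibniz rule associated with (\ref{equa:ladiff}) the coboundary collapses to
\[
\delta \tg_{\sigma} \ = \ \lambda_{\tc \Delta_{0}} \otimes \cdots \otimes \lambda_{\tc \Delta_{n-1}} \otimes \delta^{\Delta_{n}} g_{\sigma}.
\]
I would then analyse each summand of the basis expansion case by case: a term of the form $\1_{(F_{0},\varepsilon_{0})} \otimes \cdots \otimes \1_{(F_{n-1},\varepsilon_{n-1})} \otimes \1_{[v,w]}$ with $\varepsilon_{n-\ell} = 1$ has perverse degree $-\infty$ in codimension $\ell$ and is harmless; the main obstacle is to show that the remaining terms, where $\varepsilon_{n-\ell} = 0$ for some $\ell$, either cancel against each other or fail to contribute to $\|\delta \tg\|_{S}$ for any singular stratum $S$ of codimension $\ell$. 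This combinatorial cancellation, which rests on the symmetric structure of $\lambda_{\tc \Delta_{n-\ell}}$ and the antisymmetry of the simplicial differential on $\Delta_{n}$, is the crux of the argument. Once it is in hand, both $\tg$ and $\delta \tg$ are $\ov 0$-allowable, so $\tg \in \lau \tN 0 {\ov 0}{X;R}$ and the lemma follows.
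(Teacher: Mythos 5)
Your proposed formula for $\tg_\sigma$ is wrong, and the ``combinatorial cancellation'' you defer to at the end does not in fact happen. You set
$\tg_\sigma = \lambda_{\tc\Delta_0}\otimes\cdots\otimes\lambda_{\tc\Delta_{n-1}}\otimes g_\sigma$, which as you observe collapses under $\delta$ to
$\lambda_{\tc\Delta_0}\otimes\cdots\otimes\lambda_{\tc\Delta_{n-1}}\otimes\delta^{\Delta_n}g_\sigma$. Every summand in the expansion of this cochain that has $\varepsilon_{n-\ell}=0$ for some $\ell\in\{1,\dots,n\}$ carries perverse degree $|(F,\varepsilon)|_{>n-\ell} \geq \dim F_n = 1 > 0 = \ov 0(S)$, and among these summands there are plenty with nonzero coefficient. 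Concretely, take $X=\rc L$ with $n=1$, $\Delta=[e_0]\ast[e_1,e_2]$, and $\sigma$ a regular simplex with $\sigma(e_0)$ equal to the apex $\tw$. Then the coefficient of $\1_{(e_0,0)}\otimes\1_{[e_1,e_2]}$ in $\delta\tg_\sigma$ is $\pm\bigl(g(\sigma(e_2))-g(\sigma(e_1))\bigr)$, generically nonzero, while that face has perverse degree $1$ in codimension $1$. So $\delta\tg$ is not $\ov 0$-allowable and your $\tg$ is not an intersection cochain.

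The defect is that your $\tg_\sigma$ reads off $g$ along the \emph{last} coordinate $b_n$, whereas what is needed is that $\tg_\sigma(b_0,\dots,b_n)$ be insensitive to the coordinates $b_j$ with $j$ beyond the position of the first genuine vertex: this is precisely what kills the contributions to $\|\delta\tg\|_\ell$ at positions $j>n-\ell$. The paper achieves this by defining $\tg_\sigma(b_0,\dots,b_n)=g(\sigma(b_{i_0}))$, where $i_0$ is the \emph{smallest} index with $b_{i_0}$ not an apex; with this choice, whenever the $1$-dimensional factor of a face $F$ sits at position $j>n-\ell$, the two endpoints give identical values of $\tg_\sigma$ (since $i_0\leq n-\ell<j$) and so $\delta\tg_\sigma$ vanishes on that face, yielding $\|\delta\tg_\sigma\|_\ell=-\infty$. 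As a product of $\lambda$'s and $g$-pullbacks your $\tg_\sigma$ is a single tensor; the correct $\tg_\sigma$ is instead a telescoping sum over the position $k=i_0$, and the fact that it is still compatible with regular face operators (and hence defines a global cochain) requires the compatibility observation recorded in the paper's proof. Everything else in your outline -- using $\lambda$'s being cocycles, checking face-compatibility, $R$-linearity -- is fine, but it is the definition itself that must be changed.
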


\begin{proof}
Let $\sigma\colon \Delta=\Delta_{0}\ast\dots\ast\Delta_{n}\to X$ be a regular simplex. We want to define
$\tg_{\sigma}\in N^0(c\Delta_{0})\otimes\dots\otimes N^0(c\Delta_{n-1})\otimes N^0(\Delta_{n})$.
If $b=(b_{0},\dots, b_{n})\in c\Delta_{0}\times\dots\times c\Delta_{n-1}
\times \Delta_{n}$, we denote by  $i_{0}$ the smallest index for which $ b_i $ is not a apex of a cone
(i.e., $i_{0}=\min\{i \mid b_{i}\in \Delta_{i}\}$).
Observe that the integer $i_{0}$ exists since $b_{n}\in \Delta_{n}$ and set $\tg_{\sigma}(b)=g(\sigma(b_{i_{0}}))$. 
This map is clearly compatible with the face  operators and defines $\tg\in \Hiru \tN 0{X;R}$.

It remains to determine the perversity of the cochain $ \tg_{\sigma} $.
Since it is a 0-cochain, it is obviously $\ov{0}$-allowable 
and we only need to study
its coboundary. 
To compute the $ \ell $-perverse degree of $ \delta \tg_{\sigma} $, we
consider
$F=F_{0}\otimes\dots\otimes F_{n}=\{b_{0}\}\otimes\dots\otimes F_{j}\otimes\dots\otimes \{b_{n}\}$ with
$\dim F_{j}=1$ and $\partial F_{j}=b_{j}^1-b_{j}^0$.
We have,
$$\delta\tg_{\sigma}(F)=
\tg_{\sigma}(b_{0},\dots, b^1_{j},\dots, b_{n})
-\tg_{\sigma}(b_{0},\dots, b^0_{j},\dots, b_{n}).
$$
If $n-\ell<j$, then $i_{0}\leq n-\ell<j$ and $\delta\tg_{\sigma}(F)=g(\sigma(b_{i_{0}}))-g(\sigma(b_{i_{0}}))=0$,
and then  $\|\delta \tilde{g}_{\sigma}\|_{\ell}=-\infty$.\\
Finally, $\tg\in \lau \tN 0 {\ov{0}}X$ and the association $g\mapsto \tilde{g}$   is $ R $-linear by construction.
\end{proof}

\section{Product with the real line. \thmref{prop:isoproduitR}.}\label{sec:produitR}

Consider the product $ X \times \R $ equipped with the product filtration $(X\times \R)_i = X_i \times \R
$ and a perversity~$\ov{p} $.
We also denote by $ \ov {p} $ the perversity induced on $ X $, that is, $\ov p (S) = \ov p (S \times \R)$ for each stratum $S$ of $X$.
Let $ I_{0} \ I_{1} \colon X \to X \times \R $ be the canonical injections, defined 
by $ I_ {0} (x) = (x, 0) $ and $ I_ {1} (x) = (x, 1) $. The canonical projections are denoted by
$ \pr \colon X \times \R \to X $ and $ \pr_ {2} \colon X \times \R \to \R $.
\propref{prop:applistratifieeforte}  ensures that the  maps $I_{0} $, $ I_ {1} $ and $ \pr $ induce cochain maps
 between the blown-up  complexes.

\bt\label{prop:isoproduitR}
Let $ X $ be a filtered space and let $ X \times \R $ equipped with the filtration product and a perversity~$\ov {p} $.
Denoting also $ \ov {p} $ the perversity induced on $ X $,
the maps induced in the blown-up  intersection cohomology by
the projection $ \pr \colon X \times \R \to X $ and the canonical injections, $ I_{0} \ I_{1} \colon X \to X \times \R $ ,
verify
$(I_{0}\circ \pr)^*=(I_{1}\circ\pr)^*=\id$.
\et

The proof proceeds according to the scheme of \secref{sec:subdivision} by constructing a
homotopy $ \Theta_{\Delta} $ at the simplex level (cf. \propref{prop:homotopieR}),
then gluing them to get a homotopy at the level of blown-up  complexes (see  \propref{prop:homotopieRglobal}).

For any filtered simplex,
$\Delta=\Delta_{0}\ast\dots\ast\Delta_{n}$,
one defines a weighted simplicial complex
$\cal L_{\Delta}=\Delta\otimes [0,1]$, whose simplices are the joins
$(F,\pmb{0})\ast (G,\pmb{1})$, with $F\fa \Delta$, $G\fa \Delta$, $(F,\pmb{0})\subset \Delta\times \{0\}$
and
$(G,\pmb{1})\subset \Delta\times\{1\}$.
Henceforth, we denote by $F  \ast  G$ such simplex, meaning that the first term, $ F $,
is identified with a simplex  of $ \Delta \times \{0 \} $ and the second, $ G $,
with a simplex of $ \Delta \times \{1 \}$.

If $ F $ and $ G $ are compatible, in the sense of \defref{def:cupsurDelta}, then, for the filtration induced by $ \Delta $, we have
$F\ast G=F_{0}\ast\dots\ast(F_{p}\ast G_{p})\ast\dots\ast G_{n}$, with $p\in\{0,\dots,n\}$.
A face of the prismatic set $ \tcL_{\Delta} $, corresponding to compatible simplices, $ F $ and $ G $ of $ \Delta $,
is denoted by,
$$(F\ast G,\varepsilon)=(F_{0},\varepsilon_{0})\times\dots\times (F_{p}\ast G_{p},\varepsilon_{p})
\times\dots\times (G_{n-1},\varepsilon_{n-1})\times G_{n}.$$
In this writing, if $ j <n $, $ j \neq  p$, one authorizes $ F_{j} $ and $ G_{j} $ to be the emptyset  but the associated variable $ \varepsilon_{j}$   must be 1.

\begin{proposition}\label{prop:homotopieR}
There exists a linear map,
$\Theta_{\Delta}\colon  \Hiru \tN *{\Delta\otimes [0,1]}\to \Hiru \tN {*-1}{\Delta}$,
such that
\begin{equation}\label{equa:homotopieR}
(\Theta_{\Delta}\circ \delta+\delta\circ \Theta_{\Delta})(\1_{(F\ast G,\varepsilon)})
=\left\{\begin{array}{cl}
0&
\text{if } F\neq\emptyset \text{ and } G\neq\emptyset,\\
-  \1_{(G,\varepsilon)}&
\text{if } F=\emptyset,\\
\1_{(F,\varepsilon)}&
\text{if } G=\emptyset,
\end{array}\right.
\end{equation}
where $\delta$ 
denotes the differential of
$\Hiru \tN* {\Delta\otimes [0,1]}$
and
$\Hiru \tN {*-1}{\Delta}$.%
\end{proposition}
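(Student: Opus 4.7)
The plan is to construct $\Theta_\Delta$ by induction, following the same double-induction scheme used in Lemma~\ref{lem:subethomotopie} for the subdivision homotopy $\tT_\Delta$. The geometric content is that $\cal L_\Delta = \Delta \otimes [0,1]$ is a prism, and $\Theta_\Delta$ plays the role of a cochain-level prism operator implementing a chain homotopy between the two inclusions of $\Delta$ as the bottom ($G = \emptyset$) and top ($F = \emptyset$) faces of $\cal L_\Delta$. This is precisely what equation~(\ref{equa:homotopieR}) expresses.

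First I would isolate a class of ``admissible'' prismatic simplices $F * G$ analogous to the full simplices of Definition~\ref{def:facecompleteK}: those where the join structure is compatible with vertex adjunction in the sense required for an iterative construction. On non-admissible generators set $\Theta_\Delta = 0$, and then define $\Theta_\Delta$ on admissible ones by recursion on the formal dimension $n$ of the filtration and on the dimension of the top component, exactly as in formulas (\ref{equa:homotopiedemarragevide})--(\ref{equa:homotopiedimqcq}). The base cases correspond to $F = \emptyset$ or $G = \emptyset$, where the recursion bottoms out at a single generator (producing $\pm\1_{(G,\varepsilon)}$ or $\pm\1_{(F,\varepsilon)}$), and to the ``coning'' generator where a barycenter-style formula analogous to (\ref{equa:homotopiedemarrage}) applies. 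In the inductive step, if the top filtration level carries a vertex $e$, one sets
\bee
\Theta_\Delta(\1_{(F * G', \varepsilon)} * e) = \Theta_\Delta(\1_{(F * G', \varepsilon)}) * e,
\eee
so that commutation with $\delta$ will be controlled by Corollary~\ref{cor:astdiff}.

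The verification of (\ref{equa:homotopieR}) then mirrors parts (i), (ii), (iii) of the proof of Lemma~\ref{lem:subethomotopie}. For $F, G$ both nonempty, I would use Proposition~\ref{prop:dfaceajout} to expand $\delta\1_{(F * G,\varepsilon)}$ as a sum of vertex and virtual-vertex adjunctions, then apply $\Theta_\Delta$ termwise and use the inductive hypothesis together with Corollary~\ref{cor:astdiff} to check that all contributions from $\Theta_\Delta \circ \delta$ and $\delta \circ \Theta_\Delta$ telescope to zero. The cases $F = \emptyset$ and $G = \emptyset$ correspond to the boundary of the prism; here the recursion terminates at the base cases and yields $-\1_{(G,\varepsilon)}$ and $\1_{(F,\varepsilon)}$ respectively, the asymmetric sign reflecting the orientation of $[0,1]$. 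A separate check, parallel to case (ii) of the proof of Lemma~\ref{lem:subethomotopie}, is needed for non-admissible simplices whose differential produces admissible terms: one must verify that $(\Theta_\Delta \circ \delta)(\1_{(F*G,\varepsilon)}) = 0$ in those situations, by showing that the full terms in $\delta\1_{(F*G,\varepsilon)}$ pair off and cancel under $\Theta_\Delta$.

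The main obstacle will be the sign bookkeeping, especially reconciling the prism orientation (which produces the asymmetric $-\1_{(G,\varepsilon)}$ versus $+\1_{(F,\varepsilon)}$) with the filtration-dependent signs $(-1)^{|(F,\varepsilon)|_{<i}}$ appearing in the differential (\ref{equa:ladiff}). A secondary difficulty is the anomalous behavior at the top filtration component $\Delta_n$, where the blow-up uses $\Delta_n$ directly rather than $\tc\Delta_n$: the construction must distinguish the cases $\varepsilon_n = 0$ from the coned components, as in the proof of Lemma~\ref{lem:subethomotopie}. Once the local construction is established, it will extend to a global homotopy $\Theta$ on $\Hiru\tN*{X;R}$ by the same face-compatibility argument used in Proposition~\ref{prop:wsubX}, yielding the theorem.
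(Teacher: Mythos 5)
Your proposal shares the paper's overall strategy (define $\Theta_\Delta$ on the dual basis, set it to zero on ``incompatible'' prismatic generators, verify the identity by expanding $\delta$ via Proposition~\ref{prop:dfaceajout}), but the details you sketch would not work and do not match the paper's construction. The paper does \emph{not} build $\Theta_\Delta$ by a double recursion modelled on $\tT_\Delta$; it defines it directly by an Alexander--Whitney-type merge formula, $\Theta_\Delta(\1_{(F*G,\varepsilon)}) = \pm\1_{(F\cup G,\varepsilon)}$ when the last vertex of $F$ coincides with the first vertex of $G$ and $0$ otherwise, and it isolates a dedicated splitting lemma (Lemma~\ref{lem:thetaaste}) to handle the interaction with vertex adjunction.

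Two of the explicit rules you state are incorrect. First, you claim the recursion ``bottoms out'' when $F=\emptyset$ or $G=\emptyset$, ``producing $\pm\1_{(G,\varepsilon)}$ or $\pm\1_{(F,\varepsilon)}$.'' In fact $\Theta_\Delta(\1_{(G,\varepsilon)})=0$ and $\Theta_\Delta(\1_{(F,\varepsilon)})=0$ here, since compatibility fails when one factor is empty; the nonzero terms $-\1_{(G,\varepsilon)}$ and $\1_{(F,\varepsilon)}$ in (\ref{equa:homotopieR}) arise \emph{only} through $\Theta_\Delta\circ\delta$ (the differential adjoins a vertex that restores compatibility). Second, the recursive rule $\Theta_\Delta(\1_{(F*G',\varepsilon)}*e)=\Theta_\Delta(\1_{(F*G',\varepsilon)})*e$ is not well posed: a vertex $e$ of $\Delta$ has two lifts $e_0,e_1$ to $\Delta\otimes[0,1]$, and the correct relation (Lemma~\ref{lem:thetaaste}) is $\Theta_\Delta(\1)*e = \Theta_\Delta(\1*e_0)+\Theta_\Delta(\1*e_1)$ --- a sum, not an equality. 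Your rule, applied to both lifts, would force $\Theta_\Delta(\1*e_0)=\Theta_\Delta(\1*e_1)$, which fails generically (exactly one of these is typically zero, depending on where $e$ sits relative to the join vertex of $F$ and $G$). Without this asymmetry, the cancellation in the case $F\neq\emptyset$, $G\neq\emptyset$ does not go through, and the analogy with ``full simplices'' of the barycentric subdivision does not provide the needed structure.
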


\begin{proof}
Define the map $\Theta_{\Delta}$ on the elements of the dual basis.
Let
$$\1_{(F\ast G,\varepsilon)}=
\1_{(F_{0},\varepsilon_{0})}\otimes\dots\otimes
\1_{(F_{p}\ast G_{p},\varepsilon_{p})}\otimes\dots\otimes
\1_{(G_{n-1},\varepsilon_{n-1})}\otimes \1_{G_{n}}.$$
We set
$$\Theta_{\Delta}(\1_{(F \ast G,\varepsilon)})=
\left\{\begin{array}{cl}
(-1)^{|(F,\varepsilon)|_{<p}+|(G_{p},\varepsilon_{p})|}\,
\1_{(F\cup G,\varepsilon)}&
\text{if}\; F\; \text{and}\; G \; \text{are compatible,}\\
0&
\text{if not,}
\end{array}\right.$$
where $\1_{(F\cup G,\varepsilon)}=
\1_{(F_{0},\varepsilon_{0})}\otimes\dots\otimes
\1_{(F_{p}\cup G_{p},\varepsilon_{p})}\otimes\dots\otimes
\1_{(G_{n-1},\varepsilon_{n-1})}\otimes \1_{G_{n}}
$ and $F_{p}\cup G_{p}$ was introduced in
 \defref{def:cupsurDelta}.

In $\1_{(F\ast G,\varepsilon)}$, we consider the faces $F$ of $\Delta\times \{0\}$ and $G$ of $\Delta\times\{1\}$. 
In the expression of
$\Theta_{\Delta}(\1_{(F \ast G,\varepsilon)})$,  we make an abuse of notation by keeping the letters
$ F $ and $ G $ for  faces of $ \Delta $.
It remains to verify (\ref{equa:homotopieR})  and for that, we consider the following cases.
We denote by
$\cal V(\tcL_{\Delta})$ the union of $\cal V(\cal L_{\Delta})$ with the family of virtual vertices.
\begin{itemize}
\item Suppose $F\neq \emptyset$, $G\neq\emptyset$, $F$ and $G$ compatible.
Using equality (\ref{equa:diffetpoint}) and \lemref{lem:thetaaste}, we obtain the equality
(\ref{equa:homotopieR}) in this case:
\begin{eqnarray*}
\delta\Theta_{\Delta}(\1_{(F\ast G,\varepsilon)})
&=&
(-1)^{|(F\ast G,\varepsilon)|+1}
\sum_{e\in\cal V(\tDelta)}\Theta_{\Delta}(\1_{(F\ast G,\varepsilon)}) \ast e
\\
&=&
(-1)^{|(F\ast G,\varepsilon)|+1}
\sum_{e\in\cal V(\tcL_{\Delta})}\Theta_{\Delta}(\1_{(F\ast G,\varepsilon)} \ast e)
=
-\Theta_{\Delta}(\delta\1_{(F\ast G,\varepsilon)}).\nonumber
\end{eqnarray*}
\item Suppose $F\neq\emptyset$, $G\neq\emptyset$, $F$ and $G$ not compatible such that 
$\delta \1_{(F\ast G,\varepsilon)}$ contains compatible elements.
This amounts to giving a $k$-simplex, $F'\ast G'$, of $\Delta\otimes [0,1]$, with $F'$ and $G'$ compatible and
having a $(k-1)$-face, $F\ast G$, with $F$ and $G$ non compatible and non empty.
Put $F'=[a_{i_{0}},\dots,a_{i_{r}}]$ and $G'=[b_{j_{0}},\dots,b_{j_{s}}]$ with $a_{i_{r}}=b_{j_{0}}\in\Delta_{p}$. 
(Recall that in the latter equality we identified
$\Delta\cong\Delta\times\{{0}\}\cong \Delta\times \{{1}\}$.)
We denote by $a_{j_{t}}\in\Delta\times\{{0}\}$ the vertex identified to $b_{j_{t}}\in\Delta\times\{{1}\}$, as well as $b_{i_{t}}\in\Delta\times\{{1}\}$ for the vertex identified to $a_{i_{t}}\in\Delta\times\{{0}\}$.
 For the simplices $ F $ and $ G $, there are only two possibilities corresponding to compatible simplices $ F '$, $ G' $.
 Equality (\ref{equa:homotopieR}) is deduced from the following calculations.

$+$ If $F=[a_{i_{0}},\dots,a_{i_{r-1}}]$ and $G=G'$, the expression $\Theta_{\Delta}(\delta\1_{(F\ast G,\varepsilon)})$
takes the values,
$$
(-1)^{|(G,\varepsilon)|+1+|(F,\varepsilon)|_{<p}+\varepsilon_{p}}
\Theta_{\Delta}(\1_{(F\ast a_{i_{r}}\ast G,\varepsilon)}+\1_{(F\ast b_{i_{r-1}}\ast G,\varepsilon)}),
$$
if   $a_{i_{r-1}}\in F_{p}$, and 
$$
(-1)^{|(F,\varepsilon)|+|(G_{p},\varepsilon_{p})+1}
\Theta_{\Delta}(\1_{(F\ast a_{i_{r}}\ast G,\varepsilon)})
+
(-1)^{|(F,\varepsilon)|_{<a}+\varepsilon_{a}}
\Theta_{\Delta}(\1_{(F\ast b_{i_{r-1}}\ast G,\varepsilon)}),
$$
if  $a_{i_{r}-1}\in F_{a}$ with $a<p$,
which implies
$$
\Theta_{\Delta}(\delta\1_{(F\ast G,\varepsilon)})=
-\1_{((F\ast a_{i_{r}})\cup G,\varepsilon)}+
\1_{(F\cup (b_{i_{r-1}}\ast G),\varepsilon)}=0.
$$

$+$ The argument is the same for $F=F'$ and
$G=[b_{j_{1}},\dots,b_{j_{s}}]$.
\item Suppose $F=\emptyset$. Let  $b_{i_{0}}\in\Delta_{p}\times \{{1}\}$ be the first vertex of $G$ and
$a_{i_{0}}$  the vertex of $\Delta_{p}\times \{{0}\}$ 
whose projection on $\Delta_{p}$ is equal to the projection of
$b_{i_{0}}$.
We have
$\Theta_{\Delta}(\1_{(G,\varepsilon)})=0$.
To determine the second term, first note,
\begin{eqnarray*}
\1_{(G,\varepsilon)}\ast a_{i_{0}}
&=&
(-1)^{|(G,\varepsilon)|_{>p}}
(\1_{(G_{p},\varepsilon_{p})}\ast a_{i_{0}})\otimes\dots\\
&=&
(-1)^{|(G,\varepsilon)|_{>p}+1}
 (\1_{(a_{i_{0}}\ast G_{p},\varepsilon_{p})})\otimes\dots
\end{eqnarray*}
It follows
\begin{eqnarray*}
\Theta_{\Delta}\delta\1_{(G,\varepsilon)}
&=&
(-1)^{|(G,\varepsilon)|}
\Theta_{\Delta}(\1_{(G,\varepsilon)}\ast a_{i_{0}})\\
&=&
(-1)^{|(G_{p},\varepsilon_{p})|+1}
\Theta_{\Delta}(\1_{(a_{i_{0}}\ast G,\varepsilon)})
=
-
\,\1_{(G,\varepsilon)}.
\end{eqnarray*}
\item The proof for the case $G=\emptyset$ is similar.
\end{itemize}
\end{proof}

\begin{lemma}\label{lem:thetaaste}
Let
$\1_{(F\ast G,\varepsilon)}
\in \Hiru \tN *{\Delta\otimes [0,1]}$,
with $F\neq\emptyset$ and $G\neq\emptyset$.
\begin{enumerate}[1)]
\item Let $\ell\in\{1,\dots,n\}$. For any vertex $e\in\Delta_{\ell}$, we denote
$e_{0}\in \Delta_{\ell}\times\{{0}\}$
and
$e_{1}\in \Delta_{\ell}\times\{{1}\}$
the vertices of $\Delta\otimes [0,1]$ corresponding to $e$. Then we have
$$
\Theta_{\Delta}(\1_{(F \ast G,\varepsilon)})\ast e
=
\Theta_{\Delta}(\1_{(F \ast G,\varepsilon)}\ast e_{0})
+
\Theta_{\Delta}(\1_{(F \ast G,\varepsilon)}\ast e_{1}).
$$
\item For any virtual vertex $\tv_{\ell}$, $\ell<n$, we have
\begin{equation}\label{equa:thetavirtuel}
\Theta_{\Delta}(\1_{(F \ast G,\varepsilon)}\ast \tv_{\ell})=
(\Theta_{\Delta}(\1_{(F \ast G,\varepsilon)}))\ast \tv_{\ell}.
\end{equation}
\end{enumerate}
\end{lemma}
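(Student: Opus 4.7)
The plan is to unfold the explicit formula for $\Theta_{\Delta}$ established in the proof of \propref{prop:homotopieR} — namely,
$$
\Theta_{\Delta}(\1_{(F\ast G,\varepsilon)}) = (-1)^{|(F,\varepsilon)|_{<p}+|(G_p,\varepsilon_p)|}\,\1_{(F\cup G,\varepsilon)}
$$
when $F,G$ are compatible (meeting at some $a_k = b_0 \in \Delta_p$) and zero otherwise — and then to compare both sides of each identity by a case analysis on the compatibility of $F,G$ and the position $\ell$ of the adjoined vertex, using repeatedly \lemref{lem:unecellulepuislautre} (antisymmetry of successive adjunctions) and \defref{def:etunpointun} (signs carried by $\ast$).

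For part 1), I would split into two cases. First, suppose $F$ and $G$ are compatible, so that $F \cup G$ is a simplex of $\Delta$. The left-hand side then equals $\pm\,\1_{(F\cup G,\varepsilon)}\ast e$. On the right-hand side, the adjunction of $e_0$ acts only on the $F$-side while $e_1$ acts only on the $G$-side, so the pair $(F \ast e_0, G)$ (resp.\ $(F, e_1 \ast G)$) is compatible essentially iff $e$ already sat at the correct pivot position. If $\ell \neq p$ then exactly one of the two summands in the RHS is nonzero, and the sign prefactor of $\Theta_{\Delta}$ combined with the prefactor $(-1)^{|(F\ast G,\varepsilon)|_{>\ell}}$ of vertex adjunction reproduces the sign of $\1_{(F\cup G,\varepsilon)}\ast e$. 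If $\ell = p$, both $e_0$- and $e_1$-contributions survive, and one checks that $(F\ast e_0)\cup G$ and $F \cup (e_1 \ast G)$ assemble into the two ways of inserting $e$ into the joined factor $F_p \cup G_p$; the sum reconstitutes $\1_{(F\cup G,\varepsilon)}\ast e$ after the signs $|(F,\varepsilon)|_{<p}$ and $|(G_p,\varepsilon_p)|$ from $\Theta_\Delta$ recombine with the $\ast$-signs. If $F,G$ are not compatible, the LHS vanishes, and one must verify that the two contributions to the RHS either are themselves zero or cancel; the only way inserting a single extra vertex $e_0$ or $e_1$ can restore compatibility is when $F$ and $G$ already shared the meeting vertex up to a single gap, and in that situation the two surviving contributions attach the same underlying vertex from opposite sides of the pivot and cancel by the antisymmetry supplied by \lemref{lem:unecellulepuislautre}.

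For part 2), the virtual vertex $\tv_{\ell}$ with $\ell<n$ acts only on the $\ell$-th cone factor of the blow-up, converting $\varepsilon_\ell=0$ to $\varepsilon_\ell=1$ and inserting the apex; it does not interact with the simplicial vertices where compatibility of $F$ and $G$ is tested. Consequently, $\Theta_{\Delta}$ commutes with $\ast\,\tv_{\ell}$ up to the sign $(-1)^{|-|_{>\ell}}$ carried by the adjunction. The equality of signs on the two sides reduces to the identity $|(F\cup G,\varepsilon)|_{>\ell} = |(F\ast G,\varepsilon)|_{>\ell}$ for $\ell<n$, which is immediate since $F\cup G$ and $F\ast G$ share the same factors strictly above index $\ell$ whenever the pivot $p \geq \ell$, and the statement is vacuous (both sides vanish by incompatibility issues at position $p<\ell$) otherwise.

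The main obstacle is the sign bookkeeping in the sub-case of part 1) when $\ell = p$, where both $e_0$- and $e_1$-contributions are simultaneously nonzero: one must check that the prefactor $(-1)^{|(F,\varepsilon)|_{<p}+|(G_p,\varepsilon_p)|}$ from $\Theta_{\Delta}$, the change in $|(G_p,\varepsilon_p)|$ when $G_p$ is enlarged by $e$, and the $\ast$-sign $(-1)^{|(F\cup G,\varepsilon)|_{>\ell}}$ all conspire to give the two terms appearing in the LHS with the correct orientation. This is a purely combinatorial verification mirroring the classical proof that the $\cup$-product of \defref{def:cupsurDelta} is compatible with face operators, but transported to the $\Delta \otimes [0,1]$ setting.
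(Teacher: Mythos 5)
The overall strategy (unfolding the explicit formula for $\Theta_\Delta$ and doing a case analysis on $\ell$ versus the pivot index $p$) matches the paper's proof, but your analysis of the crucial case $\ell = p$ contains a genuine error. You assert that when $\ell = p$ \emph{both} the $e_0$- and $e_1$-contributions survive and that they ``assemble into the two ways of inserting $e$ into $F_p \cup G_p$.'' This is not what happens: for a \emph{fixed} vertex $e \in \Delta_p$, at most one of $\Theta_\Delta(\1_{(F\ast G,\varepsilon)}\ast e_0)$ and $\Theta_\Delta(\1_{(F\ast G,\varepsilon)}\ast e_1)$ can be nonzero. Adjoining $e_0$ to $F$ keeps the last vertex of $F$ equal to the pivot $a_{i_r}$ (and so preserves compatibility with $G$) only when $e < a_{i_r}$, while adjoining $e_1$ to $G$ preserves the first vertex $b_{j_0}=a_{i_r}$ only when $e > b_{j_0}$; these conditions are mutually exclusive, and for $e$ equal to the pivot both sides give $0$. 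So the identity reduces, as in the paper, to checking that the single surviving term reproduces $\Theta_\Delta(\1_{(F\ast G,\varepsilon)})\ast e$ with the correct sign — there is no ``sum reconstituting'' the left-hand side from two terms, and no sign conspiracy between two simultaneously surviving contributions to verify. This misconception also infects your discussion of the incompatible case: since for a fixed $e$ the $e_0$- and $e_1$-insertions cannot both restore compatibility (one requires $e$ to equal the first vertex of $G$, the other to equal the last vertex of $F$, and these differ by incompatibility), there are never two terms to cancel via the antisymmetry of \lemref{lem:unecellulepuislautre}.

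Your discussion of part (2) also contains a wrong claim: you say the identity for $\tv_\ell$ is ``vacuous (both sides vanish) when $p < \ell$,'' but this is not so. When $p < \ell < n$, the adjunction $\ast\,\tv_\ell$ acts on the $G$-component $(G_\ell,\varepsilon_\ell)$ and both $\Theta_\Delta(\1_{(F\ast G,\varepsilon)}\ast\tv_\ell)$ and $(\Theta_\Delta(\1_{(F\ast G,\varepsilon)}))\ast\tv_\ell$ are generally nonzero; the paper handles $\ell<p$ explicitly and observes that the case $\ell\geq p$ goes by the same computation, with no vacuity. The correct proof must actually verify the sign identity $|(F\ast G,\varepsilon)|_{>\ell} = |(F\cup G,\varepsilon)|_{>\ell}$ when $\ell > p$ as well, rather than dismissing it.
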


\begin{proof}
Let $\1_{(F\ast G,\varepsilon)}=
\1_{(F_{0},\varepsilon_{0})}\otimes\dots\otimes
\1_{(F_{p}\ast G_{p},\varepsilon_{p})}\otimes\dots\otimes
\1_{(G_{n-1},\varepsilon_{n-1})}\otimes \1_{G_{n}}$.

1) $\bullet$ Let $\ell<p$. Using 
the \defref{def:etunpointun}, we get
$\Theta_{\Delta}(\1_{(F \ast G,\varepsilon)}\ast e_{1})=0$
and the following equalities
\begin{eqnarray*}
\Theta_{\Delta}(\1_{(F \ast G,\varepsilon)}\ast e_{0})=&&\\
(-1)^{|(F\ast G,\varepsilon)|_{>\ell}}
\Theta_{\Delta} (
\1_{(F_{0},\varepsilon_{0})}
\otimes\dots\otimes
(\1_{(F_{\ell},\varepsilon_{\ell})}\ast e_{0})
\otimes
\dots\otimes
\1_{(F_{p}\ast G_{p},\varepsilon_{p})}
\otimes\dots\otimes
\1_{G_{n}}
) =&&\\
(-1)^{|(F\ast G,\varepsilon)|_{>\ell}+
|(F,\varepsilon)|_{<p}+1+|G_{p}|+\varepsilon_{p}}
\dots\otimes
(\1_{(F_{\ell},\varepsilon_{\ell})}\ast e)
\otimes
\dots\otimes
\1_{(F_{p}\cup G_{p},\varepsilon_{p})}
\otimes\dots =&&
\\
(-1)^{|(F,\varepsilon)|_{<p}+|G_{p}|+\varepsilon_{p}}
(
\1_{(F_{0},\varepsilon_{0})}
\otimes
\dots\otimes
\1_{(F_{p}\cup G_{p},\varepsilon_{p})}
\otimes\dots\otimes
\1_{G_{n}}
)\ast e
=
(\Theta_{\Delta}(\1_{(F \ast G,\varepsilon)}))\ast e.&&
\end{eqnarray*}

\medskip\noindent
$\bullet$ The argument is the same if $\ell>p$.

\medskip\noindent
$\bullet$ If $\ell=p$, these formulas become:
\begin{eqnarray*}
\Theta_{\Delta}(\1_{(F \ast G,\varepsilon)}\ast e_{0})
=
(-1)^{|(F\ast G,\varepsilon)|_{>p}}
\Theta_{\Delta}(
\1_{(F_{0},\varepsilon_{0})}
\otimes\dots\otimes
\1_{(F_{p}\ast G_{p},\varepsilon_{p})}\ast e_{0}
\otimes\dots\otimes
\1_{G_{n}}
).&&\\
=
(-1)^{|(F\ast G,\varepsilon)|_{\geq p}+\varepsilon_{p}}
\Theta_{\Delta}(
\1_{(F_{0},\varepsilon_{0})}
\otimes\dots\otimes
\1_{(F_{p}\ast G_{p}\ast e_{0},\varepsilon_{p})}
\otimes\dots\otimes
\1_{G_{n}}
).&&
\end{eqnarray*}
Write $F_{p}=[a_{i_{0}},\dots,a_{i_{r}}]$, $G_{p}=[b_{j_{0}},\dots,b_{j_{s}}]$. 
If the simplices are not compatible, both members of the previous equality  are zero.
The only cases giving
compatible  simplices are one of the following cases:
\begin{equation}\label{equa:ellegalp}
(*)\ 
a_{i_{t}}<e<a_{i_{t+1}} \text{ with } t\in\{0,\dots,r-1\},
\ \ (*) \  e<a_{i_{0}} \text{ and }e\in\Delta_{p}. 
\end{equation}
We treat the first case, the second being identical.
Notice:
\begin{eqnarray*}
F_{p}\ast G_{p}\ast e
&=&
[a_{i_{0}},\dots,a_{i_{r}}]\ast [b_{j_{0}},\dots,b_{j_{s}}]\ast e\\
&=&
(-1)^{s+1+r-t}
[a_{i_{0}},\dots,a_{i_{t}},e,a_{i_{t+1}},\dots, a_{i_{r}}]\ast [b_{j_{0}},\dots,b_{j_{s}}]
\end{eqnarray*}
and, setting
$F_{p(e)}=[a_{i_{0}},\dots,a_{i_{t}},e,a_{i_{t+1}},\dots, a_{i_{r}}]$,
$$
F_{p}(e)\cup G_{p}
=
[a_{i_{0}},\dots,a_{i_{t}},e,a_{i_{t+1}},\dots, a_{i_{r}}=b_{j_{0}},\dots,b_{j_{s}}]
=
(-1)^{s+r-t}
(F_{p}\cup G_{p}) \ast e.
$$
It follows
\begin{eqnarray*}
\Theta_{\Delta}(\1_{(F \ast G,\varepsilon)}\ast e_{0})
=&&\\
(-1)^{|(F\ast G,\varepsilon)|_{\geq p}+|(F,\varepsilon)|_{<p}+|G_{p}|+1}
\1_{(F_{0},\varepsilon_{0})}
\otimes\dots\otimes
\1_{((F_{p}\cup G_{p})\ast e,\varepsilon_{p})}
\otimes\dots\otimes
\1_{G_{n}}=&&\\
%
(-1)^{|(F,\varepsilon)|_{<p}+|G_{p}|+\varepsilon_{p}}
(\1_{(F_{0},\varepsilon_{0})}
\otimes\dots\otimes
(\1_{(F_{p}\cup G_{p},\varepsilon_{p})})
\otimes\dots\otimes
\1_{G_{n}})\ast e
=&&\\
(\Theta_{\Delta}(\1_{(F_{0},\varepsilon_{0})}
\otimes\dots\otimes
(\1_{(F_{p}\ast G_{p},\varepsilon_{p})})
\otimes\dots\otimes
\1_{G_{n}}))\ast e.&&
\end{eqnarray*}
In the case (\ref{equa:ellegalp}), we have
$\Theta_{\Delta}(\1_{(F \ast G,\varepsilon)}\ast e_{1})=0$, 
which gives the desired equality.

\medskip\noindent
Similar argument gives the result in the symmetric case of the previous one:
``$b_{j_{t}}<e<b_{j_{t+1}}$ with $t\in\{1,\dots,s-1\}$
or  $b_{j_{s}}<e$ with $e\in\Delta_{p}$.''

\medskip
2)
We now consider a virtual vertex  $\tv_{\ell}$.
If $\varepsilon_{\ell}=1$, equality (\ref{equa:thetavirtuel}) is verified since its two members are zero. 
We therefore assume $\varepsilon_{\ell}=0$. If $\ell<p$, we have:
\begin{eqnarray*}
\Theta_{\Delta}(\1_{(F \ast G,\varepsilon)}\ast \tv_{\ell})
=&&\\
(-1)^{|(F\ast G,\varepsilon)|_{>\ell}+
|(F,\varepsilon)|_{<p}+1+|(G_{p},\varepsilon_{p})|}
\dots\otimes
(\1_{(F_{\ell},0)}\ast \tv_{\ell})
\otimes
\dots\otimes
\1_{(F_{p}\cup G_{p},\varepsilon_{p})}
\otimes\dots =&&
\\
(-1)^{|(F,\varepsilon)|_{<p}+|(G_{p},\varepsilon_{p})|}
(
\1_{(F_{0},\varepsilon_{0})}
\otimes
\dots\otimes
\1_{(F_{p}\cup G_{p},\varepsilon_{p})}
\otimes\dots
\otimes
\1_{G_{n}}
)\ast \tv_{\ell}=&&\\
(\Theta_{\Delta}(\1_{(F \ast G,\varepsilon)}))\ast \tv_{\ell}.&&
\end{eqnarray*}
The argument is similar for $\ell\geq p$.
\end{proof}

\begin{proposition}\label{prop:homotopieRglobal}
Let $X$ be a filtered space and let  $ X\times \R$ endowed with the product filtration and a perversity~$\ov{p}$. 
Denoting also by $\ov{p}$ the induced perversity on $X$, there exists a linear map
$\Theta \colon \lau \tN*{\ov{p}}{X\times \R;R}\to \lau \tN {*-1} {\ov{p}}{X\times \R;R}$,
such that
\begin{equation}\label{equa:homotopieXR}
\Theta \circ \delta+\delta\circ \Theta= (I_{0}\circ\pr)^*-\id.
\end{equation}
\end{proposition}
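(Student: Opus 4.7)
The strategy is to globalize the simplex-level homotopy $\Theta_\Delta$ of \propref{prop:homotopieR} exactly as the homotopy $\tT$ in \propref{prop:wsubX} was built from the $\tT_\Delta$'s. For each regular simplex $\sigma\colon\Delta\to X\times\R$, I will use the product structure of $X\times\R$ to construct an auxiliary cochain $\omega_{\Delta\otimes[0,1]}\in\Hiru\tN*{\Delta\otimes[0,1]}$, apply $\Theta_\Delta$ to it, and read off the result as $(\Theta\omega)_\sigma$.

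Concretely, for a regular simplex $\sigma\colon\Delta\to X\times\R$, define the affine interpolation
$$H_\sigma\colon\Delta\times[0,1]\longrightarrow X\times\R,\qquad H_\sigma(y,t)=(\pr\sigma(y),\,t\cdot\pr_2\sigma(y)).$$
Then $H_\sigma(\cdot,0)=I_0\circ\pr\circ\sigma$ and $H_\sigma(\cdot,1)=\sigma$. Since $H_\sigma^{-1}(X_i\times\R)=\sigma^{-1}(X_i\times\R)\times[0,1]$, the restriction $\sigma_{F*G}$ of $H_\sigma$ to any simplex $F*G$ of the prismatic weighted simplicial complex $\Delta\otimes[0,1]$ is a filtered simplex of $X\times\R$, whose induced filtration matches the intrinsic weighted structure of $F*G$; it is regular whenever the top piece $F_n*G_n$ is non-empty. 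Following \propref{prop:pasdeface}, set
$$\omega_{\Delta\otimes[0,1]}=\sum_{(F*G,\varepsilon)\fa\Delta\otimes[0,1]}\omega_{\sigma_{F*G}}(F*G,\varepsilon)\,\1_{(F*G,\varepsilon)}\in\Hiru\tN*{\Delta\otimes[0,1]},$$
and define $(\Theta\omega)_\sigma:=\Theta_\Delta(\omega_{\Delta\otimes[0,1]})$. Compatibility with regular face operators $\delta_\ell\colon\nabla\to\Delta$ follows from the naturality identity $H_{\sigma\circ\delta_\ell}=H_\sigma\circ(\delta_\ell\times\id)$, which gives $(\delta_\ell\times\id)^*\omega_{\Delta\otimes[0,1]}=\omega_{\nabla\otimes[0,1]}$, combined with the obvious commutativity of $\Theta_\Delta$ with the restriction $(\delta_\ell\times\id)^*\colon\Hiru\tN*{\Delta\otimes[0,1]}\to\Hiru\tN*{\nabla\otimes[0,1]}$ (visible on the explicit formula for $\Theta_\Delta$). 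So $\Theta\omega\in\Hiru\tN{*-1}{X\times\R;R}$ is well defined.

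The homotopy identity (\ref{equa:homotopieXR}) is then a direct consequence of \propref{prop:homotopieR}: applying $\Theta_\Delta\circ\delta+\delta\circ\Theta_\Delta$ to $\omega_{\Delta\otimes[0,1]}$ kills every term with both $F$ and $G$ non-empty and leaves exactly the contributions from the top face $\Delta\times\{1\}$ (with sign $-$) and the bottom face $\Delta\times\{0\}$ (with sign $+$). But $\sigma_{\emptyset*G}=\sigma|_G$ for $G\fa\Delta\times\{1\}$, so the top contribution reassembles to $-\omega_\sigma$; and $\sigma_{F*\emptyset}=I_0\pr\sigma|_F$ for $F\fa\Delta\times\{0\}$, so the bottom contribution reassembles to $((I_0\circ\pr)^*\omega)_\sigma$, yielding $(\Theta\delta+\delta\Theta)\omega=(I_0\circ\pr)^*\omega-\omega$. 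Finally, the allowability of $\Theta\omega$ has to be checked: since $\Theta_\Delta(\1_{(F*G,\varepsilon)})=\pm\1_{(F\cup G,\varepsilon)}$ preserves the tuple $\varepsilon$ and does not increase the perverse degree (the $\ell$-perverse degree only sees $\varepsilon_{n-\ell}$ and $|(F\cup G,\varepsilon)|_{>n-\ell}\le|(F*G,\varepsilon)|_{>n-\ell}$), and since the product filtration means that a singular stratum of $X\times\R$ has the form $S\times\R$ with the same codimension as $S$, one obtains $\|\Theta\omega\|_{S\times\R}\le\|\omega\|_{S\times\R}\le\ov p(S\times\R)$, and similarly for $\delta\Theta\omega=(I_0\pr)^*\omega-\omega-\Theta\delta\omega$. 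The sole real obstacle is this last perverse-degree bookkeeping, since the construction is otherwise a mechanical adaptation of \propref{prop:wsubX}.
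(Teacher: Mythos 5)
Your proposal follows the paper's own proof essentially step by step: your $H_\sigma$ is exactly the paper's $\hat\sigma$, your $\omega_{\Delta\otimes[0,1]}$ and the definition $(\Theta\omega)_\sigma=\Theta_\Delta(\omega_{\Delta\otimes[0,1]})$ are identical, the face-compatibility argument via $(\delta_\ell\times\id)^*$ is the same, the identification of the $t=0$ and $t=1$ contributions with $(I_0\circ\pr)^*\omega$ and $\omega$ matches, and the perverse-degree estimate reduces, as in the paper, to the pointwise bound $\|\1_{(F\cup G,\varepsilon)}\|_\ell\le\|\1_{(F*G,\varepsilon)}\|_\ell$. This is correct and not a genuinely different route.
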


\begin{proof}
Let $\sigma\colon\Delta\to X\times \R$ be a regular simplex and   $\omega\in  \Hiru \tN k{X\times \R;R}$. 
We define
$\hat{\sigma}\colon \cal L_\Delta =  \Delta\otimes [0,1]\to X\times \R$
by
$\hat{\sigma}(x,t)=(\pr(\sigma(x)),t\pr_{2}(\sigma(x)))$ and we set
$$\omega_{\Delta\otimes [0,1]}=
\sum_{(F\ast G,\varepsilon)\fa \widetilde{\Delta \otimes [0,1]}}
\omega_{\hat{\sigma}\circ \iota_{F\ast G}}(F\ast G,\varepsilon)\,\1_{(F\ast G,\varepsilon)},
$$
where $\iota_{F\ast G}\colon F\ast G \hookrightarrow \Delta \otimes [0,1]$ is the canonical injection.
By construction, $\omega_{\Delta \otimes [0,1]}\in\Hiru \tN k {\Delta \otimes [0,1]}$, and we can define
\begin{equation*}\label{equa:homotopiee}
\Theta(\omega)_\sigma=
\Theta_{\Delta }(\omega_{\Delta \otimes [0,1]})
\in \Hiru \tN {k-1}{\Delta }.
\end{equation*}
We need to verify $\Theta(\omega)\in \Hiru \tN*{X\times \R;R}$. For this purpose, we consider a regular face operator
$\delta_{\ell}\colon \nabla \to\Delta $
and
$\tau=\sigma\circ\delta_{\ell}$.
The property
$\omega_{\Delta \otimes [0,1]}\in \Hiru\tN *{\Delta \otimes [0,1]}$
becomes
$\delta_{\ell}^*\,\omega_{\Delta \otimes [0,1]}=\omega_{\nabla \otimes [0,1]}$.
The map
${\delta}^*_{\ell}\colon \Hiru \tN * {\Delta \otimes [0,1]}\to \Hiru \tN*{\nabla \otimes [0,1]}$
verifies
$$
{\delta}^*_{\ell}(\1_{(F*G,\varepsilon)})=
\left\{\begin{array}{cl}
\1_{(F*G,\varepsilon)}
&\text{if}\;
F*G\fa \nabla \otimes [0,1],\\
0&\text{if not,}
\end{array}\right.
$$
which implies the commutativity of the following diagram,
\begin{equation*}\label{equa:biendefini2}
\xymatrix{
\Hiru \tN *{\Delta \otimes [0,1]}
\ar[rr]^-{\Theta_{\Delta }}
\ar[d]_{{\delta}^*_{\ell}}
&&
\Hiru \tN{*-1}{\Delta \otimes [0,1]}
\ar[d]^{{\delta}^*_{\ell}}
\\
\Hiru \tN *{\nabla \otimes [0,1]}
\ar[rr]^-{\Theta_\nabla }
&&
\Hiru \tN {*-1}{\nabla \otimes [0,1]}.
}
\end{equation*}
We  deduce 
\begin{eqnarray*}
\delta^*_{\ell}(\Theta(\omega)_\sigma)
&=&
\delta^*_{\ell}\Theta_\Delta (\omega_{\Delta \otimes [0,1]})=
\Theta_\nabla (\delta^*_{\ell}\omega_{\Delta \otimes [0,1]})=
\Theta_\nabla (\omega_{\nabla \otimes [0,1]})
=
\Theta(\omega)_\tau .
\end{eqnarray*}
The map $\Theta\colon \Hiru \tN *{X\times \R;R}\to
\Hiru \tN {*-1}{X\times \R;R}$ is well defined.
From  \propref{prop:homotopieR}, we can deduce
\begin{equation}\label{equa:thetadelta}
(\Theta\delta+\delta\Theta)(\omega )_\sigma
=
\sum_{(F,\varepsilon)\fa \widetilde{\Delta \times\{0\}}}
\omega_{\hat{\sigma}\circ \iota_{F}}(F,\varepsilon)\,\1_{(F,\varepsilon)}
-
\sum_{(G,\varepsilon)\fa \widetilde{\Delta \times\{1\}}}
\omega_{\hat{\sigma}\circ \iota_{G}}(G,\varepsilon)\,\1_{(G,\varepsilon)}.
\end{equation}
Recall the canonical inclusions $\iota_{0},\,\iota_{1}\colon \Delta \to \Delta \otimes [0,1]$.
If $F$ is a face of $\Delta \times \{0\}$ identified to
$\nabla\fa\Delta $, one has
$\iota_{F}=\iota_{0}\circ\iota_{\nabla}$, from where
$$\hat{\sigma}\circ\iota_{F}=\hat{\sigma}\circ\iota_{0}\circ \iota_{\nabla}=I_{0}\circ\pr\circ\sigma\circ\iota_{\nabla}$$
and
$$
\sum_{(F,\varepsilon)\fa \widetilde{\Delta \times\{0\}}}
\omega_{\hat{\sigma}\circ \iota_{F}}(F,\varepsilon)\,\1_{(F,\varepsilon)}
= (I_{0}\circ\pr)^*
\left(
\sum_{(\nabla,\varepsilon)\fa \widetilde{\Delta }}
\omega_{{\sigma}\circ \iota_{\nabla}}(\nabla,\varepsilon)\,\1_{(\nabla,\varepsilon)}
\right)
= (I_{0}\circ\pr)^*(\omega ) _\sigma.
$$
Using $\iota_{G}=\iota_{1}\circ \iota_{\nabla}$
and
$\hat{\sigma}\circ\iota_{G}=\hat{\sigma}\circ\iota_{1}\circ \iota_{\nabla}=\sigma\circ\iota_{\nabla}$,
we show, as above, that the right side-hand of the second term of  (\ref{equa:thetadelta}) is  equal to
$\omega_\sigma $. In summary, we have shown,
$$(\Theta\delta+\delta\Theta)(\omega ) _\sigma= (I_{0}\circ\pr)^*(\omega ) _\sigma-\omega _\sigma ,$$
from which comes the equality (\ref{equa:homotopieXR}).

\medskip

As for the compatibility with the perverse  degrees, we prove
$\|\Theta(\omega)\|\leq \|\omega\|$,
for each  $\omega\in \Hiru \tN*{X\times \R;R}$, by arguing over the chosen basis. This inequality
follows directly from the definition of $\Theta$,
$$\|\Theta_{\Delta}(\1_{(F\ast G,\varepsilon)})\|_{\ell}
=\|\1_{(F\cup G,\varepsilon)}\|_{\ell}
\leq 
\|\1_{(F\ast G,\varepsilon)}\|_{\ell},
$$
for each $\ell\in\{1,\dots,n\}$. 
With (\ref{equa:homotopieXR}), we have constructed the homotopy\\
$\Theta \colon \lau \tN*{\ov{p}}{X\times \R;R}\to \lau \tN {*-1} {\ov{p}}{X\times \R;R}$.
\end{proof}

\begin{proof}[Proof of  \thmref{prop:isoproduitR}]
This is a direct consequence of the equalities $\pr\circ I_{0}=\pr\circ I_{1}=\id$ and 
$\lau \IH*{\ov{p}}{I_{0}\circ \pr}=\id
$, proved in (\ref{equa:homotopieXR}).
\end{proof}

The following result is used in \secref{15}.

\begin{proposition}\label{cor:SfoisX}
Let  $(X,\ov p)$ be a paracompact perverse space. Let $S^{\ell}$ be the unit sphere of $\R^{\ell+1}$. 
The canonical projection $p_{X}\colon {S}^\ell \times X\to X$, $(z,x)\mapsto x$, 
induces an isomorphism
$\lau \IH {k} {\ov{p}}{{S}^\ell\times X;R}\cong \lau \IH {k} {\ov{p}}{X;R}\oplus \lau \IH {k-\ell} {\ov{p}}{X;R}$.
\end{proposition}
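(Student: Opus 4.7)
The plan is to argue by induction on $\ell \geq 0$, combining the Mayer--Vietoris long exact sequence (\corref{cor:MVlongue}) with the invariance under the product with $\R$ (\thmref{prop:isoproduitR}). The base case $\ell=0$ is immediate: $S^0 \times X$ is the disjoint union of two copies of $X$, so $\lau \IH k {\ov p}{S^0 \times X;R}$ is canonically isomorphic to $\lau \IH k {\ov p}{X;R} \oplus \lau \IH k {\ov p}{X;R}$, matching the statement since $k-0 = k$.

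For the inductive step, I would fix $z_0 \in S^\ell$ and set $U_+ = S^\ell \menos \{-z_0\}$, $U_- = S^\ell \menos \{z_0\}$, so that $U_\pm \cong \R^\ell$ and $U_+ \cap U_- \cong S^{\ell-1} \times \R$. Iterating \thmref{prop:isoproduitR} $\ell$ times gives that $p_X^*$ induces $\lau \IH * {\ov p}{U_\pm \times X;R} \cong \lau \IH * {\ov p}{X;R}$; applying \thmref{prop:isoproduitR} once to the $\R$-factor and then the induction hypothesis yields $\lau \IH k {\ov p}{(U_+ \cap U_-) \times X;R} \cong \lau \IH k {\ov p}{X;R} \oplus \lau \IH {k-\ell+1} {\ov p}{X;R}$. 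Substituting these identifications into the Mayer--Vietoris sequence for the cover $(U_+ \times X, U_- \times X)$ of $S^\ell \times X$ produces, for each $k$,
$$\cdots \xrightarrow{\partial} \lau \IH k {\ov p}{S^\ell \times X;R} \to \lau \IH k {\ov p}{X;R} \oplus \lau \IH k {\ov p}{X;R} \xrightarrow{\Phi} \lau \IH k {\ov p}{X;R} \oplus \lau \IH {k-\ell+1} {\ov p}{X;R} \xrightarrow{\partial} \cdots$$

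The key computation is to identify $\Phi$. The two restrictions $U_\pm \times X \to (U_+\cap U_-) \times X$ commute with the projection $p_X$ onto $X$, so a class of the form $p_X^* \alpha$ on $U_\pm \times X$ restricts to $p_X^* \alpha$; under the induction hypothesis this lands in the first summand $\lau \IH k {\ov p}{X;R}$. Hence $\Phi(\alpha, \beta) = (\alpha - \beta, 0)$, its kernel is the diagonal copy of $\lau \IH k {\ov p}{X;R}$, and its cokernel is $\lau \IH {k-\ell+1} {\ov p}{X;R}$. The same description one degree lower shows that the image of $\partial$ is $\lau \IH {k-\ell} {\ov p}{X;R}$, yielding the short exact sequence
$$0 \to \lau \IH {k-\ell} {\ov p}{X;R} \to \lau \IH k {\ov p}{S^\ell \times X;R} \to \lau \IH k {\ov p}{X;R} \to 0.$$

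This sequence splits through the stratified section $s_{z_0}\colon X \to S^\ell \times X$, $x \mapsto (z_0, x)$, of $p_X$: by \thmref{MorCoho}, functoriality gives $s_{z_0}^* \circ p_X^* = \id$, so $p_X^*$ is a right inverse to the projection onto $\lau \IH k {\ov p}{X;R}$, and the desired direct sum decomposition follows. The main obstacle is the precise identification of $\Phi$, which rests on tracking the naturality of $p_X^*$ under the inclusions of opens together with the inductively constructed splitting for $S^{\ell-1}\times X$; once this is in hand, the rest is a formal diagram chase.
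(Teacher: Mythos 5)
Your proof is correct and follows essentially the same route the paper sketches in a single line: induct on $\ell$, cover $S^\ell$ by the complements of two antipodal points, and combine the Mayer--Vietoris sequence with \thmref{prop:isoproduitR}. The identification of $\Phi$ as $(\alpha,\beta)\mapsto(\alpha-\beta,0)$ and the splitting via the section $s_{z_0}$ (equivalently, $p_X^*$ itself) are exactly the details the paper leaves to the reader.
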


\begin{proof}
It suffices to use an induction on $\ell$  with the decomposition 
${S}^{\ell} = {S}^{\ell} \backslash \{\mathrm{North pole}\} \cup  {S}^\ell \backslash \{\mathrm{South pole}\}$
(see \thmref{thm:MVcourte} and \thmref{prop:isoproduitR}).
\end{proof}


\section{Blown-up intersection cohomology of a cone. \thmref{prop:coneTW}.}\label{sec:cohomologiecone}

In this section, $ X $ is an $n$-dimensional \emph{compact} filtered space  and we represent the open cone as the quotient $ \rc  X = X \times [0,\infty[ /X \times \{0\}$, whose apex is denoted by $ \tw$. The formal dimension of $\rc X$ is $ n + 1 $ relatively to the conical filtration,
$(\rc X)_{i}=\rc X_{i-1}$ if $i\geq 1$ and $(\rc X)_{0}=\{\tw\}$.
The purpose of this section is to prove the following proposition, cf. also
\cite[Corollary 1.47]{CST1} and
\cite[Proposition 3.1.1]{MR2210257}.

\bt\label{prop:coneTW}
Let  $X$  be a compact filtered space.
Consider the open cone, $\rc  X = X \times [0,\infty[ /X \times \{0\}$, equipped with the conical filtration and a perversity $ \ov {p} $.
We also denote by $\ov{p}$ the perversity induced on $ X $.
The following properties are verified for any commutative ring, $R$.
\begin{enumerate}[\rm (a)]
\item The inclusion $\iota\colon X\to \rc X$, $x\mapsto [x,1]$,
induces an isomorphism,
$\lau \IH {k}{\ov{p}}{\rc X;R}\xrightarrow[]{\cong} \lau \IH{k}{\ov{p}}{X;R}$,
for each $k\leq \ov{p}(  \tw )$.
\item 
For each $k>  \ov{p}(  \tw )$, we have $\lau \IH {k}{\ov{p}}{\rc X;R}=0$.
\end{enumerate}
\et

\subsection{Simplices on a filtered space and its cone} 
First we link the complexes of $ X $ and $ \rc X$.
The formal dimension of the cone being different from that of the original space, we introduce some operations which increase or decrease the length of filtrations.
\begin{itemize}
\item If $\Delta=\Delta_{0}\ast\dots\ast \Delta_{n+1}$ is a regular simplex, of formal dimension   $n+1$, we define a regular simplex, of formal dimension  $n$, by
$\widehat{\Delta}=\Delta_{1}\ast\dots\ast\Delta_{n+1}$.
Its filtration is characterized by $\widehat{\Delta}_{i}=\Delta_{i+1}$, for each $i\in\{0,\dots,n\}$.

\item Let $\sigma\colon \Delta_{\sigma}=\Delta_{0}\ast\dots\ast\Delta_{n+1}\to \rc X$ be a regular simplex of $\rc X$.
Since $\sigma(\widehat{\Delta}_{\sigma})\subset X\times ]0,\infty[$, we define the restriction
$$\hat{\sigma}\colon\Delta_{\hat{\sigma}}=\widehat{\Delta}_{\sigma}\xrightarrow[]{\sigma}X\times ]0,\infty[.
$$
\item For each regular simplex of $\rc X$, 
$\sigma\colon \Delta_{\sigma}=\Delta_{0}\ast\dots\ast\Delta_{n+1}\to \rc X$,
the image of a point $x\in \Delta_{\sigma}$ can be written as,
$$\sigma(x)=[\sigma_{1}(x),\sigma_{2}(x)]\in {\rc X}=X\times [0,\infty[/X\times \{0\}.$$
Associated to the simplex $\sigma$, there is the following regular simplex of $\rc X$, 
$$\tc\sigma\colon \Delta_{\tc\sigma}=\left(\{\tp\}\ast\Delta_{0}\right)\ast\dots\ast \Delta_{n+1}\to \rc X,$$
defined by
$\tc\sigma((1-t)\tp+tx)=[\sigma_{1}(x),t\sigma_{2}(x)]$.
Moreover, if one considers $\hat\sigma\colon \widehat{\Delta}_{\sigma}\to X\times ]0,\infty[\hookrightarrow \rc X$ 
as a filtered simplex of the cone, then $\tc\hat{\sigma}$ is a face of $\tc \sigma$.
\end{itemize}

The \emph{truncation} 
of 
a cochain complex
is defined for all positive integers 
$s$ by
\begin{equation}\label{equa:troncation}
(\tau_{\leq s} C)^r=\left\{
\begin{array}{ccl}
C^r
&\text{if}&
r<s,\\
\cal Z C^s
&\text{if}&
r=s,\\
0
&\text{if}&
r>s,
\end{array}\right.
\end{equation}
where $\cal Z C^s$ means the $R$-module of cocycles whose  degree is $s$.

\parr{Construction of $f\colon \Hiru \tN*{X\times ]0,\infty[;R}\to \Hiru \tN*{\rc X;R}$}

Let $\sigma\colon \Delta_{\sigma}=\Delta_{0}\ast\dots\ast\Delta_{n+1}\to \rc X$
and 
$\omega\in \Hiru\tN *{X\times ]0,\infty[;R}$. We denote by
$\lambda_{\tc\Delta_{0}}$ the cocycle $\1_{(\emptyset,1)}+\sum_{e\in \cal V(\Delta_{0})}\1_{([e],0)}\in N^0(\tc\Delta_{0})$. We set
\begin{equation}\label{equa:lef}
f(\omega)_{\sigma}=\lambda_{\tc\Delta_{0}}\otimes \omega_{\hat{\sigma}}.
\end{equation}

\begin{proposition}\label{prop:lef}
Let $(\rc X,\ov{p})$ be a perverse space over the cone of the compact space  $X$ and let 
$(X\times ]0,\infty[,\ov{p})$ be the induced perverse space.
The  correspondence defined above  induces a  cochain map,
$$f\colon \tau_{\leq \ov{p}(\tw)} \lau \tN* {\ov{p}}{X\times ]0,\infty[;R}\to 
\tau_{\leq \ov{p}(\tw)}\lau \tN *{\ov{p}}{\rc X;R}.$$
\end{proposition}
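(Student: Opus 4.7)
The plan is to verify in turn the three properties needed: (a) the assignment $\sigma \mapsto \lambda_{\tc\Delta_0}\otimes \omega_{\hat\sigma}$ is compatible with regular face operators (so $f(\omega)\in \Hiru\tN*{\rc X;R}$), (b) $f$ commutes with the blown-up differential, and (c) $f$ respects the perverse degree bound at every singular stratum of $\rc X$, provided we truncate the source and target complexes at degree $\ov p(\tw)$.

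\textbf{Well-definedness.} First I would check that the cochain $\lambda_{\tc\Delta_0}\in N^0(\tc\Delta_0)$ is a cocycle: using \propref{prop:dfaceajout} for the cone $\tc \Delta_0$ (a weighted simplicial complex of formal dimension $0$), the terms in $\delta\1_{(\emptyset,1)}$ and in $\sum_e\delta\1_{([e],0)}$ cancel pairwise (the $\1_{([e],1)}$ contributions cancel, and the $\1_{([e,e'],0)}$ terms vanish by antisymmetry). Compatibility with face operators splits into two cases: a regular face operator $\delta_\ell\colon \Delta'\to\Delta_\sigma$ either leaves $\Delta_0$ untouched, in which case $\hat{\sigma\circ\delta_\ell}=\hat\sigma\circ\hat\delta_\ell$ and $\delta_\ell^*(\lambda_{\tc\Delta_0}\otimes\omega_{\hat\sigma}) = \lambda_{\tc\Delta'_0}\otimes\omega_{\hat\sigma\circ\hat\delta_\ell}$ with $\Delta'_0=\Delta_0$, or it removes a vertex $e_*$ from $\Delta_0$ (possibly all of them); in the latter case, a direct computation on the dual basis shows $\delta_\ell^*\lambda_{\tc\Delta_0}=\lambda_{\tc\Delta'_0}$ because the pullback kills precisely the $\1_{([e_*],0)}$ summand while fixing $\1_{(\emptyset,1)}$ and the remaining $\1_{([e],0)}$'s.

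\textbf{Chain map property.} Since $\lambda_{\tc\Delta_0}$ is a cocycle of degree $0$, formula \eqref{equa:ladiff} applied to a tensor of the form $\lambda_{\tc\Delta_0}\otimes\omega_{\hat\sigma}$ yields $\delta(\lambda_{\tc\Delta_0}\otimes\omega_{\hat\sigma})=\lambda_{\tc\Delta_0}\otimes\delta\omega_{\hat\sigma}$ (the sign $(-1)^{|\lambda|_{<i}}=1$ since $|\lambda|=0$, and the $i=0$ term vanishes as $\delta\lambda_{\tc\Delta_0}=0$). Combining this with the identity $(\delta\omega)_{\hat\sigma}=\delta(\omega_{\hat\sigma})$ valid in the blown-up complex of $X\times]0,\infty[$ gives $\delta f(\omega)_\sigma = f(\delta\omega)_\sigma$ on each regular simplex.

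\textbf{Perverse degree and the truncation.} The decisive computation is at the apex stratum $\{\tw\}$ of codimension $N:=n+1$: from \defref{def:degrepervers}, the $N$-perverse degree of $f(\omega)_\sigma=\lambda_{\tc\Delta_0}\otimes\omega_{\hat\sigma}$ sees only the first tensor factor, and it is either $-\infty$ (for the $\1_{(\emptyset,1)}$ summand, which has $\varepsilon_0=1$) or equal to the total degree of $\omega_{\hat\sigma}$ (for the $\1_{([e],0)}$ summands, which have $\varepsilon_0=0$). Hence $\|f(\omega)_\sigma\|_{N}=|\omega|$. For any other singular stratum $S'=S\times]0,\infty[$ of $\rc X$ meeting $\sigma(\Delta)$, $S$ is a singular stratum of $X\times]0,\infty[$ of the same codimension, and $\|f(\omega)_\sigma\|_{\codim S'}=\|\omega_{\hat\sigma}\|_{\codim S}$, where the apex factor contributes trivially by the index shift $\hat\Delta_i=\Delta_{i+1}$. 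This yields $\|f(\omega)\|_{S'}\leq \|\omega\|_{S}\leq \ov p(S)=\ov p(S')$ when $\omega$ is $\ov p$-allowable. Restricting $f$ to the truncation $\tau_{\leq\ov p(\tw)}$ forces $|\omega|\leq \ov p(\tw)$, so the allowability at $\tw$ becomes automatic; when $|\omega|=\ov p(\tw)$, the truncation requires $\omega$ to be a cocycle, making $f(\omega)$ automatically a cocycle and hence an element of the truncation of the target. I expect the only mildly technical step is the case analysis for the face-operator compatibility when $\delta_\ell$ sends $\Delta_0$ to $\emptyset$, since one must carefully identify the pullback of $\lambda_{\tc\Delta_0}$ with $\lambda_{[\tv]}=\1_{(\emptyset,1)}=\lambda_{\tc\emptyset}$.
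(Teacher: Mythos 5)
Your proposal follows essentially the same route as the paper's own proof: you verify that $\lambda_{\tc\Delta_0}$ is a cocycle, split the face-compatibility check into the cases where the regular face operator does or does not remove a vertex of $\Delta_0$ (the paper expresses the same dichotomy via $k\leq j_0$ versus $k>j_0$ with $j_0=\dim\Delta_0$), deduce the chain-map property from the cocycle condition on $\lambda_{\tc\Delta_0}$, and then bound the perverse degree separately at the apex stratum $\{\tw\}$ (codimension $n+1$, where the first tensor factor controls everything) and at the strata $S\times ]0,\infty[$ of positive codimension $\leq n$ (where the index shift $\hat\Delta_i=\Delta_{i+1}$ makes the apex factor invisible). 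The only minor imprecision is the claim $\|f(\omega)_\sigma\|_{n+1}=|\omega|$: this is an inequality $\leq|\omega|$ in general (for instance it is $-\infty$ when $\omega_{\hat\sigma}=0$ or when $\Delta_0=\emptyset$), but since only the upper bound is used, the conclusion is unaffected. The argument is correct.
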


\begin{proof}
First, we check that the application $ f $, defined locally at the level of simplices, extends globally to
$\Hiru \tN*{X\times ]0,\infty[;R}$.
For this, we must establish
$\delta^*_{k} f(\omega)_{\sigma}=f(\omega)_{\sigma\circ\delta_{k}},$
for each $\omega\in \Hiru \tN*{X\times ]0,\infty[;R}$, each regular simplex, 
$\sigma\colon \Delta_{\sigma}=\Delta_{0}\ast\dots\ast\Delta_{n+1}\to \rc X$, and any regular face operator,
 $\delta_{k}\colon \nabla\to \Delta_{\sigma}$, with $k\in\{0,\dots,\dim\Delta_{\sigma}\}$.
Let  $j_{0}$ denote the dimension of $\Delta_{0}$.
To determine the effect of $\delta_{k}$ on the operation $\sigma\mapsto \hat{\sigma}$,
 we must distinguish $ k> j_{0} $ of $ k \leq j_{0 } $.
 For the sake of convenience, we set $\delta_{s}=\id$ if $s<0$. 
From the construction of $\hat{\sigma}$, we have
 $$\widehat{\sigma\circ\delta_{k}}=\left\{
 \begin{array}{ccl}
 \hat{\sigma}\circ \delta_{k-j_{0}-1}
 &\text{if}&
 k>j_{0},\\
 \hat{\sigma}
  &\text{if}&
  k\leq j_{0},
  \end{array}\right.$$
which implies $\widehat{\sigma\circ\delta_{k}}=\hat{\sigma}\circ \delta_{k-j_{0}-1}$, with the previous convention.
  We conclude
  $$\delta^*_{k}f(\omega)_{\sigma}=\delta^*_{k}\left(\lambda_{\tc\Delta_{0}}\otimes \omega_{\hat{\sigma}}\right)=\left\{
 \begin{array}{ccl}
 \lambda_{\tc\Delta_{0}}\otimes \delta^*_{k-j_{0}-1}\omega_{\hat{\sigma}} 
&\text{if}&
 k>j_{0},\\
  \lambda_{\tc \nabla_{0}}\otimes \omega_{\hat{\sigma}}
  &\text{if}&
  k\leq j_{0}.
  \end{array}\right.$$
 It follows
  $\delta^*_{k}f(\omega)_{\sigma}=\lambda_{\tc \nabla_{0}}\otimes \omega_{\widehat{\sigma\circ\delta_{k}}}=
  f(\omega)_{\sigma\circ\delta_{k}}$.
  
  Since the 0-cochain $\lambda_{\tc\Delta_{0}}$ is a cocycle, the compatibility with the differentials is immediate from the equalities
$$\delta\left(f(\omega)_{\sigma}\right)=\delta\left(\lambda_{\tc\Delta_{0}}\otimes \omega_{\hat{\sigma}}\right)=
\lambda_{{\tc\Delta_{0}}}\otimes \delta \omega_{\hat{\sigma}}=f(\delta\,\omega)_{\sigma}.$$

The map  $f$ 
being compatible with the differentials, it remains to show that the image by $ f $ of
a $ \ov {p} $-allowable cochain,
 $\omega\in \Hiru \tN*{X\times ]0,\infty[;R}$, 
is a  $\ov{p}$-allowable cochain in $\Hiru \tN*{\rc X;R}$.
We choose $ \omega $ of degree less than or equal to $ \ov {p} (\tw) $
and refer to \defref{def:admissible} for the property of $ \ov {p} $-allowability.
For  the stratum reduced to $\tw$, the allowability
comes   directly  from
$\|f(\omega)_{\sigma}\|_{n+1}\leq |\omega_{\hat{\sigma}}|\leq \ov{p}(\tw).$
Now consider
a singular  stratum $ S $ of $ X $
and a regular simplex
$\sigma\colon \Delta_{\sigma}=\Delta_{0}\ast\dots\ast\Delta_{n+1}\to \rc X$,
such that $\sigma(\Delta_{\sigma})\cap (S\times ]0,\infty[)\neq \emptyset$.
Let $\ell=\codim_{\!X\times ]0,\infty[}(S\times ]0,\infty[)$ and notice the equivalence of the conditions
$\sigma(\Delta_{\sigma})\cap (S\times ]0,\infty[)\neq \emptyset$ and
$\hat{\sigma}(\Delta_{\hat{\sigma}})\cap (S\times ]0,\infty[)\neq \emptyset$. 
For such stratum, we have $\ell\in \{1,\dots,n\}$ and  
$\|f(\omega)_{\sigma}\|_{\ell}=\|\lambda_{\tc\Delta_{0}}\otimes \omega_{\hat{\sigma}}\|_{\ell}=
\|\omega_{\hat{\sigma}}\|_{\ell}$.
The result is a consequence of the inequality
$\|\omega_{\hat{\sigma}}\|_{\ell}\leq\|\omega\|_{S\times ]0,\infty[}\leq \ov{p}(S\times ]0,\infty[)$, 
arising from the $\ov{p}$-allowability of $\omega$.
\end{proof}

\parr{Construction of $g\colon  \Hiru \tN*{\rc X;R}\to \Hiru \tN*{ X\times ]0,\infty[;R}$}

Let $\omega\in\Hiru \tN *{\rc X;R}$ and 
$\tau\colon \Delta_{\tau}\to X\times ]0,\infty[$ a regular simplex.
We denote by $\tc\tau\colon \Delta_{c\tau}=\{\tp\}\ast \Delta_{\tau}\to \rc X$  the cone over $\tau$ 	defined above.
Notice $\widetilde{\Delta_{\tc\tau}}=\tc\{\tp\}\times \widetilde{\Delta_{\tau}}$.

\emph{Let  $\tv_{0}$ be
the apex of the cone over the  component of filtration degree 0 of a filtered simplex.}
The cone $\tc\{\tp\}$ having two vertices $\tp$ and $\tv_{0}$, the cochain $\omega_{\tc\tau}$ decomposes into
\begin{equation}\label{equa:leg}
\omega_{\tc\tau}=\1_{\tp}\otimes \gamma_{\tp}+\1_{\tv_{0}}\otimes \gamma_{\tv_{0}}+
\1_{\tp\ast\tv_{0}}\otimes \gamma'_{\tv_{0}},
\end{equation}
with $\gamma_{\tp},\gamma_{\tv_{0}},\gamma'_{\tv_{0}}\in \Hiru \tN *{\Delta_{\tau}}$. We set
$$g(\omega)_{\tau}=\gamma_{\tv_{0}}.$$

\begin{proposition}\label{prop:leg}
Let $(\rc X,\ov{p})$ be a perverse space with $X$ compact 
and   let $(X\times ]0,\infty[,\ov{p})$ be  the induced perverse space.
The correspondence defined above induces a cochain map,
$$g\colon \tau_{\leq \ov{p}(\tw)}\lau \tN *{\ov{p}}{\rc X;R}\to \tau_{\leq \ov{p}(\tw)} \lau \tN * {\ov{p}}{X\times ]0,\infty[;R}.$$
\end{proposition}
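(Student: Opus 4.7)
The plan is to verify four properties in turn: (1) the local assignment $\tau \mapsto \gamma_{\tv_0}$ is compatible with regular face operators, thereby defining a global element of $\Hiru \tN * {X\times ]0,\infty[;R}$; (2) $g$ commutes with the coboundary; (3) $g$ preserves $\ov p$-allowability; (4) $g$ restricts to the truncations at $\ov p(\tw)$. The scheme parallels the proof of \propref{prop:lef}, and the first two steps are routine. For (1), a regular face operator $\delta_k\colon\nabla\to\Delta_\tau$ extends canonically to the regular face operator $\id_{\{\tp\}}*\delta_k\colon \Delta_{\tc(\tau\circ\delta_k)}\to\Delta_{\tc\tau}$, and satisfies $\tc\tau\circ(\id*\delta_k)=\tc(\tau\circ\delta_k)$. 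The induced cochain map on $\tN^*(\Delta_{\tc\tau}) = N^*(\tc\{\tp\})\otimes \tN^*(\Delta_\tau)$ is $\id\otimes\delta_k^*$; applying it to the decomposition (\ref{equa:leg}) and comparing $\1_{\tv_0}$-components yields $\delta_k^* g(\omega)_\tau = g(\omega)_{\tau\circ\delta_k}$. For (2), I compute $\delta \omega_{\tc\tau}$ by Leibniz using the relations $\delta\1_\tp = \1_{\tp*\tv_0}$, $\delta\1_{\tv_0}=-\1_{\tp*\tv_0}$ and $\delta\1_{\tp*\tv_0}=0$ in $N^*(\tc\{\tp\})$, then extract the coefficient of $\1_{\tv_0}$ to obtain $\delta \gamma_{\tv_0} = g(\delta\omega)_\tau$.

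The hard part will be (3), controlling perverse degrees under the shift in formal dimensions between $\Delta_{\tc\tau}$ (formal dimension $n+1$) and $\Delta_\tau$ (formal dimension $n$). The singular strata of $X\times ]0,\infty[$ are exactly the products $S\times ]0,\infty[$ for $S\in \cal S_X$ singular, each of codimension $\ell = \codim S \in \{1,\ldots,n\}$; the same strata appear in $\rc X$ with the same codimensions, only the additional stratum $\{\tw\}$ of $\rc X$ (codimension $n+1$) having no counterpart in $X\times ]0,\infty[$. The key observation is that for $\ell \in \{1,\ldots,n\}$, the $\ell$-perverse degree of a basis element in $\widetilde{\Delta_{\tc\tau}}=\tc\{\tp\}\times\widetilde{\Delta_\tau}$ is read off the tensor factor in position $(n+1)-\ell \geq 1$, hence is independent of the first factor; concretely, $\|\1_{(F_0',\varepsilon_0')}\otimes \1_{(H,\kappa)}\|_\ell^{\widetilde{\Delta_{\tc\tau}}} = \|\1_{(H,\kappa)}\|_\ell^{\widetilde{\Delta_\tau}}$ for every admissible first factor. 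In particular $\|\gamma_{\tv_0}\|_\ell \leq \|\omega_{\tc\tau}\|_\ell$, and taking the supremum over regular simplices $\tau$ with $\im\tau\cap (S\times ]0,\infty[)\neq\emptyset$ (noting that $\tc\tau$ then has image meeting the corresponding stratum in $\rc X$) gives $\|g(\omega)\|_{S\times ]0,\infty[} \leq \|\omega\|_{S\times ]0,\infty[}\leq \ov p(S)$. Applying the same bound to $\delta\omega$ and combining with (2) shows $g(\omega) \in \lau \tN *{\ov p}{X\times ]0,\infty[;R}$.

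Finally, for (4), since $\1_{\tv_0}$ is a $0$-cochain, the component $\gamma_{\tv_0}$ has the same cochain degree as $\omega_{\tc\tau}$; hence $g$ preserves total degree, and together with its commutation with $\delta$ this restricts $g$ correctly to the truncated complexes $\tau_{\leq\ov p(\tw)}$ on both sides.
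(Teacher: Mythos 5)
Your proof is correct and follows the same scheme the paper prescribes (mirroring the proof of Proposition~\ref{prop:lef}): compatibility with regular face operators via the extension $\id_{\{\tp\}}\ast\delta_k$, commutation with $\delta$ via the elementary coboundary relations in $N^*(\tc\{\tp\})$, and preservation of perverse degree via the observation that for $\ell\le n$ the $\ell$-perverse degree of a basis element of $\widetilde{\Delta_{\tc\tau}}=\tc\{\tp\}\times\widetilde{\Delta_\tau}$ ignores the first tensor factor. Since $X\times\,]0,\infty[$ has no stratum of codimension $n+1$, you correctly note that the degree bound $|\omega|\le\ov p(\tw)$ plays no role in allowability for $g$ (unlike in Proposition~\ref{prop:lef}), and only enters to restrict $g$ to the truncated complexes.
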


The proof follows the pattern of that of \propref{prop:lef};  we leave it to the reader.

\bigskip\noindent
\emph{Specify the compositions $f\circ g$ and $g\circ f$.}
\begin{enumerate}[\rm (a)]
\item Let $\omega\in\Hiru \tN *{X;R}$. Consider a regular simplex, $\tau\colon \Delta_{\tau}\to X\times]0,\infty[$ and its  associated map $\tc\tau\colon \{\tp\}\ast\Delta_{\tau}\to \rc X$. Following (\ref{equa:lef}), one has
$$f(\omega)_{\tc\tau}=\lambda_{\tc\{\tp\}}\otimes \omega_{\tau}
= \1_{\tp}\otimes \omega_{\tau}+
\1_{\tv_{0}}\otimes \omega_{\tau}.$$
It follows, according to (\ref{equa:leg}), $g(f(\omega))_{\tau}=\omega_{\tau}$ and $g\circ f=\id$.
\item Let $\omega\in\Hiru \tN *{\rc X;R}$. Consider a regular simplex
$\sigma\colon\Delta_{\sigma}=\Delta_{0}\ast\dots\ast \Delta_{n+1}\to \rc X$,
and its associated map $\tc\sigma\colon (\{\tp\}\ast\Delta_{0})\ast\dots\ast \Delta_{n+1}\to \rc X$.
The cochain $\omega_{\tc\sigma}$ decomposes into
\begin{equation}\label{equa:omegacsigma}
\omega_{\tc\sigma}=\sum_{F\fa \tc\Delta_{0}}\1_{F}\otimes \gamma_{F}+
\sum_{F\fa \tc\Delta_{0}}\1_{\tp\ast F}\otimes \gamma'_{F}+\1_{\tp}\otimes \gamma_{\emptyset}.
\end{equation}
Since the cochain $ \omega $ is globally defined and the simplex $ \tc \hat {\sigma} $ is a face of $ \tc \sigma $, we deduce
$\omega_{\tc\hat{\sigma}}=
\1_{\tv_{0}}\otimes \gamma_{\tv_{0}}+
\1_{\tp\ast\tv_{0}}\otimes \gamma'_{\tv_{0}}+
\1_{\tp}\otimes \gamma_{\emptyset}$.
It follows:
\begin{equation}\label{equa:fg}
f(g(\omega))_{\sigma}=\lambda_{\tc\Delta_{0}}\otimes g(\omega)_{\hat{\sigma}}=\lambda_{\tc\Delta_{0}}\otimes \gamma_{\tv_{0}}.
\end{equation}
\end{enumerate}

\parr{Construction of a homotopy $H\colon\Hiru \tN *{\rc X;R}\to \Hiru \tN {*-1}{\rc X;R}$}

If $\sigma\colon\Delta_{\sigma}=\Delta_{0}\ast\dots\ast \Delta_{n+1}\to \rc X$ is a regular simplex, we define a map
$H\colon \Hiru \tN{*}{\Delta_{\tc\sigma}}\to \Hiru \tN {*-1}{\Delta_{\sigma}}$, i.e.,
$$H\colon \Hiru N{*}{\tc(\tp\ast\Delta_{0})}\otimes \Hiru N*{\tc\Delta_{1}}\otimes\dots\otimes \Hiru N*{\Delta_{n+1}}
\to
\Hiru N{*-1}{\tc\Delta_{0}}\otimes \dots\otimes \Hiru N*{\Delta_{n+1}}.$$
We decompose $\omega_{\tc\sigma}\in  \Hiru\tN {*}{\Delta_{\tc\sigma}}$ 
as in the formula  (\ref{equa:omegacsigma}) and set:
\begin{equation}\label{equa:leH}
(H(\omega))_{\sigma}=
\sum_{\tv_{0}\neq F\fa \tc\Delta_{0}}(-1)^{|F|+1}
\1_{F}\otimes \gamma'_{F}+\lambda_{\Delta_{0}}\otimes \gamma'_{\tv_{0}},
\end{equation}
where $\tv_{0}$ is the apex of the cone over the component  filtration  of degree 0  and
$\lambda_{\Delta_{0}}$ the sum of 
0-cochains on $\Delta_{0}$.

\begin{proposition}\label{prop:leH}
Let $(\rc X,\ov{p})$ be a perverse space with $X$ compact.
\begin{enumerate}[\rm (a)]
\item The equality (\ref{equa:leH}) 
induces a linear map,
$H\colon\Hiru \tN *{\rc X;R}\to \Hiru \tN {*-1} {\rc X;R}$,
verifying
$$\delta\circ H+H\circ \delta=\id -f\circ g.$$
\item
Using the notation introduced in (\ref{equa:troncation}),  the application $ H $ induces a map,
$$H\colon \tau_{\leq \ov{p}(\tw)}\lau \tN *{\ov{p}}{\rc X;R}\to \tau_{\leq \ov{p}(\tw)} \lau \tN{*-1}{\ov{p}}{\rc X;R}.$$
\end{enumerate}
\end{proposition}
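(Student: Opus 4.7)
The plan is to verify (a) and (b) by a direct computation following the template of \propref{prop:lef}, in three steps. First, I would establish that the local formula (\ref{equa:leH}) assembles into a global cochain. The cone construction $\sigma \mapsto \tc\sigma$ is functorial: for a regular face operator $\delta_k \colon \nabla \to \Delta_\sigma$, one has $\tc(\sigma\circ \delta_k) = \tc\sigma \circ \tc\delta_k$, where $\tc\delta_k$ fixes $\tp$. The dual operator $\tc\delta_k^*$ therefore respects the tripartite decomposition (\ref{equa:omegacsigma}) into faces of $\tc\Delta_0$, faces of the form $\tp\ast F$, and $\tp$ itself, so a direct verification gives $\delta_k^*(H(\omega)_\sigma) = H(\omega)_{\sigma\circ \delta_k}$.

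Second, I would establish the homotopy identity $\delta \circ H + H\circ \delta = \id - f\circ g$. Observing that $\omega_\sigma$ is the restriction of $\omega_{\tc\sigma}$ to the face $\Delta_\sigma$ of $\{\tp\}\ast\Delta_\sigma$ opposite to $\tp$, one obtains $\omega_\sigma = \sum_{F\fa \tc\Delta_0} \1_F \otimes \gamma_F$. Combining this with (\ref{equa:fg}) and the decomposition $\lambda_{\tc\Delta_0} = \1_{\tv_0} + \lambda_{\Delta_0}$ yields
\begin{equation*}
(\omega - f\circ g(\omega))_\sigma = \sum_{\tv_0 \neq F \fa \tc\Delta_0} \1_F \otimes \gamma_F - \lambda_{\Delta_0} \otimes \gamma_{\tv_0}.
\end{equation*}
On the other hand, the fact that $\omega \in \Hiru \tN *{\rc X;R}$ forces $\delta(\omega_{\tc\sigma}) = (\delta\omega)_{\tc\sigma}$, which in turn produces linear relations among the coefficients $\gamma_F$, $\gamma'_F$ and $\gamma_\emptyset$ via the differential on $\tN^*(\tc(\{\tp\}\ast\Delta_0))$: the adjunction of $\tp$ couples each $\gamma_F$ to the corresponding $\gamma'_F$, while the adjunction of $\tv_0$ couples each $F$ to its cone $\tc F$. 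Substituting (\ref{equa:leH}) into $\delta H(\omega)_\sigma$ and $H(\delta\omega)_\sigma$ and applying the adjunction-of-vertex formulas \propref{prop:dfaceajout} and \corref{cor:astdiff} recovers the above expression after sign cancellations.

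Third, for the truncation statement, let $\omega \in \tau_{\leq \ov p(\tw)}\lau \tN *{\ov p}{\rc X;R}$. For any singular stratum $S\times ]0,\infty[$ of codimension $\ell \in \{1,\ldots,n\}$, the $\ell$-th perverse degree of $H(\omega)_\sigma$ reads off the factor $\tN^*(\tc\Delta_{n+1-\ell})$, in which the coefficients $\gamma'_F$ of $\omega_{\tc\sigma}$ also live; hence $\|H(\omega)_\sigma\|_\ell \leq \|\omega_{\tc\sigma}\|_\ell \leq \ov p(S\times ]0,\infty[)$. For the apex stratum $\{\tw\}$ of codimension $n+1$, the truncation bound $|\omega| \leq \ov p(\tw)$ yields $|H(\omega)_\sigma| = |\omega|-1 \leq \ov p(\tw)-1$, so $\|H(\omega)_\sigma\|_{n+1} \leq |H(\omega)_\sigma| < \ov p(\tw)$. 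The corresponding estimates for $\delta H(\omega) = \omega - f\circ g(\omega) - H(\delta\omega)$ then follow from the $\ov p$-allowability of $\omega$ and of $f\circ g(\omega)$, using that $\delta\omega$ itself lies in the truncated complex (or vanishes when $|\omega| = \ov p(\tw)$). Thus $H$ lands in $\tau_{\leq \ov p(\tw)}\lau \tN {*-1}{\ov p}{\rc X;R}$.

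The main obstacle will be the sign bookkeeping in the second step: three kinds of vertex adjunctions—true vertices of $\Delta_0$, the vertex $\tp$, and the virtual vertex $\tv_0$—enter the differential on the first tensor factor $\tN^*(\tc(\{\tp\}\ast\Delta_0))$, and one must carefully combine the resulting linear relations among $(\gamma_F, \gamma'_F, \gamma_\emptyset)$, together with the behaviour of $\lambda_{\Delta_0}$ under $\delta$, in order to match the two sides of the homotopy identity.
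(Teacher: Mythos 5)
Your proposal is correct and follows essentially the same route as the paper: verify compatibility with regular face operators case-by-case, establish the homotopy identity by expanding $\delta(\omega_{\tc\sigma})$ via vertex adjunction and collecting the $\tp\ast F$-terms (where $\omega_\sigma = \sum_{F\fa\tc\Delta_0}\1_F\otimes\gamma_F$ as the restriction of $\omega_{\tc\sigma}$), and check the perverse degree separately for $\ell\leq n$ (first tensor factor irrelevant) and $\ell = n+1$ (degree bound from the truncation). The only imprecision is cosmetic — the $\ell$-perverse degree depends on all factors of index $>n+1-\ell$, not just the single factor $\tN^*(\tc\Delta_{n+1-\ell})$ — but this does not affect the argument since the first factor is the only one modified by $H$.
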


\begin{proof}
(a) We must establish the equality
$\delta_{k}^*H(\omega)_{\sigma}=H(\omega)_{\sigma\circ\delta_{k}},$
for each cochain $\omega\in  \Hiru \tN *{\rc X}$, each regular simplex 
$\sigma\colon\Delta_{\sigma}=\Delta_{0}\ast\dots\ast\Delta_{n+1}\to \rc X$
and each regular face operator
$\delta_{k}\colon D=D_{0}\ast\dots\ast D_{n+1}\to \Delta_{\sigma}$, with $k\in\{0,\dots,\dim\Delta_{\sigma}\}$. 
Denoting by $\delta^{\tp}_{*}$ 
the regular face operators of 
$\{\tp\}\ast \Delta_{\sigma}$, we can write
$\tc(\sigma\circ\delta_{k})=\tc\sigma \circ \delta^{\tp}_{k+1}$. 
If $k>\dim\Delta_{0}$, we set $k^{\circ}=k-\dim\Delta_{0}-1$ and
$\delta^{\circ}_{k^{\circ}}\colon D_{1}\ast\dots\ast D_{n+1}\to \Delta_{1}\ast\dots\ast \Delta_{n+1}$
the induced face. Following (\ref{equa:leH}), we have:
$$\delta_{k}^*H(\omega)_{\sigma}
=\left\{\begin{array}{cl}
\sum_{\tv_{0}\neq F\fa \tc D_{0}}(-1)^{|F|+1}
\1_{F}\otimes \gamma'_{F}+\lambda_{D_{0}}\otimes \gamma'_{\tv_{0}}
&
\text{if } k\leq\dim\Delta_{0},\\[.2cm]
\sum_{\tv_{0}\neq F\fa \tc\Delta_{0}}(-1)^{|F|+1}
\1_{F}\otimes \delta_{k^{\circ}}^{\circ,*}\gamma'_{F}+
\lambda_{\Delta_{0}}\otimes \delta_{k^{\circ}}^{\circ,*}\gamma'_{\tv_{0}}
&
\text{if } k>\dim\Delta_{0}.
\end{array}\right.
$$
Using the equality
$\omega_{\tc(\sigma\circ\delta_{k})}=\omega_{\tc\sigma\circ\delta^{\tp}_{k+1}}=
\delta_{k+1}^{\tp,*}\omega_{\tc\sigma}$ and
(\ref{equa:omegacsigma}), we get:
$$
\omega_{\tc(\sigma\circ\delta_{k})}
=
\left\{\begin{array}{cl}
(\sum_{F\fa \tc D_{0}}\1_{F}\otimes \gamma_{F}+
\1_{\tp\ast F}\otimes \gamma'_{F})
+\1_{\tp}\otimes \gamma_{\emptyset}
&
\text{if } k\leq \dim \Delta_{0},\\[.2cm]
(\sum_{F\fa \tc D_{0}}\1_{F}\otimes \delta_{k^{\circ}}^{\circ,*}\gamma_{F}+
\1_{\tp\ast F}\otimes \delta_{k^{\circ}}^{\circ,*}\gamma'_{F})
+\1_{\tp}\otimes \delta_{k^{\circ}}^{\circ,*}\gamma_{\emptyset}
&
\text{if } k>\dim\Delta_{0},
\end{array}\right.
$$
which gives
\begin{eqnarray*}
H(\omega)_{\sigma\circ\delta_{k}}
&=&
\left\{\begin{array}{cl}
(\sum_{\tv_{0}\neq F\fa \tc D_{0}}(-1)^{|F|+1}
\1_{F}\otimes \gamma'_{F})+
\lambda_{D_{0}}\otimes \gamma'_{\tv_{0}},
&
\text{if } k\leq \dim \Delta_{0},\\[.2cm]
(\sum_{\tv_{0}\neq F\fa \tc D_{0}}(-1)^{|F|+1}
\1_{F}\otimes \delta_{k^{\circ}}^{\circ,*}\gamma'_{F})+
\lambda_{D_{0}}\otimes \delta_{k^{\circ}}^{\circ,*}\gamma'_{\tv_{0}}
&
\text{if } k>\dim\Delta_{0},
\end{array}\right.\\[,2cm]
&=&
\delta_{k}^* H(\omega)_{\sigma}.
\end{eqnarray*}

\medskip
Let us study the behavior of $ H $ towards the differentials.
Let
$\sigma\colon\Delta_{\sigma}=\Delta_{0}\ast\dots\ast\Delta_{n+1}\to \rc X$ be a regular simplex and
  $\omega\in\tN^*(\rc X;R)$. Let us start from the equality (\ref{equa:omegacsigma})
and calculate the differential in
eliminating terms having a zero image by $ H $.
\begin{eqnarray}\label{previa}
(H(\delta \omega))_{\sigma}
&=&
H\left(
\sum_{e\in\cal V(\tc\Delta_{0})}
\1_{p\ast e}\otimes \gamma_{\emptyset}
+
\sum_{F\fa\tc\Delta_{0}}\1_{F\ast \tp}\otimes \gamma_{F}+ \right.\\\nonumber
&&
\left.
\sum_{\substack{F\fa\tc\Delta_{0} \\ e\in\cal V(\tc\Delta_{0})\phantom{-}}}
\1_{\tp\ast F\ast e}\otimes \gamma'_{F}+
\sum_{F\fa\tc\Delta_{0}}(-1)^{|F|+1}\1_{\tp\ast F}\otimes \delta \gamma'_{F}\right).
\label{eqna:homotopie}
\end{eqnarray}
Notice
\begin{eqnarray*}
\sum_{e\in\cal V(\tc\Delta_{0})}
\1_{p\ast e}\otimes \gamma_{\emptyset}
+
\sum_{F\fa\tc\Delta_{0}}\1_{F\ast \tp}\otimes \gamma_{F} =&&\\
\sum_{e\in\cal V(\Delta_{0})}
\1_{p\ast e}\otimes \gamma_{\emptyset}
+
\sum_{\tv_{0}\neq F\fa\tc\Delta_{0}}\1_{F\ast \tp}\otimes \gamma_{F}
+\1_{\tp\ast\tv_{0}}\otimes (\gamma_{\emptyset}-\gamma_{\tv_{0}}).&&
\end{eqnarray*}
Replacing in \eqref{previa} and developing the definition of  $H$, we get:
\begin{eqnarray*}
(H(\delta \omega))_{\sigma}
&=&
-\sum_{e\in\cal V(\Delta_{0})}\1_{e}\otimes \gamma_{\emptyset}
+\sum_{\tv_{0}\neq F\fa\tc\Delta_{0}}\1_{F}\otimes \gamma_{F}
+\lambda_{\Delta_{0}}\otimes (\gamma_{\emptyset}-\gamma_{\tv_{0}}-\delta\gamma'_{\tv_{0}})\\
&&
+\sum_{\substack{F\fa\tc\Delta_{0} \\ e\in\cal V(\tc\Delta_{0})\phantom{-}}}
(-1)^{|F|}\1_{F\ast e}\otimes \gamma'_{F}
+
\sum_{\tv_{0}\neq F\fa\tc\Delta_{0}}\1_{F}\otimes \delta \gamma'_{F}.
\end{eqnarray*}
On the other hand, the quantity $\delta\circ H$ can be written
\begin{eqnarray*}
\delta(H(\omega))_{\sigma}
&=&
\sum_{\substack{\tv_{0}\neq F\fa\tc\Delta_{0} \\ e\in\cal V(\tc\Delta_{0})\phantom{-}}}
(-1)^{|F|+1}\1_{F\ast e}\otimes \gamma'_{F}
-\sum_{\tv_{0}\neq F\fa\tc\Delta_{0}} \1_{F}\otimes \delta \gamma'_{F}\\
&&
+\sum_{e\in\cal V(\Delta_{0})}\1_{e\ast\tv_{0}}\otimes \gamma'_{\tv_{0}}
+\lambda_{\Delta_{0}}\otimes \delta\gamma'_{\tv_{0}}.
\end{eqnarray*}
Using (\ref{equa:fg}),
the sum of the two expressions can be reduced to:
\begin{equation}\label{equa:deltaH}
H(\delta\omega)_{\sigma}+\delta H(\omega)_{\sigma}
=
\sum_{F\fa \tc\Delta_{0}}\1_{F}\otimes \gamma_{F}
-\lambda_{\tc\Delta_{0}}\otimes \gamma_{\tv_{0}}=
\omega_{\sigma}-(f\circ g)(\omega)_{\sigma}.
\end{equation}

\medskip
(b) As in the proof of \propref{prop:lef}, we are reduced to consider a singular stratum  $T$ of $\rc X$,
a $\ov{p}$-allowable cochain $\omega\in \Hiru \tN *{\rc X}$, of degree  less than or equal  to $\ov{p}(\tw)$, 
and a regular simplex, $\sigma\colon\Delta_{\sigma}\to \rc X$, with $\sigma(\Delta_{\sigma})\cap T\neq\emptyset$.
Let $\ell=\codim_{\!\rc X} T\in \{1,\dots,n+1\}$.
Notice that $\tc\sigma(\Delta_{\tc\sigma})\cap T\neq\emptyset$. Thus, according to the definition of perverse degree
(cf. \defref{def:degrepervers}), we have
\begin{eqnarray*}
\ov{p}(T)
&\geq&
\|\omega\|_{T}\geq \|\omega_{\tc\sigma}\|_{\ell}
=
\max\{\|\1_{F}\otimes\gamma_{F}\|_{\ell},\,
\|\1_{\tp\ast F}\otimes \gamma'_{F}\|_{\ell},\,
\|\1_{\tp}\otimes \gamma_{\emptyset}\|_{\ell}\mid F\fa\tc\Delta_{0}\}
\\
&\geq&
\max\{\|\1_{\tp\ast F}\otimes \gamma'_{F}\|_{\ell}\mid F\fa\tc\Delta_{0}\},
\end{eqnarray*}
where the equality uses (\ref{equa:degrepervers}) and (\ref{equa:omegacsigma}).
We develop this expression by distinguishing two cases.
\begin{itemize}
\item Let $\ell\neq n+1$. 
By definition, for any face $F\fa \rc\Delta_{0}$, we have the equality
$\|\1_{\tp\ast F}\otimes \gamma'_{F}\|_{\ell}=
\|\1_{F}\otimes \gamma'_{F}\|_{\ell}$,
if $F\neq \tv_{0}$, and
$\|\1_{\tp\ast \tv_{0}}\otimes \gamma'_{\tv_{0}}\|_{\ell}=\|\lambda_{\Delta_{0}}\otimes \gamma'_{\tv_{0}}\|_{\ell}$. 
It follows:
$$\ov{p}(T)\geq
\max\{\|\1_{F}\otimes\gamma'_{F}\|_{\ell},\,
\|\lambda_{\Delta_{0}}\otimes \gamma'_{\tv_{0}}\|_{\ell}\mid \tv_{0}\neq F\fa \tc\Delta_{0}\}
=\|H(\omega)_{\sigma}\|_{\ell},$$
where the last equality uses  (\ref{equa:degrepervers}) and (\ref{equa:leH}). We deduce
$\ov{p}(T)\geq \|H(\omega)\|_{T}$.
\item Let $\ell=n+1$. In this case we have
\begin{eqnarray*}
\|H(\omega)_{\sigma}\|_{n+1}
&=&
\max\{\|\1_{\tp\ast F}\otimes \gamma'_{F}\|_{n+1},\,
\|\lambda_{\Delta_{0}}\otimes \gamma'_{\tv_{0}}\|_{n+1}\mid \tv_{0}\neq F\fa\tc\Delta_{0}\}\\
&=&
\max\{|\gamma'_{F}|,\,|\gamma'_{\tv_{0}}|\mid F\fa\Delta_{0}\}
\leq |\omega|-1\leq \ov{p}(\tw),
\end{eqnarray*}
where the first equality uses (\ref{equa:degrepervers}) and (\ref{equa:leH}).
It follows
$\ov{p}(\tw)\geq \|H(\omega)\|_{\tw}$.
\end{itemize}
It has been shown $\|H(\omega)\|\leq \ov{p}$. The property
$\|\delta H(\omega)\|\leq \ov{p}$ 	is deduced using
(\ref{equa:deltaH}).
\end{proof}


The determination of the cohomology of a cone follows from the properties of  $ f, g $ and $ H $.

\begin{proof}[Proof of \thmref{prop:coneTW}]
(a) From Propositions \ref{prop:lef}, \ref{prop:leg} and \ref{prop:leH}, we deduce that
the map $g\colon \tau_{\leq \ov{p}(\tw)}\lau \tN * {\ov{p}}{\rc X;R}\to \tau_{\leq \ov{p}(\tw)}\lau \tN*{\ov{p}}{X\times ]0,\infty[;R}$
is a quasi-isomor\-phism. We know from \thmref{prop:isoproduitR}, that the inclusion $I_{1}\colon X\to  X\times ]0,\infty[$
induces a quasi-isomorphism. It remains to prove  $I_{1}^*\circ g =\iota^*$.
The involved stratifications on $X \times ]0,\infty[$ are different in these two quasi-isomorphisms but the cohomologies are the same  (see subsection \ref{SF1}).

For this, consider $\omega\in \tau_{\ov{p}(\tw)}\lau \tN * {\ov{p}}{\rc X;R}$
and
$\tau\colon \Delta_{\tau}\to X$ a regular simplex. 
By definition, we have
$(\iota^*\omega)_{\tau}=\omega_{\iota\circ\tau}$. Notice
$\iota\circ \tau
=\tc(I_{1}\circ\tau)\circ\delta_{0}$, where $\delta_{0}(x)=0 \cdot \tp+1 \cdot x$. It then follows
$$(\iota^*\omega)_{\tau}=
\omega_{\tc(I_{1}\circ\tau)\circ\delta_{0}}=\delta^*_{0}\,\omega_{\tc(I_{1}\circ\tau)}
=_{(\ref{equa:leg})}\gamma_{\tv_{0}}
=g(\omega)_{I_{1}\circ\tau}
=I_{1}^*(g(\omega))_{\tau}.$$

\medskip
To prove part (b), we consider a cocycle $\omega\in \lau \tN k {\ov{p}}{\rc X;R}$ with $k>\ov{p}(\tw)$.
Following  \propref{prop:leH}, we have
$\delta H(\omega)=\omega-f(g(\omega))$ and it suffices to establish the equality$f(g(\omega))=0$.

We prove it by contradiction, assuming that there is a regular simplex
$\sigma\colon \Delta_{\sigma}\to \rc X$, such that $f(g(\omega))_{\sigma}\neq 0$. Following (\ref{equa:fg}), 
this implies
$\gamma_{\tv_{0}}\neq 0$. 
We get a contradiction,
\begin{eqnarray*}
k> \ov{p}(\tw)
&\geq&
\|f(g(\omega))\|_{\tw} 
\geq
\|f(g(\omega))_{\tc\sigma}\|_{n+1}
=
\|\lambda_{\tc(\tp\ast\Delta_{0})}\otimes \gamma_{\tv_{0}}\|_{n+1}\\
&=&
\|\lambda_{\tp\ast\Delta_{0}}\otimes \gamma_{\tv_{0}}\|_{n+1}
=|\gamma_{\tv_{0}}|=k.
\end{eqnarray*}
\end{proof}

The proof of the topological  independence Theorem of \secref{15} is based on the following excision theorem.

\subsection{Relative cohomology}\label{RelCoho}
Let $X$ be a filtered space  and $Y \subset X$ a subspace endowed with the induced stratification. Consider  a perversity $\ov{p}$ on $X$. We also denote by $\ov p$ the induced perversity on $Y$.
The \emph{complex of relative $\ov{p}$-intersection cochains} is the direct sum
$\lau \tN * {\ov p}{X,Y;R} =  \lau \tN * {\ov p}{X;R}  \oplus \lau \tN {*-1} {\ov p}{Y;R} $, endowed with the differential $D(\alpha,\beta) = (d\alpha, \alpha - d\beta)$. 
Its homology is the
\emph{relative blown-up $\ov{p}$-intersection cohomology of the perverse pair  $(X,Y,\ov{p})$,}
denoted by 
$\lau \IH{*}{\ov{p}}{X,Y;R}$.

By definition, we have a long exact sequence associated to the perverse pair $(X,Y,\ov{p})$:
\begin{equation}\label{equa:suiterelative2}
\ldots\to
\lau \IH {i}{\ov{p}}{X;R}\stackrel{\i^*}{\to}
\lau \IH {i}{\ov{p}}{Y;R}\to
\lau \IH{i+1}{\ov{p}}{X,Y;R}\stackrel{\pr^*}{\to}
\lau \IH{i+1}{\ov{p}}{X;R}\to
\ldots,
\end{equation}
where $\pr \colon \lau \tN * {\ov p}{X,Y;R}  \to \lau  \tN * {\ov p}{X;R} $ is defined by $\pr(\alpha,\beta) = \alpha$ and $\i^*$ comes from the restriction map $\i \colon \lau \tN * {\ov p}{X;R} \rightarrow  \lau \tN * {\ov p}{Y;R} $.

\begin{proposition}\label{prop:Excisionhomologie}
Let  $(X,\ov p)$ be a paracompact perverse space. 
If $F$ is a closed subset of $X$ and $U$ an open subset of $X$ with $F\subset U$, 
then the natural inclusion  $(X\backslash F,U\backslash F)\hookrightarrow (X,U)$ induces the isomorphism 
$$  \lau \IH * {\ov{p}} {X,U ;R}  \cong \lau \IH * {\ov{p}}{ X\backslash F,U\backslash F ;R}.
$$
\end{proposition}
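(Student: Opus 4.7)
The plan is to deduce the excision isomorphism from the Mayer--Vietoris property of Theorem~\ref{thm:MVcourte}. Since $F$ is closed in $X$ and contained in the open set $U$, the family $\cal U = \{U,\,X\menos F\}$ is an open cover of $X$. By paracompactness of $X$ I would choose a subordinate partition of unity and apply Theorem~\ref{thm:MVcourte} to obtain the Mayer--Vietoris short exact sequence
\[
0 \to \lau \tN{*,\cal U}{\ov p}{X;R} \to \lau \tN{*,\cal U_1}{\ov p}{U;R} \oplus \lau \tN{*,\cal U_2}{\ov p}{X\menos F;R} \to \lau \tN*{\ov p}{U\menos F;R} \to 0,
\]
where $\cal U_1, \cal U_2$ are the induced subcovers of $U$ and $X\menos F$ respectively.

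This short exact sequence exhibits the square of $\cal U_i$-small cochain complexes with corners $X,\,U,\,X\menos F,\,U\menos F$ as a homotopy pullback: the top-left term is the chain-level pullback of the remaining three, and the rightmost arrow is surjective. A standard fact in homological algebra, derivable by a snake-lemma diagram chase from this SES, asserts that the mapping cones of the two parallel horizontal maps are then quasi-isomorphic via the induced morphism. By Theorem~\ref{thm:Upetits}, restriction from full to $\cal U$- or $\cal U_i$-small cochains is a quasi-isomorphism on each of $X$, $U$, and $X\menos F$, and since passage to mapping cones preserves quasi-isomorphisms, the two cones above are quasi-isomorphic to the relative complexes $\lau \tN*{\ov p}{X,U;R}$ and $\lau \tN*{\ov p}{X\menos F,\,U\menos F;R}$ respectively. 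Composing these quasi-isomorphisms yields the desired isomorphism on cohomology, and tracking the maps through the construction shows it is induced by the inclusion $(X\menos F,\,U\menos F) \hookrightarrow (X,U)$.

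The main obstacle is the homological-algebra step passing from the Mayer--Vietoris short exact sequence to the quasi-isomorphism between the two mapping cones. This is an instance of the general principle ``the cones of parallel sides of a homotopy pullback square are quasi-isomorphic'', but making it explicit at the chain level requires a careful construction of the comparison map between the cones and a diagram chase to verify that its kernel (or rather, its mapping cone) is acyclic. Alternatively, one could proceed more concretely by directly constructing an isomorphism between the sub-complex of $\cal U$-small cochains on $X$ vanishing on $U$-simplices and that of cochains on $X\menos F$ vanishing on $(U\menos F)$-simplices -- the inverse extending a cochain by zero, consistent on the overlap because of the prescribed vanishing -- and then reducing the remaining surjectivity/allowability question to the Mayer--Vietoris SES.
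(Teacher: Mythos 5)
Your proposal is correct and follows essentially the same route as the paper's proof: form the Mayer--Vietoris short exact sequence for the cover $\{U,\,X\menos F\}$, use it to conclude that the cones of the two horizontal restriction maps (i.e.\ the two relative complexes, viewed at the level of $\cal U$-small cochains) are quasi-isomorphic, and then invoke Theorem~\ref{thm:Upetits} to pass from small cochains to the full relative complexes. The paper carries out the homological-algebra step you flagged as the main obstacle by an explicit chain-level construction: it exhibits a short exact sequence whose outer terms are the relative complex for $(X,U)$ and an acyclic complex, and then writes down explicit chain maps $h,k$ identifying the middle term, up to quasi-isomorphism, with the relative complex for $(X\menos F,\,U\menos F)$.
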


\begin{proof}
For the sake of simplicity we write the complexes $\bi {\cal A} *= \lau \tN {*,\mathcal U} {\ov{p}} {X;R}$, 
$\bi {\cal B} * = \lau \tN {*,\mathcal U_1} {\ov{p}} {X\menos F;R}$, $\bi {\cal C} * = \lau \tN {*,\mathcal U_2} {\ov{p}} {U;R}$ and $\bi {\cal E} *=\lau \tN {*} {\ov{p}} {U\backslash F;R}$, the terms of the Mayer-Vietoris exact sequence  of \thmref{thm:MVcourte}, applied to the cover $\cal U = \{ X\menos F,U\}$. Associated to this sequence we have the exact sequence
$$
0 \to (\bi {\cal A} * \oplus \bi {\cal C} {*-1},D) \xrightarrow[]{f}
 (\bi {\cal B} *\oplus \bi {\cal C} * \oplus \bi {\cal C} {*-1} \oplus \bi {\cal E} {*-1}, D') \xrightarrow[]{g} (\bi {\cal E} {*} \oplus \bi {\cal E} {*-1},D) \to 0,
$$
where 
$D(a,c)  = (da ,a -dc)$, $f(a,c) = (a,a,c,0)$, $D'(b,c_0,c_1,e)=(db,dc_0,c_0-dc_1,b-c_0-de)$, $g(b,c_0,c_1,e) =(b-c_0,e)$ and $D(e,e') = (de,e-de')$.
The right-hand complex is acyclic since $D(e,e')=0 \Rightarrow (e,e') = D(e',0)$.
 The chain map $h \colon (\bi {\cal B} * \oplus \bi {\cal E} {*-1},D) \to
 (\bi {\cal B} *\oplus \bi {\cal C} * \oplus \bi {\cal C} {*-1} \oplus \bi {\cal E} {*-1}, D') ,
 $
 defined by 
 $h (b,e) = (b,0,0,e)$, where $D(b,e) = (db , b -de)$, is a quasi-isomorphism, since
\begin{eqnarray*}
0=  D'(b,c_0,c_1,e) &\Rightarrow&  (b,c_0,c_1,e)  = h(b,e+c_1) + D'(0,c_1,0,0) \\
 h(b',e') =  D'(b,c_0,c_1,e) &\Rightarrow& (b', e')  = D(b,e+c_1).
 \end{eqnarray*}
Let $k\colon 
 (\bi {\cal A} * \oplus \bi {\cal C} {*-1},D)
 \to 
 (\bi {\cal B} * \oplus \bi {\cal E} {*-1},D)$,
defined by $k(a,c) =(a,c)$.
If $D(a,c)=0$, then $h(k(a,c)) -f(a,c) = (0,-a,-c,c) = D'(0,-c,0,0)$. Therefore, $k$  is a quasi-isomorphism.
 Applying \thmref{thm:Upetits}  and the definition of relative blown-up $\ov{p}$-intersection cohomology  we get the result.\end{proof}
 
 The following result comes directly from the  Mayer-Vietoris sequence and the long exact sequence associated to a pair.

\bp\label{cor:homologieconerel}
Let $X$  be a compact filtered space.
Let us consider the cone, $\rc X$, with apex $\tw$, endowed with the cone filtration.
Consider a perversity  $\ov p$ on $\rc X$ and denote also by $\ov p$ the induced perversity on $X$.
Then
$$
 \lau \IH j{\ov{p}}{\rc X, \rc X \backslash \{ \tw\} ;R}=\left\{
\begin{array}{cl}
\lau \IH {j-1}{\ov{p}} {X;R}&\text{ if } j\geq \ov{p}( \tw) +2,
\\[.2cm]
0&\text{ if }  j\leq  \ov{p}( \tw) +1.\\[.2cm]%
\end{array}\right.$$
\ep

\section{Comparison between intersection  cohomologies. \thmref{thm:TWGMcorps}.}\label{subsec:lesdeuxcohomologies}

If $ R $ is a field, the existence of
a quasi-isomorphism between the cochain complexes,
$\lau C {*} {D\ov{p}}{X;R}$ and $\lau \tN *{\ov{p}} {X;R}$,
has been established in {\cite[Theorem B]{CST1}} 
for GM-perversities in the framework of filtered face sets.
We generalize this result using  general perversities without the condition $D\ov p \leq \ov t$. In order to eliminate this last constraint we use tame intersection cohomology and homology instead of intersection cohomology and homology.

Recall that 
a perverse CS set $(X,\ov{p})$ is \emph{locally $(D\ov{p},R)$-torsion free} if, for each singular stratum $S$ and each $x\in S$ with link $L$, we have that 
$\lau \gT {D\ov{p}} { \ov{p}(S)}{L;R}=0$, i.e., the torsion $R$-submodule of $\lau \gH {D\ov{p}} { \ov{p}(S)}{L;R}$,  vanishes (see \cite{MR699009}).

 \begin{theorem}\label{thm:TWGMcorps}
 Let $ (X, \ov p) $ be a paracompact separable perverse CS set 
and  $ R $  a Dedekind ring.
The blown-up  
and the tame intersection complexes, $\lau \tN*{\ov{p}}{X;R}$ and $\lau \gC{*} {D\ov{p}}{X;R}$, are related by a quasi-isomorphism
if one of the following hypotheses is satisfied.
\begin{enumerate}[1)]
\item The ring $R$ is a field.
\item The space $X$ is a locally $(D\ov{p},R)$-torsion free pseudomanifold.
\end{enumerate}
Under any of these assumptions, there exists an isomorphism
$$\lau \IH *{\ov{p}}{X;R}\cong \lau \gH*{D\ov{p}}{X;R}.$$
\end{theorem}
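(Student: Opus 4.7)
The plan is to establish the quasi-isomorphism by a local-to-global argument, comparing the two theories on each stratum via Mayer-Vietoris and the cone formula.

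First, I would construct a natural chain map $\Phi\colon \lau \tN * {\ov{p}}{X;R}\to \lau \gC * {D\ov{p}}{X;R}$. A natural candidate is the map induced by duality from a chain map $\lau \gC {D\ov p} *{X;R} \to \lau \tN * {\ov p}{X;R}$ obtained using the cap product of \propref{prop:cap} together with the intersection cap product's Leibniz rule (\propref{prop:lecap}). The perversity condition $\ov p + D\ov p = \ov t$ exactly matches the allowability requirement so that the dualized map lands in the complex of $D\ov p$-tame intersection cochains. The key structural facts about $\Phi$ that must be checked are that it is natural with respect to inclusions of open subsets, and that it is compatible with the Mayer-Vietoris exact sequences on both sides.

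Next, I would verify that $\Phi$ is a quasi-isomorphism on the ``local models'' of a CS set, namely on the open subsets of the form $U \times \rc L$, where $U$ is a Euclidean open subset of a stratum and $L$ is the link. The homotopy invariance under products with $\R$ (\thmref{prop:isoproduitR}) reduces this to checking the quasi-isomorphism on the open cone $\rc L$. For the blown-up side, the cone computation is given by \thmref{prop:coneTW}: the cohomology equals $\lau \IH * {\ov p}{L;R}$ in degrees $* \leq \ov p(\tw)$ and vanishes above. On the tame cohomology side, the analogous cone formula (established in \cite{CST3,LibroGreg}) gives a vanishing above the dual degree $D\ov{p}(\tw) + 1 = \dim L - \ov p(\tw)$, together with a truncated identification below, provided the coefficient module $\lau \gH {D\ov p} {\ov p(S)} {L;R}$ is torsion-free (the universal coefficient term). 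Under the hypothesis that $R$ is a field, or that $X$ is locally $(D\ov p,R)$-torsion free, this torsion vanishes, so the degree ranges match up and the inductive hypothesis on the link $L$ (which has strictly smaller depth) gives the isomorphism on $\rc L$ by the five lemma applied to both cone sequences.

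Third, I would perform an induction on the depth of the stratification, combined with a Mayer-Vietoris / Bredon-style patching argument. Given the local quasi-isomorphism, a Mayer-Vietoris induction on a finite open cover by distinguished neighborhoods (using \corref{cor:MVlongue} for the blown-up side and the corresponding long exact sequence for tame intersection cohomology) and the five lemma extend the quasi-isomorphism from conical charts to any open subset admitting a finite good cover. Finally, a colimit argument using the paracompactness and separability of $X$ extends the result to all of $X$: one can exhaust $X$ by an increasing union of open subsets for which the statement holds, using compatibility of both cohomologies with such colimits (by Milnor's $\varprojlim^1$ sequence, or directly from the partition of unity argument inherent in the proof of Mayer-Vietoris).

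The main obstacle will be the local step on the open cone, specifically the interplay between the torsion-free hypothesis and the universal coefficient theorem controlling $\lau \gH * {D\ov p}{\rc L;R}$. Whereas the blown-up cohomology of a cone is given by a clean truncation that works over any commutative ring (thanks to the explicit cochain homotopy $H$ of \propref{prop:leH}), the tame cohomology of the cone is only governed by such a truncation after dualizing homology, so torsion in $\lau \gH {D\ov p} {\ov p (S)} {L;R}$ would break the identification precisely in the boundary degree. Managing this requires carefully aligning the degree range in \thmref{prop:coneTW} with the dual range for $D\ov p$-tame cohomology, and then invoking either that $R$ is a field (so there is no torsion) or the explicit torsion-free assumption of the pseudomanifold hypothesis.
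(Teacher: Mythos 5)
Your overall strategy matches the paper's: a Bredon/Mayer--Vietoris local-to-global argument (the paper packages this as \propref{prop:supersuperbredon}), reduction of the local step to the open cone via \thmref{prop:isoproduitR}, comparison of the two cone formulas, and observing that the discrepancy between them is exactly an $\ext$-term which dies under either hypothesis. So the approach is essentially the one the paper takes. Two points differ or need attention.

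First, your construction of the comparison map is not coherent as written. You propose "the map induced by duality from a chain map $\lau \gC {D\ov p}*{X;R} \to \lau \tN*{\ov p}{X;R}$," but dualizing such a map would land in $\hom(\lau \tN*{\ov p}{X;R},R)$, not $\lau \gC*{D\ov p}{X;R}$, and there is no obvious chain map in the direction you indicate to begin with. What the paper actually does (\propref{prop:TWGM}) is define a direct evaluation map $\chi(\omega)(\sigma)=\omega_\sigma(\tDelta)$; equivalently, you could define $\Phi(\omega)(\xi)$ as the augmentation of $\omega\cap\xi$, using \propref{prop:cap} and \propref{prop:lecap}, but you should say this explicitly. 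Moreover, verifying that this is a chain map is not automatic: in the paper it hinges on the claim \eqref{cleim} that $\omega_\sigma(\cal H_i)=0$ on all hidden faces $\cal H_i$, which in turn uses the interplay between the $\ov p$-allowability of $\omega$ and the $D\ov p$-allowability of $\sigma$. This is a genuine step that the proposal glosses over.

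Second, the range in your cone formula for the tame side is off. The correct statement (cited by the paper from \cite[Chapter 7]{LibroGreg}) is that $\lau \gH k{D\ov p}{\rc L;R}$ agrees with $\lau \gH k{D\ov p}{L;R}$ for $k\leq \ov p(\tw)$, equals $\ext(\lau \gH{D\ov p}{\ov p(\tw)}{L;R},R)$ at $k=\ov p(\tw)+1$, and vanishes for $k>\ov p(\tw)+1$. Your claimed vanishing threshold $D\ov p(\tw)+1=\dim L-\ov p(\tw)$ is generally different and incorrect; the cutoff dual to \thmref{prop:coneTW} really does occur at $\ov p(\tw)+1$, and that is exactly what lines the two sides up. You correctly identified the $\ext$-term at the boundary degree as the obstruction, and correctly noted that either the field hypothesis or the locally $(D\ov p,R)$-torsion-free hypothesis on a pseudomanifold (which yields finitely generated link homology) makes it vanish — that part of the argument is sound.
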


This isomorphism   exists for the top perversity $ \ov {t} $, for any  CS set and any
Dedekind ring (see \cite[Remark 1.51]{CST1}). This Theorem has a particular form in the classical setting of stratified pseudomanifolds and GM-perversities.

 \begin{corollary}\label{cor:TW corps}
 Let $\ov p$ be a GM-perversity, with \emph{complementary perversity} $\ov q$, i.e., $\ov q(i) = i-2-\ov p (i)$ for $i\geq 2$. We consider   a Dedekind ring $R$. 
 Let $ X$ be  a paracompact separable classical pseudomanifold.
The blown-up  
and the intersection complexes, $\lau \tN*{\ov{p}}{X;R}$ and $\lau C{*} {\ov{q}}{X;R}$, are related by a quasi-isomorphism 
if  $X$ is  locally $(\ov{q},R)$-torsion free. So, we have an isomorphism
$\lau \IH *{\ov{p}}{X;R}\cong \lau H*{\ov{q}}{X;R}.$
\end{corollary}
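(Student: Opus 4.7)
The strategy is to define a natural chain map
\[\Psi \colon \lau \tN * {\ov p}{X;R} \longrightarrow \lau \gC * {D\ov p}{X;R}\]
using the intersection cap product, and then verify it is a quasi-isomorphism by a Mayer--Vietoris / Bredon-type induction on the depth of the CS set. For the construction, use the identity $\ov p + D\ov p = \ov t$ together with \propref{prop:cap} to obtain the pairing
\[\lau \tN k {\ov p}{X;R} \otimes \lau \gC {D\ov p}{k}{X;R} \longrightarrow \lau \gC {\ov t}{0}{X;R}.\]
A $\ov t$-tame $0$-chain has image in $X \setminus \Sigma$ (a $0$-simplex is regular only when its image lies in $X_n \setminus X_{n-1}$), so the standard augmentation $\epsilon \colon \lau \gC {\ov t}{0}{X;R} \to R$ is well defined. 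Set $\Psi(\omega)(\sigma) := \epsilon(\omega \cap \sigma)$. The Leibniz rule of \propref{prop:lecap}, combined with $\epsilon \circ \gd = 0$, shows that $\Psi$ commutes with the coboundary (up to the usual sign).

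To prove $\Psi$ is a quasi-isomorphism, I would invoke the standard sheaf-theoretic machinery: both source and target satisfy a Mayer--Vietoris long exact sequence for paracompact perverse spaces (\corref{cor:MVlongue} on the blown-up side, and the corresponding statement for $\lau \gH * {D\ov p}{-;R}$ from \cite{CST3,LibroGreg}), and both are invariant under products with $\R$ (\thmref{prop:isoproduitR} together with its tame counterpart). Combining paracompactness, a subordinate partition of unity, and a direct-limit argument over finite subcovers, the claim reduces to the case of a basic conical chart $U \times \rc L$, where $L$ is a compact filtered space (a pseudomanifold in case~2) of strictly smaller depth than $X$. Homotopy invariance in the $U$-factor then reduces the problem to the cone $\rc L$, and depth induction on $L$ becomes the heart of the matter.

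The blown-up cone formula \thmref{prop:coneTW} gives $\lau \IH k{\ov p}{\rc L;R} = \lau \IH k {\ov p}{L;R}$ for $k \leq \ov p(\tw)$ and zero otherwise. On the tame side, the cone formula for tame intersection homology from \cite{CST3,LibroGreg} together with the universal coefficient theorem (valid since $R$ is Dedekind) yields, via a two-term exact sequence, that $\lau \gH k{D\ov p}{\rc L;R} = \lau \gH k{D\ov p}{L;R}$ for $k \leq \ov p(\tw)$, a single extra term $\ext_R(\lau \gH {D\ov p}{\ov p(\tw)}{L;R},R)$ in degree $k = \ov p(\tw)+1$, and zero above. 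By the inductive hypothesis applied to $L$, the degrees $k \leq \ov p(\tw)$ match precisely via $\Psi$, whose naturality under $\iota \colon L \hookrightarrow \rc L$ identifies the two sides. The hypotheses of the theorem are exactly what annihilate the critical Ext contribution at $k = \ov p(\tw)+1$: if $R$ is a field it vanishes automatically, while the locally $(D\ov p,R)$-torsion-free hypothesis gives $\lau \gT {D\ov p}{\ov p(S)}{L;R} = 0$, which forces $\ext_R(\lau \gH {D\ov p}{\ov p(\tw)}{L;R},R) = 0$ (over a Dedekind ring, $\ext_R(M,R)$ detects only the torsion part of $M$).

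The main obstacle is precisely this local matching of cone formulas in the critical degree $\ov p(\tw)+1$. Outside the stated hypotheses, the Ext summand survives and $\Psi$ fails to be a local quasi-isomorphism on $\rc L$, even though $\Psi$ itself remains globally defined and compatible with all differentials. Everything else -- the Bredon/Mayer--Vietoris induction, the direct-limit step for infinite covers by conical charts, and the verification that $\Psi$ respects the allowability bounds -- is routine once this local calculation at a cone point is pinned down.
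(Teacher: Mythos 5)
The paper's proof of this corollary is essentially a one-liner: it specializes the already-established \thmref{thm:TWGMcorps}. For a GM-perversity $\ov p$ on a classical pseudomanifold, the complementary perversity $\ov q = \ov t - \ov p = D\ov p$ is again a GM-perversity, so in particular $\ov q \leq \ov t$; and the paper has recorded (after \defref{tameNormHom}) that $\lau \gC * {\ov q}{X;R} = \lau C * {\ov q}{X;R}$ and $\lau \gH * {\ov q}{X;R} = \lau H * {\ov q}{X;R}$ whenever $\ov q \leq \ov t$. So the quasi-isomorphism of \thmref{thm:TWGMcorps}~(case~2) is literally the statement of the corollary once this identification is made; and if $R$ is a field the torsion-free hypothesis is vacuous, so nothing further is needed. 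Your proposal instead rebuilds \thmref{thm:TWGMcorps} from scratch (map construction, Bredon/Mayer--Vietoris reduction to conical charts, cone-formula matching, Dedekind/finite-generation argument for the $\mathrm{Ext}$ term). That argument is recognizable as the paper's proof of the theorem itself, with the paper's comparison map $\chi(\omega)(\sigma) = \omega_\sigma(\tDelta)$ repackaged as a cap-product-plus-augmentation $\Psi(\omega)(\sigma) = \epsilon(\omega \cap \sigma)$, which agrees with $\chi$ up to sign via \defref{61} and \eqref{AmuA}. So you are proving the theorem, not the corollary.

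Beyond the inefficiency, there is a small but genuine gap: your quasi-isomorphism $\Psi$ lands in the \emph{tame} complex $\lau \gC * {D\ov p}{X;R}$, while the corollary asserts a quasi-isomorphism to the ordinary intersection complex $\lau C * {\ov q}{X;R}$. The identification of these two complexes is exactly the observation $\ov q = D\ov p \leq \ov t \Rightarrow \lau \gC * {\ov q}{X;R} = \lau C * {\ov q}{X;R}$, which you never invoke. Without that, your argument proves a statement about tame intersection cohomology that is, a priori, different from the one asserted. Stating this equality (and observing that the GM-perversity hypothesis is what guarantees it) is the whole content of the corollary's derivation from the theorem, and is precisely the step missing from your write-up.
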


The proof of \thmref{thm:TWGMcorps}, similar to that used in \cite[Théorème A]{CST3}, meets the scheme
  used in \cite[Theorem 10]{MR800845}, 
\cite[Lemma 1.4.1]{MR2210257}, \cite[Section 5.1]{LibroGreg}. 

\begin{proposition}\label{prop:supersuperbredon}
Let $\cal F_{X}$ be the category whose objects are (stratified homeomorphic to) open subsets
of a given paracompact and separable CS set $X$ and whose morphisms are  stratified homeomorphisms and inclusions.
Let  $\cal Ab_{*}$ be the category of graded abelian groups. Let $F^{*},\,G^{*}\colon \cal F_{X}\to \cal Ab$
be two functors and
 $\Phi\colon F^{*}\to G^{*}$ a natural transformation satisfying
 the conditions listed
below.
\begin{enumerate}[(i)]

\item $F^{*}$ and $G^{*}$ admit exact Mayer-Vietoris sequences and the natural transformation $\Phi$ 
 induces a commutative diagram between these sequences,

\item If $\{U_{\alpha}\}$ is a disjoint collection of open subsets of $X$  and $\Phi\colon F_{*}(U_{\alpha})\to G_{*}(U_{\alpha})$ is an isomorphism for each $\alpha$, then $\Phi\colon F^{*}(\bigsqcup_{\alpha}U_{\alpha})\to G^{*}(\bigsqcup_{\alpha}U_{\alpha})$  is an isomorphism.

\item If $L$ is a compact filtered space such that 
$X$  has an open subset  stratified homeomorphic
to $\R^i\times \rc L$ and, if
$\Phi\colon F^{*}(\R^i\times (\rc L\backslash \{\tv\}))\to G^{*}(\R^i\times (\rc L\backslash \{\tv\}))$
is an isomorphism, then so is
$\Phi\colon F^{*}(\R^i\times \rc L)\to G^{*}(\R^i\times \rc L)$. Here, $\tv$ is the apex of the cone $\rc L$.

\item If $U$ is an open subset of X contained within a single stratum and homeomorphic
to an Euclidean space, then $\Phi\colon F^{*}(U)\to G^{*}(U)$ is an isomorphism.
\end{enumerate}
Then $\Phi\colon F^{*}(X)\to G^{*}(X)$ is an isomorphism.
\end{proposition}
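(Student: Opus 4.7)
The plan is a Bredon-style induction in three phases, proving $\Phi$ is an isomorphism on progressively larger classes of open subsets of $X$ until we reach $X$ itself. The argument is anchored by (iv), propagated through the local conical structure by (iii), and glued via Mayer-Vietoris (i) and the disjoint-union additivity (ii). I would mirror the scheme in \cite[Theorem 10]{MR800845} and \cite[Section 5.1]{LibroGreg}, adapted to the CS-set setting.

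Phase 1: open subsets of a single stratum. Hypothesis (iv) handles Euclidean opens $V \subset S$ in a fixed manifold stratum. Hypothesis (ii) extends this to arbitrary disjoint unions of such Euclideans. A finite open cover of an open set $V \subset S$ by convex charts has finite intersections which are themselves disjoint unions of Euclideans, so an induction on the size of the cover via Mayer-Vietoris (i) gives $\Phi$ iso on finite unions. Extending to arbitrary $V \subset S$ uses paracompactness of $V$ together with the standard Mayer-Vietoris exhaustion on a locally finite countable refinement.

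Phase 2: distinguished neighborhoods. By induction on the depth of $X$, I would show $\Phi$ is iso on every open of the form $\R^i \times \rc L$ appearing as a conical chart. The base case (depth $0$) falls under Phase 1. For the inductive step, observe $\R^i \times (\rc L\setminus\{\tv\}) \cong \R^{i+1}\times L$ has depth strictly smaller than $\R^i \times \rc L$; the induction hypothesis, applied together with the global gluing arguments of Phases 1 and 3 at lower depth, shows $\Phi$ is iso on this punctured neighborhood. Hypothesis (iii) then promotes the isomorphism to $\R^i \times \rc L$ itself.

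Phase 3: arbitrary open subsets of $X$. Using paracompactness and separability, any open $U \subset X$ admits a countable locally finite cover by distinguished neighborhoods. Finite intersections of distinguished neighborhoods are themselves open subsets of distinguished neighborhoods, hence lie in the good class by Phase 2. An induction on the number of sets in a finite subcover, using Mayer-Vietoris (i), yields $\Phi$ iso on all finite unions; taking $U=X$ via a Mayer-Vietoris telescope finishes the proof.

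The main obstacle is the passage from finite unions to an arbitrary (countable) union, since the hypotheses do not require $F^*$ and $G^*$ to commute with filtered colimits, so a naive direct-limit argument is unavailable. The standard remedy, which I would employ, is the Bredon partition trick: refine the locally finite cover so that its index set splits into finitely many subfamilies of pairwise disjoint open sets. Hypothesis (ii) then handles each subfamily as a single disjoint union, and the finitely many resulting open sets can be glued by iterated Mayer-Vietoris (i), avoiding any transfinite passage. Verifying that such a refinement exists for a paracompact separable CS set (essentially a finite-dimensional combinatorial-covering property compatible with the stratification) is the technically delicate step, but it is classical and available from the CS structure.
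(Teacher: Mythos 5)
There is a genuine gap in Phase 3, at the step ``finite intersections of distinguished neighborhoods are themselves open subsets of distinguished neighborhoods, hence lie in the good class by Phase 2.'' Phase 2 only establishes the isomorphism for sets stratified homeomorphic to $\R^i\times\rc L$; an open subset of such a set is \emph{not} of this form in general, and the intersection of two conical charts is just an arbitrary open set. Nor can you delegate such an intersection to the lower-depth recursion: it may still meet the deepest strata, so it has the same depth as the ambient chart. Since every Mayer--Vietoris induction over a cover by charts must control the intersections, your good class has to contain \emph{all} open subsets of a conical chart, and nothing in your three phases produces this. The same defect undermines your proposed remedy for the colimit problem: the ``partition trick'' refines the cover into finitely many disjoint subfamilies, but the refined sets and the intersections of their unions are again arbitrary open subsets of charts, so hypothesis (ii) plus iterated Mayer--Vietoris still presupposes exactly the missing class. (By contrast, the existence of such a disjoint refinement, which you flag as the delicate point, is not where the difficulty lies.)

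The paper closes this hole with an intermediate stage that your proposal lacks: it first proves the statement for \emph{every} open subset $Y$ of a fixed conical chart $\R^m\times\rc L$, by applying a gluing lemma (\lemref{lem:bredon}) inside the chart to a basis closed under finite intersections, consisting of (a) open sets avoiding $\R^m\times\{\tv\}$, which are handled by the induction on the dimension/depth of the CS set, and (b) sub-charts $B\times\rc_{\varepsilon}L$ with $B$ an open cube, which are handled by hypothesis (iii) together with the induction applied to the punctured chart; intersections of type (b) sets are again of type (b), which is what makes the Mayer--Vietoris bookkeeping close up. Only then does the global step run, with the basis of all open subsets of conical charts (now known to be good, and closed under finite intersections). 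Note also that the passage from finite unions of basis elements to the whole space is done in \lemref{lem:bredon} without any disjoint-refinement dimension argument: one uses a proper map $f\colon X\to[0,\infty[$ coming from metrizability, separability and local compactness, covers each compact band $f^{-1}([n,n+1])$ by a finite union $U_n$ of basis sets, and glues $\bigsqcup_n U_{2n}$ and $\bigsqcup_n U_{2n+1}$ by hypotheses (ii) and (i). If you insert the ``all opens of a chart'' stage (with its two-type basis) into your scheme, the rest of your outline can be made to work along these lines.
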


\begin{proof} 
Let $U$  be an open subset of $X$. We apply \lemref{lem:bredon}, taking for $P(U)$ the property
$$``\Phi\colon F^*(U)\to G^*(U) \text{ is an isomorphism.''}$$
Notice that the CS set $X$ is  a separable, locally compact and metrizable space \cite[Proposition 1.11]{CST3}. We proceed by induction on the dimension of $ X $, the result is immediate when $ \dim X=0$, that is, a discrete space. The inductive step  occurs in two stages.

$\bullet$ \emph{We first show $P(Y)$ for any open subset $Y$ of a fixed conical chart.}
We can suppose $Y=\R^m\times \rc L$. If $L=\emptyset$, we apply (iv). Let us suppose $L\ne \emptyset$.
We consider the basis $\cal V$ of open sets of $Y$, composed of
\begin{itemize}
\item open subsets  $V$ of $Y$, 
with $\tv \not\in V$, which are CS sets of strictly lower dimension than $ X $,
\item open subsets $V=B\times \rc_{\varepsilon}L$, where $B \subset \R^m$ is an open $m$-cube, $\varepsilon >0$ and
$\rc_{\epsilon}=(L\times [0,\varepsilon[)/(L\times \{0\})$.
\end{itemize}
This family is closed by finite intersections and it verifies the assumptions of \lemref{lem:bredon}.
For a), it suffices to apply the induction hypothesis together with (iii) and (iv).
The property b) derives from (i) and property c) from (ii).

$\bullet$ \emph{We prove $P(X)$} by
considering the open basis of $X$ consisting of open subsets of conical charts. This family  is stable  by finite intersections
and verifies the assumptions of \lemref{lem:bredon}.
The condition a) is a consequence of the first step. The two other conditions derive from (i) and (ii), as previously.
\end{proof}

\begin{lemma}\label{lem:bredon}
Let $ X $ be a locally compact topological space, metrizable and separable. We are given an open basis
of $ X $, $ \cal U = \{U_{\alpha} \} $, closed by finite intersections, and a statement $P(U) $ on  open subsets of $X$ satisfying the following three properties.
\begin{enumerate}[a)]
\item  The property $P(U_{\alpha}) $ is true for all $ \alpha $.

\item  If $ U $, $ V $ are open subsets of $ X $ for which  properties $P(U) $, $ P (V) $ and $ P (U \cap V) $ are true, then
$ P (U \cup V) $ is true.

\item If $ (U_{i})_{i \in I} $ is a family of open subsets of $ X $, pairwise disjoint, verifying the property $ P (U_{i}) $ for all $ i \in I $, then $ P (\bigsqcup_i U_{i}) $ is true.
\end{enumerate}
Then the property $ P (X) $ is true.
\end{lemma}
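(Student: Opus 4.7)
My plan is to follow the classical Bredon-type scheme based on a compact exhaustion and a mod-$3$ trick.

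\emph{Step 1: finite unions of basic open sets.} I would first prove by induction on $k$ that $P(U_{\alpha_1} \cup \dots \cup U_{\alpha_k})$ holds for any $k$ sets in $\cal U$. The case $k=1$ is condition a). For the inductive step, set $V = U_{\alpha_1} \cup \dots \cup U_{\alpha_k}$ and $W = U_{\alpha_{k+1}}$. Then $V \cap W = \bigcup_{i=1}^{k}(U_{\alpha_i} \cap U_{\alpha_{k+1}})$ is a union of $k$ basic opens (using that $\cal U$ is closed under finite intersections), so $P(V \cap W)$ holds by the inductive hypothesis at level $k$, and condition b) yields $P(V \cup W)$.

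\emph{Step 2: compact exhaustion and annular finite covers.} Since $X$ is locally compact, separable, and metrizable, it is $\sigma$-compact, so there is an increasing sequence of compact sets $K_1 \subset K_2 \subset \cdots$ with $K_n \subset \mathrm{int}(K_{n+1})$ and $X = \bigcup_n K_n$; set $K_n = \emptyset$ for $n \leq 0$. For each $n \geq 1$ the compact annulus $A_n := K_n \setminus \mathrm{int}(K_{n-1})$ is contained in the open set $B_n := \mathrm{int}(K_{n+1}) \setminus K_{n-2}$. Cover $A_n$ by finitely many basic open sets contained in $B_n$ and let $V_n$ be their union; Step~1 gives $P(V_n)$. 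Crucially, $V_n \subset B_n$ implies $V_n \cap V_m = \emptyset$ whenever $|n-m| \geq 3$, and $\bigcup_n V_n = X$.

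\emph{Step 3: mod-$3$ partition and final gluing.} For $j \in \{0,1,2\}$ set $W_j := \bigsqcup_{n \equiv j\,(\mathrm{mod}\,3)} V_n$. By Step~2 these are genuine disjoint unions, so condition c) yields $P(W_j)$. Now for any $j \neq j'$ the intersection $W_j \cap W_{j'}$ decomposes as $\bigsqcup_{(n,m)}V_n \cap V_m$ with $n \equiv j$, $m \equiv j'$; this is disjoint in $(n,m)$ since the $V_n$'s within $W_j$ (resp.\ $W_{j'}$) are pairwise disjoint, only finitely many pairs with $|n-m| \leq 2$ give nonempty contributions, and each nonempty $V_n \cap V_m$ is a finite union of basic opens, hence satisfies $P$ by Step~1. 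Condition c) then gives $P(W_j \cap W_{j'})$, and condition b) yields $P(W_j \cup W_{j'})$. Iterating once more with $W_0 \cup W_1$ and $W_2$---whose intersection $(W_0 \cap W_2) \sqcup (W_1 \cap W_2)$ decomposes analogously as a disjoint union of finite unions of basic opens---gives $P(W_0 \cup W_1 \cup W_2) = P(X)$.

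\emph{Main obstacle.} The delicate point is the combinatorial bookkeeping in Step~3: ensuring that every intersection appearing in the application of b) is genuinely a disjoint union of sets to which Step~1 applies, so that condition c) can be invoked. The mod-$3$ separation is engineered precisely for this: it is wide enough that $V_n$'s within one residue class are disjoint (making each $W_j$ a legitimate disjoint union), while cross-pairs $V_n \cap V_m$ between distinct residue classes remain finite, locally, and are themselves finite unions of basic opens by closedness of $\cal U$ under finite intersections.
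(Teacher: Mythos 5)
Your proof is correct and follows essentially the same scheme as the paper's: $P$ for finite unions of basic opens via a) and b), then a covering of the annuli of a $\sigma$-compact exhaustion by finite unions of basic opens confined to bounded shells, and finally c) and b) to glue (the paper produces its annuli from a proper map coming from the Alexandroff compactification and uses even/odd classes where you use mod-$3$ classes, which costs you one extra gluing step but changes nothing substantial). The only slip is the claim that $(W_0\cap W_2)$ and $(W_1\cap W_2)$ are disjoint -- triple overlaps $V_n\cap V_{n+1}\cap V_{n+2}$ can be nonempty -- but this is harmless, since what you actually need and what your ``analogous'' decomposition gives is $(W_0\cup W_1)\cap W_2=\bigsqcup_{k\equiv 2 \pmod 3}\bigl(V_k\cap (W_0\cup W_1)\bigr)$, a disjoint union of finite unions of basic opens.
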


\begin{proof} 
Since the family $\cal U$ is closed by finite intersections, the properties a) and b) imply that the property $P$ is true for any finite union of elements of $\cal U$.

Since the space $X$ is separable, metrizable and locally compact, then its Alexandroff's compactification $\hat{X}=X\sqcup\{\infty\}$ 
is  metrizable (cf. \cite[Exercise 23C]{MR0264581}) and there exists a proper map, $f\colon X\to [0,\infty[$, defined by $f(x)=1/d(\infty,x)$.
We associate to $f$ a countable family of compact subsets, $A_{n}=f^{-1}([n,n+1])$. 
Each $A_{n}$ possesses a finite cover consisting  of open subsets $U_{\alpha}\in\cal U$, included in $f^{-1}(]n-1/2,n+3/2[)$;
we denote by $U_{n}$ the union of the  elements of this cover.
Since the open subset $ U_ {n} $ is a finite union of elements of $ \cal U$, then the
property $ P (U_ {n}) $ is true.

Let $U_{\mathrm{even}}=\bigsqcup_{n}U_{2n}$
and let
$U_{\mathrm{odd}}=\bigsqcup_{n}U_{2n+1}$.
The hypothesis  c) implies that $P(U_{\mathrm{even}})$ and  $P(U_{\mathrm{odd}})$ are true.
Furthermore, since the intersection $U_{2n}\cap U_{2n+1}$ is a finite union of elements $U_{\alpha}$ of $\cal U$,
then the property $P(U_{2n}\cap U_{2n+1})$ is true.
From  $U_{\mathrm{even}}\cap U_{\mathrm{odd}}=\bigsqcup_{n} U_{2n}\cap U_{2n+1}$
and from the hypothesis c), 
we deduce that the property
$P(U_{\mathrm{even}}\cup U_{\mathrm{odd}})$ is true.
The conclusion then follows from
$U_{\mathrm{even}}\cup U_{\mathrm{odd}}=\bigcup_{n}U_{n}\supset \bigcup_{n}A_{n}=X$.
\end{proof}

\bigskip

Let  $X$ be  a filtered space.
We construct the map
$$\chi\colon\Hiru \tN *{X;R}\to \Hiru \gC* {X;R}$$ as follows.
If $\omega\in\Hiru \tN *{X;R}$ and if $\sigma\colon \Delta=\Delta_{0}\ast\dots\ast\Delta_{n}\to X$
is a regular simplex, 
we set:
$$\chi(\omega)(\sigma)=
 \omega_{\sigma}(\tDelta).
 $$
\begin{proposition}\label{prop:TWGM}
Let $(X,\ov{p})$ be a perverse space.
The operator $\chi$ induces a chain map,
$\chi\colon \lau \tN * {\ov{p}}{X;R}\to  \lau \gC*{D\ov{p}}{X;R}$.
\end{proposition}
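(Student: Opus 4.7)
The plan is to verify three things in turn: (i) $\chi(\omega)$ is a well-defined cochain in $\lau \gC * {D\ov p}{X;R}$, (ii) $\chi$ commutes with the differentials, and (iii) the complementary-perversity pairing $\ov p + D\ov p = \ov t$ is exactly what forces (ii) to hold. Point (i) is immediate: since $\chi(\omega)(\sigma) := \omega_\sigma(\tDelta)$ is prescribed on each regular simplex, linear extension defines a cochain on $\hiru\gC *{X;R}$, and restriction to the subcomplex $\lau \gC{D\ov p}*{X;R}$ gives an element of $\lau \gC * {D\ov p}{X;R}$. Unlike an element of $\Hiru \tN * {X;R}$, no face-compatibility needs to be checked for $\chi(\omega)$ itself.

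For (ii), once the sign conventions $(\delta f)(v) = -(-1)^{|f|} f(\partial v)$ and Definition \ref{defgd} are unwound, the identity $d\chi(\omega) = \chi(\delta\omega)$ reduces to the equality
$$
\omega_\sigma(\partial \tDelta) \;=\; \chi(\omega)(\gd \sigma)
$$
for every $D\ov p$-tame simplex $\sigma\colon \Delta \to X$. I would then invoke the decomposition (\ref{Hid}), $\partial \tDelta = \widetilde{\gd \Delta} + \sum_{i} (-1)^{|\Delta|_{\leq i}+1}\, \mathcal H_i$, and use the face-compatibility $\delta_\ell^*\omega_\sigma = \omega_{\sigma\circ\delta_\ell}$ of $\omega \in \Hiru \tN *{X;R}$ to rewrite $\omega_\sigma(\widetilde{\gd \Delta}) = \chi(\omega)(\gd \sigma)$ directly. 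The chain-map property therefore reduces to the vanishing of the hidden-face contributions:
$$
\omega_\sigma(\mathcal H_i) = 0 \quad \text{for every } i\in\{0,\dots,n-1\} \text{ with } \Delta_i \ne \emptyset.
$$

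The hidden-face vanishing is the main obstacle and the only step where the complementarity $\ov p + D\ov p = \ov t$ is used. By construction, $\mathcal H_i$ is the top-dimensional cell of $\tc \Delta_0 \times \cdots \times \Delta_i \times \cdots \times \Delta_n$; equivalently, it is dual to the basis element $\1_{(\Delta,\varepsilon^*)}$ with $\varepsilon^*_j = 1$ for $j\ne i$, $j<n$, and $\varepsilon^*_i = 0$. A direct application of Definition \ref{def:degrepervers} shows its $(n-i)$-perverse degree equals $\dim(\Delta_{i+1}*\cdots*\Delta_n)$. Since $\Delta_i \ne \emptyset$, the image of $\sigma$ meets some codimension $(n-i)$ singular stratum $S$ of $X$. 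If the coefficient $\omega_\sigma(\mathcal H_i)$ were nonzero, the $\ov p$-allowability of $\omega$ would give $\dim(\Delta_{i+1}*\cdots*\Delta_n) \leq \ov p(S)$, while the $D\ov p$-allowability of $\sigma$, together with the join-dimension identity $\dim \Delta = \dim(\Delta_0*\cdots*\Delta_i) + \dim(\Delta_{i+1}*\cdots*\Delta_n) + 1$, yields $D\ov p(S) \geq (n-i) - 1 - \dim(\Delta_{i+1}*\cdots*\Delta_n)$. Adding these produces $\ov t(S) = \ov p(S) + D\ov p(S) \geq (n-i) - 1 = \codim S - 1$, contradicting $\ov t(S) = \codim S - 2$. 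The edge cases $\ov p(S) = -\infty$ or $D\ov p(S) = -\infty$ collapse to the same conclusion: the allowability conditions force either $\omega_\sigma(\mathcal H_i) = 0$ directly, or $\Delta_i = \emptyset$, contrary to assumption.

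Once this vanishing is established, the chain-map identity reduces to routine sign bookkeeping, and the proposition follows.
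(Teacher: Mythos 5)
Your proof is correct and follows essentially the same route as the paper's: reduce the chain-map identity to the vanishing $\omega_\sigma(\mathcal H_i)=0$ of hidden-face coefficients via the decomposition \eqref{Hid}, and obtain that vanishing from the allowability inequalities for $\omega$ and $\sigma$ combined with $\ov p(S)+D\ov p(S)=\ov t(S)=\codim S-2$, finishing with the join-dimension identity to produce the contradiction $\dim\Delta-1\leq\dim\Delta-2$. The only difference is bookkeeping: the paper adds the two inequalities directly while you isolate lower and upper bounds first, and the paper's own write-up has a minor $i$ versus $n-i$ indexing slip that you avoid; the edge cases $\ov p(S)=\pm\infty$ are handled the same way in both.
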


\begin{proof}
Recall that $\lau \gC*{D\ov{p}}{X;R} = \Hom (\lau \gC{D\ov{p}} * {X;R},R)$.
Since the simplices coming from $\lau \gC{D\ov{p}}*{X;R} $ are regular, then the definition of $\chi$ makes sense. This is the reason of considering tame intersection chains instead of intersection chains.

We need to prove $\chi \circ \delta = \gd^* \circ \chi$, where $\gd^*$ is the linear dual of $\gd$ (see \defref{defgd}).
Consider a cochain 
$\omega$ of $ \lau \tN * {\ov{p}}{X;R}$ and a
chain $\xi \in \lau \gC{D\ov{p}} * {X;R}$.
We prove 
$\chi(\delta\omega)(\xi) =(\gd^*\chi)(\omega)(\xi) =-(-1)^{|\omega|}\chi(\omega)(\gd\xi) $.
In fact, it suffices to prove
$\chi(\delta\omega)(\sigma) =-(-1)^{|\omega|}\chi(\omega)(\gd\sigma) $ for a regular simplex $\sigma \colon \Delta \to X$ $D\ov p$-allowable.
Let us suppose that: 
\be\label{cleim}
\omega_\sigma (\cal H_i)=0 \hbox{ for each hidden face $\cal H_i$}
\ee
(see \eqref{Hid}).
Then
\begin{eqnarray*}
\chi(\delta\omega)({\sigma})
=(\delta\omega)_{\sigma}(\tDelta)
=-(-1)^{|\omega|} \;  \omega_{\sigma}(\partial \tDelta)
=   -(-1)^{|\omega|} \;  \omega_{\sigma}(\widetilde{\gd\Delta}) =
 -(-1)^{|\omega|} \; \chi(\omega)({\gd\sigma}).\label{equa:chidelta}
\end{eqnarray*}
Let us prove the claim \eqref{cleim}. We suppose $ \omega_\sigma (\cal H_i)\ne 0 $ for some hidden face $\cal H_i$.
By definition of a hidden face, notice that $\Delta_{n-i} \ne \emptyset
$, thus there exists an $i$-dimensional    stratum $S$ of $X$ with $S \cap \im \sigma \ne \emptyset$.
Then, since $\sigma$ is $D\ov p$-allowable and $\omega$ is $\ov p$-allowable,
\begin{itemize}
\item  
$
0 \leq |\Delta|_{\leq n-i} = \|\sigma\|_S \leq \dim \sigma - \codim S + D\ov p (S)
=\dim \sigma - \ov p (S) -2$, and
\item $0 \leq |\Delta|_{>n-i}  = |\cal H_i|_{>n-i}  =  \|\1_{\cal H_i}\|_{\codim S} \leq_{(1)} \|\omega_\sigma\|_{\codim S}\leq \ov p(S)$,

\end{itemize}
where $\leq_{(1)} $ comes from  $ \omega_\sigma (\cal H_i)\ne 0 $.
We conclude that $\ov p (S) $ is finite. Adding up these two inequalities, we get
$\dim \sigma -1= \dim\Delta-1=|\Delta|_{\leq n-i} + |\Delta|_{>n-i} \leq  \|\omega_\sigma\|_S+ \|\sigma\|_S
\leq \dim\sigma-2.
$
This contradiction gives the claim \eqref{cleim} and ends the proof.
\end{proof}


\begin{proof}[Proof of \thmref{thm:TWGMcorps}]
By hypothesis, $X$ is a separable paracompact CS set.
Properties (i), (ii), (iv) of  \propref{prop:supersuperbredon} are verified. 
For the  item (iii), we consider a
filtered compact space $ L $, for which the chain map of \propref{prop:TWGM}
induces an isomorphism
$\chi^*\colon \lau \IH * {\ov{p}}{\R^i\times L\times ]0,\infty[;R}\xrightarrow[]{\cong} 
\lau \gH *  {\ov{q}}{\R^i\times L\times ]0,\infty[;R}$.
Since the space $\R^i\times L\times ]0,\infty[$ is an open subset of 
$\R^i\times\rc L$, 
the following diagram commutes.
\begin{equation}\label{equa:dualitefin}
\xymatrix{
\lau \IH *{\ov{p}}{\R^i\times L\times ]0,\infty[;R)}
\ar[r]^{\chi^*}&
\lau \gH*{D
\ov{p}} {\R^i\times L\times ]0,\infty[;R}
\\
\lau \IH *{\ov{p}}{\R^i\times\rc L;R} 
\ar[u]\ar[r]^-{\chi^*_{\R^i\times \rc L}}&
\lau \gH  *{D\ov{p}}{\R^i\times \rc L;R)}\ar[u]
}
\end{equation}
Recall the blown-up intersection cohomology computation (see \thmref{prop:isoproduitR} and \thmref{prop:coneTW}):
\begin{eqnarray*}
\lau \IH k{\ov{p}}{\R^i\times\rc L;R} &=& \lau \IH k {\ov{p}}{\rc L;R}
\\ &=&
\left\{
\begin{array}{ccl}
\lau \IH k {\ov{p}}{L;R} =\lau \IH k{\ov{p}}{\R^i\times  L\times ]0,\infty[ ;R} 
&\text{if}&
k\leq \ov{p}(\tw),\\
0
&\text{if}&
k>\ov{p}(\tw).
\end{array}\right.
\end{eqnarray*}
The tame intersection cohomology is determined in \cite[Chapter 7]{LibroGreg}:
\begin{eqnarray*}
\lau \gH k{D
\ov{p}} {\R^i\times \rc L;R} &= &\lau \gH k{D\ov{p}}{\rc L;R}
\\ &=&
\left\{
\begin{array}{ccl}
\lau \gH k {D\ov{p}}{L;R}=  \lau \gH k{D
\ov{p}} {\R^i\times L \times ]0,\infty[;R}
&\text{if}&
k\leq \ov{p}(\tw),\\[,2cm]
\ext(\lau \gH{D\ov{p}} {k-1} {L;R},R)
&\text{if}&
k=  \ov{p}(\tw) +1,\\[,2cm]
0
&\text{if}&
k>  \ov{p}(\tw) +1.
\end{array}\right.
\end{eqnarray*}
The involved stratifications on $  L $ are different in these
computations but the cohomologies are the same  (see subsections \ref{SF1}, \ref{SF}).

In case 1) we have 
$\ext(\lau \gH{D\ov{p}} {\ov{p}(\tw)} {L;R},R)=0$ since  $R$ is a field.
In case 2) the link $L$ is a compact pseudomanifold.
According to  \cite[Section 5.7]{LibroGreg}, the tame intersection homology of $ L $ is finitely generated.
Since the torsion module of  $\lau \gH {D\ov{p}}  {\ov{p}(\tw)}{L;R}$ is zero by hypothesis, we deduce
$\ext(\lau \gH {D\ov{p}}  {\ov{p}(\tw)}{L;R},R)=0$.

So, in both cases, the vertical maps of \eqref{equa:dualitefin} are isomorphisms.
Since $\chi^*$ is an isomorphism then $\chi^*_{\R^i\times \rc L}$ is also an isomorphism.
\end{proof}

For some particular perversities the blown-up intersection cohomology  is described in terms of  the ordinary cohomology, denoted by $\Hiru H * {-;R}$.

\bp\label{PartPer}
Let $X$ be a paracompact separable CS set. Then

\begin{enumerate}[(a)]
\item $\lau \IH * {\ov 0} {X;R} \cong \Hiru H * {X;R}$ if $X$ is normal.

\item $\lau \IH * {\ov p} {X;R} \cong \Hiru H * {X, \Sigma;R}$ if $\ov p <0$.

\item $\lau \IH * {\ov p} {X;R} \cong \Hiru H * {X\menos\Sigma;R}$ if $\ov p >\ov t$.

\end{enumerate}
These isomorphisms preserve the cup product.
\ep
\bpr
The complex of singular chains  (resp. relative chains) with coefficients in $R$, is denoted by  $\hiru S * {X;R}$ (resp.$\hiru S * {X,\Sigma;R}$). It computes the ordinary homology $\hiru H * {X;R}$ (resp. $\hiru H * {X,\Sigma;R}$).

(a)
  Let
$
F\colon  \Hiru S*{X;R} \to  \lau \tN * {\ov 0} {X;R}
$
be
the differential operator 
defined by $F (\omega)_\sigma = \mu^*(\sigma^*(\omega)),$
for any regular filtered simplex $\sigma\colon \Delta\to X$. 	
The operator $\mu^*$ is associated to the $\mu$-amalgamation of $\Delta$ (see \ref{defmu*}.1). 
Applying  \propref{CMAmal} repeatedly, we conclude that $F$ is a chain map commuting with the cup product.
By definition of $\mu^*$ we have $||F (\omega)_\sigma ||_\ell \leq ||\sigma^*(\omega)||_0 =0$. The operator $F$ is therefore well defined.

We consider the statement: ``The operator $F$  is a quasi-isomorphism'' and we prove it by using \propref{prop:supersuperbredon}. We verify the four properties (i)-(iv) of this Proposition.
\begin{itemize}
\item[(i)]
It is well known that the functor $\Hiru S * {-;R}$ verifies the Mayer-Vietoris property.
It has been proven in  \thmref{thm:MVcourte} that     the functor  $\lau \tN*{\ov{0}}{-;R}$ also 
verifies Mayer-Vietoris property. One easily checks that $F$ induces a commutative diagram between these sequences.

\item[(ii),(iv)] Straightforward.

\item[(iii)] It is well known that
$H^*(\R^i\times \rc L;R)=H^0(\R^i\times \rc L;R)=R$ (constant cochains).
From Theorems \ref{prop:isoproduitR}, \ref{prop:coneTW}, we have
$\lau \IH* {\ov{0}}{\R^i\times \rc L;R}$ =$\lau \IH  0 {\ov{0}}{L;R}.$
Let us consider a basis point $x_0 \in  L \backslash \Sigma$, which is connected from   \cite[Lemma 2.6.3]{LibroGreg}. Also, for any point $x_1 \in L$ there exists a regular simplex $\sigma \colon [0,1] \to L$ going from $x_1$ to $x_0$.
If $\omega\in \lau \tN 0 {\ov{0}} {L;R}$ is a cocycle, the cochain $\omega_{\sigma}$ takes the same value on $x_1$ and on $x_0$. This implies $\lau \IH 0 {\ov{0}}{L;R}=R$.

\end{itemize}

(b)
 Consider the previous operator 
$
F\colon  \Hiru S*{X;R} \to  \lau \tN * {} {X;R}
$
defined by $F (\omega)_\sigma = \mu^*(\sigma^*(\omega)),$
for any regular filtered simplex $\sigma\colon \Delta\to X$. 	
We know that it is a chain map commuting with the cup product.
If $\omega$ vanishes on $\Sigma$ then $\|F (\omega)_\sigma\|_\ell =-\infty$ for any $\ell \in \{1, \ldots,n\}$. So, $
F\colon  \Hiru S*{X,\Sigma;R} \to  \lau \tN * {\ov p} {X;R}
$
is well defined.
We consider the statement: ``The operator $F$  is a quasi-isomorphism'' and we prove it 
as in the previous case. In fact, the only item to prove is (iii).
It comes from
$
\Hiru  H* {\R^i \times \rc L, \R^i \times \rc \Sigma_L;R} = 0=\lau \IH * {\ov p}{\R^i \times \rc L;R},
$
induced by  Theorems \  \ref{prop:isoproduitR}, \ref{prop:coneTW}.

(c) Consider the natural restriction $\gamma \colon \lau \tN * {\ov p} {X;R} \to \lau \tN * {\ov p} {X\menos \Sigma;R}  =\Hiru S * {X\menos \Sigma;R}$. 
It is a chain map commuting with the cup product. 	
We consider the statement: ``The operator $\gamma$  is a quasi-isomorphim'' we proceed as in the previous case. In fact, the only property to prove is the item (iii) of  \propref{prop:supersuperbredon}.
We know that $
\Hiru H * {\R^i \times \rc L  \menos \Sigma_{\R^i \times \rc L  };R} 
=
\Hiru H * {\R^i \times  (L\backslash\Sigma_L) \times ]0,\infty[;R} = \Hiru H * {L\backslash \Sigma_L;R}
=
\Hiru H * {\R^i \times (\rc L \menos \{ \tv \}) \menos \Sigma_{\R^i \times (\rc L \menos \{ \tv \}) };R} 
 $. On the other hand, we have 
$\lau  \IH * {\ov p} {\R^i \times \rc L;R} =
\lau \IH * {\ov p}  { L;R}
=
\lau \IH * {\ov p}  { \R^i \times (\rc L \menos \{ \tv \});R}
$
(cf. Theorems \ref{prop:isoproduitR}, \ref{prop:coneTW}). It suffices to apply the hypothesis of (iii).
\epr

\section{Topological invariance. \thmref{inv}.}\label{15}

We prove in this section that the blown-up intersection cohomology is a topological  invariant, working with CS sets and GM-perversities.
We follow  the procedure of King \cite{MR800845} (see also \cite{LibroGreg}).

\subsection{Intrinsic filtration}
A key ingredient in the King's proof of the topological invariance is the  intrinsic filtration of a CS set $X$. It was introduced in  \cite{MR800845}, crediting to Sullivan, see also  \cite{MR0478169}. We refer the reader to \cite{LibroGreg} where there is an exhaustive study of this notion.

Let $X$ be an $n$-dimensional filtered space. Two points $x,y\in X$  are \emph{equivalent} if there exists a homeomorphism $h \colon (U,x) \to (V,y)$, where $U,V$ are neighborhoods of $x,y$ respectively.
The local structure of a CS set implies that two points of the same stratum are necessarily equivalent.
Hence the equivalence
classes are unions of components of strata of X. 

Let $Y_i$ be the union of the equivalence
classes which only contain components of strata of dimension $\leq i$. The filtration $Y_0 \subset Y_1 \subset \cdots \subset Y_n=X$ is the \emph{intrinsic filtration}
 of $X$, denoted by  $X^*$. It is proved that $X^*$ is an $n$-dimensional  CS set \cite{MR800845}.
 Since the filtration $X^*$ coarsens the original filtration of $X$ then the identity map $\nu \colon X \to X^*$ is a stratified map, called the \emph{intrinsic aggregation} (see \defref{def:applistratifieeforte}).
 Recall that, given a perversity $\ov p$, the map  $\nu$ induces a morphism $\nu^* \colon \lau \IH * {\ov p}{X^*;R} \to
\lau \IH * {\ov p}{X;R} $ (see \thmref{MorCoho}).

\bp\label{prop:local}
Let $\ov p$ be a GM-perversity. 
Let $X$ be a paracompact CS set with no codimension
one strata.
  Let $S$ be a stratum of $X$ and $(U,\varphi)$ a conical chart of a point $x\in S$. 
 If the intrinsic aggregation  $\nu\colon X\to X^*$ induces the isomorphism
 $\nu_{*}\colon  \lau \IH{*} {\ov{p}}{(U\backslash S)^*;R} \xrightarrow[]{\cong} \lau \IH {*} {\ov{p}}{U\backslash S;R}$
then it also induces the  isomorphism,
 $$\nu_{*}\colon  \lau \IH{*} {\ov{p}}{U^*;R} \xrightarrow[]{\cong} \lau \IH {*} {\ov{p}}{U;R}.$$
\ep

\begin{proof}We analyze the local structure of $X$ and $X^*$. Without loss of generality, we can suppose
$U = \R^k \times \rc W$, where $W$ is a compact filtered space (possibly empty) and  $S \cap U= \R^k \times \{ \tw\}$, $\tw$ being the apex of the cone $\rc W$.
Following \cite[Lemma 2 and Proposition 1]{MR800845}, 
there exists a  homeomorphism
\begin{equation}\label{equa:homeo}
h\colon ( \R^k \times \rc W)^*\xrightarrow[]{\cong} \R^m\times \rc L,
\end{equation}
which is also a stratified map, where $L$ is a compact filtered space  (possibly empty)
and $m\geq k$. 
Moreover,  $h$ verifies
\begin{equation}\label{equa:hetcone}
h(\R^k\times \{\tw\} ) \subset \R^m\times \{\tv\} \text{ and }
h^{-1}(\R^m\times \{\tv\})=\R^k\times \rc A,
\end{equation}
where $A$ is an $(m-k-1)$ sphere and  $\tv$ is the apex of the cone $\rc L$.
Now, the hypothesis and the conclusion of the Proposition become
\begin{equation}\label{equa:hyp}
h \colon 
\lau \IH * {\ov{p}}{\R^m  \times \rc L \backslash h(\R^k  \times \{\tw\});R}
\xrightarrow[]{\cong}
\lau \IH *{\ov{p}}{\R^k  \times \rc W \backslash (\R^k  \times \{\tw\});R}
\end{equation}
and
\begin{equation}\label{equa:conc}
h\colon 
\lau \IH * {\ov{p}}{\R^m  \times \rc L ;R}
\xrightarrow[]{\cong}
\lau \IH * {\ov{p}}{\R^k  \times \rc W;R }.
\end{equation}
The existence of the homeomorphism (\ref{equa:homeo}) implies $k+s=m+t$, and $s\geq t$, since $m\geq k$, where $s = \dim W$  and  $t=\dim L$.  

The result is clear when  $s=-1$. So, we can suppose  $s\geq 0$ which implies that the stratum $\R^k\times \{\tw\}$ is singular. Since $X$ has no codimension one strata, we indeed have  $s \geq 1$.
When $t=-1$ then $L=\emptyset$ , $s =\dim A$ and therefore
\begin{eqnarray*}
\lau \IH * {\ov{p}}{\R^k  \times \rc W;R } 
&\stackrel{(\ref{equa:hetcone})}{\cong}&
\lau \IH * {\ov{p}}{ \R^k \times \rc A ;R}
\cong_{(2)}
\lau \IH * {\ov{p}
}{ \rc A;R }
\cong_{(3)} 
\left\{
\begin{array}{ll}
R & \hbox{if } *=0\\
0 & \hbox{if not}
\end{array}
\right. \\
&\cong&
\lau \IH * {\ov{p}}{\R^m  \times \rc L;R } .
\end{eqnarray*}
Here, $\cong_{(2)}$ is 
\thmref{prop:isoproduitR} and  $\cong_{(3)}$ comes from  
\begin{itemize}
\item \thmref{prop:coneTW}, and
\item $0 \leq \ov p(s+1) \leq \ov t (s+1) = s-1 < \dim A$ since $\ov p$ is a GM-perversity and $s \geq 1$.
\end{itemize}
So, we can suppose  $t\geq 0$ which implies that the stratum $\R^m\times \{\tv\}$ is singular. 

We establish (\ref{equa:conc}) in two different cases.

\smallskip
$\bullet$ \emph{First case}: $i >  \ov p(s+1)$.  

 \thmref{prop:coneTW} gives
$\lau \IH i {\ov{p}}{\R^k \times \rc W;R}=0$ and then we have to prove that  
\newline $\lau \IH i {\ov{p}} {\R^m \times \rc L;R}=0$. 
Since the stratum  $\R^m \times \{ \tv\}$  is singular and $\ov p$ is a GM-perversity then 
$
i> \ov{p}(s+1)
 \geq \ov{p}(t+1).$
 Now, \thmref{prop:coneTW} applied to
$\lau \IH i{\ov{p}} {\R^m \times \rc L;R}$ gives the result.

\smallskip
$\bullet$ \emph{Second case} : $i \leq \ov{p}(s+1)$.  
We have the isomorphisms
\begin{eqnarray}\label{equa:cas3debut}
\lau \IH i {\ov{p}}{\R^m\times \rc L\backslash h(\R^k\times \{\tw\});R}
\cong_{(1)}
\lau \IH {i}{\ov{p}}{\R^k\times\rc W\backslash (\R^k\times \{\tw\});R}  \cong \\
\lau \IH{i}{\ov{p}}{\R^k\times (\rc W\backslash \{\tw\});R} 
\cong 
\lau \IH{i}{\ov{p}}{\R^k\times ]0,1[\times  W;R} \cong_{(2)}
\lau \IH {i}{\ov{p}}{W;R},\nonumber&&
\end{eqnarray}
where $\cong_{(1)}$ is the hypothesis (\ref{equa:hyp}) and $\cong_{(2)}$ is \thmref{prop:isoproduitR}.
Write $h(\R^k\times\{\tw\})=B\times \{\tv\}\subset \R^m\times \{\tv\}$, with  $B$ closed.
We obtain a new sequence of isomorphisms
\begin{eqnarray}\label{equa:tropdeL}
\lau \IH {i}{\ov{p}}{\R^m\times \rc L\backslash h(\R^k\times \{\tw\}),\R^m\times \rc L\backslash \R^m\times\{\tv\};R}
\cong &&\\
\lau \IH {i}{\ov{p}}{(\R^m\times \rc L )\backslash (B\times\{\tv\}) ,\R^m\times (\rc L\backslash \{\tv\});R}
\cong_{(1)}\nonumber &&\\
\lau  \IH {i} {\ov{p}}{(\R^m\backslash B)\times \rc L, (\R^m\backslash B)\times (\rc L\backslash \{\tv\});R}
\cong
\lau \IH {i} {\ov{p}}{(\R^m\backslash B)\times (\rc L,\rc L\backslash \{\tv\});R}
\cong_{(2)}\nonumber &&\\
\lau \IH {i}{\ov{p}}{\R^{k+1}\times A\times (\rc L,\rc L\backslash \{\tv\});R}\cong_{(3)}\nonumber &&\\
\lau \IH {i}{\ov{p}}{\rc L,\rc L\backslash \{\tv\};R}\oplus 
\lau \IH {i-m+1+k} {\ov{p}}{\rc L,\rc L\backslash \{\tv\};R}.\nonumber&&
\end{eqnarray}
The isomorphism $\cong_{(1)}$ is the excision of $B\times (\rc L\backslash \{\tv\})$ (see \propref{prop:Excisionhomologie}),
$\cong_{(2)}$ comes from (\ref{equa:hetcone}) and $\cong_{(3)}$ from \thmref{prop:isoproduitR} and \propref{cor:SfoisX}.
Notice that $\R^m \times \rc L$, $\rc L$ and $\rc L \backslash \{ \tw \}$ are $F_\sigma$-subsets, \cite[Exercise 3H]{MR0264581}, of $X^*$ and therefore paracompact spaces \cite[Theorem 20.12 a),b)]{MR0264581}.
The hypothesis on  $i$ implies
$i-m+1+k\leq \ov p(s+1) -s+1+t \leq \ov{p}(t+1)+1$, since $\ov p$ is a GM-perversity.
Then $\lau \IH {i-m+1+k} {\ov{p}}{\rc L,\rc L\backslash \{\tv\};R}$  vanishes (see \propref{cor:homologieconerel}).
We have proved that
$\lau \IH {i} {\ov{p}}{\R^m\times \rc L\backslash h(\R^k\times \{\tw\}),\R^m\times \rc L\backslash \R^m\times\{\tv\};R}
\cong
\lau \IH {i}{\ov{p}}{\rc L,\rc L\backslash \{\tv\};R}.$
Finally, from this  isomorphism, from the long exact sequence of a pair  (\ref{equa:suiterelative2}) and from (\ref{equa:cas3debut}), we get that
$\lau \IH {i} {\ov{p}}{W;R}
\cong
\lau \IH {i} {\ov{p}}{\rc L;R}.
$ Applying \thmref{prop:coneTW} and \thmref{prop:isoproduitR} we have
$
\lau \IH {i}{\ov{p}}{\R^m\times \rc L;R}
\cong
\lau \IH {i} {\ov{p}}{\rc L;R}
\cong 
\lau  \IH{i}{\ov{p}}{W;R}\cong 
\lau \IH {i}{\ov{p}}{\rc W;R} \cong 
\lau \IH {i}{\ov{p}}{\R^k\times \rc W;R}.$
\end{proof}

\begin{theorem}\label{inv}
Suppose  $X$ is a separable paracompact CS set with no  codimension one strata.
Let $\ov p$ be  a $GM$-perversity.
Then, the intrinsic aggregation $\nu \colon X \to X^*$ induces an isomorphism
$\lau \IH * {\ov{p}}{X;R}\cong \lau \IH * {\ov{p}}{X^*;R}.$
It follows
that $\lau \IH * {\ov{p}}{X;R}$ is independent (up to isomorphism) of the choice of a stratification of $X$ as
 CS set with no  codimension one strata.

In particular, if $X'$ is another CS set with no  codimension one strata which  is homeomorphic to $X$ (not necessarily stratified homeomorphic), then
$\lau \IH * {\ov{p}}{X;R} \cong \lau \IH * {\ov{p}}{X';R}.$
These $\ov p$-depending isomorphisms can be chosen to preserve the cup product.

\end{theorem}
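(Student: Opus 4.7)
The plan is to apply the recognition principle of Proposition \ref{prop:supersuperbredon} to the two contravariant functors $F^*(U) = \lau \IH * {\ov p}{U^*;R}$ and $G^*(U) = \lau \IH * {\ov p}{U;R}$ on the category $\cal F_X$, with $\Phi = \nu^*$ the natural transformation induced by the intrinsic aggregation $\nu \colon U \to U^*$. Both functors are well-defined on $\cal F_X$ since any open subset of a CS set with no codimension one strata is again of this kind, and $\nu^*$ is provided by Corollary \ref{GMp} using that GM-perversities satisfy $\ov p \geq \nu^* \ov p$.

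I would then verify the four conditions of Proposition \ref{prop:supersuperbredon}. Condition (i) (Mayer--Vietoris) follows from Corollary \ref{cor:MVlongue} applied to $U$ and to $U^*$; since $\nu$ is the identity on underlying spaces, any open cover of $U$ is simultaneously a cover of $U^*$, and the naturality of $\nu^*$ with respect to restriction (immediate from its definition at the simplex level in \defref{local1}) produces the commutative ladder between the two long exact sequences. Condition (ii) is immediate from the definition of the blown-up complex on a disjoint union. Condition (iv) is trivial: on a chart lying within a single stratum, the intrinsic filtration agrees with the original one, so $\nu = \id$ and $\Phi$ is the identity. Condition (iii) is precisely the content of Proposition \ref{prop:local}: the product $\R^i \times \rc L$ is a conical chart of $(0,\tv)$, which lies in the stratum $S = \R^i \times \{\tv\}$, and $(\R^i \times \rc L)\setminus S = \R^i \times (\rc L \setminus \{\tv\})$ is exactly the hypothesis of that proposition. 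Proposition \ref{prop:supersuperbredon} then yields the isomorphism $\nu^* \colon \lau \IH * {\ov p}{X^*;R} \to \lau \IH * {\ov p}{X;R}$, and cup product compatibility follows from Corollary \ref{GMp}.

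The independence of the stratification is then immediate: two structures as CS sets with no codimension one strata on the same underlying space share the same intrinsic aggregation, since the intrinsic filtration is defined purely topologically, and hence produce isomorphic $\lau \IH * {\ov p}{-;R}$. For the second assertion, a homeomorphism $f \colon X \to X'$ descends to a stratum preserving stratified homeomorphism $\tilde f \colon X^* \to X'^*$ (local topological equivalence classes are preserved). Proposition \ref{prop:applistratifieeforte} applied to $\tilde f$ and $\tilde f^{-1}$ produces a cup-product compatible isomorphism $\tilde f^* \colon \lau \IH * {\ov p}{X'^*;R} \to \lau \IH * {\ov p}{X^*;R}$; composing it with the two intrinsic aggregation isomorphisms delivers the desired isomorphism $\lau \IH * {\ov p}{X;R} \cong \lau \IH * {\ov p}{X';R}$.

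The genuine difficulty has already been absorbed by Proposition \ref{prop:local}, whose proof computes the cohomology of a conical neighborhood before and after passage to the intrinsic filtration, using the cone formula (\thmref{prop:coneTW}), invariance under a product with $\R$ (\thmref{prop:isoproduitR}), the sphere computation (\propref{cor:SfoisX}) and excision (\propref{prop:Excisionhomologie}). Given that local input, the remaining work above is a routine Bredon-style assembly; the only mild bookkeeping point is to keep track that the intrinsic stratifications of open subsets of $X$ are themselves CS sets so that the functors $F^*$ and $G^*$ behave well under restriction.
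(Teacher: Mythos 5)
Your proposal is correct and takes essentially the same route as the paper: both arguments apply Proposition~\ref{prop:supersuperbredon} to the natural transformation $\nu^*$, with condition (iii) supplied by Proposition~\ref{prop:local}, and deduce the final statements from the topological determination of the intrinsic filtration and from \thmref{MorCoho}/\corref{GMp}. You spell out the verification of (i)--(iv) and the two closing assertions in more detail than the paper, but the underlying strategy and the decisive lemmas are identical.
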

\begin{proof}
We consider the statement: ``The intrinsic aggregation $\nu \colon X \to X^*$  induces  a quasi-isomorphism'' and we prove it by using \propref{prop:supersuperbredon}.
We verify the four conditions of this Proposition.
\begin{itemize}

\item[(i)] The functor   $\lau \tN * {\ov p} {-;R}$ satisfies Mayer-Vietoris property (see \thmref{thm:MVcourte}).
Notice that we know from \cite[Theorem 20.12 b)]{MR0264581}
 that $X^*$ is paracompact.

\item[(ii),(iv)] Straightforward.

\item[(iii)] See \propref{prop:local}.

\end{itemize}
Last statement comes from the fact that $\nu$ is a stratified map and \thmref{MorCoho}.
\end{proof}

\section{Decomposition of the ordinary cap product.} \label{TWIC}

In this last section, we show how the ordinary cap product factorizes through the cap product we have defined for the blown-up intersection cohomology. This result extends the factorization developed in \cite[Theorem A]{CST7}.

 \bp\label{Comparacion}
 Let $X$ be a normal  separable paracompact  CS set endowed with two perversities $\ov p, \ov p$ verifying $\ov 0 \leq \ov p$ and $\ov p + \ov p \leq \ov t$. Then, there exists a commutative diagram
 $$
\xymatrix{
\lau H  * {}  {X;R}
\otimes 
\lau H  {} m  {X;R}
\ar[r]^-{\cap} 
\ar@<-4ex>[d]^\Phi
 & 
\lau H  {} {m-*}  {X;R}
  \\
 \lau \IH  *  {\ov{p}}  {X;R}
\otimes 
\lau H  {\ov{q}} m  {X;R}
 \ar[r]^-{\cap}  
 \ar@<-4ex>[u]^\phi 
& 
\lau H  {\ov{p} + \ov p} {m-*}  {X;R}
\ar[u]^\phi, 
}
$$
that is, 
$
\phi (\Phi(\alpha) \cap \xi) = \alpha \cap \phi (\xi),
$ for each $\alpha \in \lau H  * {}   {X;R}$ and $\xi \in \lau H  {\ov{q}} m  {X;R}$.
 The map $\Phi$  preserves the $\cup$-product. Moreover, if $(\ov p,\ov p) =(\ov 0,\ov t)$ then $\Phi$ and $\phi$ can be chosen  isomorphisms.
 \ep

 \begin{proof} Consider the natural inclusions $\lau \tN * {\ov 0} {X;R} \hookrightarrow \lau \tN * {\ov p} {X;R}$,
 $\lau C  {\ov q} * {X;R} \hookrightarrow \lau C  {\ov t} * {X;R}$ and 
$\lau C  {\ov p + \ov q} * {X;R} \hookrightarrow \lau C  {\ov t} * {X;R}$.
So, it suffices to prove the statement for $(\ov p,\ov q) =(\ov 0,\ov t)$
(see \remref{67}).

 The complex of singular chains (resp. cochains), with coefficients on $R$, is denoted by  $\hiru S * {X;R}$ (resp. $\Hiru S * {X;R}$), it computes the ordinary homology $\hiru H * {X;R}$ (resp. ordinary cohomology $\Hiru H * {X;R}$).
We proceed in three steps.
 
 \begin{itemize}
\item {\em Construction of $\phi$}. We consider the inclusion
$
\iota\colon \lau C {\ov t} * {X;R}\hookrightarrow  \hiru S * {X;R}.
$
We prove  the statement  :``The map $\iota$  is a  quasi-isomorphim'' by using \cite[Theorem 5.1.4]{LibroGreg}.
 Let us verify the four conditions (1)-(4) of this Theorem.
\begin{enumerate}[(1)]
\item
It is well known that the functor $\hiru S * {-;R}$ verifies the Mayer-Vietoris property.
It has been proven in  \cite[Proposition 4.1]{CST3} that     the functor   $\lau C {\ov t} * {-;R}$ also 
verifies Mayer-Vietoris property.

\item Since the support of chains are compact.

\item  Since 
\begin{eqnarray*}
\hiru H * {\R^i \times \rc L;R} &= &\hiru H 0 {\R^i \times \rc L;R}= R
\stackrel{Normal}{=} 
\hiru H 0{L;R} \\
&=&
\hiru H  0 {\R^i \times ( \rc L \menos \{ \tv\})  ;R} \\ &\stackrel{Hyp. \ (3)}{=}&
\lau H {\ov t} 0 {\R^i \times   ( \rc L \menos \{ \tv\}) ;R}
\stackrel{\hbox{\tiny \cite[Prop. 5.4]{CST3}}}{=}
\lau H {\ov t} * {\R^i \times \rc L;R}
\end{eqnarray*}

\item Straightforward.
\end{enumerate}
The differential operator $\phi$ is the isomorphism $\iota_{*}$.
 
 \item[]
 
\item {\em Construction of $\Phi$}. Consider the operator $F$ constructed in the proof of \propref{PartPer} (a) and set $\Phi = F_*$.

\item {\em Diagram commutativity}. 
This is a local question. We can consider a cochain $\omega \in \lau \tN * {} {X}$ and a regular simplex $\sigma \colon \Delta \to X$. We have:
$
\phi(\Phi(\omega ) \cap \sigma) =
\sigma_*\mu_* ((\Phi(\omega))_\sigma \tcap \tDelta)
=\sigma_* \mu_{*}(\mu^*(\sigma^* \omega)\tcap \tDelta)
\stackrel{Prop. \ref{CMAmal} (2)}{=} \sigma_*(\sigma^*\omega \cap \Delta)=
\omega \cap \sigma
=\omega \cap \phi(\sigma).
$
\end{itemize}
 \end{proof}

The case $\ov q  = \ov 0$ corresponds to the decomposition of \cite[Theorem A]{CST7}.
If we work with non positive  perversities, we have a decomposition  of the cap product as follows.
 \bp\label{Comparacion2}
 Let $X$ be a separable paracompact  CS set endowed with two perversities $\ov p < \ov 0$ and $\ov q < \ov 0$.
Then, there exists a commutative diagram
$$
\xymatrix{
\lau H * {}  {X,\Sigma;R}
\otimes 
\lau H  {} m  {X\menos \Sigma;R}
\ar[r]^-{\cap} 
\ar@<-4ex>[d]^\Phi
 & 
\lau H  {} {m-*}  {X\menos \Sigma;R}
  \\
 \lau \IH  *  {\ov{p}}  {X;R}
\otimes 
\lau H  {\ov{q}} m  {X;R}
 \ar[r]^-{\cap}  
 \ar@<-4ex>[u]^\phi 
& 
\lau H  {\ov{p} + \ov q} {m-*}  {X;R},
\ar[u]^\phi 
}
$$
that is, 
$
\phi (\Phi(\alpha) \cap \xi) = \alpha \cap \phi (\xi),
$ for each $\alpha \in \lau H  * {}   {X,\Sigma;R}$ and $\xi \in \lau H  {\ov{q}} m  {X;R}$,
 where $\Phi$, $\phi$ are isomorphisms and the first one  preserves the $\cup$-product .
\ep
 \begin{proof} We proceed in three steps.
 
 \begin{itemize}
\item {\em Construction of $\phi$}. We prove that the inclusion 
$
\lau C {\ov p} * {X;R}  \stackrel{\iota}{\hookleftarrow}  \hiru S * {X \backslash
\Sigma,R}
$
is a quasi-isomorphim. We proceed as in the previous Proposition. In fact, the only item to prove is  the property (3) of \cite[Theorem 5.1.4]{LibroGreg}.
The regular part of $\R^i \times \rc L$ is $\R^i \times  (L\backslash\Sigma_L) \times ]0,\infty[$.
 We know that $\hiru H * {\R^i \times  (L\backslash\Sigma_L) \times ]0,\infty[;R} = \hiru H * { L\backslash \Sigma_L;R}$. On the other hand, we have
$\lau H {\ov p} * {\R^i \times \rc L;R} =
\hiru H * { L\backslash \Sigma_L;R}
$
(cf. \cite[Prop. 5.4]{CST3}). Since both isomorphisms are induced by the canonical projection then we get (3).
The map $\phi$ is the isomorphism $\iota_*$.

 \item {\em Construction of $\Phi$}. Consider the operator $F$ constructed in the proof of \propref{PartPer} (b) and set $\Phi = F_*$.

\item {\em Diagram commutativity}. As in the previous Proposition.
\end{itemize}
\end{proof}

\bibliographystyle{amsalpha}

\begin{thebibliography}{A}

\bibitem{MR2607414}
M.~Banagl -- \textit{ Rational generalized intersection homology
  theories },  {Homology, Homotopy Appl.} \textbf{12} (2010), no.~1,
  p.~157--185.

\bibitem{MR1143404}
J.-P. Brasselet, G.~Hector and  M.~Saralegi --
  \textit{ Th\'eor\`eme de de {R}ham pour les vari\'et\'es stratifi\'ees },
   {Ann. Global Anal. Geom.} \textbf{9} (1991), no.~3, p.~211--243.

\bibitem{MR1700700}
G.~Bredon --  {Topology and geometry}, Graduate Texts in
  Mathematics, vol. 139, Springer-Verlag, New York, 1997, Corrected third
  printing of the 1993 original.

\bibitem{CST1}
D.~Chataur, M.~Saralegi-Aranguren and
  D.~Tanré --\textit{ Intersection Cohomology. Simplicial blow-up and
  rational homotopy.},  {ArXiv Mathematics e-prints. no. 1205.7057} (2012), To appear
  in Mem. Amer. Math. Soc.

\bibitem{CST3}
\bysame , \textit{ {Homologie d'intersection. Perversit\'es g\'en\'erales et
  invariance topologique} },  {ArXiv Mathematics e-prints. no. 1602.03009} (2016).

\bibitem{CST4}
\bysame , \textit{ {Poincaré duality with cap products in intersection
  homology} },  {ArXiv Mathematics e-prints. no. 1603.08773 } (2016).

\bibitem{CST7}
\bysame , \textit{ Singular decompositions of a cap product },  {Proceedings AMS }\textbf{145}(2017),   no.~8, p.~3645--3656.

\bibitem{CST2}
\bysame , \textit{ Steenrod squares on intersection cohomology and a conjecture of
  {M} {G}oresky and {W} {P}ardon },  {Algebr. Geom. Topol.} \textbf{16}
  (2016), no.~4, p.~1851--1904.

\bibitem{MR2209151}
G.~Friedman -- \textit{ Superperverse intersection cohomology:
  stratification (in)dependence },  {Math. Z.} \textbf{252} (2006),
  no.~1, p.~49--70.

\bibitem{MR2276609}
\bysame , \textit{ Singular chain intersection homology for traditional and
  super-perversities },  {Trans. Amer. Math. Soc.} \textbf{359} (2007),
  no.~5, p.~1977--2019 (electronic).

\bibitem{MR2461258}
\bysame , \textit{ Intersection homology {K}\"unneth theorems },  {Math.
  Ann.} \textbf{343} (2009), no.~2, p.~371--395.

\bibitem{MR2721621}
\bysame , \textit{ Intersection homology with general perversities },  {Geom.
  Dedicata} \textbf{148} (2010), p.~103--135.

\bibitem{MR2796412}
\bysame , \textit{ An introduction to intersection homology with general perversity
  functions }, in  {Topology of stratified spaces}, Math. Sci. Res. Inst.
  Publ., vol.~58, Cambridge Univ. Press, Cambridge, 2011, p.~177--222.

\bibitem{LibroGreg}
\bysame , \textit{ Singular intersection homology }, 
 {Available at http://faculty.tcu.edu/gfriedman/IHbook.pdf} (2017).

\bibitem{MR3046315}
G.~Friedman and J.~E. McClure -- \textit{ Cup and
  cap products in intersection (co)homology },  {Adv. Math.} \textbf{240}
  (2013), p.~383--426.

\bibitem{MR572580}
M.~Goresky and R.~MacPherson -- \textit{
  Intersection homology theory },  {Topology} \textbf{19} (1980), no.~2,
  p.~135--162.

\bibitem{MR696691}
\bysame , \textit{ Intersection homology. {II} },  {Invent. Math.}
  \textbf{72} (1983), no.~1, p.~77--129.

\bibitem{MR699009}
M.~Goresky and  P.~Siegel--\textit{ Linking
  pairings on singular spaces },  {Comment. Math. Helv.} \textbf{58}
  (1983), no.~1, p.~96--110.

\bibitem{MR736299}
S.~Halperin--\textit{ Lectures on minimal models },  {M\'em.
  Soc. Math. France (N.S.)} (1983), no.~9-10.

\bibitem{MR0478169}
M.~Handel--\textit{ A resolution of stratification conjectures
  concerning {CS} sets },  {Topology} \textbf{17} (1978), no.~2,
  p.~167--175.

\bibitem{MR800845}
H.~C. King--\textit{ Topological invariance of intersection homology
  without sheaves },  {Topology Appl.} \textbf{20} (1985), no.~2,
  p.~149--160.

\bibitem{RobertSF}
R.~MacPherson--\textit{ Intersection homology and perverse
  sheaves },  {Unpublished AMS Colloquium Lectures, San Francisco}
  (1991).

\bibitem{MR1245833}
M.~Saralegi--\textit{ Homological properties of stratified spaces },
   {Illinois J. Math.} \textbf{38} (1994), no.~1, p.~47--70.

\bibitem{MR2210257}
M.~Saralegi-Aranguren--\textit{ de {R}ham intersection cohomology for
  general perversities },  {Illinois J. Math.} \textbf{49} (2005), no.~3,
  p.~737--758 (electronic).

\bibitem{MR0319207}
L.~C. Siebenmann--\textit{ Deformation of homeomorphisms on stratified
  sets. {I}, {II} },  {Comment. Math. Helv.} \textbf{47} (1972),
  p.~123--136; ibid. 47 (1972), 137--163.

\bibitem{MR0264581}
S.~Willard-- {General topology}, Addison-Wesley Publishing
  Co., Reading, Mass.-London-Don Mills, Ont., 1970.

\end{thebibliography}

\end{document}